\newif\iffinalversion
\Crefname{lstlisting}{Listing}{Listings}
\providecommand{\printbibliography}{ 
  \bibliographystyle{alpha}
  \bibliography{bibliography}
}
\providecommand{\addbibresource}[1]{}
\providecommand{\DeclareSourcemap}[1]{}
\providecommand{\DeclareFieldFormat}[2]{}
\providecommand{\renewbibmacro}[2]{}
\makeatletter\@ifpackageloaded{biblatex}{}{
  \newcounter{biburllcpenalty}
  \newcounter{biburlucpenalty}
  \newcounter{biburlnumpenalty}
   
}\makeatother
\setlist[enumerate]{nosep,label=(\roman*),font=\normalfont,leftmargin=2.8em}
\newcounter{theoremcounter}
\numberwithin{theoremcounter}{subsection}
\let\c@theoremcounter\c@subsubsection
\numberwithin{equation}{subsection}
\theoremstyle{plain}
\newtheorem{intro-theorem}{Theorem}
\newtheorem{intro-conjecture}[intro-theorem]{Conjecture}
\newtheorem{intro-corollary}[intro-theorem]{Corollary}
\theoremstyle{remark}
\newtheorem{intro-question}[intro-theorem]{Question}
\crefname{intro-theorem}{Theorem}{Theorems}
\theoremstyle{plain}
\newtheorem{conjecture}[theoremcounter]{Conjecture}
\newtheorem{corollary}[theoremcounter]{Corollary}
\newtheorem{lemma}[theoremcounter]{Lemma}
\newtheorem{proposition}[theoremcounter]{Proposition}
\newtheorem{theorem}[theoremcounter]{Theorem}
\newtheorem*{theorem*}{Theorem}
\newtheorem*{conjecture*}{Conjecture}
\theoremstyle{remark}
\newtheorem{remark}[theoremcounter]{Remark}
\newtheorem{question}[theoremcounter]{Question}
\newtheorem*{remark*}{Remark}
\newtheorem*{question*}{Question}
\theoremstyle{definition}
\newtheorem{construction}[theoremcounter]{Construction}
\newtheorem{convention}[theoremcounter]{Convention}
\newtheorem{definition}[theoremcounter]{Definition}
\newtheorem{example}[theoremcounter]{Example}
\newtheorem{notation}[theoremcounter]{Notation}
\newtheorem{situation}[theoremcounter]{Situation}
\newtheorem*{notation*}{Notation}
\newenvironment{claim}{\par\textbf{Claim.}}{\par}
\patchcmd{\swappedhead}{(#3)}{#3}{}{}
\theoremstyle{definition}
\newtheorem{internal_passage}[theoremcounter]{}
\newenvironment{passage}[1][]
  {\begin{internal_passage}%
  \if\relax\detokenize{#1}\relax\else
    \textbf{#1.}%
  \fi}
  {\end{internal_passage}}
\newcounter{char}
\ifnum\value{char}<27
\edef\csname\Alph{char}bb\endcsname{\noexpand\mathbb{\Alph{char}}}
\edef\csname\Alph{char}ca\endcsname{\noexpand\mathcal{\Alph{char}}}
\edef\csname\Alph{char}sc\endcsname{\noexpand\mathscr{\Alph{char}}}
\edef\csname\Alph{char}fr\endcsname{\noexpand\mathfrak{\Alph{char}}}
\edef\csname\alph{char}fr\endcsname{\noexpand\mathfrak{\alph{char}}}
\edef\csname\Alph{char}bf\endcsname{\noexpand\mathbf{\Alph{char}}}
\edef\csname\alph{char}bf\endcsname{\noexpand\mathbf{\alph{char}}}
\edef\csname\Alph{char}rm\endcsname{\noexpand\mathrm{\Alph{char}}}
\edef\csname\alph{char}rm\endcsname{\noexpand\mathrm{\alph{char}}}
  \ifnum\value{char}=1\else
\edef\csname\Alph{char}\Alph{char}\endcsname{\noexpand\mathbb{\Alph{char}}}
\let\oldAA\AA
\renewcommand*{\AA}{\ifmmode\mathbb{A}\else\oldAA\fi}
\DeclareSymbolFontAlphabet{\mathbb}{AMSb}
\DeclareSymbolFontAlphabet{\mathbbl}{bbold}
\DeclareMathOperator{\Cobar}{Cobar}
\DeclareMathOperator{\conn}{conn}
\DeclareMathOperator{\ev}{ev}
\DeclareMathOperator{\fib}{fib}
\DeclareMathOperator{\free}{free}
\DeclareMathOperator{\frgt}{forg}
\DeclareMathOperator{\Ho}{H}
\DeclareMathOperator{\HoCat}{ho}
\DeclareMathOperator{\Loc}{L}
\DeclareMathOperator{\Map}{Map}
\DeclareMathOperator{\MMap}{\mathbb{M}ap} 
\DeclareMathOperator{\T}{T}
\DeclareMathOperator{\infNv}{\mathcal{N}}
\newcommand*{\Loop}{\operatorname{\Omega}\mathopen{}}
\newcommand*{\susp}{\operatorname{\Sigma}\mathopen{}}
\newcommand*{\quot}[2]{\left.#1 \right/ \mathord #2}
\newcommand*{\cocolon}{\nobreak\mskip6mu plus1mu\mathpunct{}\nonscript\mkern-\thinmuskip{:}\mskip2mu\relax}
\newcommand*{\aug}{\mathrm{aug}}
\newcommand*{\coaug}{\mathrm{coaug}}
\newcommand*{\f}{\mathrm{f}}
\newcommand*{\fin}{\mathrm{fin}}
\newcommand*{\ndp}{\mathrm{ndp}}
\newcommand*{\nut}{\mathrm{nu}}
\newcommand*{\rex}{\mathrm{rex}}
\newcommand*{\op}{\mathrm{op}}
\newcommand*{\st}{\mathrm{st}}
\newcommand*{\id}{\operatorname{id}}
\newcommand*{\pr}{\operatorname{pr}}
\newcommand*{\blank}{{-}}
\newcommand*{\Category}[1]{\operatorname{\mathbf{#1}}}
\DeclareMathOperator{\CatAlg}{\Category{Alg}}
\DeclareMathOperator{\CatCh}{\Category{Ch}}
\DeclareMathOperator{\CatFin}{\Category{Fin}}
\NewDocumentCommand{\mathcalUpper}{m}{
  \tl_set:Nn \l_tmpa_tl { #1 }
  \regex_replace_all:nnN { ([A-Z]+) } { \c{mathcal}\cB\{\1\cE\} } \l_tmpa_tl
  \tl_use:N \l_tmpa_tl
  }
\newcommand*{\InftyCategory}[1]{\operatorname{\mathcalUpper{#1}}}
\DeclareMathOperator{\infAlg}{\InftyCategory{Alg}}
\DeclareMathOperator{\infCAT}{\InftyCategory{CAT}}
\DeclareMathOperator{\infcoAlg}{\InftyCategory{coAlg}}
\DeclareMathOperator{\infcoExc}{\InftyCategory{coExc}}
\DeclareMathOperator{\infcoOpd}{\InftyCategory{coOpd}}
\DeclareMathOperator{\infcoSp}{\InftyCategory{coSp}}
\DeclareMathOperator{\infD}{\InftyCategory{D}}
\DeclareMathOperator{\infExc}{\InftyCategory{Exc}}
\DeclareMathOperator{\infFin}{\InftyCategory{Fin}}
\DeclareMathOperator{\infFun}{\InftyCategory{Fun}}
\DeclareMathOperator{\infL}{\InftyCategory{L}}
\DeclareMathOperator{\infLmod}{\InftyCategory{LMod}}
\DeclareMathOperator{\infMap}{\InftyCategory{Map}}
\DeclareMathOperator{\infMod}{\InftyCategory{Mod}}
\DeclareMathOperator{\infOpd}{\InftyCategory{Opd}}
\DeclareMathOperator{\infPrl}{\InftyCategory{Pr}^{\mathrm{L}}}
\DeclareMathOperator{\infPrr}{\InftyCategory{Pr}^{\mathrm{R}}}
\DeclareMathOperator{\infSp}{\InftyCategory{Sp}}
\DeclareMathOperator{\infSSeq}{\InftyCategory{SymSeq}}
\DeclareMathOperator{\infHType}{\InftyCategory{Ho}}
\DeclareMathOperator{\infSplx}{\mathrm{\Delta}}
\newcommand*{\Perm}{\Sfr}
\newcommand*{\Operad}[1]{\operatorname{\mathbf{#1}}}
\DeclareMathOperator{\opE}{\Operad{E}}
\DeclareMathOperator{\opLie}{\Operad{Lie}}
\newcommand*{\InftyOperad}[1]{\operatorname{\mathcalUpper{#1}}}
\DeclareMathOperator{\infopCom}{\InftyOperad{Com}}
\DeclareMathOperator{\infopLie}{\InftyOperad{Lie}}
\DeclareMathOperator{\infopE}{\InftyOperad{E}}
\DeclareMathOperator{\infopTriv}{\InftyOperad{Triv}}
\newcommand*{\augideal}{\operatorname{I}}
\newcommand*{\trivaug}{\operatorname{A}}
\newcommand*{\PC}{\mathrm{\Pi}}
\newcommand*{\pt}{\mathrm{pt}}  
\newcommand*{\usphere}{\mathrm{S}}
\newcommand*{\Mon}{\mathrm{M}} 
\newcommand*{\EM}{\mathrm{H}} 
\newcommand*{\MK}{\mathrm{K}} 
\newcommand*{\Tel}{\mathrm{T}} 
\newcommand*{\munit}{\mathbbl{1}}
\newcommand*{\ie}{i.e.\@\xspace}
\newcommand*{\cf}{cf.\@\xspace}
\newcommand*{\loccit}{loc.~cit\xperiod}
\let\oldiff\iff
\renewcommand*{\iff}{\ifmmode\oldiff\else{if and only if}\xspace\fi}
\newcommand*{\htype}{homotopy type\@\xspace}
\newcommand*{\htypes}{homotopy types\@\xspace}
\newcommand*{\rom}[1]{\textrm{(\romannumeral #1)}}
\newcounter{itemnumcounter}[theoremcounter] 
\newcommand*{\itemnum}{\refstepcounter{itemnumcounter}\textnormal{(\roman{itemnumcounter})}\xspace}
\DeclareMathOperator{\infModuli}{\InftyCategory{Moduli}}
\DeclareMathOperator{\rB}{B} 
\DeclareMathOperator{\rrB}{C} 
\DeclareMathOperator{\Bac}{Bar} 
\DeclareMathOperator{\BK}{\Phi} 
\DeclareMathOperator{\rBK}{\Theta} 
\DeclareMathOperator{\CE}{CE}
\DeclareMathOperator{\indc}{indec}
\DeclareMathOperator{\KD}{K} 
\DeclareMathOperator{\lBK}{\Theta}
\DeclareMathOperator{\Null}{P} 
\DeclareMathOperator{\Poly}{P} 
\DeclareMathOperator{\sfree}{\mathbb{F}ree}
\DeclareMathOperator{\triv}{triv}
\DeclareMathOperator{\UE}{U} 
\DeclareMathOperator{\rUE}{T} 
\DeclareMathOperator{\relP}{F}
\newcommand*{\opdsusp}{\sigma\mathopen{}}
\newcommand*{\contraction}{contraction\@\xspace}
\newcommand*{\less}{less\@\xspace}
\title[Universal property of the Bousfield--Kuhn functor]{Universal property of the Bousfield--Kuhn functor}
\author{Yuqing Shi}
\date{\today}
\address{Max-Planck-Institute for Mathematics, Vivatsgasse 7, 53111 Bonn, Germany}
\email{yuqing.shi@mpim-bonn.mpg.de}
\begin{document}

\begin{abstract}
  We present a universal property of the Bousfield--Kuhn functor~$\operatorname{\Phi}_h$ of height~$h$, for every positive natural number~$h$.
  This result is achieved by proving that the costabilisation of the~$\infty$\nobreakdash-category of~$v_h$\nobreakdash-periodic homotopy types is equivalent to the~$\infty$\nobreakdash-category of~$\operatorname{T}(h)$-local spectra.
  A key component in our proofs is the spectral Lie algebra model for~$v_h$-periodic homotopy types~\cite{Heu21}:
  We associate the costabilisation of the~$\infty$\nobreakdash-category of spectral Lie algebras with the costabilisations of the~$\infty$\nobreakdash-category of non-unital~$\mathcal{E}_{n}$\nobreakdash-algebras, via our construction of higher enveloping algebras of spectral Lie~algebras. 
\end{abstract}

\maketitle

\tableofcontents


\numberwithin{equation}{section}

\section*{Introduction}

Quillen's work on rational homotopy theory demonstrates the important role of Lie algebraic models in understanding rational \htypes: It proves a functorial one-to-one correspondence up to homotopy between simply connected rational \htypes and Lie algebra objects in rational spectra, see~\cite{Qui69}.
Using this correspondence, we have a better understanding of the structural and computational properties of rational \htypes.
In the framework of stable chromatic homotopy theory, the~$\infty$\nobreakdash-category~$\infSp_{\QQ}$ of rational spectra is the begin of a sequence~$(\infSp_{\Tel(h)})_{h \geq 0}$ of localisations of the~$\infty$\nobreakdash-category~$\infSp_{(p)}$ of~$p$\nobreakdash-local spectra, for a fixed prime number~$p$.
Here~$\Tel(h)$ denotes the~$p$\nobreakdash-local telescope spectrum of type~$h$ and~$\infSp_{\Tel(h)}$ denotes the localisation of~$\infSp_{(p)}$ at the set of~$\Tel(h)$\nobreakdash-homology equivalences~(by convention we have~$\Tel(0) \coloneqq \EM\QQ$).
Moreover, the notion of~$v_h$\nobreakdash-periodic homotopy equivalences~(of spectra or of \htypes) generalises the notion of rational homotopy equivalence in chromatic height~$h \geq 1$.
Recently, Heuts extended Quillen's Lie algebra model for rational \htypes to~$v_h$\nobreakdash-periodic \htypes by establishing the following equivalence
\begin{equation}\label{eq:spectral-Lie-vh}
  \infAlg_{\infopLie}(\infSp_{\Tel(h)}) \simeq \infHType_{v_h}  
\end{equation}
between the~$\infty$\nobreakdash-category~$\infAlg_{\infopLie}(\infSp_{\Tel(h)})$ of spectral Lie algebra objects in~$\infSp_{\Tel(h)}$ and the~$\infty$\nobreakdash-category~$\infHType_{v_h}$ of~$v_h$\nobreakdash-periodic ($p$-local) \htypes, \ie the localisation of the~$\infty$\nobreakdash-category~$\infHType_{(p)}$ of~$p$\nobreakdash-local \htypes at the set of~$v_h$\nobreakdash-periodic equivalences, see~\cite{Heu21}.
Fix a prime number~$p$ and a positive natural number~$h$, the theme of this article is to use the aforementioned spectral Lie algebra model to study~$v_h$\nobreakdash-periodic \htypes.

One can associate the stable monochromatic layer~$\infSp_{\Tel(h)}$ with the unstable monochro\-matic layer~$\infHType_{v_h}$ in the following homological and homotopical ways.
Recall that the {adjunction}~$\susp^{\infty} \dashv \Loop^{\infty}$ exhibits the~$\infty$\nobreakdash-category~$\infSp$ of spectra as the stabilisation of the~$\infty$\nobreakdash-category~$\infHType_{\ast}$ of pointed \htypes.
The analogous statement holds in chromatic homotopy theory: The induced functor~$\susp_{v_h}^{\infty} \coloneqq \Loc_{\Tel(h)} \circ \susp^{\infty}$, \ie the~$\Tel(h)$\nobreakdash-localisation of~$\susp^{\infty}$, exhibits~$\infSp_{\Tel(h)}$ as the stabilisation of~$\infHType_{v_h}$, see~\cite[Proposition~3.19]{Heu21}.
The homotopical way relating~$\infHType_{v_h}$ and~$\infSp_{\Tel(h)}$ is through the \emph{Bousfield--Kuhn~functor}
\[
\BK_h \colon \infHType_{v_h} \to \infSp_{\Tel(h)},
\] 
which is constructed in~\cite{Bou01} and plays an essential role in periodic homotopy theory. 
See~\cite{Kuh04, Kuh07,BR20, Heu21} for an incomplete list of applications of~$\BK_h$.
We call it homotopical because the~$v_h$\nobreakdash-periodic homotopy group~$v_h^{-1}\pi_{\bullet}(X; V_h)$ of a \htype~$X \in \infHType_{v_h}$ with coefficient in a finite complex~$V_h$ is isomorphic to the stable homotopy group of the mapping spectrum~$\MMap(V_h, \BK_h(X))$.
By~\cite{Bou01} the functor~$\BK_h$ admits a left adjoint~$\lBK_h \colon \infSp_{\Tel(h)} \to \infHType_{v_h}$, which supplies the~$\infty$\nobreakdash-category~$\infHType_{v_h}$ with infinitely many non-trivial objects admitting infinite desuspension.
In other words, for every~$E \in \infSp_{\Tel(h)}$, there exists an infinite sequence~$(Y_n)_{n \in \NN}$ of objects in~$\infHType_{v_h}$ such that~$\lBK_{h}(E) \simeq \susp_{v_h}^n Y_n$ for every~$n \in \NN$, where~$\susp_{v_h}$ denotes the suspension endofunctor on~$\infHType_{v_h}$.
Therefore, the object~$\lBK_h(E)$ gives an object in the~$\infty$\nobreakdash-category
\[
\infcoSp\left(\infHType_{v_h}\right) \coloneqq \varprojlim_{n} \left(\cdots \xrightarrow{\susp_{v_h}} \infHType_{v_h}\xrightarrow{\susp_{v_h}} \infHType_{v_h} \xrightarrow{\susp_{v_h}} \infHType_{v_h}\right)
\]
of the costabilisation of~$\infHType_{v_h}$, where the inverse limit is taken in the~$\infty$\nobreakdash-category of small~$\infty$\nobreakdash-categories.
The first theorem of this paper implies that every~$v_h$\nobreakdash-periodic \htype admitting infinite desuspensions lies in the essential image of~$\lBK_h$.

\begin{intro-theorem}[\cref{thm:costab-Lie-Tn}]\label{thm:costab-vh-intro-2}
  Let~$h \geq 1$ be a natural number. 
  The Bousfield--Kuhn adjunction~$\lBK_{h} \dashv \BK_{h}$ exhibits the~$\infty$\nobreakdash-category~$\infSp_{\Tel(h)}$ as the costabilisation of~$\infHType_{v_{h}}$.
\end{intro-theorem}

Note that~\cref{thm:costab-vh-intro-2} applies exclusively to positive chromatic height: The {costabilisation} of the~$\infty$\nobreakdash-category~$\infHType_{\ast}$ of pointed \htypes and is trivial, since any object admitting infinite desuspensions in~$\infHType_{\ast}$ is weakly contractible for connectivity reasons.
By the same arguments, the costabilisation of the~$\infty$\nobreakdash-category~$\infHType_{\QQ}$ of rational \htypes is also trivial.
One reason for the non-triviality of the costabilisation of~$\infHType_{v_h}$ is the fact that~$v_h$\nobreakdash-periodic equivalences of \htypes is insensitive to connectivity: 
The canonical morphism~$\tau_{> n}(X) \to X$, given by taking the fibre of the Postnikov truncation~$X \to \tau_{\leq n}(X)$, is a~$v_h$\nobreakdash-periodic equivalence, for every~$n \in \NN$ and for every~$1 \leq h \in \NN$.
This observation is also reflected in the equivalence~$\infHType_{v_h} \simeq \infAlg_{\infopLie}(\infSp_{\Tel(h)})$, which doesn't have assumption on connectivity, as opposed to Quillen's Lie algebra model in rational homotopy theory.

As a consequence of~\cref{thm:costab-vh-intro-2}, we obtain the following universal property of the Bousfield--Kuhn~functor.
\begin{intro-theorem}[\cref{cor:universal-property-BK}]\label{thm:intro-thm-up-BK}
  Let~$h \geq 1$ be a natural number.
  For any presentable stable~$\infty$\nobreakdash-category~$\Dca$, composition with the Bousfield--Kuhn functor~$\BK_{h}$ induces an equivalence
  \[
  \infFun^{\mathrm{R}}(\infSp_{\Tel(h)}, \Dca) \xrightarrow{\sim} \infFun^{\mathrm{R}}\left(\infHType_{v_{h}}, \Dca\right),
  \]
  where~$\infFun^{\mathrm{R}}$ denotes the~$\infty$\nobreakdash-category of functors that are accessible and preserve small limits, \ie functors that admits a left adjoint. 
\end{intro-theorem}

We analyse the costabilisation of~$\infHType_{v_h}$ by studying the costabilisation of~$\infAlg_{\infopLie}(\infSp_{\Tel(h)})$, using the equivalences~\eqref{eq:spectral-Lie-vh}.
By the recent work Heuts and Land~\cite{HLEn} we know that the costabilisation of the~$\infty$\nobreakdash-category~$\infAlg_{\infopE_n}^{\nut}(\infSp_{\Tel(h)})$ of non-unital~$\infopE_n$\nobreakdash-algebras is equivalent to~$\infSp_{\Tel(h)}$.
In order to make use of this result, we relate spectral Lie algebras with~$\infopE_n$\nobreakdash-algebras via our construction of higher enveloping algebras: We construct the following commutative diagram
\begin{equation*} 
\begin{tikzcd}[row sep = huge, column sep = small]
  & \infAlg_{\infopLie}\left(\infSp_{\Tel(h)}\right) \arrow[rrrrd, "\UE_0"] \arrow[rrrd, "\UE_1"']  \arrow[rd, "\UE_{n-1}"'] \arrow[d, "\UE_n"']  \arrow[ld] &             &             &   \\
  \cdots \arrow[r] & \infAlg_{\infopE_{n}}^{\nut}\left(\infSp_{\Tel(h)}\right) \arrow[r, "\rB_n"']                                                & \infAlg_{\infopE_{n-1}}^{\nut}\left(\infSp_{\Tel(h)}\right) \arrow[r, "\rB_{n-1}"'] & \cdots \arrow[r] & \infAlg_{\infopE_{1}}^{\nut}\left(\infSp_{\Tel(h)}\right) \arrow[r, "\rB_{1}"'] & \infSp_{\Tel(h)}
\end{tikzcd}
\end{equation*}
in the~$\infty$\nobreakdash-category~$\infPrl$ of presentable~$\infty$\nobreakdash-categories and small-colimit-preserving functors~(see~\cref{sec:higher-enveloping-Th}).
By the universal property of the inverse limits, the above commutative diagram induces a small-colimit-preserving functor
\[
\UE_{\infty} \colon \infAlg_{\infopLie}\left(\infSp_{\Tel(h)}\right) \to \varprojlim \infAlg_{\infopE_n}^{\nut}\left(\infSp_{\Tel(h)}\right).
\]
\begin{intro-theorem}[\cref{thm:U-infty-ff}]\label{thm:intro-thm-U-infty-ff}
  The functor~$\UE_{\infty}$ is fully faithful.
\end{intro-theorem}

The functors~$\UE_n$, for~$n \in \NN$, are called \emph{higher enveloping algebras} of spectral Lie algebras, since the functor~$\UE_1$ is an~$\infty$\nobreakdash-categorical analogue of the classical universal enveloping algebra functor.
These functors were first studied in~\cite{Knu18}. 
In~\cref{sec:higher-enveloping-Th} we give a different construction of~$\UE_n$, which helps us analyse the unit natural transformation of the induced adjunction~$\UE_{\infty} \dashv \rUE_{\infty}$ for the proof of~\cref{thm:intro-thm-U-infty-ff}.
For comparison, we discuss three candidates for the construction of~$\UE_n$ for a presentable symmetric monoidal~$\infty$\nobreakdash-category~$\Cca$ and we conjecture that they should all~agree, see~\cref{sec:relation-to-KD}.

By~\cref{thm:intro-thm-U-infty-ff} the functor~$\UE_{\infty}$ induces a fully faithful functor on the costabilisations of its source and target~(see~\cref{lem:ff-costab}).
Using this and several diagram chases we show that the functors~$\infcoSp(\UE_0)$ and~$\infcoSp(\free_{\infopLie})$ on costabilisations induced by~$\UE_0$ and the free Lie algebra functor~$\free_{\infopLie}$ are inverse to each other.
From this we conclude that the costabilisation of~$\infAlg_{\infopLie}(\infSp_{\Tel(h)})$ is equivalent to the costabilisation of~$\infSp_{\Tel(h)}$.
The latter is equivalent to~$\infSp_{\Tel(h)}$, since it is a stable~$\infty$\nobreakdash-category.

Another motivation for~\cref{thm:intro-thm-U-infty-ff} is the following analogous theorem for Lie algebras and~$\infopE_n$\nobreakdash-algebras in rational spectra, which is a consequence of the formality of the rational~$\infopE_n$\nobreakdash-operads and self Koszul-duality of~$\infopE_n$\nobreakdash-operads~\cite{CS22}.

\begin{intro-theorem}[\cref{thm:U-infty-rational-equi}]
  The sequence of inclusions~$\infopE_0 \hookrightarrow \cdots \hookrightarrow \infopE_n \hookrightarrow \infopE_{n+1} \hookrightarrow \cdots$ induces an~equivalence
  \[
  \infAlg_{\infopLie}\left(\infSp_{\QQ}\right) \xrightarrow{\sim} \varprojlim \infAlg_{\infopE_n}^{\nut}\left(\infSp_{\QQ}\right)
  \]
  of~$\infty$\nobreakdash-categories.
\end{intro-theorem}

Considering~$\infSp_{\Tel(h)}$ as a higher height analogue of~$\infSp_{\QQ}$ in chromatic homotopy theory, we make the following conjecture.

\begin{intro-conjecture}[{\cref{conj:U-finty-equi-Sp-Th}}]
  The functor
  \[
  \UE_{\infty} \colon \infAlg_{\infopLie}(\infSp_{\Tel(h)}) \to \varprojlim \infAlg_{\infopE_n}^{\nut}(\infSp_{\Tel(h)})
  \] 
  is an equivalence of~$\infty$\nobreakdash-categories.
\end{intro-conjecture}

\subsection*{Conventions}

\begin{enumerate}
  \item Throughout the text we work in the language of~$(\infty, 1)$\nobreakdash-categories, abbreviated as~$\infty$\nobreakdash-categories and modelled by quasi-categories, as introduced in~\cite{Joy02,Lur09}.
    The fundamental~$\infty$\nobreakdash-category we consider is \emph{the~$\infty$\nobreakdash-category~$\infHType$ of homotopy types}~(also known as~$\infty$\nobreakdash-groupoids, animas, or spaces), which is the~$\infty$\nobreakdash-categorical ground for doing homotopy theory of topological spaces.
  \item  We follow the size conventions in~\cite[§1.2.15]{Lur09} to deal with set-theoretic technicalities; see~\cite[§1.1]{Lan21} for a more precise explanation.
    In particular, we work in the set-theoretic framework of the ZFC axioms.
    In addition we assume the large cardinal axiom which guarantees the existence of a tower~$\Uca_{0} \subsetneq \Uca_{1} \subsetneq \cdots$ of Grothendieck universes~\cite[Definition~1.1.2]{Bor08}.
    We call a mathematical object \emph{small} if it is an element of~$\Uca_0$.
    Then, for example, the~$\infty$\nobreakdash-category~$\infHType$ of small \htypes is well-defined and it is itself an element of~$\Uca_1$.
    Since we are not writing a foundational text, this will be the last comment about set theory that we~make. 
\end{enumerate}

\subsection*{Acknowledgements}

The results in this article are part of my Ph.D. thesis written at Utrecht University.
I would like to thank my PhD supervisor Gijs Heuts for suggesting the thesis topics and I am very grateful to him for his guidance, availability and continuous support.
I would like to thank Gabriel Angelini-Knoll, Tobias Barthel, Thomas Blom, Jack Davies, Connor Malin and Gijs Heuts for proofreading drafts of this article and for their valuable feedback.

\numberwithin{equation}{subsection}

\section{Unstable~\texorpdfstring{$v_h$}{vₕ}-periodic homotopy theory}

We begin this section with necessary prerequisites on stable chromatic homotopy theory~(see~\cref{sec:stable-chrom}) and unstable periodic homotopy theory~(see~\cref{sec:usntable-periodic}) respectively.  
With those backgrounds we will present the main theorem of this paper in~\cref{sec:BK}: The universal property of the Bousfield--Kuhn functor~(see~\cref{thm:universal-property-BK}).
Throughout this section we work with a fixed prime number~$p$.

\subsection{Stable chromatic homotopy theory}\label{sec:stable-chrom}

Many ideas and methods from stable chromatic homotopy theory are indispensable for the development of unstable periodic homotopy theory.
In this section we include the relevant background on stable chromatic localisations of the~$\infty$\nobreakdash-category~$\infSp_{(p)}$ of~$p$\nobreakdash-local spectra, which will be convenient for our later discussions. 
For more detailed expositions on this subject, see~\cite{AB20},~\cite{Rav04} or~\cite{Pet19}. 

\begin{definition}\label{def:type-fin-spectra}
  Let~$h$ be a natural number and let~$\MK(h)$ denotes the~$p$-local Morava~$\MK$-theory of height~$h$.
  A~$p$\nobreakdash-local finite spectrum~$F$ is of \emph{type at least}~$h$ if its Morava~$\MK$\nobreakdash-theory homology~$\MK(n)_{\bullet}(F)$ is trivial for every~$0 \leq n \leq h-1$.
  If in addition~$\MK(h)_{\bullet}(F)$ is non-trivial, we say~$F$ is of \emph{type~$h$}.
\end{definition}

\begin{theorem}[Ravenel]\label{thm:finite-spectrum-of-type}
Let~$F$ be a~$p$\nobreakdash-local finite spectrum and let~$h$ be a natural number.
If~$\MK(h)_{\bullet}(F) = 0$, then~$F$ is of type at least~$h+1$.
\end{theorem}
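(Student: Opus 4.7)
The plan is to reduce the question to a $\mathrm{BP}$-theoretic computation via Landweber's filtration theorem. For a $p$-local finite spectrum $F$, the module $\mathrm{BP}_\bullet(F)$ is finitely presented over $\mathrm{BP}_\bullet \cong \mathbb{Z}_{(p)}[v_1, v_2, \ldots]$, and Landweber's theorem provides a finite filtration $0 = M_0 \subset M_1 \subset \cdots \subset M_r = \mathrm{BP}_\bullet(F)$ whose successive quotients are (suspensions of) $\mathrm{BP}_\bullet/I_{m_i}$, where $I_m = (p, v_1, \ldots, v_{m-1})$ is the $m$-th invariant prime ideal. Morava $\MK$-homology is related to $\mathrm{BP}$-homology through the Künneth-type spectral sequence $\mathrm{Tor}_{\mathrm{BP}_\bullet}^{*}(\MK(n)_\bullet, \mathrm{BP}_\bullet(F)) \Rightarrow \MK(n)_\bullet(F)$, which has finite length since $\mathrm{BP}_\bullet/I_m$ has projective dimension $m$ over $\mathrm{BP}_\bullet$.

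The central homological input is the following computation: for $m > n$, $\mathrm{Tor}_{\mathrm{BP}_\bullet}^{*}(\MK(n)_\bullet, \mathrm{BP}_\bullet/I_m) = 0$. Indeed, since $(p, v_1, \ldots, v_{m-1})$ is a regular sequence in $\mathrm{BP}_\bullet$, the corresponding Koszul complex is a finite free resolution of $\mathrm{BP}_\bullet/I_m$. After base-change to $\MK(n)_\bullet = \mathbb{F}_p[v_n^{\pm 1}]$, the generator $v_n$ (which lies in the sequence because $n \leq m-1$) becomes a unit, rendering the Koszul complex contractible. Granted this, I would argue from the hypothesis $\MK(h)_\bullet(F) = 0$ that every invariant $m_i$ appearing in the Landweber filtration of $\mathrm{BP}_\bullet(F)$ must satisfy $m_i > h$. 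Once this is established, for any $n \leq h$ one has $n < m_i$ for all $i$, so the key calculation together with the long exact sequences of Tor attached to the filtration forces $\mathrm{Tor}_{\mathrm{BP}_\bullet}^{*}(\MK(n)_\bullet, \mathrm{BP}_\bullet(F)) = 0$, and hence $\MK(n)_\bullet(F) = 0$ by the spectral sequence. This is exactly the assertion that $F$ is of type at least $h+1$.

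The main obstacle is the intermediate step, namely deducing from $\MK(h)_\bullet(F) = 0$ that every $m_i > h$ in a Landweber filtration. Running the Künneth spectral sequence subquotient-by-subquotient is not immediately conclusive because non-trivial differentials might in principle cancel contributions of subquotients with $m_i \leq h$ against those with $m_i > h$. I see two natural ways to close this gap: either a support-theoretic argument, identifying the minimum value of $m_i$ in a Landweber filtration as a genuine invariant of $\mathrm{BP}_\bullet(F)$ that is detected by the non-vanishing of $\MK(m)_\bullet(F)$ for $m = m_i^{\min}$; or, alternatively, an appeal to Hopkins--Smith's thick subcategory theorem, which identifies $\ker(\MK(h)_\bullet)$ on $p$-local finite spectra with $\mathcal{C}_k$ for some $k$, and the equality $k = h+1$ is then verified by producing, for each $m \leq h$, a generalised Moore / Smith--Toda complex of type $m$ with non-trivial $\MK(h)$-homology.
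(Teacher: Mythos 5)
Your overall strategy (Landweber filtration of $\mathrm{BP}_\bullet(F)$, the K\"unneth spectral sequence $\mathrm{Tor}^{\mathrm{BP}_\bullet}(\MK(n)_\bullet, \mathrm{BP}_\bullet(F)) \Rightarrow \MK(n)_\bullet(F)$, and the Koszul computation of $\mathrm{Tor}^{\mathrm{BP}_\bullet}(\MK(n)_\bullet, \mathrm{BP}_\bullet/I_m)$) is exactly the strategy of the result the paper cites, namely~\cite[Theorem~2.11]{Rav84}, and the ``vanishing'' half of your argument is correct: if every $m_i$ in a Landweber filtration exceeds $h$, then for each $n \leq h$ the element $v_n$ lies in every $I_{m_i}$ and acts invertibly on $\MK(n)_\bullet$, so all the relevant Tor groups vanish, the spectral sequence (which is confined to finitely many columns by the finiteness of the projective dimension) collapses to zero, and $\MK(n)_\bullet(F) = 0$.

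The problem is that the step you yourself flag as open --- deducing from $\MK(h)_\bullet(F) = 0$ that every $m_i$ exceeds $h$, equivalently that the minimal $m_i$ is \emph{detected} by the non-vanishing of $\MK(m)_\bullet(F)$ --- is not a technical loose end but the entire mathematical content of the theorem; everything else in your write-up is formal. Neither of your proposed repairs closes it. Repair (a) is a restatement of the claim to be proved: exactness of the localisation $v_h^{-1}(-)$ does show that $\min_i m_i$ is a well-defined invariant of $\mathrm{BP}_\bullet(F)$ (namely the least $m$ with $v_m^{-1}\mathrm{BP}_\bullet(F) \neq 0$), but passing from the non-vanishing of $v_h^{-1}\mathrm{BP}_\bullet(F)$, or of the $\mathrm{Tor}_0$-line, to the non-vanishing of $\MK(h)_\bullet(F)$ is precisely where differentials in the K\"unneth spectral sequence could in principle intervene, and you offer no mechanism to rule this out (note that Euler-characteristic or rank counts do not suffice: the surviving Tor of a subquotient $\mathrm{BP}_\bullet/I_{m_i}$ with $m_i \leq h$ is an exterior algebra on $m_i$ generators over $\MK(h)_\bullet$, whose Euler characteristic is zero). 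Repair (b) is circular: the thick subcategory theorem of Hopkins--Smith takes \cite[Theorem~2.11]{Rav84} as an input --- it is what guarantees that the subcategories $\Pca_{\geq n}$ form a nested chain, without which the classification statement does not even make sense --- and the generalised Moore spectra of type $m$ that you propose to produce are themselves supplied by the Periodicity Theorem, which rests on the thick subcategory theorem. A non-circular proof of the detection step must go back to the comodule algebra, as Ravenel does; as written, your proposal does not contain one.
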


\begin{proof}
  See~\cite[Theorem~2.11]{Rav84}.
\end{proof}

\begin{theorem}[Mitchell~\cite{Mit85}]
  For every~$h \in \NN$, there exists a finite~$p$\nobreakdash-local spectrum of type~$h$.
\end{theorem}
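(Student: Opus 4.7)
The plan is to proceed by induction on~$h$. The base case~$h = 0$ is immediate: the~$p$\nobreakdash-local sphere spectrum~$\mathbb{S}_{(p)}$ has nontrivial rational homology, equivalently nontrivial~$\MK(0)$\nobreakdash-homology, hence is a finite spectrum of type~$0$.

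For the inductive step, I would start with a~$p$\nobreakdash-local finite spectrum~$F_{h}$ of type~$h$ and first observe, by contraposition of~\cref{thm:finite-spectrum-of-type}, that~$\MK(n)_{\bullet}(F_{h}) \neq 0$ for every~$n \geq h$. The key tool to invoke is the Hopkins--Smith Periodicity Theorem, which guarantees the existence of a~$v_{h}$\nobreakdash-self map
\[
v \colon \Sigma^{d} F_{h} \to F_{h},
\]
i.e.\ a self-map acting as multiplication by some positive power~$v_{h}^{i}$ on~$\MK(h)_{\bullet}$\nobreakdash-homology and nilpotently on~$\MK(n)_{\bullet}$\nobreakdash-homology for every~$n \neq h$. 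After replacing~$v$ by a sufficiently high iterate, I may further assume that~$v$ acts as zero on~$\MK(h+1)_{\bullet}(F_{h})$. I would then define~$F_{h+1} \coloneqq \mathrm{cofib}(v)$.

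Checking that~$F_{h+1}$ has type exactly~$h+1$ reduces to a short diagram chase with the long exact sequences in~$\MK(n)_{\bullet}$\nobreakdash-homology induced by the cofibre sequence~$\Sigma^{d} F_{h} \to F_{h} \to F_{h+1}$: for~$n < h$ both~$F_{h}$\nobreakdash-terms vanish; for~$n = h$ the map induced by~$v$ is invertible, forcing~$\MK(h)_{\bullet}(F_{h+1}) = 0$, and then~\cref{thm:finite-spectrum-of-type} already yields that~$F_{h+1}$ is of type at least~$h+1$; and for~$n = h+1$ the vanishing action of~$v$ splits the sequence and forces~$\MK(h+1)_{\bullet}(F_{h+1}) \neq 0$.

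The main obstacle is the invocation of the Hopkins--Smith Periodicity Theorem itself, which is a substantially nontrivial piece of input from stable chromatic homotopy theory. Mitchell's original argument~\cite{Mit85} in fact predates both the nilpotence and the periodicity theorems and proceeds instead by a direct construction of suitable finite complexes built from stunted projective spaces (or partial flag varieties), whose mod-$p$ cohomology is free over an appropriate finite sub-Hopf-algebra of the Steenrod algebra; one then identifies the type of these complexes by computing their Morava~$\MK$\nobreakdash-theory~directly.
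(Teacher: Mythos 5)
Your argument is correct, but it is worth noting that the paper offers no proof of this statement at all: it is quoted as a black box with a citation to Mitchell, so any proof you give is necessarily a different route from the paper's. Your derivation --- induction on $h$, starting from $\mathbb{S}_{(p)}$ at height $0$ and passing from $F_{h}$ to the cofibre of a $v_{h}$-self-map --- is the standard modern argument, and the details check out: the contrapositive of \cref{thm:finite-spectrum-of-type} does give $\MK(n)_{\bullet}(F_{h}) \neq 0$ for all $n \geq h$ (if some $\MK(m)_{\bullet}(F_h)$ with $m \geq h$ vanished, $F_h$ would have type at least $m+1$ and hence trivial $\MK(h)$-homology, a contradiction), and the three long-exact-sequence cases go through as you describe. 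Two small remarks. First, in the form stated in the paper, \cref{thm:periodicity} already asserts that the self-map is \emph{zero} on $\MK(j)$-homology for all $j \neq h$, so your step of passing to a high iterate is unnecessary here (though it is needed for the more common formulation in which the off-height action is only nilpotent). Second, your closing caveat is the right one and deserves emphasis: the Hopkins--Smith proof of the Periodicity Theorem itself hinges on the Smith construction, which produces specific finite complexes of each type with designer Margolis homology --- essentially a strengthening of Mitchell's construction --- so while your induction is not logically circular (the Periodicity Theorem is invoked only as a black box, and only on spectra whose existence is the inductive hypothesis), the genuine geometric content of the statement is hidden inside that black box rather than supplied by the induction. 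Mitchell's direct construction buys a self-contained, explicit family of complexes; your route buys brevity at the cost of importing the full chromatic machinery.
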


\begin{theorem}[Periodicity Theorem]\label{thm:periodicity}
Let~$F$ be a finite spectrum of type at least~$h$ for a natural number~$h$.
Then there exists a \emph{$v_{h}$ self-map}~$\susp^{d_{F}} F \to F$ for some natural number~$d_{F}$ which induces an isomorphism on~$\MK(h)$\nobreakdash-homology and induces the zero map on~$\MK(j)$\nobreakdash-homology for all~$j \neq h$.
\end{theorem}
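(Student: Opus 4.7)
The plan is to follow the Hopkins--Smith proof, whose central ingredient is the Nilpotence Theorem of Devinatz--Hopkins--Smith: a self-map $g \colon \susp^d R \to R$ of a homotopy associative ring spectrum is smash-nilpotent whenever $\MK(n)_\bullet(g) = 0$ for every $n \geq 0$. The strategy is to show that the collection $\Cca_h$ of finite $p$\nobreakdash-local spectra of type at least $h$ which admit a $v_h$\nobreakdash-self-map in the sense of the statement is a thick subcategory of finite spectra, and then to exhibit a single nonzero object of each type $h$ in $\Cca_h$; the Thick Subcategory Theorem of Hopkins--Smith then forces $\Cca_h$ to coincide with the entire subcategory of finite $p$\nobreakdash-local spectra of type at least~$h$.

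The first step is a uniqueness-up-to-iteration statement for $v_h$\nobreakdash-self-maps: given two such maps $f, f' \colon \susp^{d} F \to F$, some iterates $f^m$ and $(f')^{m'}$ of matched degree agree. Indeed, viewing self-maps of $F$ as elements of the homotopy ring of $\End(F) \coloneqq F \wedge F^{\vee}$, one arranges $f^m$ and $(f')^{m'}$ to have the same effect on $\MK(n)$\nobreakdash-homology for every $n$; the Nilpotence Theorem applied to the difference $f^m - (f')^{m'} \in \pi_\bullet \End(F)$ then shows this difference is smash-nilpotent, so a further common iterate identifies the two maps. With this rigidity available, closure of $\Cca_h$ under cofibres, retracts and desuspensions is routine: for a cofibre sequence $X \to Y \to Z$ with $X, Y \in \Cca_h$, one lifts the self-maps on $X$ and $Y$ to compatible iterates using the uniqueness, and takes the induced self-map on $Z$, whose Morava $\MK$-theory behaviour is forced by that of $X$ and $Y$.

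The remaining step is to produce one nonzero object of $\Cca_h$ for each $h$, for example a generalised Smith--Toda complex obtained inductively as the cofibre of a $v_j$\nobreakdash-self-map for $j < h$, starting from the mod~$p$ Moore spectrum. The main obstacle, and the deepest part of the argument, is precisely this existence statement: there is no formal reason why the Adams-type spectral sequence obstructions to extending the algebraic element $v_h$ to an honest homotopy self-map should vanish, and the original proofs rely on delicate computations with the Morava stabiliser group together with the inductive Smith--Toda construction. Since the paper uses \cref{thm:periodicity} only as a black box for setting up $v_h$\nobreakdash-periodic homotopy theory in the sequel, it is appropriate to cite Hopkins--Smith rather than reproduce this analysis here.
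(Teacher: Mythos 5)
Your proposal is consistent with the paper, whose entire proof of \cref{thm:periodicity} is the citation to~\cite[Theorem~9]{HS98}; your sketch of the Hopkins--Smith argument (Nilpotence Theorem, asymptotic uniqueness, thickness of the subcategory of spectra admitting $v_h$\nobreakdash-self-maps, and the Smith--Toda-type existence input) is an accurate outline of that reference, and you correctly conclude that citing it is the appropriate move here. The only point worth flagging in your sketch is that $\pi_{\bullet}\End(F)$ is noncommutative, so deducing that iterates of $f$ and $f'$ agree from nilpotence of their difference requires first arranging (by passing to further iterates) that the two maps commute, which is part of the Hopkins--Smith centrality argument.
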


\begin{proof}
 See~\cite[Theorem~9]{HS98}.
\end{proof}

\begin{definition}
  Let~$\Cca$ be a stable~$\infty$\nobreakdash-category. 
  A full~$\infty$\nobreakdash-subcategory~$\Cca_{0} \subseteq \Cca$ is \emph{thick} if the homotopy category~$\HoCat(\Cca_0)$ is a thick subcategory of the triangulated category~$\HoCat(\Cca)$.
  In other words,~$\Cca_0$ is thick if it is closed under equivalences, cofibre sequences and retracts.
  By closed under cofibre sequences we mean that if two of the three objects of a cofibre sequence is contained in~$\Cca_0$, then so is the remaining one.
\end{definition}

\begin{example}
  Denote the~$\infty$\nobreakdash-category of~$p$\nobreakdash-local finite spectra by~$\infSp^{\fin}_{(p)}$.
  Let~$\Pca_{ \geq h}$ denote the full~$\infty$\nobreakdash-subcategory whose objects are~$p$\nobreakdash-local finite spectra of type at least~$h$. 
  One can show that the~$\infty$-category~$\Pca_{\geq h}$ is a thick~$\infty$\nobreakdash-subcategory of~$\infSp^{\fin}_{(p)}$.
  By convention we denote the full~$\infty$\nobreakdash-subcategory whose objects are finite contractible~$p$\nobreakdash-local spectra by~$\Pca_{ \geq \infty}$.
\end{example}

\begin{theorem}[Thick Subcategory Theorem]\label{thm:thick-sub-cat}
  If~$\Tca$ is a thick subcategory of~$\infSp^{\fin}_{(p)}$, then there exists a unique~$h \in \NN \cup \{\infty\}$ such that~$\Tca$ is equivalent to the~$\infty$\nobreakdash-category~$\Pca_{ \geq h}$.
\end{theorem}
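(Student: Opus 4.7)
The plan is to separate the statement into uniqueness of~$h$ and existence of an equivalence, and to proceed in the standard Hopkins--Smith style. \emph{Uniqueness} is immediate from Mitchell's theorem: for any two values~$h < h'$ in~$\NN \cup \{\infty\}$, the~$\infty$\nobreakdash-subcategories~$\Pca_{\geq h}$ and~$\Pca_{\geq h'}$ differ because Mitchell provides a~$p$\nobreakdash-local finite spectrum~$F$ of type exactly~$h$, which lies in~$\Pca_{\geq h} \smallsetminus \Pca_{\geq h'}$.

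For the existence direction, given a thick~$\infty$\nobreakdash-subcategory~$\Tca \subseteq \infSp^{\fin}_{(p)}$, define~$h \in \NN \cup \{\infty\}$ to be the infimum of the types of objects of~$\Tca$ (setting~$h = \infty$ if~$\Tca$ contains only contractible spectra). The inclusion~$\Tca \subseteq \Pca_{\geq h}$ is then tautological from the definition of~$h$, together with the observation that if~$\MK(n)_{\bullet}(X)$ vanishes for all~$n < h$ then~$X \in \Pca_{\geq h}$ by iterated application of~\cref{thm:finite-spectrum-of-type}. The substantive content is the reverse inclusion~$\Pca_{\geq h} \subseteq \Tca$: choosing any~$F \in \Tca$ of type exactly~$h$ (which exists if~$h < \infty$; the case~$h = \infty$ being trivial), I would reduce to showing that the thick~$\infty$\nobreakdash-subcategory~$\langle F \rangle \subseteq \infSp^{\fin}_{(p)}$ generated by~$F$ already equals~$\Pca_{\geq h}$.

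The heart of the argument, and the main obstacle, is the following key lemma: any two~$p$\nobreakdash-local finite spectra of the same type generate the same thick~$\infty$\nobreakdash-subcategory. Granted this, I would conclude by induction on the type: for~$X \in \Pca_{\geq h}$ of type~$h + k$, the~Periodicity Theorem~(\cref{thm:periodicity}) supplies a~$v_{h+k}$\nobreakdash-self map of~$X$ whose cofibre has type at least~$h + k + 1$, so reverse induction (anchored at types large enough that the finite spectrum is built out of cofibres involving lower-type objects in~$\langle F \rangle$) reduces us to the lemma at type~$h$. To prove the key lemma, pick finite spectra~$F, F'$ both of type~$h$ with~$v_h$\nobreakdash-self maps~$v$ and~$v'$ given by~\cref{thm:periodicity}; smash~$F$ with the Spanier--Whitehead dual of~$F'$ and analyse the induced self-maps on~$F \wedge DF'$ using the Nilpotence Theorem of Devinatz--Hopkins--Smith. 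The Nilpotence Theorem is precisely what controls self-maps of finite spectra via their~$\MU$\nobreakdash-homology and forces compatibility between sufficiently large powers of~$v$ and~$v'$, which upon taking cofibres and passing to retracts exhibits~$F'$ as an object built from~$F$ through finitely many cofibre sequences and retracts, hence~$F' \in \langle F \rangle$; the symmetric argument gives the reverse inclusion.

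The hard step is genuinely the key lemma, since it is the only place where deep input beyond formal thickness arguments enters; everything else is bookkeeping. In a write-up I would either invoke the Hopkins--Smith result directly from~\cite{HS98} or sketch the Nilpotence Theorem argument, rather than reproving it, since the statement is classical and the proof uses substantial machinery (the~$\MU$\nobreakdash-based Adams spectral sequence and careful analysis of~$v_h$\nobreakdash-periodic families) that lies outside the scope of the present paper.
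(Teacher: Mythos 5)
The paper offers no argument for this statement: it is the Hopkins--Smith Thick Subcategory Theorem, and the entire proof in the paper is the citation to~\cite[Theorem~7]{HS98} --- which is also where your write-up ultimately lands, so on that level there is no divergence. Your sketch of the classical argument has the right overall shape (uniqueness from Mitchell, the inclusion~$\Tca \subseteq \Pca_{\geq h}$ from Ravenel's \cref{thm:finite-spectrum-of-type}, and the identification of the Nilpotence Theorem as the only deep input), but the induction on type is stated backwards. Taking the cofibre of a~$v_{h+k}$\nobreakdash-self map of~$X$ supplied by \cref{thm:periodicity} produces an object of type at least~$h+k+1$ \emph{out of}~$X$; it does not exhibit~$X$ as assembled from lower-type objects, and there is no anchor ``at types large enough'' since types are unbounded above. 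The bookkeeping should run the other way: starting from~$F \in \Tca$ of minimal type~$h$, iterated cofibres of periodicity self-maps place one object of every type~$m \geq h$ inside the thick subcategory~$\langle F \rangle$, and your key lemma applied at type~$m$ then absorbs every type-$m$ object of~$\Pca_{\geq h}$. (The argument in~\cite{HS98} in fact sidesteps this induction entirely: with~$W$ the fibre of the coevaluation~$\SS \to F \otimes F^{\vee}$, smash nilpotence shows that~$W^{\otimes j} \otimes Y \to Y$ is null for~$j \gg 0$ for \emph{any}~$Y$ of type at least~$h$, so~$Y$ is a retract of the cofibre, which visibly lies in~$\langle F \rangle$.) Since you explicitly defer the key lemma to the literature rather than reproving it, this does not sink the proposal, but the induction as written would not assemble into a proof.
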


\begin{proof}
  See~\cite[Theorem~7]{HS98}.
\end{proof}

\begin{remark}
  In the situation of~\cref{thm:thick-sub-cat}, one can regard the full~$\infty$\nobreakdash-category~$\Pca_{ \geq h}$ as the ``prime ideals'' of~$\infSp^{\fin}_{(p)}$, using the theory of tensor-triangulated geometry, see~\cite{Bal20}.
\end{remark}

\begin{corollary}
  Let~$F$ be a non-trivial finite~$p$\nobreakdash-local spectrum. 
  There exists a unique natural number~$h$ such that~$F$ is of type~$h$. \qed 
\end{corollary}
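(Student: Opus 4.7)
The statement has two parts, existence and uniqueness of~$h$, which I would handle separately. Uniqueness is immediate from the definition: if~$F$ is simultaneously of type~$h_1$ and type~$h_2$ with~$h_1 < h_2$, then being of type at least~$h_2$ forces~$\MK(h_1)_{\bullet}(F) = 0$, directly contradicting~$F$ being of type~$h_1$.

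For existence, given a non-trivial finite~$p$\nobreakdash-local~$F$, I would set~$h$ to be the least~$n \in \NN$ with~$\MK(n)_{\bullet}(F) \ne 0$; by minimality of~$h$ the defining conditions for~$F$ being of type~$h$ then hold automatically. This reduces the entire corollary to the following key claim: every finite~$p$\nobreakdash-local spectrum $F$ with vanishing Morava~$\MK$\nobreakdash-theory at every height is contractible.

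To prove this reduction, I would combine the Thick Subcategory Theorem with Mitchell's existence theorem. Consider the full~$\infty$\nobreakdash-subcategory
\[
  \Tca \coloneqq \bigcap_{h \in \NN} \Pca_{\geq h} \subseteq \infSp^{\fin}_{(p)},
\]
which by the definition of type at least~$h$ coincides with the full $\infty$\nobreakdash-subcategory of those finite $p$\nobreakdash-local spectra whose Morava~$\MK$\nobreakdash-theories all vanish. As an intersection of thick subcategories,~$\Tca$ is itself thick, so by the Thick Subcategory Theorem there is a unique~$m \in \NN \cup \{\infty\}$ with~$\Tca = \Pca_{\geq m}$. Were~$m$ finite, Mitchell's theorem would supply a spectrum~$M$ of type~$m$ lying in~$\Pca_{\geq m} = \Tca \subseteq \Pca_{\geq m+1}$, which would force~$\MK(m)_{\bullet}(M) = 0$ and contradict~$M$ being of type~$m$. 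Hence~$m = \infty$ and~$\Tca = \Pca_{\geq \infty}$, so every object of~$\Tca$ is contractible.

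The main conceptual input is the Thick Subcategory Theorem, which is needed to upgrade the obvious inclusion~$\Pca_{\geq \infty} \subseteq \Tca$ into an equality; without this full classification of thick subcategories one could not rule out the existence of a non-contractible finite~$p$\nobreakdash-local spectrum whose Morava~$\MK$\nobreakdash-theories all vanish. Everything else—closure of thick subcategories under intersection, Mitchell's existence, and the minimality definition-chase—enters in a routine way.
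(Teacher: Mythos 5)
Your proof is correct. The paper itself offers no argument (the corollary is stated with an immediate \qed), so the relevant comparison is with the ``expected'' one\nobreakdash-line derivation, which would read: uniqueness is a definition chase, and existence follows from Ravenel's theorem together with the standard fact that a finite $p$\nobreakdash-local spectrum whose Morava $\MK$\nobreakdash-theories all vanish is contractible (usually proved by relating $\MK(n)_{\bullet}$ for large $n$ to mod\nobreakdash-$p$ homology via the Atiyah--Hirzebruch spectral sequence). That last fact is nowhere stated in the paper, and your argument supplies it from ingredients the paper \emph{does} state: you observe that $\Tca = \bigcap_{h}\Pca_{\geq h}$ is thick, invoke the Thick Subcategory Theorem to identify it with some $\Pca_{\geq m}$, and use Mitchell's theorem to rule out finite $m$. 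This is a genuinely different and self\nobreakdash-contained route; what it buys is independence from the AHSS input (and, incidentally, from Ravenel's \cref{thm:finite-spectrum-of-type}, which your minimality argument never uses), at the cost of leaning on the much deeper Thick Subcategory Theorem. One small caveat: you are reading the conclusion of \cref{thm:thick-sub-cat} as an equality $\Tca = \Pca_{\geq m}$ of full subcategories of $\infSp^{\fin}_{(p)}$ rather than a mere abstract equivalence; that is the standard and clearly intended reading (cf.\ the ``prime ideal'' remark), but it is worth saying explicitly since your contradiction $M \in \Pca_{\geq m} = \Tca \subseteq \Pca_{\geq m+1}$ depends on it.
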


\begin{passage}[The finite chromatic localisation of~$\infSp_{(p)}$]\label{para:finite-localisation-stable}
  Consider the nested sequence
  \[
  \cdots \subseteq \Pca_{ \geq h + 1} \subseteq \Pca_{ \geq h} \subseteq \cdots \subseteq \Pca_{ \geq 1} \subseteq \Pca_{\geq 0}
  \]
  of thick~$\infty$\nobreakdash-subcategories of~$\infSp^{\fin}_{(p)}$.
  For~$h \in \NN$ let~$\infSp_{\geq h}$ denote the full~$\infty$\nobreakdash-subcategory whose objects are equivalent to small colimits of objects of~$\Pca_{\geq h}$.
  Thus, there exists a~filtration 
  \[
  \cdots \subseteq \infSp_{\geq {h + 1}} \subseteq \infSp_{\geq {h}}  \subseteq \cdots \subseteq \infSp_{\geq {1}} \subseteq \infSp_{\geq {0}} = \infSp_{(p)}
  \]
  of~$\infSp_{(p)}$.
  The \emph{Verdier~quotient} 
  \[
  \Loc_{h}^{\f} \colon \infSp_{(p)} \to \infL_{h}^{\f}\left(\infSp_{(p)}\right) \coloneqq \quot{{\infSp_{(p)}}}{{\infSp_{\geq h + 1}}}
  \]
  is a reflective localisation of~$\infSp_{(p)}$, characterised by the property that it is the initial functor from~$\infSp_{(p)}$ to a stable~$\infty$\nobreakdash-category sending every object of~$\infSp_{\geq h + 1}$ to the zero object.
  The \emph{finite chromatic localisation tower} denotes the following sequence
  \begin{equation}\label{eq:finite-localisation-tower}
    \cdots \to\Loc_{h}^{\f} \to\Loc_{h -1}^{\f} \to \cdots \to \Loc_{0}^{\f} \simeq \Loc_{\QQ} \to \pt
  \end{equation}
  of localisations of the~$\infSp_{(p)}$; here~$\Loc_{0}^{\f} \simeq \Loc_{\QQ}$ is the localisation of~$\infSp_{(p)}$ at the set of rational homology~equivalences.
  
  Let~$\Loc_{h}$ denote the localisation functor of~$\infSp_{(p)}$ at the set of~$\MK(i)$\nobreakdash-homology equivalences for every~$0 \leq i \leq h$.
  By the comparison of homological localisation in~\cite[Theorem~2.1]{Rav84} we obtain another~tower  
  \begin{equation}\label{eq:chrom-tower-homology}
    \cdots \to \Loc_{h + 1} \to \Loc_{h} \to \cdots \to \Loc_{1} \to \Loc_{0}
  \end{equation}
  of localisations of~$\infSp_{(p)}$.
  There exists a natural transformation~$\Loc_{h}^{\f} \to \Loc_{h}$ for every~$h \in \NN$ which combines to a map from the tower~\eqref{eq:chrom-tower-homology} the tower~\eqref{eq:finite-localisation-tower}.
  Ravenel's telescope conjecture states that this comparison between the two towers is an equivalence, but the conjecture is recently proven to be false for all~$h \geq 2$, see~\cite{BHLS}.
\end{passage}

\begin{passage}[The spectra~$\Tel(h)$]\label{para:S(h)-T(h)}
  Let~$h \in \NN$ and let~$F_{h}$ be a finite spectrum of type~$h$ together with a~$v_{h}$-self-map~$\susp^{d} F_{h} \to F_{h}$.
  The \emph{$p$\nobreakdash-local telescope spectrum~$\Tel(h)$ of height~$h$} is defined as the colimit
  \[
  \varinjlim\left(F_{h} \xrightarrow{\susp^{-d} v_{h}} \susp^{-d} F_{h} \to \cdots \xrightarrow{\susp^{-nd} v_{h}} \susp^{-nd} F_{h} \to \cdots \right)
  \]
  in the~$\infty$\nobreakdash-category~$\infSp_{(p)}$.
  
  The construction of~$\Tel(h)$ depends on the choices of~$F_{h}$ and the~$v_{h}$ self-map.
  However, these choices are elided from the notation by the following reason: By the Thick Subcategory Theorem and the asymptotic uniqueness of the~$v_{h}$\nobreakdash-self maps~\cite[Corollaries~3.7 and~3.8]{HS98}, the notion of~$\Tel(h)$\nobreakdash-homology equivalence does not depend on these choices, see~\cite[§3]{Bou01}.
  In this article, we are only concerned with~$\Tel(h)$\nobreakdash-homology equivalences, and not with any specific~$\Tel(h)$\nobreakdash-spectrum.
\end{passage}

We will use the following important features of~$\infSp_{\Tel(h)}$.

\begin{remark}\label{rmk:Th-compact-generated}
  The~$\infty$\nobreakdash-category~$\infSp_{\Tel(h)}$ is compactly generated with a generator given by~$\Loc_{h}^{\f}(F_{h})$, by the Thick Subcategory Theorem.
  Note that any~$v_{h}$ self-map~$v_{h} \colon \susp^{d} F_{h} \to F_{h}$ becomes an equivalence~$\Loc_{h}^{\f}(v_{h})$ in~$\infSp_{\Tel(h)}$, since it is a~$v_{h}$\nobreakdash-periodic equivalence.
\end{remark}

\begin{passage}[$v_h$\nobreakdash-periodic homotopy equivalences]\label{para:stable-vh-periodic-homotopy-group}
  Let~$F_{h}$ be a~$p$\nobreakdash-local spectrum of type~$h$ together with a~$v_{h}$\nobreakdash-self-map.
  For~$E \in \infSp_{(p)}$, define the \emph{homotopy group~$\pi_{\bullet}(E; F_h)$ of~$E$ with coefficient~$F_h$} as
  \[
  \pi_{\bullet}(E; F_h) \coloneqq \pi_{\bullet}\left(\MMap(E, F_h)\right),
  \] 
  where~$\MMap(\blank, \blank)$ denotes the mapping spectrum functor.
  The~$v_h$\nobreakdash-self-map of~$F_h$ induces a self-map on the graded abelian group~$\pi_{\bullet}(E; F_h)$ by composition. 
  Let~$\BK_{F_h}(E)$ denote the colimit~in~$\infSp_{(p)}$ of the following diagram
  \begin{equation}\label{eq:stable-BK}
     \MMap_{\ast}\left(F_{h}, E\right) \xrightarrow{(v_{h})_{\ast}} \MMap_{\ast}\left(\susp^{d_{F_{h}}}F_{h}, E\right) \xrightarrow{(v_{h})_{\ast}} \cdots \xrightarrow{(v_{h})_{\ast}} \MMap_{\ast}\left(\susp^{kd_{F_{h}}}F_{h}, E\right) \xrightarrow{(v_{h})_{\ast}} \cdots
  \end{equation}
  The~\emph{$v_h$\nobreakdash-periodic homotopy group~$v_h^{-1}\pi_{\bullet}(E; F_h)$ of~$E$ with coefficient in~$F_h$} is defined as
  \[
  v_h^{-1}\pi_{\bullet}(E; F_h) \coloneqq \pi_{\bullet}\left(\BK_{F_h}(E)\right).
  \]
  The reader shall consider~$v_h^{-1}\pi_{\bullet}(E; F_h)$ as ``inverting the~$v_h$\nobreakdash-action on~$\pi_{\bullet}(E; F_h)$''.
  A morphism~$f \colon E \to E' \in \infSp_{(p)}$ is called a~\emph{$v_h$\nobreakdash-periodic equivalence} if~$f$ induces an isomorphism~$f_{\ast} \colon  v_h^{-1}\pi_{\bullet}(E; F_h) \xrightarrow{\simeq}  v_h^{-1}\pi_{\bullet}(E'; F_h)$.
  By the Thick Subcategory Theorem and the asymptotically uniqueness of the~$v_{h}$\nobreakdash-self maps, the notion of~$v_h$\nobreakdash-periodic equivalence of spectra is independent of the choices of~$F_h$ and the~$v_h$\nobreakdash-self-maps.
\end{passage}

\begin{passage}[Stable monochromatic layer of type~$h$]\label{para:stable-monocrhom}
   We explain another equivalent interpretation of the localisation ~$\infL_{h}^{\f}\left(\infSp_{(p)}\right)$, which can be easily generalised to unstable~setting.
   
  Let~$F_{h + 1}$ be a $p$\nobreakdash-local finite spectrum of type~$h + 1$.
  We say a~$p$\nobreakdash-local spectrum~$E$ is~\emph{$F_{h + 1}$\nobreakdash-null} if the \emph{mapping spectrum}~$\MMap(F_{h + 1}, E)$ is equivalent to the zero spectrum.
  A morphism~$E_1 \to E_2$ of~$p$\nobreakdash-local spectra is an \emph{$F_{h + 1}$\nobreakdash-equivalence} if the induced morphism~$\MMap(E_2, E) \to \MMap(E_1, E)$ is an equivalence for every~$F_{h + 1}$\nobreakdash-null~$p$\nobreakdash-local spectrum~$E$.
By the Class Invariance Theorem~(see~\cite[Theorem~14]{HS98}) a morphism of~$p$\nobreakdash-local spectra is a~$F_{h + 1}$\nobreakdash-equivalence if and only if it is a~$F_{h + 1}'$\nobreakdash-equivalence for any~$p$\nobreakdash-local finite spectrum~$F_{h + 1}'$ of type~$h+1$.

The functor~$\Loc_{h}^{\f}$ exhibits~$\infL_{h}^{\f}\left(\infSp_{(p)}\right)$ as the localisation of the~$\infty$\nobreakdash-category~$\infSp_{(p)}$ at the set of~$F_{h + 1}$\nobreakdash-equivalences of~$p$\nobreakdash-local spectra, see~\cite[§2, §3.2]{Bou01}.
By~\cite[§3.8]{Bou01} a morphism of~$p$\nobreakdash-local spectra is a~$F_{h + 1}$\nobreakdash-equivalence if and only if it is a~$v_{n}$\nobreakdash-periodic equivalence for all~$0 \leq n \leq h$.
From \loccit we also know that~$v_m^{-1}\pi_{\bullet}(\Loc_h^{\f}(E); F_h) = 0$ for any~$m > h$ and for any~$E \in \infSp_{(p)}$.

By the Class Invariance Theorem there exists a fibre sequence
\[
\Mon_{h}^{\f} \to \Loc_{h}^{\f} \to \Loc_{h - 1}^{\f}
\]
of functors for~$h \geq 1$.
We say a~$p$\nobreakdash-local spectrum~$E$ is \emph{$v_{h}$\nobreakdash-periodic} if it is in the essential image of~$\Mon_{h}^{\f}$. 
For a~$v_h$\nobreakdash-periodic spectrum~$E$, the~$v_{i}$\nobreakdash-periodic homotopy groups of~$E$ vanishes for every~$i \neq h$.
Let~$\Mca_{h}^{\f}(\infSp_{(p)})$ denotes the full~$\infty$\nobreakdash-subcategory of~$\infSp_{(p)}$ whose objects are~$v_{h}$\nobreakdash-periodic spectra.
A morphism in~$\Mca_{h}^{\f}(\infSp_{(p)})$ is an equivalence if and only if the underlying morphism of spectra is a~$v_{h}$\nobreakdash-periodic equivalence.
Therefore, the~$\infty$\nobreakdash-category~$\Mca_{h}^{\f}(\infSp_{(p)})$ is a model for the localisation of the~$\infty$\nobreakdash-category~$\infSp_{(p)}$ at the set of~$v_h$\nobreakdash-periodic equivalences.

In the definition of~$v_h$\nobreakdash-periodic homotopy groups, see~\cref{para:stable-vh-periodic-homotopy-group}, we have
\[
\BK_{F_h}(E) \simeq \Tel(h) \otimes E, 
\]
where~$\Tel(h)$ is the telescope spectrum defined using the Spanier--Whitehead dual of~$F_{h + 1}$, see~\cref{para:S(h)-T(h)}.
Thus, a morphism of~$p$\nobreakdash-local spectra is a~$v_{h}$\nobreakdash-periodic equivalence if and only if it is a~$\Tel(h)$\nobreakdash-homology equivalences.
Moreover, we have an equivalence 
\[
  \Mca_{h}^{\f}(\infSp_{(p)}) \xrightarrow{\sim} \infSp_{\Tel(h)}
\]
of~$\infty$\nobreakdash-categories, see~\cite[Theorem~3.3]{Bou01}.
The stable~$\infty$\nobreakdash-categories in the above equivalence are both known as \emph{monochromatic chromatic layer of height~$h$}.
\end{passage}

\subsection{Unstable periodic homotopy theory}\label{sec:usntable-periodic}

Let~$h \geq 1$ be a natural number.
The definition of~$v_h$\nobreakdash-periodic equivalences can be analogously applied to~$p$\nobreakdash-local \htypes.
Let~$V_h$ be a pointed~$p$\nobreakdash-local \emph{finite complex} of types~$h$.
By the Periodicity~\cref{thm:periodicity} and the Freudenthal Suspension Theorem~\cite[Corollary~3.2.3]{Koc96} we know that~$\susp^{c_{h}}(V_h)$ admits a~$v_h$\nobreakdash-self-map, for some~$c_h \in \NN$.
For our application we can assume without loss of generality that~$V_h$ admits a~$v_h$\nobreakdash-self-map.
Replacing~$F_h$ by~$V_h$, the spectrum~$E$ by a pointed~$p$\nobreakdash-local \htype~$X$ and the mapping spectrum by pointed mapping space in~\eqref{eq:stable-BK}, one obtains a pointed space~$\Tel_{V_h}(X)$ as the colimit of the corresponding diagram in the~$\infty$-category~$\infHType_{(p)}$ of~$p$-local \htypes. 
It turns out that~$\Tel_{V_h}(X)$ is an infinite loop space and is thus the underlying infinite loop space of a spectrum~$\BK_{V_h}(X)$.\footnote{See for example~\cite[§3.2]{ShiTh} for a proof and a~$\Loop$\nobreakdash-spectrum model for~$\BK_{V_h}(X)$.}

\begin{definition}\label{def:vh-periodic-group}
  Use the notations as above.
  \begin{enumerate}
    \item The~\emph{$v_h$\nobreakdash-periodic homotopy group}~$v_h^{-1}\pi_{\bullet}(X;V_h)$ of~$X$ with coefficient~$V_h$ is defined as the graded abelian~group
      \[
      v_h^{-1}\pi_{\bullet}(X;V_h) \coloneqq \pi_{\bullet}\left(\Tel_{V_h}(X)\right).
      \]
    \item A morphism of pointed connected~$p$\nobreakdash-local \htypes is a~\emph{$v_h$\nobreakdash-periodic homotopy equivalence} if it induces an isomorphism on the~$v_h$\nobreakdash-periodic homotopy groups with coefficient~$V_h$.
    \item A morphism of~$p$\nobreakdash-local \htypes is a~$v_h$\nobreakdash-periodic homotopy equivalence if its restriction to each connected component with an arbitrarily chosen basepoint is a~$v_h$\nobreakdash-periodic homotopy equivalence.
    \item  For a morphism~$f \colon X \to Y$ between nilpotent \htypes, we say by convention that~$f$ is a \emph{$v_0$\nobreakdash-periodic equivalence} if~$f$ is a rational homotopy equivalence.
  \end{enumerate}
\end{definition}

\begin{remark}
  \begin{enumerate}
    \item A~$v_h$\nobreakdash-periodic homotopy equivalence of pointed homotopy types depends neither on the choice of the finite complex of type~$h$ nor on the choice of the~$v_h$\nobreakdash-self-map.
    \item Let~$X$ be a connected \htype. 
      The canonical morphism~$\tau_{> c}(X) \to X$ defined as the fibre of the Postnikov truncation~$X \to \tau_{\leq c}(X)$ is a~$v_h$\nobreakdash-periodic equivalence, for any natural number~$c$.
  \end{enumerate}
  
  See~\cite[§3.2]{ShiTh} for the proofs. 
\end{remark}

Using the theory of unstable localisation~\cite{Dro95,Bou94}, one can also construct the localisation of the~$\infty$\nobreakdash-category~$\infHType_{(p)}^{\geq 2}$ of~$p$\nobreakdash-local simply-connected \htypes at the set of~$v_h$\nobreakdash-periodic equivalences from an ``unstable finite chromatic localisation tower'' analogue to~\eqref{eq:finite-localisation-tower}.
The classical reference for unstable (periodic) localisations are~\cite{Dror92,Bou94,Bou01}.
For expositions in~$\infty$\nobreakdash-categorical languages, see~\cite{EHMM19, Heu20, Heu21, ShiTh}.
Here we recall briefly the basic constructions.

\begin{situation}
  Let~$V_{h+1}$ be a pointed~$p$\nobreakdash-local finite complex of type~$h+1$ and assume that~$V_{h+1}$ \emph{admits a desuspension}, \ie there exists a pointed \htype~$V_{h+1}'$ such that $V_{h+1} \simeq \susp\left(V_{h+1}'\right)$.
  For a~\htype we denote by~$X_{+}$ the pointed \htype~$X \coprod \{\pt\}$ where~$\pt$ is the basepoint.
\end{situation}

\begin{definition}\label{def:V-less-V-equivalence}
  \begin{enumerate}
    \item A pointed \htype~$X$ is~\emph{$V_{h+1}$\nobreakdash-less} if the induced map
      \[
      \infMap_{\ast}(\pt_{+}, X) \xrightarrow{\sim} \infMap_{\ast}((V_{h+1})_{+}, X)
      \]
      of pointed mapping spaces is an equivalence.
    \item A morphism~$f \colon Y \to Z$ of pointed \htypes is a \emph{$V_{h+1}$\nobreakdash-equivalence} if for every pointed~$V_{h+1}$\nobreakdash-less~\htype~$(X, x_0)$ the induced map 
        \[
        f^{\ast} \colon \infMap_{\ast}(Z, X) \to \infMap_{\ast}(Y, X)
        \]
        is an equivalence of pointed mapping spaces.
  \end{enumerate}
\end{definition}

\begin{remark}
  \Cref{def:V-less-V-equivalence} is an analogue of the notion of ``$F_h$\nobreakdash-null'' and ``$F_h$\nobreakdash-equivalence'' in~\cref{para:stable-monocrhom}.
  Note that a pointed connected \htype~$X$ is~$V_{h+1}$\nobreakdash-less if and only if~$\infMap_{\ast}(V_{h+1}, X)$ is contractible.
\end{remark}

It is shown that the localisation of the~$\infty$\nobreakdash-category~$\infHType_{\ast}$ of pointed \htypes at the set of~$V_{h+1}$\nobreakdash-equivalences exists.

\begin{theorem}[Bousfield]\label{thm:exists-nullification-pointed}
  There exists a functor~${\Null_{V_{h+1}} \colon \infHType_{\ast} \to \infHType_{\ast}}$ together with a natural transformation~${\lambda_{V_{h+1}} \colon \id_{\infHType_{\ast}} \to \Null_{V_{h+1}}}$ such that for every~$X \in \infHType_{\ast}$
  \begin{enumerate}
    \item the pointed \htype~$\Null_{V_{h+1}}(X)$ is~$V_{h+1}$\nobreakdash-\less,
    \item the morphism~$\lambda_{V_{h+1}}(X) \colon X \to \Null_{V_{h+1}}(X)$ is a~$V_{h+1}$\nobreakdash-equivalence in~$\infHType_{\ast}$, and
    \item the induced natural transformation~$\Null_{V_{h+1}} \to \Null_{V_{h+1}} \circ \Null_{V_{h+1}}$ is an equivalence in the~$\infty$\nobreakdash-category~$\infFun(\infHType_{\ast}, \infHType_{\ast})$.
  \end{enumerate}
\end{theorem}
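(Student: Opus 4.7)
The plan is to realise~$\Null_{V_{h+1}}$ as the reflector of a reflective localisation of the presentable~$\infty$-category~$\infHType_{\ast}$ at a single morphism. I would first observe that a pointed \htype~$X$ is~$V_{h+1}$-less precisely when it is $p$-local in Lurie's sense, where~$p \colon (V_{h+1})_{+} \to \pt_{+}$ denotes the canonical collapse map in~$\infHType_{\ast}$: unwinding definitions, $p$-locality of~$X$ asserts that~$\infMap_{\ast}(\pt_{+}, X) \to \infMap_{\ast}((V_{h+1})_{+}, X)$ is an equivalence, which is exactly the condition in~\cref{def:V-less-V-equivalence}. Similarly, a morphism~$f \colon Y \to Z$ of pointed \htypes is a~$V_{h+1}$-equivalence exactly when it induces equivalences~$\infMap_{\ast}(Z, X) \to \infMap_{\ast}(Y, X)$ for every $p$-local~$X$, matching Lurie's notion of $p$-equivalence.

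Next, since~$\infHType_{\ast}$ is presentable, I would invoke the general existence theorem for reflective localisations of a presentable~$\infty$-category at a small set of morphisms, applied to the singleton~$\{p\}$; see~\cite[Proposition~5.5.4.15]{Lur09}. This produces an accessible reflective full subcategory whose objects are exactly the~$p$-local ones; denote the reflector composed with the inclusion by~$\Null_{V_{h+1}}$, with unit~$\lambda_{V_{h+1}}$. Property~(i) is then automatic from the construction. Property~(ii) follows from the universal property of the adjunction: for every~$V_{h+1}$-less~$Z$, the map~$\infMap_{\ast}(\Null_{V_{h+1}}(X), Z) \to \infMap_{\ast}(X, Z)$ induced by precomposition with~$\lambda_{V_{h+1}}(X)$ is an equivalence, which is the very definition of a~$V_{h+1}$-equivalence. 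Property~(iii) is the standard idempotence of a reflective localisation — the unit evaluated on an already-local object is an equivalence — so~$\lambda_{V_{h+1}}\bigl(\Null_{V_{h+1}}(X)\bigr)$ is an equivalence for every~$X$, and pointwise equivalence in the functor~$\infty$-category~$\infFun(\infHType_{\ast}, \infHType_{\ast})$ is equivalence.

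The only genuine content lies in the reformulation in the first paragraph: one needs to confirm that~$V_{h+1}$-locality is cut out by a \emph{single} morphism so that presentability of~$\infHType_{\ast}$ suffices to invoke the localisation theorem. Once this observation is in place the proof is a direct application of standard accessibility arguments in presentable~$\infty$-category theory, and no small-object argument or model-categorical input is required beyond what is packaged into~\cite[Proposition~5.5.4.15]{Lur09}. Bousfield's original treatment constructs the functor explicitly via a transfinite small-object iteration; the~$\infty$-categorical formulation above is logically equivalent but avoids such explicit constructions.
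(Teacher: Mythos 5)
Your argument is correct. Note, however, that the paper does not actually prove this statement: it simply cites Bousfield's original result~\cite[Theorem~2.10]{Bou94}, which is established model-categorically by an explicit transfinite small-object construction on simplicial sets. What you do instead is the standard modern repackaging: you identify the~$V_{h+1}$-less objects with the~$\{p\}$-local objects for the single morphism~$p \colon (V_{h+1})_{+} \to \pt_{+}$, and the~$V_{h+1}$-equivalences with the~$\{p\}$-equivalences, and then invoke the existence of accessible reflective localisations of a presentable~$\infty$-category at a small set of morphisms~\cite[Proposition~5.5.4.15]{Lur09}. Properties~(i)--(iii) then follow formally from the reflective-localisation package (the reflection is local, the unit is an~$S$-equivalence by the adjunction's universal property, and idempotence of the unit on local objects gives~(iii) pointwise). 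This is exactly how the~$\infty$-categorical literature the paper leans on (e.g.\ the treatments in~\cite{EHMM19,Heu21}) handles nullification, so your route is both valid and arguably better adapted to the paper's framework; what it gives up relative to Bousfield's construction is only the explicit cellular description of~$\Null_{V_{h+1}}(X)$, which is not needed here. The one point worth making explicit, which you correctly flag, is that locality is cut out by a \emph{single} morphism (a set, not a proper class), so presentability of~$\infHType_{\ast}$ suffices.
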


\begin{proof}
  See~\cite[Theorem~2.10]{Bou94}.
\end{proof}

\begin{definition}\label{def:contraction}
  We call the functor~$\Null_{V_{h+1}}$ the \emph{contraction of~$V_{h+1}$} or \emph{$V_{h+1}$\nobreakdash-contraction}.
  For~$X \in \infHType_{\ast}$, we call the pointed \htype~$\Null_{V_{h+1}}(X)$ together with the morphism~$\lambda_{V_{h+1}}(X) \colon X \to \Null_{V_{h+1}}(X)$ the~\emph{$V_{h+1}$\nobreakdash-contraction of~$X$}.
\end{definition}

\begin{passage}[Explanation]
In the situation of~\cref{def:contraction} one can regard the \contraction of~$V_{h+1}$ as the universal functorial way to ``quotient out'' the~$V_{h+1}$\nobreakdash-information in a pointed homotopy type. 
Informally speaking, it is an unstable analogue of taking the Verdier quotient in stable~$\infty$\nobreakdash-categories, \cf~\cref{para:stable-monocrhom}.

\Cref{def:V-less-V-equivalence} has nothing special to finite complexes.
In other word, one can replace~$V_{h+1}$ by any other object~$W \in \infHType_{\ast}$ and obtain the notions of~$W$\nobreakdash-less,~$W$\nobreakdash-equivalence and~$W$\nobreakdash-contraction.
For example, let~$\usphere^{n+1}$ denote the \htype of the sphere of dimension~$n+1$.
Then a pointed \htype~$X$ is~$\usphere^{n+1}$\nobreakdash-less if~$X$ is~$n$\nobreakdash-truncated, \ie~$\pi_{k}(X) = 0$ for all~$k \geq n+1$.
And the~$\usphere^{n+1}$-contraction~$\Null_{\usphere^{n+1}}(\blank)$ is the Postnikov truncation~$\tau_{\leq n}(\blank)$.
Thus, one can also view the notions of~$W$\nobreakdash-less and~$W$\nobreakdash-contraction as generalisations of connectedness and the Postnikov truncations, respectively.

In the literatures the functor~$\Null_{W}$ is often called ``$W$\nobreakdash-nullification'' or ``$W$\nobreakdash-localisation'', \cf~\cite{Dror92, Bou94, Dro95}
We prefer the name~$W$\nobreakdash-\contraction because under the construction~$\Null_{W}$ the \htype~$W$ becomes contractible instead of becoming ``null''.
\end{passage}

The relationship between~$V_{h+1}$\nobreakdash-contraction and~$v_h$\nobreakdash-periodic equivalences of~\htypes is summarised in the following theorem.

\begin{theorem}[Bousfield]\label{thm:Vh-truncatio-vh-periodic-equivalence}
  Let~$X$ be a pointed connected~$p$\nobreakdash-local \htype.
  \begin{enumerate}
    \item The natural morphism~$\lambda_{V_{h+1}}(X) \colon X \to \Null_{V_{h+1}}(X)$ is a~$v_n$\nobreakdash-periodic equivalence for every~$1 \leq n \leq h$.
      If~$X$ is simply-connected, then~$\lambda_{V_{h+1}}(X)$ is also a~$v_0$\nobreakdash-periodic equivalence, \ie rational homotopy equivalence.
    \item Every~$V_{h+1}$\nobreakdash-equivalence is a~$v_n$\nobreakdash-periodic equivalence for every~$1 \leq n \leq h$.
      If we assume in addition that the source and the target is also simply-connected, then it is also a~$v_0$\nobreakdash-periodic equivalence.
    \item The~$v_m$\nobreakdash-periodic homotopy groups of~$\Null_{V_{h+1}} (X)$ vanish, for every~$m \geq h+1$.
  \end{enumerate}
\end{theorem}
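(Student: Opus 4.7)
The plan is to use the Bousfield--Kuhn functor $\BK_n$ as the main computational device, translating $v_n$-periodic equivalences into the stable monochromatic category $\infSp_{\Tel(n)}$, and to exploit the fact that finite $p$-local spectra of type~$\geq h+1$ are invisible to $\BK_n$ for $n \leq h$. The key initial computation I would establish is $\BK_n(V_{h+1}) \simeq \pt$ for $1 \leq n \leq h$. Since $V_n$ generates $\infSp_{\Tel(n)}$ by~\cref{rmk:Th-compact-generated}, it suffices to check that $\BK_n(V_{h+1}) \otimes V_n$ is contractible; this spectrum agrees (up to shift) with $\Loc_{\Tel(n)}(\susp^{\infty} V_{h+1} \otimes V_n)$, and the smash product $V_{h+1} \otimes V_n$ is a finite $p$-local spectrum of type~$\geq h+1 > n$ by the Thick Subcategory Theorem, so its $\Tel(n)$-localisation vanishes.

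For parts (i) and (ii), I would argue as follows. The vanishing $\BK_n(V_{h+1}) \simeq \pt$ says exactly that $V_{h+1} \to \pt$ is a $v_n$-periodic equivalence. By~\cref{thm:exists-nullification-pointed}, the $V_{h+1}$-null objects of $\infHType_{\ast}$ coincide with the local objects for the single morphism $V_{h+1} \to \pt$, so the class of $V_{h+1}$-equivalences is the strongly saturated class generated by this one morphism. The class of $v_n$-periodic equivalences is itself strongly saturated, being the preimage under $\BK_n$ of the equivalences of $\infSp_{\Tel(n)}$, and therefore contains every $V_{h+1}$-equivalence. This yields (ii), and (i) follows as the special case applied to $\lambda_{V_{h+1}}(X)$. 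The simply-connected $v_0$-statement is analogous, since $V_{h+1}$ is rationally contractible: a finite $p$-local spectrum of type~$\geq 1$ has vanishing $\EM\QQ$-homology.

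For part (iii), set $Y \coloneqq \Null_{V_{h+1}}(X)$ and fix $m \geq h+1$; the goal is $v_m^{-1}\pi_\bullet(Y; V_m) = 0$. The Thick Subcategory Theorem places $\susp^{\infty} V_m$ in the thick subcategory generated by $\susp^{\infty} V_{h+1}$, and after sufficient suspension this stable presentation lifts to an unstable one expressing $\susp^{k} V_m$ via iterated cofibre sequences and retracts built from $\susp^{k} V_{h+1}$. A routine induction on this cellular structure, using the $V_{h+1}$-nullity $\infMap_{\ast}(V_{h+1}, Y) \simeq \pt$ together with the fact that in the stable range cofibre sequences of sources pass to fibre sequences of mapping spaces, then gives $\infMap_{\ast}(\susp^{k} V_m, Y) \simeq \pt$ for all sufficiently large~$k$. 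Hence $\Tel_{V_m}(Y) \simeq \pt$ and the required vanishing follows.

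The main obstacle I expect is the careful passage between the stable Thick Subcategory and Class Invariance theorems and their unstable consequences: one must track connectivity and suspension to ensure that cofibre sequences of finite complexes translate into fibre sequences upon mapping into~$Y$, and one must be vigilant about the saturation argument in part~(ii) when the objects are not assumed simply-connected, since the localisation of $\infHType_{\ast}$ at $v_n$-periodic equivalences is constructed in~\cite{Bou94,Bou01} with some care. The original arguments of Bousfield handle these subtleties via a direct small-object-argument analysis of $\Null_{V_{h+1}}$ and explicit manipulation of the $v$-self-maps.
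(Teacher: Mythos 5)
Your computation $\widetilde{\BK}_n(V_{h+1}) \simeq 0$ for $1 \le n \le h$ is correct, and your outline of part~(iii) is essentially the standard unstable thick-subcategory argument underlying the Unstable Class Invariance Theorem~\cite[Theorem~9.15]{Bou94}. Note that the paper offers no argument of its own for this theorem: it simply cites~\cite[§§10-11]{Bou94}.

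The deduction of (i) and (ii), however, contains a genuine gap, and it sits exactly at the step you pass over most quickly: the class of $v_n$\nobreakdash-periodic equivalences in~$\infHType_{\ast}$ is \emph{not} strongly saturated, because it is not closed under cobase change. Declaring it to be ``the preimage under $\widetilde{\BK}_n$ of the equivalences'' only yields closure under $2$-out-of-$3$, retracts and filtered colimits; closure under pushouts would require $\widetilde{\BK}_n$ to preserve pushouts, which it does not (it is built from mapping spaces out of finite complexes and is far from being a left adjoint). Concretely: $\usphere^1 \to \pt$ is a $v_n$\nobreakdash-periodic equivalence for every $n \ge 1$, since $\usphere^1$ is $1$\nobreakdash-truncated and $V_n$ is simply connected, so $\infMap_{\ast}(\susp^k V_n, \usphere^1) \simeq \pt$. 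The $\usphere^1$\nobreakdash-null spaces are the homotopy-discrete ones, so the strongly saturated class generated by $\usphere^1 \to \pt$ --- the class of $\usphere^1$\nobreakdash-equivalences --- contains $X \to \pi_0(X)$ for every $X$, in particular $\usphere^2 \to \pt$; but $v_1^{-1}\pi_{\bullet}(\usphere^2; V_1) \neq 0$, so $\usphere^2 \to \pt$ is not a $v_1$\nobreakdash-periodic equivalence. Thus the strongly saturated class generated by a single $v_n$\nobreakdash-periodic equivalence of the form $W \to \pt$ need not consist of $v_n$\nobreakdash-periodic equivalences, and your inference ``the $V_{h+1}$\nobreakdash-equivalences are generated by $V_{h+1} \to \pt$, hence are all $v_n$\nobreakdash-periodic equivalences'' does not go through. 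Equivalently: in the transfinite construction of $\Null_{V_{h+1}}(X)$ one forms pushouts along maps $\susp^k V_{h+1} \to X_\alpha$, and showing that each such pushout is a $v_n$\nobreakdash-periodic equivalence is precisely the non-formal content of the theorem. Bousfield's proof of~(i) uses much more than the single vanishing $\widetilde{\BK}_n(V_{h+1}) \simeq 0$: it analyses the cellular construction of $\Null_{V_{h+1}}$ directly, with control of connectivity and of the $v_n$\nobreakdash-periodic homotopy of the relevant mapping spaces; part~(ii) then follows from~(i) by $2$-out-of-$3$ applied to the naturality square of~$\lambda_{V_{h+1}}$, which is the order of deduction you should adopt as well.
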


\begin{proof}
  See~\cite[§§10-11]{Bou94}.
\end{proof}

Now we will review the construction of the~$v_h$\nobreakdash-periodic localisation~$\infHType_{v_h}$ of the~$\infHType_{\ast}$, which is already indicated in~\cite{Bou94,Bou01} and worked out in details~$\infty$\nobreakdash-categorically in~\cite{EHMM19,Heu21}.
We omit all the proofs and refer the reader to~\loccit or to~\cite[§3.4]{ShiTh} for more details.

\begin{situation}\label{sit:monochrom-construction}
  We fix a natural number~$h \geq 1$.
  Let~$V_{h + 1}$ and~$V_{h}$ be pointed~$p$\nobreakdash-local finite complexes of type~$h+1$ and type~$h$, respectively.
  Assume in addition that
  \begin{enumerate}
    \item both~$V_{h+1}$ and~$V_h$ admit a desuspension, and
    \item $\conn(V_{h + 1}) \geq \conn(V_{h})$, where~$\conn(\blank)$ denotes the connectivity of \htypes. 
  \end{enumerate}
  
  Let us denote
  \[
  c^{+}\left(V_{h+1}\right) \coloneqq \conn\left(V_{h+1}\right) + 1 \text{ and similarly } c^{+}\left(V_{h}\right) \coloneqq \conn\left(V_{h}\right) + 1
  \] 
  Thus we have that~$c^{+}\left(V_{h + 1}\right) \geq c^{+}\left(V_{h}\right)$.
  By the Unstable Class Invariance Theorem~\cite[Theorem~9.15]{Bou94} there exists a natural transformation~$\Null_{V_{h + 1}} \to \Null_{V_{h}}$, given by~$V_{h}$\nobreakdash-contraction.
\end{situation}

\begin{construction}
  Consider the natural transformation~$\Null_{V_{h + 1}} \to \Null_{V_{h}}$ as a morphism in~$\infty$\nobreakdash-category~$\infFun\left(\infHType_{\ast}, \infHType_{\ast}\right)$ of functors.
  Denote its fibre by~$ \relP_{V_{h + 1}, V_{h}}$.
  In particular, for every pointed connected \htype~$X$, there exists a fibre sequence
  \[
  \relP_{V_{h + 1}, V_{h}}(X) \to \Null_{V_{h + 1}}(X) \to \Null_{V_{h}}(X) 
  \] 
  of pointed connected \htypes, since limits of functors are computed pointwise.
\end{construction}

\begin{proposition}\label{prop:vn-group-monochrom}
  Let~$X$ be a pointed connected \htype.
  Then the \htype~$\relP_{V_{h + 1}, V_{h}}(X)$ is also connected.
  Furthermore:
  \begin{enumerate}
    \item For every natural number~$n \geq 1$ and~$n \neq h$, the~$v_{n}$\nobreakdash-periodic homotopy groups of the \htype~$ \relP_{V_{h + 1}, V_{h}}(X)$ vanish.
    \item The~$v_{h}$\nobreakdash-periodic homotopy groups of the \htype~$\relP_{V_{h + 1}, V_{h}}(X)$ are isomorphic to those of~$X$.
  \end{enumerate}
\end{proposition}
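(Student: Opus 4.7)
The plan is to obtain the long exact sequence of $v_n$-periodic homotopy groups from the given fibre sequence $\relP_{V_{h+1},V_h}(X) \to \Null_{V_{h+1}}(X) \to \Null_{V_h}(X)$ and then to invoke~\cref{thm:Vh-truncatio-vh-periodic-equivalence} case by case. Such a long exact sequence exists because the pointed mapping-spectrum functor $\MMap_{\ast}(V_n, \blank)$ out of a finite complex of type~$n$ carries fibre sequences of pointed connected \htypes to fibre sequences of spectra, and because the filtered colimit along a chosen $v_n$-self-map preserves fibre sequences of spectra and hence the induced long exact sequences on homotopy groups.

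For the connectedness claim, I would exploit the fact that, under~\cref{sit:monochrom-construction}, both $V_{h+1}$ and $V_h$ are suspensions and in particular simply connected. By Bousfield's results on unstable nullification, the unit maps $\lambda_{V_{h+1}}(X) \colon X \to \Null_{V_{h+1}}(X)$ and $\lambda_{V_{h}}(X) \colon X \to \Null_{V_{h}}(X)$ then induce isomorphisms on $\pi_{0}$ and on $\pi_{1}$. The universal property of $\Null_{V_{h+1}}$ (applied to the target $\Null_{V_{h}}(X)$, which is $V_{h+1}$-\less since $\conn(V_{h+1}) \geq \conn(V_{h})$) factors $\lambda_{V_{h}}(X)$ through $\lambda_{V_{h+1}}(X)$, so the connecting map $\Null_{V_{h+1}}(X) \to \Null_{V_{h}}(X)$ is itself a $\pi_{\leq 1}$-isomorphism; in particular it is surjective on $\pi_{1}$, which forces the fibre $\relP_{V_{h+1},V_h}(X)$ to be connected.

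For the vanishing and identification of $v_n$-periodic homotopy groups I would case-split on $n \geq 1$. If $n \geq h+1$, then~\cref{thm:Vh-truncatio-vh-periodic-equivalence}\rom{3}, applied to both $\Null_{V_{h+1}}$ and $\Null_{V_{h}}$, makes the outer terms of the long exact sequence vanish. If $1 \leq n \leq h-1$, then by~\cref{thm:Vh-truncatio-vh-periodic-equivalence}\rom{1} both unit maps are $v_n$-periodic equivalences, so the connecting map $\Null_{V_{h+1}}(X) \to \Null_{V_{h}}(X)$ is one as well, and the long exact sequence forces $v_n^{-1}\pi_{\bullet}(\relP_{V_{h+1},V_h}(X); V_n) = 0$. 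If $n = h$, then~\cref{thm:Vh-truncatio-vh-periodic-equivalence}\rom{3} applied to $\Null_{V_{h}}$ annihilates $v_h^{-1}\pi_{\bullet}(\Null_{V_{h}}(X); V_h)$, so the long exact sequence collapses to an isomorphism
\[
v_h^{-1}\pi_{\bullet}\left(\relP_{V_{h+1},V_h}(X); V_h\right) \xrightarrow{\sim} v_h^{-1}\pi_{\bullet}\left(\Null_{V_{h+1}}(X); V_h\right),
\]
which~\cref{thm:Vh-truncatio-vh-periodic-equivalence}\rom{1} in turn identifies with $v_h^{-1}\pi_{\bullet}(X; V_h)$.

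The main obstacle is technical rather than conceptual: carefully justifying the long exact sequence on $v_n$-periodic homotopy groups associated to a fibre sequence of pointed connected \htypes, and justifying the $\pi_{\leq 1}$-behaviour of nullification maps used in the connectedness step. Both are staples of Bousfield's unstable localisation machinery and should be cited directly; granted these inputs, the remaining argument is a formal case analysis from \cref{thm:Vh-truncatio-vh-periodic-equivalence}.
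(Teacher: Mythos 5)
Your proof is correct and follows essentially the same long-exact-sequence argument as the proofs in the sources the paper defers to for this statement (Heuts's and Bousfield's treatments, \cf~\cite[§3.4]{ShiTh}): connectivity of the fibre via the $\pi_{\leq 1}$-behaviour of the nullification units, then a three-way case split on $n$ using \cref{thm:Vh-truncatio-vh-periodic-equivalence} in the long exact sequence of $v_n$-periodic homotopy groups. One minor imprecision: a suspension need not be simply connected in general ($\susp \usphere^0 = \usphere^1$); here $V_h = \susp(V_h')$ is simply connected because $V_h'$ is connected (its reduced rational homology vanishes since $V_h$ has type $\geq 1$), and it is this positive connectivity of $V_h$ and $V_{h+1}$ that actually yields the $\pi_{\leq 1}$-isomorphism statement for the nullification units.
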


\begin{proposition}\label{prop:monochrom-vh-equivalence}
  Let~$f \colon X \to Y$ be a morphism of pointed connected \htypes.
  The following statements are equivalent:
  \begin{enumerate}
    \item The map~$f$ is a~$v_{h}$\nobreakdash-periodic equivalence.
    \item The induced map~$(\tau_{> c^{+}(V_{h + 1})} \circ  \relP_{V_{h + 1}, V_{h}})(f)$ is a~$v_{h}$\nobreakdash-periodic equivalence.
    \item The induced map~$(\tau_{> c^{+}(V_{h + 1})} \circ  \relP_{V_{h + 1}, V_{h}})(f)$ is an equivalence in~$\infHType_{\ast}$.
  \end{enumerate}
\end{proposition}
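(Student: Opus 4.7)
The implication (iii)$\Rightarrow$(ii) is immediate, so the remaining work is to establish (i)$\Leftrightarrow$(ii) and (ii)$\Rightarrow$(iii).

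For (i)$\Leftrightarrow$(ii), denote $Z \coloneqq \tau_{> c^{+}(V_{h + 1})} \circ \relP_{V_{h + 1}, V_{h}}$. The $v_{h}$-periodic homotopy functor is a stable invariant, in the sense that any fibre sequence of pointed \htypes induces a long exact sequence of $v_{h}$-periodic homotopy groups. Applying it to the natural fibre sequence
\[
\relP_{V_{h + 1}, V_{h}}(X) \to \Null_{V_{h + 1}}(X) \to \Null_{V_{h}}(X),
\]
combined with \cref{thm:Vh-truncatio-vh-periodic-equivalence}---which tells us that $X \to \Null_{V_{h + 1}}(X)$ is a $v_{h}$-periodic equivalence and that $v_{h}^{-1}\pi_{\bullet}(\Null_{V_{h}}(X); V_{h})$ vanishes---produces a natural isomorphism $v_{h}^{-1}\pi_{\bullet}(\relP_{V_{h + 1}, V_{h}}(X); V_{h}) \cong v_{h}^{-1}\pi_{\bullet}(X; V_{h})$. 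Since the Postnikov fibre is always a $v_{h}$-periodic equivalence, composing with $\tau_{> c^{+}(V_{h + 1})}$ yields a natural isomorphism $v_{h}^{-1}\pi_{\bullet}(Z(X); V_{h}) \cong v_{h}^{-1}\pi_{\bullet}(X; V_{h})$. Therefore $f$ is a $v_{h}$-periodic equivalence if and only if $Z(f)$ is.

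The implication (ii)$\Rightarrow$(iii) is the main content. The strategy is to show that $Z(X)$ lies in a subcategory of $\infHType_{\ast}$ in which $v_{h}$-periodic equivalences coincide with equivalences. By \cref{prop:vn-group-monochrom} and the construction, $Z(X)$ is $c^{+}(V_{h + 1})$-connected and has trivial $v_{n}$-periodic homotopy groups for every $n \geq 1$ with $n \neq h$. Moreover $\relP_{V_{h + 1}, V_{h}}(X)$ is $V_{h + 1}$-null: both $\Null_{V_{h + 1}}(X)$ and $\Null_{V_{h}}(X)$ are, the latter because the cofibre sequence $\susp^{d} V_{h} \xrightarrow{v_{h}} V_{h} \to V_{h + 1}$ arising from a $v_{h}$ self-map induces a fibre sequence of pointed mapping spaces showing that $V_{h}$-nullity implies $V_{h + 1}$-nullity. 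The main obstacle---and the crux of the proposition---is the Whitehead-type statement that such highly connected, $V_{h + 1}$-null \htypes are controlled entirely by their $v_{h}$-periodic homotopy: any $v_{h}$-periodic equivalence between them is an equivalence. I would establish this by appealing to Bousfield's analysis of the unstable monochromatic layer in \cite{Bou94,Bou01}, which identifies the essential image of $Z$ with the localisation of a suitable subcategory at $v_{h}$-periodic equivalences; within this image, $v_{h}$-periodic equivalences are equivalences by construction.
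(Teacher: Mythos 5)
The paper itself does not prove this proposition; it defers to \cite{EHMM19,Heu21} and \cite[§3.4]{ShiTh}, so your proposal is being measured against the standard argument in those sources. Your handling of (iii)$\Rightarrow$(ii) and of (i)$\Leftrightarrow$(ii) is correct and is exactly the standard route: the long exact sequence of $v_h$\nobreakdash-periodic homotopy groups for the fibre sequence $\relP_{V_{h+1},V_h}(X) \to \Null_{V_{h+1}}(X) \to \Null_{V_h}(X)$, together with \cref{thm:Vh-truncatio-vh-periodic-equivalence} and the fact that Postnikov covers are $v_h$\nobreakdash-periodic equivalences. One small repair there: your claim that $\Null_{V_h}(X)$ is $V_{h+1}$\nobreakdash-less should not be argued from a cofibre sequence $\susp^d V_h \to V_h \to V_{h+1}$, since $V_{h+1}$ is an arbitrary finite complex of type $h+1$ and need not be the cofibre of a $v_h$ self-map of $V_h$; the correct justification is the natural transformation $\Null_{V_{h+1}} \to \Null_{V_h}$ supplied by the Unstable Class Invariance Theorem in \cref{sit:monochrom-construction}, which says precisely that $V_h$\nobreakdash-less \htypes are $V_{h+1}$\nobreakdash-less. (Being a fibre of a map of $V_{h+1}$\nobreakdash-less \htypes, $\relP_{V_{h+1},V_h}(X)$ is then $V_{h+1}$\nobreakdash-less, as you say.)

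The genuine gap is in (ii)$\Rightarrow$(iii), which you correctly identify as the crux but then justify circularly. You appeal to the identification of the essential image of $Z = \tau_{>c^+(V_{h+1})}\circ\relP_{V_{h+1},V_h}$ with the localisation of $\infHType_{\ast}$ at the $v_h$\nobreakdash-periodic equivalences, within which ``$v_h$\nobreakdash-periodic equivalences are equivalences by construction.'' But that identification is \cref{thm:vh-loc-cat}, and \cref{prop:monochrom-vh-equivalence} is precisely one of the inputs used to prove it: the essential image of $Z$ is defined as a full $\infty$\nobreakdash-subcategory of $\infHType_{\ast}$ (\cref{def:vh-loc-cat}), and the statement that a $v_h$\nobreakdash-periodic equivalence between its objects is an actual equivalence is a theorem, not a formal consequence of being a localisation. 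What is actually needed here is Bousfield's concrete unstable Whitehead-type theorem from~\cite[§11]{Bou94} (see also~\cite[Theorem~4.4]{Bou01}): for pointed \htypes that are $V_{h+1}$\nobreakdash-less and $c^{+}(V_{h+1})$\nobreakdash-connected, equivalences are detected by $V_h$\nobreakdash-mapping spaces, hence by $v_h$\nobreakdash-periodic homotopy once one knows (as in your connectivity and vanishing observations) that no lower-height or unperiodic information survives. You should also address why the truncation $\tau_{>c^+(V_{h+1})}$ does not destroy $V_{h+1}$\nobreakdash-lessness, or phrase the Whitehead input so that it applies directly to $Z(X)$; this is where the hypothesis $\conn(V_{h+1}) \geq \conn(V_h)$ and the choice of the cutoff $c^{+}(V_{h+1})$ enter, and your proposal does not engage with it. As written, the hard implication is asserted rather than proved.
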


\begin{definition}\label{def:vh-loc-cat}
  We define the functor
  \begin{align*}
   \Loc_{v_{h}} \colon \infHType_{\ast} &\to \infHType_{\ast}^{> c^{+}(V_{h + 1})} \\
    X &\mapsto \left(\tau_{> c^{+}(V_{h + 1})}\circ  \relP_{V_{h + 1}, V_{h}}\right)(X).
  \end{align*}
  Let~$\infHType_{v_{h}}$ denote the full~$\infty$\nobreakdash-subcategory of~$\infHType_{\ast}$ whose objects are pointed connected homotopy types that are equivalent to~$\left(\tau_{> c^{+}(V_{h + 1})}\circ  \relP_{V_{h + 1}, V_{h}}\right)(X)$ for some pointed \htype~$X$.
\end{definition}

\begin{theorem}[{\cite[Theorem~2.2]{Heu21}}]\label{thm:vh-loc-cat}
  The functor~$\Loc_{v_{h}}$ exhibits~$\infHType_{v_{h}}$ as a localisation of the~$\infty$\nobreakdash-category~$\infHType_{\ast}$ at the set of~$v_{h}$\nobreakdash-periodic equivalences.
  In particular, for every~$\infty$\nobreakdash-category~$\Cca$, composing with~$\Loc_{v_{h}}$ induces an equivalence
  \[
  \infFun\left(\infHType_{v_{h}}, \Cca\right) \xrightarrow{\sim} \infFun^{v_{h}}\left(\infHType_{\ast}, \Cca\right)
  \] 
  of~$\infty$\nobreakdash-categories where~$\infFun^{v_{h}}$ denotes the~$\infty$\nobreakdash-category of functors that send~$v_{h}$\nobreakdash-periodic equivalences in~$\infHType_{\ast}$ to equivalences in~$\Cca$.
\end{theorem}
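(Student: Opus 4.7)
The plan is to verify the universal property of a localisation directly, using the facts from~\cref{prop:monochrom-vh-equivalence} that~$\Loc_{v_h}$ both inverts and detects~$v_h$\nobreakdash-periodic equivalences, together with the observation that~$\infHType_{v_h}$ is by definition the essential image of~$\Loc_{v_h}$. Writing~$\iota \colon \infHType_{v_h} \hookrightarrow \infHType_{\ast}$ for the inclusion, I would exhibit restriction along~$\iota$ as the inverse to precomposition with~$\Loc_{v_h}$ for every target~$\infty$\nobreakdash-category~$\Cca$.

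The crucial ingredient is a natural zigzag of~$v_h$\nobreakdash-periodic equivalences in~$\infFun(\infHType_{\ast}, \infHType_{\ast})$ connecting~$\id_{\infHType_{\ast}}$ with~$\iota \circ \Loc_{v_h}$, arising directly from the construction of~$\Loc_{v_h}$:
\[
  X \xrightarrow{\lambda_{V_{h+1}}} \Null_{V_{h+1}}(X) \leftarrow \relP_{V_{h+1},V_h}(X) \leftarrow \tau_{>c^{+}(V_{h+1})}\relP_{V_{h+1},V_h}(X).
\]
The first arrow is a~$v_h$\nobreakdash-periodic equivalence by~\cref{thm:Vh-truncatio-vh-periodic-equivalence}; the last arrow is a canonical Postnikov fibre inclusion, which is a~$v_h$\nobreakdash-equivalence by the remark following~\cref{def:vh-periodic-group}; and the middle arrow arises from the defining fibre sequence~$\relP_{V_{h+1},V_h}(X) \to \Null_{V_{h+1}}(X) \to \Null_{V_h}(X)$ whose base has vanishing~$v_h$\nobreakdash-periodic homotopy groups by~\cref{thm:Vh-truncatio-vh-periodic-equivalence}, so the middle map is a~$v_h$\nobreakdash-equivalence because~$v_h$\nobreakdash-periodic homotopy is computed as the telescope of pointed mapping spaces and the telescope construction preserves the fibre sequence; one can alternatively read this off the comparison of~\cref{prop:vn-group-monochrom} with~\cref{thm:Vh-truncatio-vh-periodic-equivalence}.

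Granting the zigzag, essential surjectivity of precomposition with~$\Loc_{v_h}$ follows: for any functor~$F \colon \infHType_{\ast} \to \Cca$ inverting~$v_h$\nobreakdash-periodic equivalences, applying~$F$ to the zigzag yields a natural equivalence~$F \simeq (F \circ \iota) \circ \Loc_{v_h}$ in~$\infFun(\infHType_{\ast}, \Cca)$. For fully faithfulness, for~$X \in \infHType_{v_h}$ choose~$Y$ with~$X \simeq \Loc_{v_h}(Y)$ and apply~$\Loc_{v_h}$ to the zigzag for~$Y$: by~\cref{prop:monochrom-vh-equivalence} this gives a natural equivalence~$\Loc_{v_h} \circ \iota \simeq \id_{\infHType_{v_h}}$, so any~$\tilde F \colon \infHType_{v_h} \to \Cca$ satisfies~$\tilde F \circ \Loc_{v_h} \circ \iota \simeq \tilde F$, completing the verification.

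The main obstacle is the~$\infty$\nobreakdash-categorical coherence of the zigzag: assembling~$\Null_{V_{h+1}}$, the fibre construction~$\relP_{V_{h+1},V_h}$, and the Postnikov truncation into genuine natural transformations in~$\infFun(\infHType_{\ast}, \infHType_{\ast})$, rather than merely pointwise morphisms, requires invoking the~$\infty$\nobreakdash-functoriality of fibres and of truncations on pointed homotopy types. Once the zigzag is realised coherently at the level of functors, the remaining steps are the diagram chases sketched above, all of which proceed by repeatedly applying~\cref{prop:monochrom-vh-equivalence}.
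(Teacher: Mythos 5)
The paper gives no proof of this statement --- it explicitly omits all proofs in this subsection and defers to~\cite[Theorem~2.2]{Heu21} --- so your proposal has to stand on its own. Its architecture is the standard one and is largely right: the zigzag
\[
X \to \Null_{V_{h+1}}(X) \leftarrow \relP_{V_{h+1},V_h}(X) \leftarrow \tau_{>c^{+}(V_{h+1})}\relP_{V_{h+1},V_h}(X)
\]
of natural~$v_h$\nobreakdash-periodic equivalences is the correct comparison between~$\id_{\infHType_{\ast}}$ and~$\iota\circ\Loc_{v_h}$, and your essential-surjectivity step (apply a~$W$\nobreakdash-inverting functor~$F$ to the zigzag to get~$F\simeq (F\circ\iota)\circ\Loc_{v_h}$) is sound, modulo the coherence issue you yourself flag.

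The gap is in the other half, the claimed natural equivalence~$\Loc_{v_h}\circ\iota\simeq\id_{\infHType_{v_h}}$. Choosing, for each~$X\in\infHType_{v_h}$, some~$Y$ with~$X\simeq\Loc_{v_h}(Y)$ is not functorial, so applying~$\Loc_{v_h}$ to ``the zigzag for~$Y$'' only produces an unnatural pointwise identification. The functorial version of your manoeuvre --- whiskering the zigzag with~$\Loc_{v_h}$ --- yields only~$\Loc_{v_h}\simeq(\Loc_{v_h}\circ\iota)\circ\Loc_{v_h}$, and one cannot cancel the outer~$\Loc_{v_h}$ without already knowing that precomposition with~$\Loc_{v_h}$ is fully faithful, which is exactly what is being proved; formally, $T\simeq T^{2}$ together with essential surjectivity does not imply~$T\simeq\id$ (compare~$\ast\rightleftarrows \mathrm{B}G$, where both composable functors are essentially surjective and one composite is the identity). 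Nor does~\cref{prop:monochrom-vh-equivalence} supply the missing piece: it is the \emph{detection} statement ($\Loc_{v_h}(f)$ is an equivalence iff~$f$ is a~$v_h$\nobreakdash-periodic equivalence), not the idempotency statement. What is actually needed --- and what constitutes the real content of~\cite[Theorem~2.2]{Heu21} and of Bousfield's work behind it --- is that each of the three maps of the zigzag becomes a genuine equivalence, not merely a~$v_h$\nobreakdash-periodic one, when evaluated at~$X$ in the essential image: $X$ is~$V_{h+1}$\nobreakdash-less (so~$\lambda_{V_{h+1}}(X)$ is an equivalence), $\Null_{V_h}(X)$ is contractible (so the fibre inclusion is an equivalence), and~$X$ is already~$c^{+}(V_{h+1})$\nobreakdash-connected (so the cover map is an equivalence). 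These rest on Bousfield's fibre and connectivity theorems for nullifications~\cite[\S\S 5, 9--11]{Bou94}, which your proposal never invokes. Once~$\Loc_{v_h}\circ\iota\simeq\id$ is established this way, your two-sided-inverse argument does complete the proof.
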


\begin{remark}
  We also call the~$\infty$\nobreakdash-category~$\infHType_{v_h}$ the \emph{unstable monochromatic layer of type~$h$}.
\end{remark}

\subsection{The Bousfield--Kuhn functor}\label{sec:BK}

Let~$h \geq 1$ be a natural number.
The unstable and stable monochromatic layers of type~$h$ are connected via the so-called Bousfield--Kuhn functor~$\BK_h$~\cite{Bou87,Kuh89,Bou01}.
We recall the characterising properties of~$\BK_h$ and present our theorem on the universal property of the Bousfield--Kuhn functor, see~\cref{thm:universal-property-BK}. 

\begin{theorem}[Bousfield--Kuhn functor]\label{thm:BK-functor}
  Let~$h \geq 1$ be a natural number.
  There exists a functor~$\widetilde{\BK}_{h} \colon \infHType_{\ast} \to \infSp$ satisfying the following properties:
  \begin{enumerate}
    \item For every~$X \in \infHType_{\ast}$, the spectrum~$\widetilde{\BK}_{h}(X)$ is~$\Tel(h)_{\bullet}$\nobreakdash-local.
    \item For every~$p$\nobreakdash-local finite complex~$V_{h}$ of type~$h$, there exist isomorphisms
      \[
      v_{h}^{-1}\pi_{\bullet}(X; V_{h}) \cong \pi_{\bullet}\MMap(\susp^{\infty} V_{h}, \widetilde{\BK}_{h}(X)) \cong v_{h}^{-1}\pi_{\bullet}(\widetilde{\BK}_{h}(X); \susp^{\infty} V_{h}),
      \]
      which are natural in~$X$.
    \item For every spectrum~$W$, there exists a natural equivalence~$\widetilde{\BK}_{h}(\Loop^{\infty} E) \simeq \Loc_{\Tel(h)}(E)$.
    \item The functor~$\widetilde{\BK}_{h}$ sends a~$v_{h}$\nobreakdash-periodic equivalence of pointed \htypes to an equivalence of spectra.
  \end{enumerate}
\end{theorem}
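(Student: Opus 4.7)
My plan is to give an explicit telescope construction of~$\widetilde{\BK}_h$ in the spirit of Kuhn's original work and then verify the four listed properties in turn. Fix a pointed $p$-local finite complex~$V_h$ of type~$h$ equipped with a $v_h$-self-map~$v \colon \susp^d V_h \to V_h$, as provided by the Periodicity~\cref{thm:periodicity}. For a pointed \htype~$X$ I would form the sequential colimit
\[
\Tel_{V_h}(X) \coloneqq \varinjlim\bigl(\infMap_{\ast}(V_h, X) \xrightarrow{v^{\ast}} \infMap_{\ast}(\susp^d V_h, X) \xrightarrow{v^{\ast}} \cdots\bigr)
\]
in~$\infHType_{\ast}$. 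Using the identification~$\infMap_{\ast}(\susp^{kd}V_h, X) \simeq \Loop^{kd}\infMap_{\ast}(V_h, X)$, this colimit naturally enhances to the zeroth space of a spectrum~$\BK_h^{V_h}(X)$ whose~$n$-th space is the filtered colimit~$\varinjlim_{kd \geq n}\Loop^{kd-n}\infMap_{\ast}(V_h, X)$. I would then set~$\widetilde{\BK}_h(X) \coloneqq \Loc_{\Tel(h)}\BK_h^{V_h}(X)$.

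Independence of this construction from the choice of~$(V_h, v)$ parallels the argument for well-definedness of~$\Tel(h)$-homology equivalences in~\cref{para:S(h)-T(h)}: the Thick Subcategory~\cref{thm:thick-sub-cat} together with the asymptotic uniqueness of~$v_h$-self-maps (\cite[Corollaries~3.7 and~3.8]{HS98}) imply that different choices yield naturally equivalent functors after~$\Loc_{\Tel(h)}$. Property~(i) is then immediate, and property~(iv) holds because by~\cref{def:vh-periodic-group} a $v_h$-periodic equivalence is precisely a morphism inducing an equivalence on~$\Tel_{V_h}(-)$, hence on~$\BK_h^{V_h}$ and a fortiori after~$\Loc_{\Tel(h)}$.

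For property~(ii), the adjunction~$\susp^{\infty} \dashv \Loop^{\infty}$ combined with the identification~$\Loop^{\infty} \widetilde{\BK}_h(X) \simeq \Tel_{V_h}(X)$ supplies the first natural isomorphism; the second follows because~$\widetilde{\BK}_h(X)$ is~$\Tel(h)$-local, and hence the $v_h$-action on~$\pi_{\bullet}\MMap(\susp^{\infty}V_h, \widetilde{\BK}_h(X))$ is already invertible, so the defining colimit for~$v_h^{-1}\pi_{\bullet}(\widetilde{\BK}_h(X); \susp^{\infty}V_h)$ stabilises immediately. For property~(iii), I would use Spanier--Whitehead duality of~$\susp^{\infty} V_h$ to compute
\[
\BK_h^{V_h}(\Loop^{\infty} E) \simeq D(\susp^{\infty} V_h)[v^{-1}] \otimes E,
\]
recognising the first factor as a valid $\Tel(h)$-spectrum built from the dual~$v_h$-self-map (\cf~\cref{para:S(h)-T(h)}); after applying~$\Loc_{\Tel(h)}$ this becomes~$\Loc_{\Tel(h)}(E)$.

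The main obstacle will be the~$\infty$-categorical bookkeeping to show that~$\Tel_{V_h}(X)$ canonically arises as the zeroth space of an honest spectrum in a manner functorial in~$X$ and invariant under the choice of auxiliary data. Classical model-categorical constructions provide~$\Loop$-spectrum models (\cf~\cite[§3.2]{ShiTh}); promoting these to a coherent $\infty$-categorical construction requires identifying the infinite loop space structure arising from the eventual-loop-space structure of each term~$\Loop^{kd}\infMap_{\ast}(V_h, X)$. Once this is in place, properties~(ii) and~(iii) reduce to the essentially formal computations above.
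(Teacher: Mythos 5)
The paper itself does not prove this statement; it simply cites \cite[Theorem~5.3]{Bou01}, so you are attempting to reconstruct a classical result the paper outsources. Your reconstruction has a genuine gap: the functor you define is not the Bousfield--Kuhn functor, and properties~(ii) and~(iii) fail for it. Setting $\widetilde{\BK}_h(X) \coloneqq \Loc_{\Tel(h)}\BK_{V_h}(X)$ for a \emph{single} complex~$V_h$ produces a spectrum whose own homotopy groups are (up to localisation) $v_h^{-1}\pi_{\bullet}(X;V_h)$, whereas~(ii) demands that the homotopy groups of $\MMap(\susp^{\infty}V_h,\widetilde{\BK}_h(X)) \simeq DV_h \otimes \widetilde{\BK}_h(X)$ be $v_h^{-1}\pi_{\bullet}(X;V_h)$. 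With your definition these are instead $v_h^{-1}\pi_{\bullet}(X; V_h \wedge DV_h)$, a different group. Your verification of~(ii) conflates $\pi_{\bullet}\Tel_{V_h}(X)$ with $\pi_{\bullet}\infMap_{\ast}(V_h,\Tel_{V_h}(X))$: since $\Loop^{\infty}\MMap(\susp^{\infty}V_h,-) \simeq \infMap_{\ast}(V_h,\Loop^{\infty}(-))$, the first isomorphism in~(ii) is equivalent to an equivalence $\MMap(\susp^{\infty}V_h,\widetilde{\BK}_h(X)) \simeq \BK_{V_h}(X)$ of spectra, which is precisely the nontrivial defining property of~$\widetilde{\BK}_h$ and is false for the naive localised telescope. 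Property~(iii) fails for the same reason: your computation gives $\widetilde{\BK}_h(\Loop^{\infty}E) \simeq \Loc_{\Tel(h)}\bigl(\Tel(DV_h)\otimes E\bigr) \simeq DV_h \otimes \Loc_{\Tel(h)}(E)$, because $DV_h \to DV_h[v^{-1}]$ is a $\Tel(h)_{\bullet}$-equivalence and smashing with a finite spectrum commutes with localisation; already for $E = \SS$ this has different homotopy groups from~$\Loc_{\Tel(h)}(\SS)$.

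The missing idea is that $\widetilde{\BK}_h$ must deloop the telescopic functors $\BK_{V}$ for \emph{all} type-$h$ coefficients~$V$ simultaneously. The actual construction chooses a directed system $V(1) \to V(2) \to \cdots$ of pointed finite type-$h$ complexes with compatible $v_h$-self-maps and connectivities tending to infinity, such that the induced map $\varinjlim_i \susp^{-d_i}\susp^{\infty}V(i) \to \SS_{(p)}$ is a $\Tel(h)_{\bullet}$-equivalence, and sets $\widetilde{\BK}_h(X) \coloneqq \varprojlim_i \susp^{d_i}\BK_{V(i)}(X)$. Property~(ii) then follows from this convergence together with your (correct) single-telescope computations, and~(iii) from $\varprojlim_i \susp^{d_i}\,DV(i)\otimes E \simeq \Loc_{\Tel(h)}(E)$. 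The existence of such a system is a nontrivial consequence of the thick subcategory and periodicity technology and is the real content your proposal omits. Your construction of the individual telescopic functor $\BK_{V_h}$, its independence of choices, and your verification of~(i) and~(iv) are fine as far as they go.
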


\begin{proof}
  See~\cite[Theorem~5.3]{Bou01}.
\end{proof}

\begin{remark}\label{rmk:BK-computes-vh-groups}
  Let us emphasise one important feature of the Bousfield--Kuhn functor: For a \htype~$X$, one can compute its~$v_h$\nobreakdash-periodic homotopy groups from the stable homotopy groups of~$\widetilde{\BK}_{h}(X)$, by~\rom{2} in the above theorem.
\end{remark}

\begin{theorem}[Bousfield]\label{thm:BK-adjunction}
  Let~$h \geq 1$ be a natural number.
  Then the induced functor~$\widetilde{\BK}_{h} \colon \infHType_{\ast} \to \infSp_{\Tel(h)}$ factors through~$\infHType_{v_{h}}$, \ie there exists a functor
  \[
  \BK_{h} \colon \infHType_{v_{h}} \to \infSp_{\Tel(h)},
  \]
  unique up to contractible choices, such that~$\widetilde{\BK}_{h} \simeq \BK_{h} \circ \Loc_{v_{h}}$.
  Moreover, the functor~$\BK_{h}$ admits a left adjoint~$\lBK_{h}$.
\end{theorem}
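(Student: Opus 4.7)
The factorization is formal. By~\cref{thm:BK-functor}(i) the functor~$\widetilde{\BK}_{h}$ takes values in~$\infSp_{\Tel(h)}$, and by property~(iv) of the same theorem it inverts every~$v_{h}$\nobreakdash-periodic equivalence of pointed \htypes. Applying the universal property of~$\Loc_{v_{h}}$ (\cref{thm:vh-loc-cat}) with~$\Cca = \infSp_{\Tel(h)}$ thus produces a functor~${\BK_{h} \colon \infHType_{v_{h}} \to \infSp_{\Tel(h)}}$, unique up to contractible choice, with~$\BK_{h} \circ \Loc_{v_{h}} \simeq \widetilde{\BK}_{h}$.

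For the existence of the left adjoint I would invoke the adjoint functor theorem for presentable~$\infty$\nobreakdash-categories. The target~$\infSp_{\Tel(h)}$ is presentable by~\cref{rmk:Th-compact-generated}, and the source~$\infHType_{v_{h}}$ is presentable as an accessible localization of~$\infHType_{\ast}$, cut out by the explicit formula~$\Loc_{v_{h}} = \tau_{> c^{+}(V_{h + 1})} \circ \relP_{V_{h+1}, V_{h}}$ from~\cref{def:vh-loc-cat}. It then suffices to check that~$\BK_{h}$ preserves small limits and is accessible. Both properties transfer from~$\widetilde{\BK}_{h}$, which is assembled as a sequential colimit of the mapping-space functors~$\infMap_{\ast}(\susp^{kd} V_{h}, \blank)$: each of these preserves small limits, and compactness of the finite complex~$V_{h}$ gives preservation of~$\omega$\nobreakdash-filtered colimits. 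One then transfers these properties from~$\widetilde{\BK}_{h}$ to~$\BK_{h}$ by computing limits and filtered colimits in~$\infHType_{v_{h}}$ by applying~$\Loc_{v_{h}}$ to the corresponding diagrams in~$\infHType_{\ast}$.

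The main technical point is upgrading preservation of finite limits to preservation of arbitrary small limits by~$\widetilde{\BK}_{h}$, since the defining sequential colimit in~$\infSp$ a priori commutes only with finite limits. The cleanest way to handle this is to exploit the natural equivalence~$\widetilde{\BK}_{h} \circ \Loop^{\infty} \simeq \Loc_{\Tel(h)}$ from~\cref{thm:BK-functor}(iii): as a reflective localization between stable~$\infty$\nobreakdash-categories, $\Loc_{\Tel(h)}$ preserves all small limits, and a comparison argument based on this identification upgrades the limit-preservation of~$\widetilde{\BK}_{h}$ to arbitrary small limits. Once accessibility and limit preservation of~$\BK_{h}$ are established, the adjoint functor theorem produces the desired left adjoint~${\lBK_{h} \dashv \BK_{h}}$.
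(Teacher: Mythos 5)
Your factorisation argument is exactly the paper's: property (iv) of \cref{thm:BK-functor} plus the universal property of \cref{thm:vh-loc-cat} is all that is needed, and the paper says no more. For the left adjoint, however, the paper does not argue at all --- it cites Bousfield's explicit construction in \cite[Theorem~5.4, Corollary~5.6]{Bou01} --- whereas you propose the adjoint functor theorem. That is a legitimate and indeed the standard $\infty$-categorical route (it is how \cite{Heu21} and \cite{EHMM19} proceed), but as written your sketch leans on two claims that do not hold formally and are in fact the hard content of the theorem.

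First, $\infHType_{v_h}$ is \emph{not} an accessible (reflective) localisation of $\infHType_{\ast}$ in the sense that would give presentability for free: the functor $\Loc_{v_h}=\tau_{>c^{+}(V_{h+1})}\circ\relP_{V_{h+1},V_h}$ is neither left nor right adjoint to the inclusion of its essential image (it mixes the left adjoints $\Null_{V_{h+1}},\Null_{V_h}$ with a fibre and a connective cover), so presentability of $\infHType_{v_h}$ is a theorem of Bousfield/Heuts, not a consequence of \cref{def:vh-loc-cat}. Second, and more seriously, for such a non-(co)reflective localisation it is not true that limits and filtered colimits in $\infHType_{v_h}$ are computed by applying $\Loc_{v_h}$ to the corresponding (co)limit in $\infHType_{\ast}$; nullification does not preserve limits, and there is no mapping-space formula identifying $\Loc_{v_h}(\varprojlim Y_i)$ with the limit in the localised category. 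This is precisely the step your transfer argument needs and does not supply. Finally, the upgrade from finite limits (which the telescope formula does give, since sequential colimits of spectra commute with finite limits) to arbitrary products is the genuine technical core; the equivalence $\widetilde{\BK}_{h}\circ\Loop^{\infty}\simeq\Loc_{\Tel(h)}$ only controls $\widetilde{\BK}_{h}$ on infinite loop spaces and a general pointed \htype is not one, so the promised ``comparison argument'' is a placeholder for the actual work (Bousfield's $\Loop$-spectrum model for $\widetilde{\BK}_h$, or Heuts's inverse-limit description). In short: right strategy, but the three inputs you treat as formal are exactly what \cite{Bou01} and \cite{Heu21} prove.
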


\begin{proof}
  The factorisation follows from~\cref{thm:BK-functor}.\rom{4}. 
  See~\cite[Theorem~5.4 and Corollary~5.6]{Bou01} for the existence of the left adjoint.
\end{proof}

\begin{convention}
  The functor~$\BK_{h}$ is also known as the \emph{Bousfield--Kuhn functor}. 
\end{convention}

The definition of the Bousfield--Kuhn functor~$\BK_h$ is through construction. 
In the following theorem we give a universal property of the functor~$\BK_h$ for every natural number~$h \geq 1$.

\begin{theorem}\label{thm:universal-property-BK}
    For every natural number~$h \geq 1$ the adjunction~$\lBK_h \dashv \BK_h$ satisfies the following universal property:
    \begin{enumerate}
      \item Let~$\Cca$ be a stable~$\infty$\nobreakdash-category.
      Then composing with~$\lBK_{h}$ induces an~equivalence
      \[
      \infFun^{\rex}(\Cca, \infSp_{\Tel(h)}) \xrightarrow{\sim} \infFun^{\rex}\left(\Cca, \infHType_{v_{h}}\right),
      \]
      of~$\infty$\nobreakdash-categories, where~$\infFun^{\rex}$ denotes the~$\infty$\nobreakdash-category of right exact functors, \ie functors that preserve finite colimits.
      \item Let~$\Dca$ be a presentable stable~$\infty$\nobreakdash-category.
      Composing with the Bousfield--Kuhn functor~$\BK_{h}$ induces an equivalence
       \[
       \infFun^{R}(\infSp_{\Tel(h)}, \Dca) \xrightarrow{\sim} \infFun^{R}\left(\infHType_{v_{h}}, \Dca\right),
       \]
       of~$\infty$\nobreakdash-categories, where~$\infFun^{R}$ denote the~$\infty$\nobreakdash-category of functors that are accessible and  preserves small limits, \ie functors admitting left adjoints. 
    \end{enumerate}  
\end{theorem}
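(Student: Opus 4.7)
My plan is to deduce both parts from \cref{thm:costab-vh-intro-2}, which identifies the adjunction $\lBK_{h} \dashv \BK_{h}$ with the costabilisation adjunction of $\infHType_{v_{h}}$. Under this identification $\infSp_{\Tel(h)}$ is realised as the inverse limit $\varprojlim_{n}\bigl(\susp_{v_{h}} \colon \infHType_{v_{h}} \to \infHType_{v_{h}}\bigr)$ in $\infCat$, and $\lBK_{h}$ corresponds to the projection at level zero. Both universal properties will then follow essentially formally, by combining this with the universal property of inverse limits and, for part~\rom{2}, the adjoint functor theorem.

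For \rom{1}, I would unfold the universal property of the inverse limit: for any $\infty$\nobreakdash-category $\Cca$,
\[
\infFun\bigl(\Cca, \infSp_{\Tel(h)}\bigr) \simeq \varprojlim_{n}\infFun\bigl(\Cca, \infHType_{v_{h}}\bigr),
\]
with transition functors given by postcomposition with $\susp_{v_{h}}$. When $\Cca$ is stable, $\susp_{\Cca}$ is invertible, so any right exact $F \colon \Cca \to \infHType_{v_{h}}$ yields a canonical compatible tower $\{F \circ \susp_{\Cca}^{-n}\}_{n \geq 0}$, using that right exact functors commute with suspension. This tower assembles to a right exact $\tilde F \colon \Cca \to \infSp_{\Tel(h)}$ satisfying $\lBK_{h} \circ \tilde F \simeq F$, and conversely every such lift is determined up to contractible choice by its composition with $\lBK_{h}$. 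This yields the equivalence of \rom{1}.

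For \rom{2}, I would pass to left adjoints using the standard duality $\infFun^{R}(\Eca, \Dca)^{\op} \simeq \infFun^{L}(\Dca, \Eca)$ for presentable $\infty$\nobreakdash-categories, where $\infFun^{L}$ denotes colimit-preserving accessible functors. Under this, precomposition with $\BK_{h}$ corresponds to postcomposition with its left adjoint $\lBK_{h}$, so \rom{2} reduces to showing that
\[
\lBK_{h} \circ (-) \colon \infFun^{L}\bigl(\Dca, \infSp_{\Tel(h)}\bigr) \xrightarrow{\sim} \infFun^{L}\bigl(\Dca, \infHType_{v_{h}}\bigr)
\]
is an equivalence for every presentable stable $\Dca$. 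Any $F \in \infFun^{L}(\Dca, \infHType_{v_{h}})$ is in particular right exact on the stable source $\Dca$, so part~\rom{1} produces a unique right exact lift $\tilde F$; it remains to check that $\tilde F$ itself preserves small colimits and is accessible. I expect this to be the delicate step of the formal derivation: both properties should follow from the fact that each component $F \circ \susp_{\Dca}^{-n}$ is colimit-preserving and accessible, together with the observation that small colimits in the inverse-limit presentation of $\infSp_{\Tel(h)}$ are jointly detected by the projections to $\infHType_{v_{h}}$, each of which preserves colimits. The bulk of the theorem's content is entirely carried by \cref{thm:costab-vh-intro-2}; the deduction above is formal once this colimit-detection property of the costabilisation is in hand.
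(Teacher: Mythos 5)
Your proposal is correct and follows essentially the same route as the paper: the entire content is carried by \cref{thm:costab-Lie-Tn} (identifying $\lBK_h \dashv \BK_h$ with the costabilisation adjunction of $\infHType_{v_h}$), after which both parts are formal consequences of the universal property of costabilisation, exactly as in \cref{cor:universal-property-BK}. The only cosmetic difference is that you unfold that universal property by hand via the inverse-limit tower $\varprojlim(\susp_{v_h})$ and the levelwise computation of colimits, whereas the paper invokes \cref{prop:costab-uni-prop} and \cref{cor:univ-right-adj-costab} (which rest on the dual of~\cite[Corollary~1.4.2.23]{HA} and the preservation of limits by $\infPrl \hookrightarrow \infCAT_{\infty}$); your colimit-detection observation for part~\rom{2} is precisely what those citations supply.
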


We will prove this theorem in~\cref{sec:costab-vh}.
The idea of the proof is to use a Lie-algebra model for the~$\infty$\nobreakdash-category~$\infHType_{v_h}$~(see~\cref{sec:Lie-en-operad}) and associate the adjunction~$\lBK_h \dashv \BK_h$ with the \emph{costabilisation} of the~$\infty$\nobreakdash-category~$\infHType_{v_h}$~(see~\cref{sec:costab-vh}).

\section{Higher enveloping algebras in monochromatic layers}\label{sec:higher-enveloping}

Following the work of Heuts~\cite{Heu21}, we review in~\cref{sec:Lie-en-operad} the spectral Lie algebra model for~$v_h$\nobreakdash-periodic \htypes.
We give our construction of the higher enveloping algebra functor~$\UE_n$ on~$\Tel(h)$\nobreakdash-local spectral Lie algebras in~\cref{sec:higher-enveloping-Th}, for every~$n \in \NN$ and every~$h \geq 1$, and prove that they induce a fully faithful embedding
\begin{equation}\label{eq:U-infty-intro-sec}
   \UE_{\infty} \colon \infAlg_{\infopLie}\left(\infSp_{\Tel(h)}\right) \to \varprojlim \infAlg_{\infopE_n}^{\nut}\left(\infSp_{\Tel(h)} \right)
\end{equation}
 of~$\infty$\nobreakdash-categories in~\cref{sec:fully-faithful}~(see~\cref{thm:U-infty-ff}).
Towards the end of the section we discuss three conjecturally equivalent candidates for the construction of higher enveloping algebras in an arbitrary presentable stable symmetric monoidal~$\infty$\nobreakdash-category~(see~\cref{sec:relation-to-KD}).
As consequences, we obtain that the~$\MK(h)$\nobreakdash-local version of the functor~\eqref{eq:U-infty-intro-sec} is also fully faithful~(see~\cref{thm:U-infty-ff-K(h)}), and we show that the~$\infty$\nobreakdash-category~$\infAlg_{\infopLie}(\infSp_{\QQ})$ of rational spectral Lie algebras~(which is the~$\infty$\nobreakdash-category underlying differential graded rational Lie algebras) is \emph{equivalent} to the inverse limit~$\varprojlim \infAlg_{\infopE_n}^{\nut}\left(\infSp_{\QQ} \right)$~(see~\cref{thm:U-infty-rational-equi}).
The latter motivates us to make the conjecture that the functor~$\UE_{\infty}$ in~\eqref{eq:U-infty-intro-sec} is also an equivalence~(see~\cref{conj:U-finty-equi-Sp-Th}).

\subsection{Lie algebras in stable monochromatic layers}\label{sec:Lie-en-operad}

In this section we recall this spectral Lie algebra model for~$\infHType_{v_h}$ from the work~\cite{Heu21} of Heuts, which we will use in our proof of~\cref{thm:universal-property-BK}.
For this purpose we first recall briefly some prerequisites and notations on~$\infty$\nobreakdash-operads.
The theory of~$\infty$\nobreakdash-operads is an important tool for describing algebraic structures on objects of~$\infty$\nobreakdash-categories.

In~\cite{HA} Lurie introduced his theory of~$\infty$\nobreakdash-operads, which is the~$\infty$\nobreakdash-categorical generalisation of the theory of coloured operad with values in simplicial sets.
One can use Lurie's theory of~$\infty$\nobreakdash-operads with values in~$\infHType$ to define~$\infty$\nobreakdash-operads with values in a presentable symmetric monoidal~$\infty$\nobreakdash-category, as discussed and used in~\cite{BraTh,BCN23,ShiTh,HLEn}.
We recall briefly this model for~$\infty$\nobreakdash-operads and refer the reader to~\loccit for the proofs.
Another construction of~$\infty$\nobreakdash-operads via symmetric sequence is given in~\cite{Hau22}.

\begin{passage}[Symmetric sequences and composition products]\label{para:symseq-compo}
  Let~$\Cca$ be a \emph{presentable symmetric monoidal~$\infty$\nobreakdash-category}, \ie~$\Cca$ is a commutative algebra object in the symmetric monoidal~$\infty$\nobreakdash-category~$\infPrl$ of presentable~$\infty$\nobreakdash-categories and small-colimit-preserving functors, see~\cite[Proposition~4.8.1.15]{HA}.
  Define the~$\infty$\nobreakdash-category~$\infFin^{\simeq} \coloneqq \infNv(\CatFin^{\cong})$ of finite sets and bijections, where~$\infNv(\blank)$ denotes the the simplicial nerve functor~(see~\cite[Definition~1.1.5.5]{Lur09}).
  Define the~$\infty$\nobreakdash-category~$\infSSeq(\Cca) \coloneqq \infFun(\infFin^{\simeq}, \Cca)$ of \emph{symmetric sequences} in~$\Cca$.
  We denote an object~$M$ in~$\infSSeq(\Cca)$ by~$M = \left(M(r)\right)_{r \geq 0}$ where~$M(r)$ is the evaluation of the functor~$M$ on the set of~$r$ elements.
  There exists a functor
  \begin{align*}
  \T_{\blank} \colon \infSSeq(\Cca) &\to \infFun(\Cca, \Cca) \\
  M &\mapsto \left(X \mapsto \coprod_{r \geq 0} \left(M(r) \otimes_{\Cca} X^{\otimes r} \right)_{\Perm_r}\right)
  \end{align*}
  
  Using the universal property of (the~$\infty$\nobreakdash-categorical) Day convolution, one can define a monoidal structure on~$\infSSeq(\Cca)$, called the~\emph{composition product~$\circledcirc$}, inspired by the work~\cite{Tri} in 1-categorical setting.
  Then the functor~$\T_{\blank}$ becomes a monoidal functor with respect to~$\circledcirc$ and the composition~$\circ$ of endofunctors.
  Note that an associative algebra object of the monoidal~$\infty$\nobreakdash-category~$\infFun(\Cca, \Cca)$ is called an \emph{monad} on~$\Cca$, see~\cite[Definition~4.7.0.1]{HA}.
\end{passage}

\begin{definition}
  Let~$\Cca$ be a presentable symmetric monoidal~$\infty$\nobreakdash-category.
  \begin{enumerate}
    \item An~\emph{$\infty$\nobreakdash-operad~$\Oca$ with values in~$\Cca$} is an associative algebra object of~$\infSSeq(\Cca)$~(with the composition~product as the monoidal structure).
      Denote the~$\infty$\nobreakdash-category of~$\infty$\nobreakdash-operads with values in~$\Cca$ by~$\infOpd(\Cca)$.
    \item An \emph{$\Oca$\nobreakdash-algebra} in~$\Cca$ is a left module in~$\Cca$ over the associated monad~$\T_{\Oca}$.
      We denote the~$\infty$\nobreakdash-category~$\infLmod_{\T_{\Oca}}(\Cca)$ of~$\Oca$\nobreakdash-algebras also by~$\infAlg_{\Oca}(\Cca)$.
    \item Whenever we denote~$\infLmod_{\T_{\Oca}}(\Cca)$ by~$\infAlg_{\Oca}(\Cca)$, we abbreviate the forgetful functor~$\frgt_{\T_{\Oca}} \colon \infLmod_{\T_{\Oca}}(\Cca) \to \Cca$~(see~\cite[Corollary~4.2.3.5]{HA}) by~$\frgt_{\Oca}$.
  \end{enumerate}
\end{definition}

\begin{passage}\label{sit:underlying-symseq}
  Let~$\Cca$ be a presentable symmetric monoidal~$\infty$\nobreakdash-category. 
  Then there exists a symmetric monoidal functor~$F \colon \infHType \to \Cca$ in~$\infPrl$, unique up to contractible choice, since~$\infHType$ is symmetric monoidal and is~the free presentable~$\infty$\nobreakdash-category generated by a point.
  By~\cite[Proposition~5.2.5.1]{ShiTh} we obtain an induced functor~$F \colon \infOpd(\infHType) \to \infOpd(\Cca)$.
  Let~$\Oca$ be an~$\infty$\nobreakdash-operad with values in~$\infHType$. 
  The~$\infty$\nobreakdash-category~$\infAlg_{\Oca}(\Cca)$ of \emph{$\Oca$\nobreakdash-algebras in~$\Cca$} is defined as the~$\infty$\nobreakdash-category~$\infAlg_{F(\Oca)}(\Cca)$.
\end{passage}

Let~$\Oca^{\otimes} \to \infFin_{\ast}$ be an~$\infty$\nobreakdash-operad in Lurie's model.
Denote the~$\infty$\nobreakdash-category of~$\Oca$\nobreakdash-algebras by~$\infAlg_{\Oca / \infopCom}(\Cca)$, see~\cite[Definition~2.1.3.1]{HA}. 
One can construct a symmetric sequence~$\Oca$ from the structure maps of~$\Oca^{\otimes}$, see~\cite[Situation~5.2.5.3]{ShiTh}. 

\begin{theorem}[{\cite[Theorem~5.2.5.5]{ShiTh}}]\label{thm:sym-fun-algebra}
  \begin{enumerate}
    \item The symmetric sequence~$\Oca$ underlies an~$\infty$\nobreakdash-operad with values in~$\infHType$.
    \item There exists an equivalence~$\infAlg_{\Oca / \infopCom}\left(\Cca\right) \simeq \infAlg_{\Oca}(\Cca)$ of~$\infty$\nobreakdash-categories. 
  \end{enumerate}
\end{theorem}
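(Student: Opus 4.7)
The plan is to prove the two parts in sequence, using the free algebra adjunction as the bridge.

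For part~(i), I would assemble the monoid structure on~$\Oca$ from the operadic composition maps encoded by the cocartesian fibration~$\Oca^{\otimes} \to \infFin_{\ast}$. Given active morphisms in~$\infFin_{\ast}$ of the form~$\langle k_1 + \cdots + k_n\rangle \to \langle n\rangle \to \langle 1\rangle$, Lurie's structure produces composition maps~$\Oca(n) \times \Oca(k_1) \times \cdots \times \Oca(k_n) \to \Oca(k_1 + \cdots + k_n)$, equivariant for the appropriate symmetric group actions. By unwinding the definition of the composition product~$\circledcirc$ via Day convolution~(as set up in~\cref{para:symseq-compo}), these maps are exactly the data required to furnish a morphism of symmetric sequences~$\Oca \circledcirc \Oca \to \Oca$; the unit~$\mathbf{1}_{\circledcirc} \to \Oca$ is supplied by the identity operation in~$\Oca(1)$. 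Associativity and unitality then follow from the corresponding coherences in Lurie's~$\infty$\nobreakdash-operadic framework, which are built into the cocartesian fibration structure.

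For part~(ii), the idea is to compare both categories as monadic~$\infty$\nobreakdash-categories over~$\Cca$ via their respective free–forgetful adjunctions. In Lurie's model, the forgetful functor~$\infAlg_{\Oca / \infopCom}(\Cca) \to \Cca$ admits a left adjoint, and the underlying object of the free~$\Oca$\nobreakdash-algebra on~$X \in \Cca$ is given by the usual formula
\[
\coprod_{r \geq 0} \left(\Oca(r) \otimes_{\Cca} X^{\otimes r}\right)_{\Perm_r},
\]
which is precisely~$\T_{\Oca}(X)$~(see for instance the operadic free algebra formula in~\cite{HA}). Since~$\T_{\blank}$ is a monoidal functor from~$(\infSSeq(\Cca), \circledcirc)$ to~$(\infFun(\Cca, \Cca), \circ)$, the monad structure on~$\T_{\Oca}$ constructed in part~(i) agrees with the one induced by the free-forgetful adjunction in Lurie's model. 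Both forgetful functors are conservative and preserve the relevant colimits~(sifted colimits for the Lurie side, using presentability of~$\Cca$), so the~$\infty$\nobreakdash-categorical Barr--Beck--Lurie theorem~\cite[Theorem~4.7.3.5]{HA} exhibits both as monadic over~$\Cca$ with the same monad~$\T_{\Oca}$, yielding the desired equivalence~$\infAlg_{\Oca / \infopCom}(\Cca) \simeq \infLmod_{\T_{\Oca}}(\Cca) = \infAlg_{\Oca}(\Cca)$.

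The main obstacle will be making part~(i) precise: one must check that the abstractly defined composition product~$\circledcirc$ on~$\infSSeq(\infHType)$ (coming from Day convolution) really does assemble the operadic composition maps into an associative multiplication in the expected manner. This is essentially a coherence compatibility between two different packagings of the same combinatorial data~(trees of operations versus active morphisms in~$\infFin_{\ast}$), and the cleanest path is to verify it at the level of the induced monad: the comparison is rigidified by the fact that the monoidal functor~$\T_{\blank}$ is fully determined by how it sends~$\circledcirc$\nobreakdash-multiplication to composition of endofunctors, and both sides are detected by their evaluation on free algebras. Once~(i) is established in this form, part~(ii) is then a formal monadicity argument.
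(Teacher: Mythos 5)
The paper itself offers no proof of this statement---it is imported wholesale from the cited thesis---so your proposal can only be measured against the standard argument. Its overall architecture is the right one: identify the free--forgetful monad of Lurie's model with~$\T_{\Oca}$ and conclude by Barr--Beck--Lurie. The purely monadic half of~(ii) is unproblematic: for a presentably symmetric monoidal~$\Cca$ the forgetful functor~$\infAlg_{\Oca / \infopCom}(\Cca) \to \Cca$ is conservative and preserves sifted colimits, and the free algebra formula identifies the \emph{underlying endofunctor} of its monad with~$\T_{\Oca}$.

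The genuine gap sits exactly where you locate ``the main obstacle,'' and your proposed fix does not close it. For~(i), exhibiting~$\Oca$ as an associative algebra in~$(\infSSeq(\infHType), \circledcirc)$ means producing the entire tower of coherence data of an~$\infopE_1$\nobreakdash-algebra, not a single map~$\Oca \circledcirc \Oca \to \Oca$ together with a check of associativity ``up to the coherences built into the cocartesian fibration'': those coherences are organised over~$\infFin_{\ast}$, whereas the ones you need are organised simplicially, and converting one packaging into the other is the content of the theorem rather than a formality. Your fallback---rigidify by passing to the induced monad and detect everything on free algebras---silently assumes that the monoidal functor~$\T_{\blank}$ lifts and reflects associative algebra structures, i.e.\ that it is fully faithful on the relevant mapping spaces. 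That is a real theorem about analytic functors which holds over~$\infHType$ but fails for a general presentable stable symmetric monoidal~$\Cca$ (coinvariants and tensor powers can destroy information), and it is neither stated nor proved in your proposal. The same issue resurfaces in~(ii): Barr--Beck gives~$\infAlg_{\Oca / \infopCom}(\Cca) \simeq \infLmod_{T}(\Cca)$ for the monad~$T$ of Lurie's adjunction, and replacing~$T$ by~$\T_{\Oca}$ requires an equivalence of \emph{monads}, i.e.\ of associative algebras in~$\infFun(\Cca, \Cca)$---which is the identical coherence comparison, asserted but not established. To repair the argument you should either construct the algebra structure on~$\Oca$ directly from the Day convolution description of~$\circledcirc$ (as in Haugseng's treatment or the cited thesis), or first prove full faithfulness of~$\T_{\blank}$ over~$\infHType$ and then transport the structure along the unique symmetric monoidal functor~$\infHType \to \Cca$ in~$\infPrl$.
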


\begin{example}\label{ex:triv-opd-C}
  Consider the following elementary example.
  Let~$\infopTriv_{\Cca}$ denotes the \emph{trivial~$\infty$\nobreakdash-operad} with values in a presentable symmetric monoidal~$\infty$\nobreakdash-category~$\Cca$: We have~$\infopTriv(1) \simeq \munit_{\Cca}$ and otherwise~$\infopTriv(r) = \emptyset_{\Cca}$, the initial object of~$\Cca$.
  Every object of~$\Cca$ is canonically an algebra over~$\infopTriv_{\Cca}$ via the identity morphism of~$X$.
  More precisely, we have an equivalence~$\infAlg_{\infopTriv}(\Cca) \simeq \Cca$ of~$\infty$\nobreakdash-categories, see~\cite[Example~2.1.3.5]{HA}.
\end{example}

\begin{proposition}\label{prop:operad-map-induced-functor}
  Let~$\Cca$ be a presentable symmetric monoidal~$\infty$\nobreakdash-category.
  A morphism~$f \colon \Oca \to \Pca$ in~$\infOpd(\Cca)$ induces a forgetful functor~$f^{\ast} \colon \infAlg_{\Pca}(\Cca) \to \infAlg_{\Oca}(\Cca)$ such that~$\frgt_{\Oca} \circ f^{\ast} \simeq \frgt_{\Pca}$.
\end{proposition}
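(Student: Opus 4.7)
The plan is to obtain the functor $f^{\ast}$ by applying the monoidal functor $\T_{\blank} \colon \infSSeq(\Cca) \to \infFun(\Cca, \Cca)$ of~\cref{para:symseq-compo} to $f$, which produces a morphism of monads, and then to invoke restriction of scalars for left modules.

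First I would apply $\T_{\blank}$ to the morphism $f \colon \Oca \to \Pca$ of associative algebra objects in $(\infSSeq(\Cca), \circledcirc)$. Since $\T_{\blank}$ is monoidal from $(\infSSeq(\Cca), \circledcirc)$ to $(\infFun(\Cca, \Cca), \circ)$, it sends associative algebras to associative algebras, and therefore sends $f$ to a morphism $\T_f \colon \T_{\Oca} \to \T_{\Pca}$ of monads on $\Cca$, functorial in $f$.

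Next I would invoke Lurie's restriction of scalars construction along a morphism of associative algebras, \cite[§4.6.2]{HA}, applied in the $\infty$-category $\infFun(\Cca, \Cca)$ to the morphism of monads $\T_f$. This yields the desired functor
\[
  f^{\ast} \colon \infLmod_{\T_{\Pca}}(\Cca) \to \infLmod_{\T_{\Oca}}(\Cca),
\]
which by construction takes a $\T_{\Pca}$-module structure on an object $X \in \Cca$ and precomposes its action map $\T_{\Pca}(X) \to X$ with $\T_f(X) \colon \T_{\Oca}(X) \to \T_{\Pca}(X)$.

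Finally, the compatibility $\frgt_{\Oca} \circ f^{\ast} \simeq \frgt_{\Pca}$ with the forgetful functors to $\Cca$ is immediate from the construction of restriction of scalars: on underlying objects the functor $f^{\ast}$ is the identity, only the action is restricted. No step here is really difficult; the only thing to verify carefully is that the appropriate instance of the general restriction-of-scalars construction of~\cite[§4.6.2]{HA} applies in our setting of monads on $\Cca$ inside the (non-presentable, non-small) monoidal $\infty$-category $\infFun(\Cca, \Cca)$. This is a purely size-theoretic issue that one resolves by passing to a sufficiently large Grothendieck universe in which $\Cca$ becomes small, and the resulting functor is independent of this choice.
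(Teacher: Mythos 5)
Your argument is correct and is essentially the paper's proof: the paper simply invokes restriction of scalars along the morphism of monads $\T_{\Oca} \to \T_{\Pca}$, citing \cite[Corollaries~4.2.3.2 and~4.2.3.3]{HA} for the induced functor on left modules and its compatibility with the forgetful functors. The only correction is the reference: the relevant restriction-of-scalars statements are in \cite[\S 4.2.3]{HA}, not \S 4.6.2.
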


\begin{proof}
  This is by~\cite[Corollaries~4.2.3.2 and~4.2.3.3]{HA}.
\end{proof}

\begin{proposition}\label{prop:adj-induced-alg}
  Let~$\Cca$ be a presentable symmetric monoidal~$\infty$\nobreakdash-category.
  Let~$f \colon \Oca \to \Pca$ be a morphism of~$\infty$\nobreakdash-operads with values in~$\Cca$. 
  There exists an adjunction
  \[
  f_{!} \colon \infAlg_{\Oca}(\Cca) \rightleftarrows \infAlg_{\Pca}(\Cca) \cocolon f^{\ast}. 
  \]
\end{proposition}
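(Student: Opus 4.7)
The plan is to obtain~$f_!$ as a left adjoint by applying the adjoint functor theorem for presentable~$\infty$\nobreakdash-categories. The ingredients required are that the two~$\infty$\nobreakdash-categories of algebras are presentable and that~$f^{\ast}$ preserves small limits and is accessible.

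For presentability, note that the monads~$\T_{\Oca}$ and~$\T_{\Pca}$ on the presentable~$\infty$\nobreakdash-category~$\Cca$ are accessible: by~\cref{para:symseq-compo} each~$\T_{\Oca}$ is assembled from coproducts of functors of the form~$X \mapsto (M(r) \otimes X^{\otimes r})_{\Perm_r}$, and every such functor preserves filtered colimits of sufficiently large cardinality since~$\Cca$ is presentable. Then~\cite[Corollary~4.2.3.7]{HA} implies that~$\infAlg_{\Oca}(\Cca) \simeq \infLmod_{\T_{\Oca}}(\Cca)$ and~$\infAlg_{\Pca}(\Cca)$ are presentable.

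For limit-preservation and accessibility of~$f^{\ast}$, I would use the identity~$\frgt_{\Oca} \circ f^{\ast} \simeq \frgt_{\Pca}$ from~\cref{prop:operad-map-induced-functor}. By monadicity~\cite[Corollary~4.2.3.5]{HA}, the forgetful functor~$\frgt_{\Oca}$ is conservative, preserves small limits, and creates~$\kappa$\nobreakdash-filtered colimits for every sufficiently large regular cardinal~$\kappa$ for which~$\T_{\Oca}$ is~$\kappa$\nobreakdash-accessible; the same holds for~$\frgt_{\Pca}$. Since~$\frgt_{\Oca}$ reflects equivalences and the composite~$\frgt_{\Oca} \circ f^{\ast} \simeq \frgt_{\Pca}$ preserves small limits, the functor~$f^{\ast}$ preserves small limits; the analogous argument using creation of~$\kappa$\nobreakdash-filtered colimits by~$\frgt_{\Oca}$ combined with preservation by~$\frgt_{\Pca}$ shows that~$f^{\ast}$ is accessible. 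The adjoint functor theorem~\cite[Corollary~5.5.2.9]{Lur09} then supplies the desired left adjoint~$f_!$. No serious obstacle is anticipated; the only delicate point is the cardinal bookkeeping in the accessibility argument. An explicit description of~$f_!$ as a two-sided bar construction~$f_!(A) \simeq \Pca \circledcirc_{\Oca} A$ is also available but is not needed for the bare existence of the adjunction.
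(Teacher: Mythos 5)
Your proposal is correct and follows essentially the same route as the paper: establish presentability of both algebra categories, limit-preservation and accessibility of~$f^{\ast}$, and then invoke the Adjoint Functor Theorem~\cite[Corollary~5.5.2.9]{Lur09}. The only difference is that the paper simply cites~\cite[Corollaries~4.2.3.3 and~4.2.3.7]{HA} for the properties of~$f^{\ast}$ that you re-derive by hand from conservativity and limit/filtered-colimit behaviour of the forgetful functors.
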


\begin{proof}
  It is shown in \cite[Corollary~4.2.3.3]{HA} that~$f^{\ast}$ preserves small limits.
  Since~$\Cca$ is presentable, the~$\infty$\nobreakdash-categories~$\infAlg_{\Oca}(\Cca)$ and~$\infAlg_{\Pca}(\Cca)$ are presentable and~$f^{\ast}$ is accessible by~\cite[Corollary 4.2.3.7]{HA}. 
  Thus, the existence of the adjunction follows from the Adjoint Functor Theorem~\cite[Corollary~5.5.2.9]{Lur09}.
\end{proof}

\begin{example}\label{ex:free-forg-adjunction}
  Let~$\Cca$ be a presentable symmetric monoidal~$\infty$\nobreakdash-category.
  We denote the adjunction induced by the morphism~$\infopTriv_{\Cca} \to \Oca$ of~$\infty$-operads with values in~$\Cca$ by 
  \[
  \free_{\Oca} \colon \Cca \rightleftarrows \infAlg_{\Oca}(\Cca) \cocolon \frgt_{\Oca}. 
  \]
\end{example}

\begin{passage}[The spectral Lie~$\infty$\nobreakdash-operad]\label{para:spectral-Lie-operad}
  We summarise here the introduction to the spectral Lie~$\infty$-operad as in~\cite[§5]{Heu20}.
  For the set~$\underline{n} = \{1, 2, \dots, n\}$ one can define the poset~$\widehat{\PC}_n$ of partitions of~$\underline{n}$:
  An element~$\lambda$ of~$\widehat{\PC}_n$ is an equivalence relation on~$\underline{n}$.
  We write~$\lambda \leq \lambda'$ if~$\lambda$ is finer than~$\lambda'$, \ie if~$x \sim_{\lambda} y$ then~$x \sim_{\lambda'} y$. 
  The minimal equivalence relation~$\widehat{0}$ is given by~$x \sim y$ if~${x = y}$, and the maximal equivalence relation~$\widehat{1}$ is given by~$x \sim y$ for every pair~$(x, y)$ of elements in~$\underline{n}$.
  We define the subset
  \[
  \PC_n \coloneqq \widehat{\PC}_n \setminus \left\{\widehat{0}, \widehat{1}\right\} \subseteqq \widehat{\PC}_n.
  \]
  It inherits its poset structure from~$\widehat{\PC}_n$.
  
  The \emph{spectral Lie~$\infty$\nobreakdash-operad~$\infopLie$} is an~$\infty$\nobreakdash-operad with values in the~$\infty$\nobreakdash-category~$\infSp$ of spectra, defined as the Koszul dual~(see~\cref{ex:KD-Lie-Com}) of the commutative~$\infty$-operad in~\cite[Corollary 8.8]{Chi12}.
  Denote the unreduced suspension by~$\diamond$.
  We have
  \[
  {\infopLie(r) \simeq \left(\susp\left(\lvert \PC_r\rvert^{\diamond}\right)\right)^{\vee}},
  \] 
  for every~$r \geq 1$, and~$\infopLie(0) = \pt$~(the zero spectrum).
  Denote the 1-categorical Lie operad with values in abelian groups by~$\opLie$, see for example~\cite[Definition~4.10]{AB21}.
  There exists an~isomorphism
  \[
  \opLie(r) \cong \widetilde{\Ho}_0\left(\susp^{r-1}\infopLie(r); \ZZ\right).
  \]
  of abelian groups. 
  That is, one can consider~$\infopLie$ as a spectral enhancement of~$\opLie$.
\end{passage}

The~$\infty$\nobreakdash-category~$\infSp$ is the presentable stable~$\infty$\nobreakdash-category freely generated by the sphere spectrum~$\SS$, see~\cite[Corollary~1.4.4.6]{HA}.
  Thus, for every presentable stable symmetric monoidal~$\infty$\nobreakdash-category~$\Cca$, there exists a symmetric monoidal functor~${F \colon \infSp \to \Cca}$ in~$\infPrl$, unique up to contractible choice;~$F$ is determined by its evaluation~$F(\SS) \simeq \munit_{\Cca}$.
Again by~\cite[Proposition~5.2.5.1]{ShiTh} we obtain an induced functor~$F \colon \infOpd(\infSp) \to \infOpd(\Cca)$.
Let~$\Oca$ be a \emph{spectral~$\infty$\nobreakdash-operad}, \ie an~$\infty$\nobreakdash-operad with values in~$\infSp$.
Then the~$\infty$\nobreakdash-category~$\infAlg_{\Oca}(\Cca)$ of \emph{$\Oca$\nobreakdash-algebras in~$\Cca$} is defined as the~$\infty$\nobreakdash-category~$\infAlg_{F(\Oca)}(\Cca)$.

\begin{definition}
  Let~$\Cca$ be a presentable stable symmetric monoidal~$\infty$\nobreakdash-category.
  A \emph{spectral Lie algebra} in~$\Cca$ is an algebra over the spectral Lie~$\infty$\nobreakdash-operad~$\infopLie$.
\end{definition}

\begin{remark}
  We refer the interested reader to~\cite[Proposition~5.2]{Cam20} and~\cite[§3.2]{Kja18} for some discussions about the Lie bracket and the Jacobi identity relation for spectral Lie~algebras.
\end{remark}

Now we can reformulate Quillen's Lie algebraic model for simply connected rational \htypes using the language of~$\infty$\nobreakdash-categories as follows.

\begin{passage}[The~$\infty$\nobreakdash-category~$\infD(\QQ)$]\label{para:infinity-rational-chain}
  One can equip the 1-category~$\CatCh_{\QQ}$ of rational chain complexes with a symmetric monoidal combinatorial projective model structure which is simplicial enriched, see~\cite[Remark~1.2.3.21, Proposition~7.1.2.8, Proposition~7.1.2.11, Construction~1.3.1.13]{HA}.
  
  The~$\infty$\nobreakdash-category~$\infD(\QQ)$ of the derived category of the rational numbers~$\QQ$ is defined as the underlying~$\infty$\nobreakdash-category of the model category~$\CatCh_{\QQ}$, \ie the localisation of~$\CatCh_{\QQ}$ at quasi-isomorphisms.
  Equivalently, the~$\infty$\nobreakdash-category~$\infD(\QQ)$ is a localisation of the~$\infty$\nobreakdash-category of the differential graded nerve of~$\CatCh_{\QQ}$, see \cite[Definition~1.3.5.8, Proposition~1.3.5.15]{HA}.
  We will need the following properties of~$\infD(\QQ)$.
  \begin{enumerate}
    \item The~$\infty$\nobreakdash-category~$\infD(\QQ)$ is a presentable symmetric monoidal~$\infty$\nobreakdash-category, see \cite[Proposition~1.3.4.22]{HA}.
    \item There exists an equivalence
      \[
      \infD(\QQ) \xrightarrow{\sim} \infMod_{\EM\QQ},
      \]
      of symmetric monoidal~$\infty$\nobreakdash-categories, where~$\infMod_{\EM\QQ}$ is the~$\infty$\nobreakdash-category of~$\EM\QQ$\nobreakdash-module spectra~(it is symmetric monoidal by~\cite[Theorem~4.5.2.1]{HA}).
      See~\cite[Theorem~7.1.2.13]{HA}.
  \end{enumerate}
\end{passage}

Let~$\CatAlg_{\opLie}(\CatCh_{\QQ})$ denote the model category of rational differential graded Lie algebras whose set of weak equivalences is denoted by~$W_{\opLie}$; the model structure is induced from the projective model structure of~$\CatCh_{\QQ}$, see~\cite[§4]{Hau}.

\begin{theorem}[{\cite[Corollary~4.11]{Hau}}]
  There exists an equivalence
  \[
  \infAlg_{\infopLie}(\infD(\QQ)) \xrightarrow{\sim} \CatAlg_{\opLie}(\CatCh_{\QQ})[W_{\opLie}^{-1}]
  \]
  of~$\infty$\nobreakdash-categories.
\end{theorem}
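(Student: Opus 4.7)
The plan is to combine a rectification theorem for dg operads with a formality identification for the rational spectral Lie operad.

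First, I would invoke a rectification theorem of Hinich--Pavlov--Scholbach type, in the form established by Haugseng in~\cite{Hau}, which states that for a suitably~$\Sigma$-cofibrant dg operad~$P$ in~$\CatCh_{\QQ}$, the simplicial localisation~$\CatAlg_{P}(\CatCh_{\QQ})[W_{P}^{-1}]$ is equivalent to the~$\infty$-category of algebras, in~$\infD(\QQ)$, over the~$\infty$-operad presented by~$P$ via the construction of~\cref{sit:underlying-symseq} applied to the underlying presentable symmetric monoidal~$\infty$-category~$\infD(\QQ)$. Applied to~$P = \opLie$, whose~$\Sigma$-cofibrancy is automatic because~$\QQ$ is of characteristic zero, this produces an equivalence between~$\CatAlg_{\opLie}(\CatCh_{\QQ})[W_{\opLie}^{-1}]$ and the~$\infty$-category of algebras over the~$\infty$-operad in~$\infD(\QQ)$ determined by the classical dg Lie operad.

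Second, I would identify this latter~$\infty$-operad with the base change of the spectral Lie~$\infty$-operad~$\infopLie$ along the essentially unique symmetric monoidal functor~$\infSp \to \infD(\QQ) \simeq \infMod_{\EM\QQ}$ in~$\infPrl$, using that~$\infSp$ is the free presentable stable symmetric monoidal~$\infty$-category on one generator. At the level of underlying symmetric sequences the comparison follows from~\cref{para:spectral-Lie-operad}: the isomorphism~$\opLie(r) \cong \widetilde{\Ho}_{0}(\susp^{r-1}\infopLie(r);\ZZ)$ carries the classical~$\Sigma_{r}$-action, and rationally all other homotopy groups of~$\infopLie(r)$ vanish.

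The main obstacle is promoting this termwise identification to an equivalence of~$\infty$-operads in~$\infD(\QQ)$, \ie matching the operadic compositions and not merely the arity spaces. I would handle this via Koszul duality: both the rationalisation of~$\infopLie$ and the~$\infty$-operad in~$\infD(\QQ)$ coming from the dg Lie operad are Koszul dual to the non-unital commutative~$\infty$-operad, and over~$\QQ$ the commutative~$\infty$-operad is essentially rigid so that its Koszul dual is determined up to contractible choice. Composing this formality equivalence with the rectification equivalence above yields the desired equivalence~$\infAlg_{\infopLie}(\infD(\QQ)) \xrightarrow{\sim} \CatAlg_{\opLie}(\CatCh_{\QQ})[W_{\opLie}^{-1}]$.
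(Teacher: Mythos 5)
The paper offers no argument for this statement: it is imported wholesale as~\cite[Corollary~4.11]{Hau}, so you are reconstructing the cited result rather than diverging from an argument in the text. Your first step is sound: over~$\QQ$ every dg operad is~$\Sigma$\nobreakdash-cofibrant by Maschke, and a Hinich-type rectification theorem identifies~$\CatAlg_{\opLie}(\CatCh_{\QQ})[W_{\opLie}^{-1}]$ with the~$\infty$\nobreakdash-category of algebras in~$\infD(\QQ)$ over the~$\infty$\nobreakdash-operad presented by the dg Lie operad.

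Your second step, however, has a genuine gap. The base change of~$\infopLie$ to~$\infD(\QQ)$ is \emph{not} equivalent to the~$\infty$\nobreakdash-operad presented by the classical dg Lie operad. With the paper's normalisation~$\infopLie(r) \simeq \left(\susp\left(\lvert \PC_r\rvert^{\diamond}\right)\right)^{\vee}$, the arity-$r$ term is concentrated in degree~$1-r$ rather than degree~$0$, and the~$\Perm_r$\nobreakdash-action on~$\widetilde{\Ho}_0\left(\susp^{r-1}\infopLie(r)\right)$ is the classical one twisted by the sign representation --- which is precisely why the paper records that isomorphism only as one \emph{of abelian groups}. In other words,~$(\infopLie)_{\QQ}$ is an operadic (de)suspension of~$\opLie$, not~$\opLie$ itself. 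Your Koszul-duality resolution conflates two conventions: in the paper's convention~$\KD(\Oca) = \Bac(\Oca)^{\vee}$ carries no shift, and its value on~$\infopE_{\infty}^{\nut}$ is the \emph{shifted} Lie operad, whereas the Ginzburg--Kapranov dual of the commutative operad --- the one that equals the classical~$\opLie$ --- builds the operadic suspension into its definition. So the two operads you assert are ``both Koszul dual to~$\infopE_{\infty}^{\nut}$'' are in fact distinct, differing by~$\opdsusp^{\pm 1}$, and the claimed identification of~$\infty$\nobreakdash-operads is false as stated. The theorem survives because algebras over an operad and over its operadic suspension form equivalent~$\infty$\nobreakdash-categories, via an equivalence that shifts the underlying object (compare the diagram~\eqref{diag:En-desuspension} for~$\infopE_n$); this shift is also why passing from a spectral Lie algebra model to Quillen's dg Lie algebra model moves homotopy groups by one degree. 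You must insert this operadic-suspension step explicitly for the argument to close.
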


\begin{theorem}[Quillen~\cite{Qui69}]\label{thm:Quillen}
  The~$\infty$\nobreakdash-category~$\infHType_{\QQ}^{\geq 2}$ of pointed simply-connected rational \htypes is equivalent to the~$\infty$\nobreakdash-category~$\infAlg_{\infopLie}\left(\infD(\QQ)\right)^{\geq 1}$ of connected differential graded Lie algebras over~$\QQ$.
\end{theorem}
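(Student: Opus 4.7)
The plan is to construct an adjunction between the two $\infty$-categories and verify it is an equivalence by a connectivity and generation argument. I would define the Quillen functor $\lambda \colon \infHType_{\QQ}^{\geq 2} \to \infAlg_{\infopLie}(\infD(\QQ))^{\geq 1}$ by sending a pointed simply-connected rational space $X$ to the primitive elements of the rational chains $C_{\bullet}(\Loop X; \QQ)$ on its loop space, a complex which is naturally a DG cocommutative Hopf algebra; taking primitives yields a connected DG Lie algebra, connected because $\Loop X$ is connected and $X$ is simply-connected. Its right adjoint $|\blank| \colon \infAlg_{\infopLie}(\infD(\QQ))^{\geq 1} \to \infHType_{\QQ}^{\geq 2}$ would be defined via the Chevalley--Eilenberg cocommutative coalgebra construction followed by a geometric realisation of cocommutative coalgebras, namely $\mathfrak{g} \mapsto |\CE(\mathfrak{g})|$.

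To show the adjunction is an equivalence, I would proceed in three steps. First, verify on the generators: compute that $\lambda(\usphere^n_{\QQ})$ is the free spectral Lie algebra on a generator in degree $n-1$, for each $n \geq 2$. This uses the rational James splitting of $\Loop \usphere^n_{\QQ}$ as a product of rational Eilenberg--MacLane spaces, together with the Milnor--Moore and Poincaré--Birkhoff--Witt theorems to identify primitives of the resulting cocommutative Hopf algebra. Second, show that both $\lambda$ and $|\blank|$ preserve the appropriate filtered and sifted colimits, so that once the unit and counit are equivalences on the generators (spheres on one side, free Lie algebras on the other), the result propagates to all objects via cell attachment. Third, use the connectivity hypothesis to ensure convergence of the Postnikov tower on both sides, reducing checks to finite stages.

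The principal obstacle is the convergence step: one must argue that every simply-connected rational space, and every connected rational DG Lie algebra, is recovered from its Postnikov truncations and that $\lambda$ commutes with these truncations up to a predictable shift. Rationally this is classical, but packaging it cleanly in the $\infty$-categorical setting is delicate. The most conceptual modern route, consistent with the framework of this paper, is to identify the Goodwillie derivatives of the identity functor on $\infHType^{\geq 2}_{\QQ}$ with the rational spectral Lie operad (following the arguments used to obtain~\eqref{eq:spectral-Lie-vh} in~\cite{Heu21}), and invoke the convergence of the Goodwillie tower on simply-connected rational objects; this immediately yields the desired equivalence. An alternative, more in line with Quillen's original strategy, is to import his zig-zag of Quillen equivalences between reduced simplicial groups, complete cocommutative Hopf algebras, and connected DG Lie algebras over $\QQ$, and verify that each step descends to an equivalence of underlying $\infty$-categories after passing to simply-connected rational objects.
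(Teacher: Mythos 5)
The paper does not prove this statement at all: it is Quillen's classical theorem, stated with the attribution \enquote{Quillen~\cite{Qui69}} and used as a black box (the only adjacent input the paper actually relies on is the comparison $\infAlg_{\infopLie}(\infD(\QQ)) \simeq \CatAlg_{\opLie}(\CatCh_{\QQ})[W_{\opLie}^{-1}]$ quoted from~\cite{Hau}). So there is no in-paper argument to match your proposal against; the honest comparison is with the literature.

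As an outline, your proposal correctly identifies the two standard routes: Quillen's original zig-zag of Quillen equivalences through simplicial groups, complete Hopf algebras and DG Lie algebras, and the modern Goodwillie-calculus route via the derivatives of the identity. Your computation on spheres is also correct (the primitives of $\Ho_{\bullet}(\Loop\usphere^n;\QQ)$ form the free graded Lie algebra on one generator in degree $n-1$, one- or two-dimensional according to parity). But two steps are thinner than you present them. First, in your step two the right adjoint $\mathfrak{g} \mapsto \lvert \CE(\mathfrak{g})\rvert$ is a right adjoint and does not preserve the colimits you need, so \enquote{propagating from generators via cell attachment} does not follow formally from checking unit and counit on spheres and free Lie algebras; one must separately establish full faithfulness of $\lambda$ (equivalently, compute mapping spaces), and that is exactly where the convergence of the relevant towers enters --- it is not a routine bookkeeping step. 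Second, identifying the derivatives of the identity on $\infHType_{\QQ}^{\geq 2}$ with the rational Lie operad does not \emph{immediately} yield the equivalence: one also needs that the Taylor tower of the identity converges on simply-connected rational objects \emph{and} that the resulting monadic structure on the limit of the tower agrees with the free Lie algebra monad (this is the content of the Behrens--Rezk/Heuts-style comparison, not a corollary of knowing the derivatives). Since you flag convergence as the principal obstacle and offer Quillen's zig-zag as a complete fallback, the proposal is a fair sketch of a known theorem rather than a self-contained proof --- which is also all the paper itself provides.
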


\begin{remark}
  Consider the composition
  \[
  \Lca \colon \infHType_{\QQ}^{\geq 2} \underset{\sim}{\xrightarrow{\BK_{0}}} \infAlg_{\infopLie}\left(\infD(\QQ)\right)^{\geq 1} \xrightarrow{\frgt_{\infopLie}} \infD(\QQ) \xrightarrow{\sim} \infMod_{\EM\QQ} \xrightarrow{\frgt} \infSp_{\QQ}.
  \]
  Given~$X \in \infHType_{\QQ}^{\geq 2}$, it is shown in~\cite{Qui69} that~$\pi_{\bullet}^{\st}\left(\Lca(X)\right)$ is isomorphic to~$\pi_{\bullet}(X) \otimes \QQ$.
  In words, the stable homotopy group of the underlying rational spectrum of the Lie algebra~$\BK_{0}(X)$ computes the rational homotopy group of~$X$.
  Recall from~\cref{rmk:BK-computes-vh-groups} that the Bousfield--Kuhn functor~$\BK_{h}$, for~$h \geq 1$, has a similar property as~$\Lca$.
\end{remark}

In his work~\cite{Heu21} Heuts generalises the aforementioned Quillen's theorem to~$v_h$\nobreakdash-periodic \htypes.

\begin{theorem}[Heuts]\label{thm:Heuts-vh-Lie}
  There exists an equivalence
  \[
  \infHType_{v_{h}} \simeq \infAlg_{\infopLie}\left(\infSp_{\Tel(h)}\right)
  \]
  of~$\infty$\nobreakdash-categories, for every natural number~$h \geq 1$. 
\end{theorem}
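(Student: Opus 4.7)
The plan is to establish the equivalence by a monadicity argument comparing two adjunctions with target~$\infSp_{\Tel(h)}$: on one side the stabilisation adjunction~$\susp_{v_h}^{\infty} \dashv \Loop^{\infty}$ exhibiting~$\infSp_{\Tel(h)}$ as the stabilisation of~$\infHType_{v_h}$, and on the other the free--forgetful adjunction~$\free_{\infopLie} \dashv \frgt_{\infopLie}$ for the spectral Lie~$\infty$\nobreakdash-operad from~\cref{ex:free-forg-adjunction}. If the two induced monads on~$\infSp_{\Tel(h)}$ can be identified, then monadicity will promote the identification to the desired equivalence of~$\infty$\nobreakdash-categories.

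First I would verify the hypotheses of the Barr--Beck--Lurie monadicity theorem for each adjunction. The forgetful functor~$\frgt_{\infopLie}$ is conservative and preserves sifted colimits by general properties of operadic algebras. More delicate is the conservativity of~$\susp_{v_h}^{\infty}$ on~$\infHType_{v_h}$, which fails for arbitrary pointed spaces but should hold in the $v_h$-periodic setting because a $v_h$-periodic equivalence is detected by the Bousfield--Kuhn functor, and~$\BK_h$ factors through~$\susp_{v_h}^{\infty}$ via~\cref{thm:BK-functor}.\rom{2}--\rom{3}.

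The heart of the argument is the identification of the two monads on~$\infSp_{\Tel(h)}$. Invoking Ching's theorem that the derivatives of~$\id_{\infHType_{\ast}}$ form the spectral Lie~$\infty$\nobreakdash-operad (see~\cref{para:spectral-Lie-operad}), one computes the monad~$\susp_{v_h}^{\infty} \circ \Loop^{\infty}$ from the Goodwillie tower of the identity functor on~$\infHType_{\ast}$. After $\Tel(h)$\nobreakdash-localisation the~$r$\nobreakdash-th layer becomes $(\infopLie(r) \otimes E^{\otimes r})_{h\Perm_r}$, matching termwise with the free Lie algebra monad, while the operadic composition product on~$\infopLie$ recovers the monadic multiplication.

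The main obstacle will be showing that the Goodwillie tower of~$\id_{\infHType_{\ast}}$ converges on objects of~$\infHType_{v_h}$ after $\Tel(h)$\nobreakdash-localisation. Since $v_h$-periodic equivalences are insensitive to connectivity, classical connectivity-based convergence arguments are unavailable; instead one must exploit Arone--Mahowald--Kuhn-type vanishing results showing that only $p^k$-th layers contribute $\Tel(h)$\nobreakdash-locally, together with a careful analysis of the resulting inverse limit and its interaction with both the cobar construction on Lie algebras and the Bousfield--Kuhn functor. Once convergence is established the comparison of monads is essentially formal, yielding the desired equivalence.
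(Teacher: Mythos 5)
The overall shape of your argument --- monadicity over $\infSp_{\Tel(h)}$ plus an identification of the resulting monad with the free spectral Lie algebra monad via the Goodwillie tower of the identity and $\Tel(h)$\nobreakdash-local Tate vanishing --- is indeed how Heuts proves this theorem; the paper itself only cites~\cite{Heu21} and sketches exactly these two steps. But you have attached the monadicity argument to the wrong adjunction, and this is not a cosmetic slip. In the stabilisation adjunction $\susp^{\infty}_{v_h} \dashv \Loop^{\infty}_{v_h}$ the functor landing in $\infSp_{\Tel(h)}$ is the \emph{left} adjoint, so this adjunction induces a monad on $\infHType_{v_h}$ and only a \emph{comonad} $\susp^{\infty}_{v_h}\Loop^{\infty}_{v_h}$ on $\infSp_{\Tel(h)}$; there are no ``two induced monads on $\infSp_{\Tel(h)}$'' to compare. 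Barr--Beck applied to your adjunction could at best exhibit $\infHType_{v_h}$ as \emph{coalgebras} over that comonad, whose Goodwillie layers are $(E^{\otimes r})_{h\Perm_r}$ with no copy of $\infopLie(r)$ in sight --- that is the Koszul-dual ``commutative coalgebra model'', whose validity is a separate and much harder question tied to convergence of the Goodwillie tower, not the Lie algebra model. The adjunction you need is the Bousfield--Kuhn adjunction $\lBK_h \dashv \BK_h$, in which $\BK_h\colon \infHType_{v_h}\to\infSp_{\Tel(h)}$ is the \emph{right} adjoint; its monad $\BK_h\circ\lBK_h$ is the one computed by applying $\BK_h$ to the Goodwillie tower of the identity evaluated on $\Loop^{\infty}E$, with layers $(\infopLie(r)\otimes E^{\otimes r})_{h\Perm_r}$, and the monadicity of this adjunction is the content of~\cite{EHMM19}.

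A second, related error: the claim that $\BK_h$ factors through $\susp^{\infty}_{v_h}$, which you use to argue conservativity of $\susp^{\infty}_{v_h}$, is false. \cref{thm:BK-functor}.\rom{3} gives $\BK_h\circ\Loop^{\infty}\simeq \Loc_{\Tel(h)}$, a statement about the opposite composite; the unit of $\susp^{\infty}\dashv\Loop^{\infty}$ only yields a natural transformation from $\BK_h$ to $\susp^{\infty}_{v_h}$, not a factorisation. Whether $\susp^{\infty}_{v_h}$ is conservative --- equivalently, whether $\Tel(h)$\nobreakdash-homology detects $v_h$\nobreakdash-periodic homotopy equivalences --- is precisely the kind of statement one cannot assume here. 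By contrast, conservativity of $\BK_h$ is immediate from \cref{thm:BK-functor}.\rom{2}, since $\BK_h$ computes $v_h$\nobreakdash-periodic homotopy groups. If you replace your adjunction by $\lBK_h\dashv\BK_h$ throughout, the remaining ingredients you list (Ching's identification of the derivatives of the identity with $\infopLie$, and the $\Tel(h)$\nobreakdash-local vanishing results that kill all but the $p^k$\nobreakdash-th layers and split the tower) are the correct ones for identifying the monad.
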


\begin{passage}
  In the proof of the theorem there are the following two main steps.
  \begin{enumerate}
    \item The Bousfield--Kuhn adjunction~(see~\cref{thm:BK-adjunction})
      \[
      \lBK_{h} \colon \infSp_{\Tel(h)} \rightleftarrows \infHType_{v_{h}}  \cocolon \BK_{h}
      \]
      is monadic, see~\cite{EHMM19}.
    \item The monad~$\BK \circ \lBK$ is equivalent to the arity-wise~$\Tel(h)_{\bullet}$\nobreakdash-localisation of the monad~$\T_{\infopLie}$ associated with spectral Lie~$\infty$\nobreakdash-operad, see~\cite[Theorem~4.21]{Heu21}.
  \end{enumerate}
  In other words, we have the following commutative diagram
  \[
    \begin{tikzcd}
    \infHType_{v_{h}} \arrow[rr, "\BK_{h}"] \arrow[rd, "\sim", "\BK_{h}'"'] && \infSp_{\Tel(h)} \\
    & \infAlg_{\infopLie}\left(\infSp_{\Tel(h)}\right) \arrow[ru, "\frgt_{\infopLie}"']
    \end{tikzcd}
  \]
  of~$\infty$\nobreakdash-categories.
  In particular, under the equivalence~$\infHType_{v_h} \simeq \infAlg_{\infopLie}\left(\infSp_{\Tel(h)}\right)$, we can view the Bousfield--Kuhn adjunction~$\lBK_h \dashv \BK_{h}$ equivalently as the adjunction~$\free_{\infopLie} \dashv \frgt_{\infopLie}$ of~$\Tel(h)$\nobreakdash-local spectral Lie algebras, see~\cref{ex:free-forg-adjunction}.
\end{passage}

\begin{passage}[Stabilisation of the~$\infty$\nobreakdash~category~$\infHType_{v_{h}}$]\label{para:vh-stabilisation}
  Let~$\Cca$ be a presentable stable symmetric monoidal~$\infty$\nobreakdash-category.
  The~$\infty$\nobreakdash-operad~$\infopLie$ is equipped with two morphisms
  \[
  \infopTriv_{\infSp} \xrightarrow{i} \infopLie \xrightarrow{a} \infopTriv_{\infSp}
  \]
  of~$\infty$\nobreakdash-operads where~$\infopTriv_{\infSp}(1) \to \infopLie(1)$ and~$\infopLie(1) \to \infopTriv_{\infSp}(1)$ are both the identity morphisms.
  The morphisms~$i$ and~$a$ induce the following adjunctions:  
  \begin{equation}\label{eq:two-adj-Lie-alg}
    \Cca \underset{\frgt_{\infopLie}}{\overset{\free_{\infopLie}}{\rightleftarrows}} \infAlg_{\infopLie}(\Cca)  \underset{\triv_{\infopLie}}{\overset{\indc_{\infopLie}}{\rightleftarrows}}  \Cca.
  \end{equation}
  In particular, we have~$\indc_{\infopLie} \circ \free_{\infopLie} \simeq \id \simeq \frgt_{\infopLie} \circ \triv_{\infopLie}$, since~$a \circ i \simeq \id$.
  Furthermore, by~\cite[Theorem~4.3]{Heu20}, the adjunction~$\indc_{\infopLie} \dashv \triv_{\infopLie}$ exhibits~$\Cca$ as the stabilisation of the~$\infty$\nobreakdash-category~$\infAlg_{\infopLie}(\Cca)$.
  
  Now let~$\Cca$ be the~$\infty$\nobreakdash-category~$\infSp_{\Tel(h)}$ of~$\Tel(h)$\nobreakdash-local spectra. 
  Under the equivalence of~\cref{thm:Heuts-vh-Lie} the two adjunctions~\eqref{eq:two-adj-Lie-alg} correspond to the following two adjunctions
  \[
  \infSp_{\Tel(h)} \underset{\BK_{h}}{\overset{\lBK_{h}}{\rightleftarrows}} \infHType_{v_h}  \underset{\Loop^{\infty}_{v_{h}}}{\overset{\susp^{\infty}_{v_{h}}}{\rightleftarrows}}  \infSp_{\Tel(h)}
  \]
  associated with~$\infHType_{v_h}$, where the adjunction~$\susp^{\infty}_{v_{h}} \dashv \Loop^{\infty}_{v_{h}}$ exhibits~$\infSp_{\Tel(h)}$ as the stabilisation of~$\infHType_{v_{h}}$, see~\cite[§1.4]{HA}.
  In particular, we have an equivalence~$
  \susp^{\infty}_{v_{h}} \simeq \Loc_{\Tel(h)} \circ \susp^{\infty}_{\infHType_{\ast}}$ of functors, where we abuse the notation~$\susp^{\infty}_{\infHType_{\ast}}$ to denote the restriction of the suspension spectrum functor to the~$\infty$\nobreakdash-subcategory~$\infHType_{v_h}$.
  See~\cite[§3.3]{Heu21} for more details.
\end{passage}

\subsection{Higher enveloping algebras in~\texorpdfstring{$\infSp_{\Tel(h)}$}{Sp_{T(h)}}}\label{sec:higher-enveloping-Th}

To every Lie algebra over a field~$k$ we can associate its universal enveloping algebra, which is an augmented associated algebra over~$k$.
In this and the next section we will discuss a similar construction for spectral Lie algebras.
Let~$n \in \NN$ and consider Lurie's~$\infty$\nobreakdash-operad~$\infopE_n^{\otimes}$ modelled on the topological little~$n$\nobreakdash-disks operad~$\opE_n$, see~\cite[§4.1]{Fre17-1} and~\cite[§5.1]{HA}.
In this section we will construct a commutative diagram
\begin{equation}\label{diag:Lie-En-Th}
\begin{tikzcd}[row sep = huge, column sep = small]
  & \infAlg_{\infopLie}\left(\infSp_{\Tel(h)}\right) \arrow[rrrrd] \arrow[rrrd, "\UE_1"']  \arrow[rd] \arrow[d, "\UE_n"'] \arrow[ld, ] &             &             &   \\
  \cdots \arrow[r] & \infAlg_{\infopE_{n}}^{\nut}\left(\infSp_{\Tel(h)}\right) \arrow[r, "\rB_n"']                                                & \infAlg_{\infopE_{n-1}}^{\nut}\left(\infSp_{\Tel(h)}\right) \arrow[r, "\rB_{n-1}"'] & \cdots \arrow[r] & \infAlg_{\infopE_{1}}^{\nut}\left(\infSp_{\Tel(h)}\right) \arrow[r, "\rB_{1}"'] & \infSp_{\Tel(h)},
\end{tikzcd}
\end{equation}
in~$\infPrl$, relating spectral Lie algebras and non-unital~$\infopE_n$\nobreakdash-algebras in~$\infSp_{\Tel(h)}$.
We consider~$\UE_1$ as the universal enveloping algebra functor and call~$\UE_n$ the~\emph{$n$\nobreakdash-th higher enveloping algebra} functor, inspired by the work~\cite{Knu18} of Knudsen.
From~\eqref{diag:Lie-En-Th} we obtain an adjunction
\begin{equation}\label{eq:U-T-infty-adjunction}
\UE_{\infty} \colon \infAlg_{\infopLie}((\infSp_{\Tel(h)}) \rightleftarrows \varprojlim_{n}\infAlg_{\infopE_n}^{\nut}(\infSp_{\Tel(h)}) \cocolon \rUE_{\infty}.
\end{equation}
by the universal property of limits of~$\infty$\nobreakdash-categories.
In~\cref{sec:fully-faithful} we will show that the functor~$\UE_{\infty}$ is fully faithful~(see~\cref{thm:U-infty-ff}).

\begin{passage}[Non-unital and augmented~$\infopE_n$\nobreakdash-algebras]
  Recall from~\cref{thm:sym-fun-algebra} that we can associate to the~$\infty$\nobreakdash-operad~$\infopE_n^{\otimes}$ an object~$\infopE_n$ in~$\infOpd(\infHType)$.
  For~$n \in \NN$ the~$\infty$\nobreakdash-operad~$\infopE_n$ is a \emph{unital} operad, \ie~$\infopE_n(0)$ is equivalent to the monoidal unit~$\pt$ of~$\infHType$~(with the Cartesian monoidal structure).
  Let~$\Cca$ be a presentable symmetric monoidal~$\infty$\nobreakdash-category. 
  Then an~$\infopE_n$\nobreakdash-algebra~$X \in \Cca$ admits a unit map~$\munit_{\Cca} \to X$, where~$\munit_{\Cca}$ denotes the symmetric monoidal unit of~$\Cca$.
  We consider the following two variants of~$\infopE_n$\nobreakdash-algebras: Augmented~$\infopE_n$\nobreakdash-algebras and non-unital~$\infopE_n$\nobreakdash-algebras.
  
  An \emph{augmented~$\infopE_n$\nobreakdash-algebra} in~$\Cca$ is an~$\infopE_n$\nobreakdash-algebra~$X$ in~$\Cca$ together with a augmentation map~$X \to \munit_{\Cca}$.
  More precisely, the~$\infty$\nobreakdash-category of augmented~$\infopE_n$\nobreakdash-algebra is defined as
  \[
  \infAlg_{\infopE_n}^{\aug}(\Cca) \coloneqq \infAlg_{\infopE_n}\left(\Cca\right)_{/ \munit_{\Cca}}.
  \]
  Moreover, the canonical functor~$\Cca_{/ \munit_{\Cca}} \to \Cca$ is symmetric monoidal and thus induces an~equivalence
  \begin{equation}\label{eq:aug-alg-cat-two-way}
    \infAlg_{\infopE_n}(\Cca_{/ \munit_{\Cca}}) \to \infAlg_{\infopE_n}\left(\Cca\right)_{/ \munit_{\Cca}}
  \end{equation}
  of~$\infty$\nobreakdash-categories, by the universal property of the~$\infty$\nobreakdash-overcategory~$\infAlg_{\infopE_n}\left(\Cca\right)_{/ \munit_{\Cca}}$ and the construction of the symmetric monoidal structure on~$\Cca_{/ \munit_{\Cca}}$, see~\cite[Definition~2.2.2.1 and~Notation~2.2.2.3]{HA}.
   Whenever we denote~$\infAlg_{\infopE_n}(\Cca_{/ \munit_{\Cca}})$ by~$\infAlg_{\infopE_n}^{\aug}(\Cca)$, we denote the forgetful functor~$\infAlg_{\infopE_n}(\Cca_{/ \munit_{\Cca}}) \to \Cca_{/ \munit}$ by 
  \[
  \frgt^{\aug}_{\infopE_n} \colon \infAlg_{\infopE_n}^{\aug}(\Cca) \to \Cca_{/ \munit}.
  \]
  
  The \emph{deunitalisation} of~$\infopE_n$ is a \emph{non-unital}~$\infty$\nobreakdash-operad~$\infopE_n^{\nut}$ where~$\infopE_n^{\nut}(r) \simeq \infopE_n(r)$ for every~$r \neq 0$ and~$\infopE_n^{\nut}(0)$ is equivalent to the initial object~$\emptyset$ of~$\infHType$.
  See~\cite[Definition~5.3.2.11]{ShiTh} for a precise definition (via symmetric sequences).
  Denote the~$\infty$\nobreakdash-category of~$\infopE_n^{\nut}$\nobreakdash-algebras by~$\infAlg_{\infopE_n}^{\nut}(\Cca)$.
  We denote the forgetful functor~$\infAlg_{\infopE_n}^{\nut}(\Cca) \to \Cca$ by~$\frgt_{\infopE_n}^{\nut}$.
  Note that~$X \in \infAlg_{\infopE_n}^{\nut}(\Cca)$ does not necessarily admit a unit map~$\munit_{\Cca} \to X$, in comparison with objects in~$ \infAlg_{\infopE_n}(\Cca)$.
  
  Assume in addition that~$\Cca$ is stable.
  The \emph{augmentation ideal} functor~$\widetilde{\augideal}_{\infopE_n}$ is defined as the following composition
  \[
  \widetilde{\augideal}_{\infopE_n} \colon \infAlg_{\infopE_n}^{\aug}(\Cca) \xrightarrow{\frgt_{\infopE_n}^{\aug}} \Cca_{ /\munit_{\Cca}} \xrightarrow{\fib} \Cca, 
  \]
  where~$\fib$ denotes the composition~$\Cca_{/ \munit_{\Cca}} \to \infFun(\infSplx^{1}, \Cca) \xrightarrow{\text{fibre}} \Cca$, assigning to an object~${X \to \munit_{\Cca}}$ of~$\Cca_{/ \munit_{\Cca}}$ its fibre.
\end{passage}

\begin{example}\label{ex:nu-unital-aug-equiv}
  Let~$\Cca$ be a presentable stable symmetric monoidal~$\infty$\nobreakdash-category.
  We have the following equivalences of~$\infty$\nobreakdash-categories:
  \begin{align*}
    \infAlg_{\infopE_0}(\Cca) &\simeq \Cca_{\munit_{\Cca}/} \\
    \infAlg_{\infopE_0}^{\nut}(\Cca) &\simeq \Cca \\
    \infAlg_{\infopE_0}^{\nut}(\Cca) &\simeq \infAlg_{\infopE_0}^{\aug}.
  \end{align*}
  The first equivalence is proven in~\cite[Proposition~2.1.3.9]{HA}~(and it also holds without the assumption that~$\Cca$ is stable);
  The equivalence~$\infopE_0^{\nut} \simeq \infopTriv$ of~$\infty$\nobreakdash-operads induces the second equivalence.
  Note that the augmentation ideal functor~$\widetilde{\augideal}_{\infopE_0}$ and~the trivial augmentation functor~$\widetilde{\trivaug}_{\infopE_0} \colon \Cca \to \infAlg_{\infopE_0}^{\aug}, \ X \mapsto X \oplus \munit_{\Cca}$ are inverse to each other.
  This fact and the second equivalence imply the third equivalence.
\end{example}
 
 \begin{proposition}\label{cor:equiv-non-unital-aug}
   Let~$\Cca$ be a presentable stable symmetric monoidal~$\infty$\nobreakdash-category and let~$n \geq 1$ be a natural number.
   \begin{enumerate}
     \item The augmentation ideal functor~$\widetilde{\augideal}_{\infopE_n}$ admits a left adjoint~$\widetilde{\trivaug}_{\infopE_n}$, called the \emph{trivial augmentation functor}.
     \item The adjunction~$\widetilde{\trivaug}_{\infopE_n} \dashv \widetilde{\augideal}_{\infopE_n}$ is monadic and the monad~$\widetilde{\augideal}_{\infopE_n} \circ \widetilde{\trivaug}_{\infopE_n}$  is equivalent to the monad~$\T_{\infopE_n^{\nut}}$ associated with the~$\infty$\nobreakdash-operad~$\infopE_n^{\nut}$.
     \item There exists the following commutative diagram
        \begin{equation}\label{eq:equiv-non-unital-aug}
          \begin{tikzcd}
          \infAlg_{\infopE_n}^{\aug}(\Cca) \arrow[d, "\frgt_{\infopE_n}^{\aug}"'] \arrow[r, "\sim"] &  \infAlg_{\infopE_n}^{\nut}(\Cca) \arrow[d, "\frgt_{\infopE_n^{\nut}}"] \\
          \infAlg_{\infopE_0}^{\aug}\left(\Cca\right)  \arrow[r, "\augideal_{0}"', "\sim"] & \infAlg_{\infopE_0}^{\nut}(\Cca)
          \end{tikzcd}
        \end{equation}
        of~$\infty$\nobreakdash-categories, where~$\augideal_{0}$ denotes the composition~$\infAlg_{\infopE_0}^{\aug}\left(\Cca\right) \xrightarrow{\widetilde{\augideal}_{\infopE_0}} \Cca \xrightarrow{\sim} \infAlg_{\infopE_0}^{\nut}(\Cca)$.
   \end{enumerate}
 \end{proposition}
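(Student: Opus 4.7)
The plan is to prove the three parts in order, with part~\rom{2} carrying the main technical weight. For part~\rom{1}, the augmentation ideal is by construction the composition $\widetilde{\augideal}_{\infopE_n} = \fib \circ \frgt^{\aug}_{\infopE_n}$ of two accessible, small-limit-preserving functors between presentable~$\infty$-categories: the forgetful functor preserves limits and is accessible by~\cite[Corollaries~4.2.3.5 and~4.2.3.7]{HA}, and $\fib \colon \Cca_{/\munit} \to \Cca$ as a pointwise limit construction preserves all small limits and is accessible. Presentability of $\infAlg_{\infopE_n}^{\aug}(\Cca) \simeq \infAlg_{\infopE_n}(\Cca_{/\munit})$ follows from presentability of~$\Cca_{/\munit}$ together with the presentability of algebras over an $\infty$-operad in a presentable symmetric monoidal~$\infty$-category. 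The Adjoint Functor Theorem~\cite[Corollary~5.5.2.9]{Lur09} then produces the desired left adjoint $\widetilde{\trivaug}_{\infopE_n}$.

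For part~\rom{2}, I would apply the Barr--Beck--Lurie monadicity theorem to~$\widetilde{\augideal}_{\infopE_n}$. Conservativity follows from stability: for $f \colon A \to B$ in $\infAlg_{\infopE_n}^{\aug}(\Cca)$, the morphism of fibre sequences $\widetilde{\augideal}(A) \to A \to \munit$ and $\widetilde{\augideal}(B) \to B \to \munit$ over the identity on~$\munit$ forces the middle arrow to be an equivalence in~$\Cca$ whenever $\widetilde{\augideal}(f)$ is, and conservativity of $\frgt_{\infopE_n}^{\aug}$ then concludes. Preservation of $\widetilde{\augideal}$-split geometric realisations holds because $\frgt^{\aug}_{\infopE_n}$ preserves sifted colimits and, in the stable setting, $\fib$ preserves all small colimits (identifying it with a shift of the cofibre). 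To identify the monad, I would observe that $\widetilde{\trivaug}(X)$ is the free $\infopE_n$-algebra on~$X$ with trivial augmentation, and the splitting $\infopE_n(0) \simeq \munit$ yields
\[
\widetilde{\augideal}\bigl(\widetilde{\trivaug}(X)\bigr) \simeq \coprod_{r \geq 1} \bigl(\infopE_n(r) \otimes X^{\otimes r}\bigr)_{\Perm_r} \simeq \T_{\infopE_n^{\nut}}(X),
\]
while the monad multiplication matches by naturality of these splittings.

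For part~\rom{3}, monadicity of both forgetful functors over~$\Cca$ with the common monad $\T_{\infopE_n^{\nut}}$ produces a canonical equivalence $\infAlg_{\infopE_n}^{\aug}(\Cca) \simeq \infAlg_{\infopE_n}^{\nut}(\Cca)$ over~$\Cca$. Both vertical forgetful functors in the square are induced by the $\infty$-operad morphism $\infopE_0 \hookrightarrow \infopE_n$ via~\cref{prop:operad-map-induced-functor}, so they are compatible with the horizontal monadic equivalences; the bottom arrow $\augideal_{0}$ is the $n = 0$ instance of the same equivalence composed with the identification of~\cref{ex:nu-unital-aug-equiv}, so the square commutes tautologically.

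The most delicate step is the identification of the monad structure in part~\rom{2}: while the computation on underlying objects is a clean splitting argument enabled by stability, matching the full monad multiplication requires careful tracking of the canonical equivalences. I expect this is cleanest via uniqueness of left adjoints, or by appealing to~\cite[\S5.4.4]{HA}, where Lurie's operadic treatment of augmented versus non-unital algebras in the stable setting provides precisely this comparison.
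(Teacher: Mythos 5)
Your proposal is correct and follows the standard argument: the paper's own proof is just a citation to~\cite[Proposition~7.3.4.5]{HA} and to the author's thesis, and those sources prove the statement exactly as you do, via the Adjoint Functor Theorem, Barr--Beck--Lurie monadicity of the augmentation ideal functor (using stability for conservativity and preservation of geometric realisations), and the splitting $\free^{\aug}_{\infopE_n}(X) \simeq \munit \oplus \T_{\infopE_n^{\nut}}(X)$ to identify the monad. The only cosmetic point is that your reference to~\cite[\S5.4.4]{HA} corresponds to the older numbering of the section the paper cites as Proposition~7.3.4.5.
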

 
 \begin{proof}
   See~\cite[Proposition~6.1.0.9, Corollary~6.1.0.10]{ShiTh} or~\cite[Proposition~7.3.4.5]{HA}
 \end{proof}
 
\begin{notation}
   In the situation of~\cref{cor:equiv-non-unital-aug} we denote the functors left adjoint to~$\frgt_{\infopE_n}^{\aug}$ and~$\frgt_{\infopE_n}^{\nut }$ by~$\free_{\infopE_n}^{\aug}$ and~$\free_{\infopE_n^{\nut}}$ respectively.
   The upper horizontal equivalence in~\eqref{eq:equiv-non-unital-aug} is denoted by
  \[
  \trivaug_{\infopE_n} \colon  \infAlg_{\infopE_n}^{\nut}(\Cca) \overset{\sim}{\rightleftarrows}  \infAlg_{\infopE_n}^{\aug}(\infopE_n) \cocolon \augideal_{\infopE_n} 
  \]
  On the underlying objets~$\trivaug_{\infopE_n}$ is given by the functor 
  \[
  \trivaug_{0} \colon  \Cca \simeq \infAlg_{\infopE_0}^{\nut}\left(\Cca\right) \to  \infAlg_{\infopE_0}^{\aug}\left(\Cca\right), \ X \mapsto X \oplus \munit_{\Cca},
  \]
  inverse to the functor~$\augideal_{0}$.
\end{notation}

\begin{passage}[The iterated Bar construction on~$\infAlg_{\infopE_n}^{\aug}\left(\Cca\right)$]\label{para:Bar-constr-En}
  We recall some facts of this construction from~\cite[§5.2]{HA}.
  Let~$\Cca$ be a symmetric monoidal~$\infty$\nobreakdash-category admitting geometric realisations of simplicial objects and totalisations of cosimplicial objects.
  \begin{enumerate}
    \item There exists a small-colimit-preserving functor
    \[
    \Bac_n \colon \infAlg_{\infopE_n}^{\aug}(\Cca) \to \infAlg_{\infopE_0}^{\aug}(\Cca),
    \]
    called the \emph{iterated Bar construction}
    \item Applying the Additivity Theorem of~$\infopE_n$\nobreakdash-algebras~\cite[Theorem~5.1.2.2]{HA} to~\eqref{eq:aug-alg-cat-two-way} we obtain equivalences
      \[
      \infAlg_{\infopE_n}^{\aug}(\Cca) \simeq \left(\infAlg_{\infopE_n}(\Cca)\right)_{/ \munit_{\Cca}} \simeq \infAlg_{\infopE_n}(\Cca_{/ \munit_{\Cca}}).
      \]
      of~$\infty$\nobreakdash-categories.
      Under these equivalences the functor~$\Bac_{n}$ is equivalent to the composition
      \[
      \infAlg_{\infopE_n}^{\aug}(\Cca) \simeq \infAlg_{\infopE_1}^{\aug}\left(\infAlg_{\infopE_{n-1}}(\Cca)\right) \xrightarrow{\Bac_{1}} \infAlg_{\infopE_{n-1}}^{\aug}(\Cca) \xrightarrow{\Bac_{n-1}} \infAlg_{\infopE_0}^{\aug}(\Cca).
      \]
    \item The functor~$\Bac_n$ induces an adjunction
    \[
    \Bac_{n} \colon \infAlg_{\infopE_n}^{\aug}(\Cca) \rightleftarrows \infAlg_{\infopE_0}^{\aug}(\Cca) \cocolon \Cobar_{n}.
    \]
  \end{enumerate}
    Assume now that for every weakly contractible simplicial set~$K$, the~$\infty$\nobreakdash-category~$\Cca$ admits~$K$\nobreakdash-indexed colimits and the symmetric monoidal product of~$\Cca$ preserves~$K$\nobreakdash-colimits in each variable.
  Then we have
    \begin{enumerate}[resume]
    \item For every object~$X \in \infAlg_{\infopE_0}^{\aug}(\Cca)$ there exists an equivalence
    \[
    \Bac_{n}\left(\free_{\infopE_n}^{\aug}(X)\right) \simeq \susp^{n}_{\infAlg_{\infopE_0}^{\aug}(\Cca)}(X).
    \]
  \end{enumerate}
\end{passage}

\begin{passage}[The functor~$\rB_{n+1}$]\label{para:non-unital-bar}
  The functor~$\rB_{n+1} \colon \infAlg_{\infopE_{n+1}}^{\nut}(\infSp_{\Tel(h)}) \to \infAlg_{\infopE_n}^{\nut}(\infSp_{\Tel(h)})$ from diagram~\eqref{diag:Lie-En-Th} is defined as the following composition
  \[
  \infAlg_{\infopE_{n+1}}^{\nut}\left(\infSp_{\Tel(h)}\right) \xrightarrow{\trivaug_{\infopE_n}} \infAlg_{\infopE_{n+1}}^{\aug}\left(\infSp_{\Tel(h)}\right) \xrightarrow{\Bac_{1}} \infAlg_{\infopE_{n}}^{\aug}\left(\infSp_{\Tel(h)}\right) \xrightarrow{\augideal_{\infopE_n}} \infAlg_{\infopE_{n}}^{\nut}\left(\infSp_{\Tel(h)}\right).
  \]
  In other words, we can regard~$\rB_{n}$ as the iterated Bar construction, under the equivalence between the augmented and the non-unital~$\infopE_n$\nobreakdash-algebras, see~\cref{ex:nu-unital-aug-equiv}.
  Note that we can replace~$\infSp_{\Tel(h)}$ by any presentable symmetric monoidal~$\infty$\nobreakdash-category in this construction.
\end{passage}

\begin{corollary}\label{cor:Bn-free-commutes}
  Let~$\Cca$ be a presentable symmetric monoidal~$\infty$\nobreakdash-category.
  There exists the following commutative diagram of~$\infty$\nobreakdash-categories:
  \[
  \begin{tikzcd}
    \infAlg_{\infopE_{n+1}}^{\nut}(\Cca) \arrow[r, "\rB_n"] & \infAlg_{\infopE_n}^{\nut}(\Cca)\\
    \Cca \arrow[u, "\free_{\infopE_{n+1}^{\nut}}"] \arrow[r, "\susp_{\Cca}"', "\sim"] & \Cca. \arrow[u, "\free_{\infopE_n^{\nut}}"']
  \end{tikzcd}
  \]
\end{corollary}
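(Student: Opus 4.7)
The plan is to unwind the definition $\rB_{n+1} = \augideal_{\infopE_n} \circ \Bac_1 \circ \trivaug_{\infopE_{n+1}}$ recalled in~\cref{para:non-unital-bar} and evaluate $\rB_{n+1} \circ \free_{\infopE_{n+1}^{\nut}}$ directly on an object $X \in \Cca$. All the needed inputs are already available: the compatibility between the $\trivaug$/$\augideal$\nobreakdash-equivalences and the free/forgetful adjunctions recorded in~\cref{cor:equiv-non-unital-aug}, the Dunn-additivity factorisation from~\cref{para:Bar-constr-En}~(ii), and, crucially, the identification of $\Bac_1$ on a free augmented $E_1$\nobreakdash-algebra with a suspension from~\cref{para:Bar-constr-En}~(iv).

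Concretely, I would first pass to left adjoints in the commutative square~\eqref{eq:equiv-non-unital-aug}, obtaining the natural equivalence $\trivaug_{\infopE_{n+1}} \circ \free_{\infopE_{n+1}^{\nut}} \simeq \free_{\infopE_{n+1}}^{\aug} \circ \trivaug_0$, where $\trivaug_0 \colon \Cca \to \infAlg_{\infopE_0}^{\aug}(\Cca)$ is the inverse equivalence of $\augideal_0$. Under the Dunn-additivity equivalence $\infAlg_{\infopE_{n+1}}^{\aug}(\Cca) \simeq \infAlg_{\infopE_1}^{\aug}(\infAlg_{\infopE_n}(\Cca))$ the functor $\Bac_1$ used to define $\rB_{n+1}$ becomes the $E_1$\nobreakdash-Bar construction in $\infAlg_{\infopE_n}(\Cca)$, and the free augmented $E_{n+1}$\nobreakdash-algebra factors as $\free_{\infopE_{n+1}}^{\aug} \simeq \free_{\infopE_1}^{\aug} \circ \free_{\infopE_n}^{\aug}$ by taking left adjoints of the corresponding factorisation of forgetful functors. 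Then~\cref{para:Bar-constr-En}~(iv), applied in the ambient category $\infAlg_{\infopE_n}(\Cca)$, yields
\[
\Bac_1\bigl(\free_{\infopE_1}^{\aug}(\free_{\infopE_n}^{\aug}(\trivaug_0(X)))\bigr) \simeq \susp_{\infAlg_{\infopE_n}^{\aug}(\Cca)}\bigl(\free_{\infopE_n}^{\aug}(\trivaug_0(X))\bigr).
\]
Since $\free_{\infopE_n}^{\aug}$ and $\trivaug_0$ are left adjoints they commute with suspension, so the right-hand side is equivalent to $\free_{\infopE_n}^{\aug}(\trivaug_0(\susp_{\Cca} X))$. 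Finally, applying $\augideal_{\infopE_n}$ and using the left-adjoint-square identification at level~$n$, combined with $\augideal_0 \circ \trivaug_0 \simeq \id$, produces $\free_{\infopE_n^{\nut}}(\susp_{\Cca} X)$, which is the asserted commutativity.

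The main obstacle I anticipate is the technical verification that the Bar--suspension identity of~\cref{para:Bar-constr-En}~(iv) is valid in the ambient $\infty$\nobreakdash-category $\infAlg_{\infopE_n}(\Cca)$ with the $E_1$\nobreakdash-monoidal structure inherited from Dunn additivity; this requires checking that $\infAlg_{\infopE_n}(\Cca)$ is presentable and that its monoidal product preserves weakly contractible colimits in each variable, both of which descend from the corresponding properties of the presentable symmetric monoidal $\infty$\nobreakdash-category $\Cca$. Once that technical point is established, everything else is a formal manipulation of adjoints, naturality squares and categorical equivalences, and the bulk of the remaining work is careful bookkeeping of identifications.
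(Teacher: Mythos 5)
Your argument is correct and uses exactly the same ingredients the paper's one-line proof cites (the Dunn-additivity factorisation and the Bar--suspension identity from \cref{para:Bar-constr-En}, together with the augmented/non-unital compatibility square of \cref{cor:equiv-non-unital-aug}); you have simply written out the bookkeeping that the paper leaves implicit. The technical point you flag about verifying the hypotheses of \cref{para:Bar-constr-En}~(iv) in the ambient category~$\infAlg_{\infopE_n}(\Cca)$ is handled as you expect, since the forgetful functor to~$\Cca$ is monoidal, conservative and preserves the relevant colimits.
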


\begin{proof}
  This follows from~\cref{para:Bar-constr-En}.\rom{2} and~\rom{4} and~\cref{cor:equiv-non-unital-aug}.
\end{proof}

Now we work towards defining the functor~$\UE_{n} \colon \infAlg_{\infopLie}\left(\infSp_{\Tel(h)}\right) \to \infAlg_{\infopE_n}^{\nut}(\infSp_{\Tel(h)})$, for every natural number~$n$.
We work in the following situation.

\begin{situation}
  Let~$\Cca$ be a presentable stable symmetric monoidal~$\infty$\nobreakdash-category.
  From now on we consider~$\infAlg_{\infopLie}(\Cca)$ as a pointed cartesian symmetric monoidal~$\infty$\nobreakdash-category.
  The zero object of~$\infAlg_{\infopLie}(\Cca)$ is the zero object~$\mathbbl{0}_{\Cca}$ of~$\Cca$ equipped with the trivial Lie algebra structure.
  It is also the symmetric monoidal unit of~$\infAlg_{\infopLie}(\Cca)$.
  Thus, we obtain the following equivalences of~$\infty$\nobreakdash-categories:
  \begin{align*}
    \infAlg_{\infopE_0}^{\aug}\left(\infAlg_{\infopLie}(\Cca)\right)  &\simeq \infAlg_{\infopLie}(\Cca) \\
    \infAlg_{\infopE_n}^{\aug}\left(\infAlg_{\infopLie}(\Cca)\right)  &\simeq \infAlg_{\infopE_n}\left(\infAlg_{\infopLie}(\Cca)\right)
  \end{align*}
\end{situation}

\begin{proposition}\label{prop:Loop-fac-En}
  The~$n$\nobreakdash-fold loop functor~$\Loop_{\infopLie}^n$ of the~$\infty$\nobreakdash-category~$\infAlg_{\infopLie}(\Cca)$ admits the following~factorisation:
  \begin{equation}\label{eq:loop-Lie-En}
    \begin{tikzcd}
      \infAlg_{\infopLie}(\Cca) \arrow[rr, "\Loop_{\infopLie}^n"] \arrow[rd, "\widetilde{\Loop}_{\infopLie}^n"'] && \infAlg_{\infopLie}(\Cca) \\
      & \infAlg_{\infopE_n}\left(\infAlg_{\infopLie}(\Cca)\right), \arrow[ru, "\frgt_{\infopE_n}"']
    \end{tikzcd}
  \end{equation}
  for every~$n \in \NN$.
  Furthermore, the functor~$\widetilde{\Loop}^n_{\infopLie}$ is an equivalence of~$\infty$\nobreakdash-categories.
\end{proposition}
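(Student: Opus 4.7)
The plan is to establish the two claims in turn. The first---existence of the factorisation~$\widetilde{\Loop}_{\infopLie}^n$---is an instance of the~$\infty$\nobreakdash-categorical recognition principle~\cite[\S 5.2.6]{HA}, which lifts the~$n$\nobreakdash-fold loop functor on any pointed~$\infty$\nobreakdash-category with finite products canonically through the forgetful functor from~$\infopE_n$\nobreakdash-algebras for the cartesian symmetric monoidal structure. The~$\infty$\nobreakdash-category~$\infAlg_{\infopLie}(\Cca)$ is pointed---with zero object the trivial Lie algebra on~$\mathbbl{0}_{\Cca}$---and its finite products are created by the conservative, limit-preserving forgetful functor~$\frgt_{\infopLie}$, so the recognition principle applies.

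For the equivalence, I would propagate the stability of~$\Cca$ through the monadic adjunction~$\free_{\infopLie} \dashv \frgt_{\infopLie}$. Since~$\frgt_{\infopLie}$ preserves finite products and commutes with~$\Loop^n$, it induces a commutative square of right adjoints
\[
\begin{tikzcd}
\infAlg_{\infopLie}(\Cca) \arrow[r, "\widetilde{\Loop}_{\infopLie}^n"] \arrow[d, "\frgt_{\infopLie}"'] & \infAlg_{\infopE_n}\left(\infAlg_{\infopLie}(\Cca)\right) \arrow[d] \\
\Cca \arrow[r, "\widetilde{\Loop}_{\Cca}^n"', "\sim"] & \infAlg_{\infopE_n}(\Cca),
\end{tikzcd}
\]
where the right vertical arrow postcomposes~$\infopE_n$\nobreakdash-algebra structures with~$\frgt_{\infopLie}$, and~$\infAlg_{\infopE_n}(\Cca)$ in the bottom row is taken with respect to the cartesian structure on~$\Cca$. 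The bottom arrow is an equivalence: stability of~$\Cca$ makes the cartesian product coincide with the biproduct, so every object carries a canonical~$\infopE_n$\nobreakdash-monoid structure~\cite[\S 2.4.3]{HA}, and moreover~$\Loop^n$ is an equivalence on~$\Cca$.

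Both vertical arrows are conservative, preserve all limits and sifted colimits, and admit left adjoints; hence they are monadic by Barr--Beck--Lurie. To conclude that the top arrow is an equivalence, the strategy is to check that the two monads on~$\Cca$ induced by traversing the square in the two possible ways agree; granted the bottom equivalence, this reduces to a diagram chase using the conservativity of~$\frgt_{\infopLie}$ together with the observation that every~$\infopE_n$\nobreakdash-monoid in~$\infAlg_{\infopLie}(\Cca)$ is automatically grouplike (its underlying~$\infopE_1$\nobreakdash-monoid in~$\Cca$ is grouplike because~$\Cca$ is stable, and grouplikeness is detected on underlying objects). The main obstacle I foresee is precisely this monad-comparison step: one must verify that the~$\infopE_n$\nobreakdash-monoid structure on~$\Loop_{\infopLie}^n(L)$ produced by the recognition principle transports under~$\frgt_{\infopLie}$ to the canonical biproduct-induced~$\infopE_n$\nobreakdash-structure on~$\Loop_{\Cca}^n \frgt_{\infopLie}(L)$. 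Once this compatibility is in place, Barr--Beck--Lurie closes the argument.
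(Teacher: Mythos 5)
Your first step is sound and agrees in substance with the paper: since $\infAlg_{\infopLie}(\Cca)$ is pointed and cartesian monoidal, $\Loop_{\infopLie}^n$ canonically refines to an $\infopE_n$\nobreakdash-algebra-valued functor. (The paper obtains the same lift $\widetilde{\Loop}^n_{\infopLie}$ as the right adjoint of the bar construction $\Bac_n$ in the decomposition $\susp^n_{\infopLie} \simeq \Bac_n \circ \free^{\aug}_{\infopE_n}$ of~\cite[Example~5.2.2.4]{HA}.) Your identification of the bottom arrow of your square as an equivalence for stable~$\Cca$, and the monadicity of the two vertical forgetful functors, are also fine.

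The gap is in how you propose to close the argument. Commutativity of the square of \emph{right} adjoints, monadicity of the verticals, and the bottom arrow being an equivalence do \emph{not} together force the top arrow to be an equivalence (compare commutative rings over associative rings, both monadic over sets, with the identity on the bottom). What is needed in addition is that the Beck--Chevalley mate, equivalently the induced map of monads, is an equivalence --- and that is a statement about the \emph{left} adjoints, namely the identity
\[
\frgt_{\infopLie} \circ \Bac_{n} \simeq \susp_{\Cca}^{n} \circ \frgt_{\infopLie} \circ \frgt_{\infopE_n},
\]
\ie that the $n$\nobreakdash-fold delooping of an $\infopE_n$\nobreakdash-algebra in $\infAlg_{\infopLie}(\Cca)$ is computed on underlying objects by $\susp^n_{\Cca}$. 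The compatibility you flag as the ``main obstacle'' --- that the $\infopE_n$\nobreakdash-structure on $\Loop^n_{\infopLie}(L)$ transports under $\frgt_{\infopLie}$ to the biproduct structure on $\Loop^n_{\Cca}\frgt_{\infopLie}(L)$ --- is only the commutativity of the right-adjoint square, which is formal because $\frgt_{\infopLie}$ preserves the limits out of which that structure is built; it does not yield the displayed identity. Nor does grouplikeness: even where every grouplike $\infopE_n$\nobreakdash-monoid deloops, $\Loop^n$ need not be fully faithful (pointed homotopy types), so grouplikeness cannot substitute for the computation. The paper supplies exactly this missing step: reduce to $n=1$, commute $\frgt_{\infopLie}$ past the geometric realisation defining $\Bac_1$, and identify the resulting two-sided bar construction of $(\frgt_{\infopLie}\circ\frgt_{\infopE_n})(X)$ in the \emph{cocartesian} monoidal structure on the stable $\Cca$ with $\susp_{\Cca}$ via~\cite[Example~5.2.2.4]{HA}; this is where stability of $\Cca$ genuinely enters. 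With that identity, the unit and counit of $\Bac_n \dashv \widetilde{\Loop}^n_{\infopLie}$ are checked to be equivalences after applying the conservative forgetful functors, and the monadic square becomes unnecessary.
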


\begin{proof}
  Since~$\infAlg_{\infopLie}(\Cca)$ is equipped with the cartesian symmetric monoidal structure, the~$n$-fold suspension endofunctor~$\susp^{n}_{\infopLie}$ on~$\infAlg_{\infopLie}(\Cca)$ admits the following decomposition
  \begin{equation}\label{eq:susp-lie-decom}
    \infAlg_{\infopLie}(\Cca) \simeq \infAlg_{\infopE_0}^{\aug}\left(\infAlg_{\infopLie}(\Cca)\right) \xrightarrow{\free_{\infopE_n}^{\aug}} \infAlg_{\infopE_n}^{\aug}\left(\infAlg_{\infopLie}(\Cca)\right) \xrightarrow{\Bac_{n}}  \infAlg_{\infopLie}(\Cca),
  \end{equation}
  by~\cite[Example~5.2.2.4]{HA}.
  Taking right adjoints to the functors in the above decomposition, we obtain the factorisation of~$\Loop_{\infopLie}^{n}$ as in~\eqref{eq:loop-Lie-En}.
  In other words, the functor~$\widetilde{\Loop}^{n}_{\infopLie}$ denotes the cobar construction, right adjoint to the Bar construction~$\Bac_n$ in~\eqref{eq:susp-lie-decom}.
  
  To finish the proof it remains to show that the unit and counit natural transformations of the adjunction~$\Bac_n \dashv \widetilde{\Loop}^n_{\infopLie}$ are equivalences of functors.
  Since the forgetful functors are conservative, it suffices to show that 
  \[
  \frgt_{\infopLie} \circ \frgt_{\infopE_n} \to \frgt_{\infopLie}  \circ \frgt_{\infopE_n} \circ \widetilde{\Loop}^n_{\infopLie} \circ \Bac_{n}, \text{ and}
  \]
  \[
  \frgt_{\infopLie} \circ  \Bac_{n} \circ \widetilde{\Loop}^n_{\infopLie} \to \frgt_{\infopLie}
  \]
  are equivalences.
  Both of these equivalences follow if we provide an equivalence
  \begin{equation}\label{eq:bar-frgt-commutes}
  \frgt_{\infopLie}  \circ \Bac_{n} \simeq \susp_{\Cca}^{n} \circ \frgt_{\infopLie} \circ \frgt_{\infopE_n},
  \end{equation}
  since the suspension~$\susp_{\Cca}$ and the loop~$\Loop_{\Cca}$ functors on~$\Cca$ are auto-equivalences.
  The line~\eqref{eq:bar-frgt-commutes} holds by the following arguments:
  \begin{enumerate}
  \item It suffices to show this for~$n = 1$ because~$\Bac_n$ is the~$n$\nobreakdash-fold iterated Bar construction~(see~\cref{para:Bar-constr-En}).
  \item The forgetful functors commute with geometric realisations, and~$\Bac_1(X)$ is equivalent to the geometric realisation of~$\Bac(\mathbbl{0}_{\Cca}, \frgt_{\infopE_n}(X), \mathbbl{0}_{\Cca})$ in~$\infAlg_{\infopLie}(\Cca)$.
  \item Consider~$\Cca$ as a cocartesian symmetric monoidal~$\infty$\nobreakdash-category.
  By~\cite[Corollary~2.4.3.10]{HA} we have an equivalence~$\Cca \simeq \infAlg_{\infopE_{\infty}}(\Cca)$ of~$\infty$\nobreakdash-categories.
  Moreover, the geometric realisation of~$\Bac\left(\mathbbl{0}_{\Cca}, (\frgt_{\infopLie} \circ \frgt_{\infopE_n})(X), \mathbbl{0}_{\Cca} \right)$ becomes equivalent to the Bar construction of the commutative algebra~$(\frgt_{\infopLie} \circ \frgt_{\infopE_n})(X)$ in~$\Cca$.
  Therefore, we obtain the equivalence in~$\Cca$
  \[
  \varinjlim\left(\Bac\left(\mathbbl{0}_{\Cca}, (\frgt_{\Oca} \circ \frgt_{\infopE_n})(X), \mathbbl{0}_{\Cca} \right)\right) \simeq \susp_{\Cca}((\frgt_{\Oca} \circ \frgt_{\infopE_n})(X)),
  \]
  by~\cite[Example~5.2.2.4]{HA}. \qedhere
  \end{enumerate}
\end{proof}

\begin{remark}
  From the proof we see that \Cref{prop:Loop-fac-En} still holds if we replace~$\infopLie$ by any other non-unital~$\infty$\nobreakdash-operad~$\Oca$.
\end{remark}

\begin{proposition}\label{prop:indc-Lie-Th-+-sym-mon}
  The functor 
  \[
  \indc_{\infopLie}^{+} \colon \infAlg_{\infopLie}\left(\infSp_{\Tel(h)}\right) \xrightarrow{\indc} \infSp_{\Tel(h)} \xrightarrow{\widetilde{\trivaug}_{\infopE_0}} \infAlg_{\infopE_0}^{\aug}\left(\infSp_{\Tel(h)}\right)
  \]
  is symmetric monoidal with respect to the standard symmetric monoidal structure on~$\infSp_{\Tel(h)}$ and the cartesian monoidal structure on~$\infAlg_{\infopLie}(\infSp_{\Tel(h)})$.
\end{proposition}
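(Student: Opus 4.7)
The plan is to reduce the claim to the classical fact that the unreduced suspension spectrum functor $\susp^{\infty}_{+}$ is symmetric monoidal, via Heuts' spectral Lie algebra model for~$\infHType_{v_h}$.

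First I would invoke Heuts' equivalence $\infAlg_{\infopLie}(\infSp_{\Tel(h)}) \simeq \infHType_{v_h}$ from~\cref{thm:Heuts-vh-Lie} as an equivalence of \emph{cartesian} monoidal $\infty$\nobreakdash-categories. Compatibility with products is automatic: in either category the categorical product is computed on underlying objects (since the forgetful functors~$\frgt_{\infopLie}$ and~$\Loop^{\infty}_{v_{h}}$ are right adjoints), and the Heuts equivalence intertwines~$\frgt_{\infopLie}$ with the Bousfield--Kuhn functor~$\BK_{h}$. By~\cref{para:vh-stabilisation}, the adjunction~$\indc_{\infopLie} \dashv \triv_{\infopLie}$ corresponds under this equivalence to the stabilisation adjunction $\susp^{\infty}_{v_{h}} \dashv \Loop^{\infty}_{v_{h}}$ with $\susp^{\infty}_{v_{h}} \simeq \Loc_{\Tel(h)} \circ \susp^{\infty}_{\infHType_{\ast}}$.

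Second, I would appeal to the classical fact that the unreduced suspension spectrum functor $\susp^{\infty}_{+} \colon \infHType_{\ast} \to (\infSp)_{/\SS} \simeq \infAlg_{\infopE_0}^{\aug}(\infSp)$, sending $X$ to $\susp^{\infty}(X_{+})$ together with its augmentation to $\susp^{\infty}(\pt_{+}) \simeq \SS$, is symmetric monoidal from the cartesian monoidal structure on the left to the overcategory symmetric monoidal structure induced by the standard smash product on the right. This amounts to the natural equivalence $\susp^{\infty}(X_{+}) \wedge \susp^{\infty}(Y_{+}) \simeq \susp^{\infty}((X \times Y)_{+})$, which in turn follows from the fact that $\susp^{\infty}$ is symmetric monoidal from the smash product on pointed \htypes to the smash product on spectra. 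Post-composing with the symmetric monoidal Bousfield localisation $\Loc_{\Tel(h)} \colon \infSp \to \infSp_{\Tel(h)}$ yields a symmetric monoidal functor $\infHType_{v_{h}} \to \infAlg_{\infopE_0}^{\aug}(\infSp_{\Tel(h)})$ with the desired source and target structures.

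Finally, I would transport this back along Heuts' equivalence to conclude that $\indc_{\infopLie}^{+} = \widetilde{\trivaug}_{\infopE_{0}} \circ \indc_{\infopLie}$ is symmetric monoidal. The main obstacle I expect is verifying that, under the various equivalences in play, the composite $\indc_{\infopLie}^{+}$ on the Lie algebra side really corresponds to $\Loc_{\Tel(h)} \circ \susp^{\infty}_{+}$ on the space side as symmetric monoidal functors. In particular one needs to check that the trivial augmentation step $\widetilde{\trivaug}_{\infopE_{0}}$ matches the disjoint-basepoint construction $X \mapsto X_{+}$, which is a diagram-chase combining \cref{para:vh-stabilisation}, the identifications in~\cref{ex:nu-unital-aug-equiv}, and the naturality of the unit $\munit_{\Cca} \to \indc_{\infopLie}^{+}(\mathfrak{g})$ with respect to the augmentation.
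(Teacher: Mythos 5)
Your proposal is correct and follows essentially the same route as the paper: the paper also defines $\susp^{\infty}_{+, v_{h}}$ as the composite $\infHType_{v_{h}} \hookrightarrow \infHType \xrightarrow{\Loc_{\Tel(h)} \circ \susp^{\infty}_{+}} \infSp_{\Tel(h)}$, observes it is symmetric monoidal factor by factor, lifts it to $\infAlg_{\infopE_0}^{\aug}(\infSp_{\Tel(h)})$ via the basepoint maps $\pt \to X \to \pt$, and identifies the result with $\indc_{\infopLie}^{+}$ under Heuts' equivalence. The final verification you flag as the main obstacle is handled in the paper exactly as you anticipate, by checking $\widetilde{\augideal}_{\infopE_0} \circ \susp^{\infty}_{+, v_{h}} \simeq \susp^{\infty}_{v_{h}}$ and using that $\widetilde{\augideal}_{\infopE_0}$ and $\widetilde{\trivaug}_{\infopE_0}$ are inverse equivalences.
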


\begin{proof}
  Let~$\susp^{\infty}_{+, v_{h}}$ denote the following composition
  \[
  \infHType_{v_{h}} \hookrightarrow \infHType \xrightarrow{\Loc_{\Tel(h)} \circ \susp^{\infty}_{+}} \infSp_{\Tel(h)}.
  \]
  The functor~$\susp^{\infty}_{+, v_{h}}$ is symmetric monoidal with respect to the cartesian monoidal structure on~$\infHType_{v_{h}}$~(given by taking the product of the underlying pointed \htypes) and the standard symmetric monoidal structure on~$\infSp_{\Tel(h)}$ given by the smash product of the underlying spectra, because each functor in the composition is, see~\cite[Proposition~2.2.1.9]{HA}. 
  
  Furthermore, we can lift~$\susp^{\infty}_{+, v_{h}}$ to a symmetric monoidal functor~
  \[
  \susp^{\infty}_{+, v_{h}} \colon \infHType_{v_{h}} \to \infAlg_{\infopE_0}^{\aug}(\infSp_{\Tel(h)});
  \] 
  every object~$X \in \infHType_{v_{h}}$ is pointed and the functor~$\susp^{\infty}_{+, v_{h}}$ assigns to the canonical maps~${\pt \to X \to \pt}$ the following morphisms
  \[
  \Loc_{\Tel(h)}(\SS)  \to \left(\Loc_{\Tel(h)} \circ \susp^{\infty}_{+} \right)(X) \simeq \left(\Loc_{\Tel(h)} \circ \susp^{\infty}\right)(X) \oplus \Loc_{\Tel(h)}(\SS)  \to \Loc_{\Tel(h)}(\SS),
  \]
  where the~$\Tel(h)_{\bullet}$\nobreakdash-local sphere spectrum~$\Loc_{\Tel(h)}(\SS)$ is the symmetric monoidal unit of~$\infSp_{\Tel(h)}$.
  Moreover, this shows that~$\widetilde{\augideal}_{\infopE_0} \circ  \susp^{\infty}_{+, v_{h}} \simeq  \susp^{\infty}_{v_{h}}$.
  Under the equivalence~${\infHType_{v_{h}} \simeq \infAlg_{\infopLie}(\infSp_{\Tel(h)})}$, the functor~$\susp^{\infty}_{+, v_{h}}$ corresponds to~$\indc_{\infopLie}^{+}$.
\end{proof}

\begin{corollary}
  For every~$n \in \NN$, the functor~$\indc_{\infopLie}^{+}$ induces a~functor
  \[
  \indc_{\infopLie}^{+,n} \colon \infAlg_{\infopE_n}\left(\infAlg_{\infopLie}\left(\infSp_{\Tel(h)}\right)\right) \to \infAlg_{\infopE_n}\left(\infAlg_{\infopE_0}^{\aug}\left(\infSp_{\Tel(h)}\right)\right) \simeq \infAlg_{\infopE_n}^{\aug}\left(\infSp_{\Tel(h)}\right).
  \]
\end{corollary}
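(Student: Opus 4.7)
The plan is to proceed almost purely formally from \cref{prop:indc-Lie-Th-+-sym-mon}. That proposition exhibits $\indc_{\infopLie}^{+}$ as a symmetric monoidal functor from $\infAlg_{\infopLie}(\infSp_{\Tel(h)})$, equipped with the cartesian symmetric monoidal structure, to $\infAlg_{\infopE_0}^{\aug}(\infSp_{\Tel(h)}) \simeq (\infSp_{\Tel(h)})_{/\munit}$, equipped with the symmetric monoidal structure inherited from the tensor product on $\infSp_{\Tel(h)}$ via the slice construction of \cite[\S2.2.2]{HA}. Since the formation of $\infopE_n$-algebras is functorial in symmetric monoidal functors between presentable symmetric monoidal $\infty$-categories, applying $\infAlg_{\infopE_n}(-)$ to $\indc_{\infopLie}^{+}$ produces a functor
\[
  \infAlg_{\infopE_n}(\indc_{\infopLie}^{+}) \colon \infAlg_{\infopE_n}\!\left(\infAlg_{\infopLie}(\infSp_{\Tel(h)})\right) \to \infAlg_{\infopE_n}\!\left(\infAlg_{\infopE_0}^{\aug}(\infSp_{\Tel(h)})\right).
\]

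To obtain the functor in the desired form, I then identify the target with $\infAlg_{\infopE_n}^{\aug}(\infSp_{\Tel(h)})$. By definition $\infAlg_{\infopE_0}^{\aug}(\infSp_{\Tel(h)}) \simeq (\infSp_{\Tel(h)})_{/\munit}$ as symmetric monoidal $\infty$-categories, and the equivalence~\eqref{eq:aug-alg-cat-two-way} yields
\[
  \infAlg_{\infopE_n}\!\left((\infSp_{\Tel(h)})_{/\munit}\right) \;\simeq\; \infAlg_{\infopE_n}(\infSp_{\Tel(h)})_{/\munit} \;=:\; \infAlg_{\infopE_n}^{\aug}(\infSp_{\Tel(h)}).
\]
Defining $\indc_{\infopLie}^{+,n}$ as the composite of $\infAlg_{\infopE_n}(\indc_{\infopLie}^{+})$ with this identification completes the construction.

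There is no real obstacle here; every step is a direct application of the formalism recalled earlier in the paper. The one point worth checking is that the symmetric monoidal structure on $\infAlg_{\infopE_0}^{\aug}(\infSp_{\Tel(h)})$ used in the statement of \cref{prop:indc-Lie-Th-+-sym-mon} coincides with the one implicit in~\eqref{eq:aug-alg-cat-two-way}, so that the two applications of $\infAlg_{\infopE_n}(-)$ can be composed; this is immediate from the uniqueness of the induced symmetric monoidal structure on the slice $(\infSp_{\Tel(h)})_{/\munit}$.
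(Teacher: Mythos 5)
Your argument is correct and is exactly the intended one: the paper states this corollary without proof, the implicit justification being precisely that a symmetric monoidal functor (here \cref{prop:indc-Lie-Th-+-sym-mon}) induces a functor on $\infopE_n$\nobreakdash-algebras, followed by the identification of the target recorded in \cref{para:Bar-constr-En}.\rom{2} via~\eqref{eq:aug-alg-cat-two-way}. Your only imprecision is writing $\infAlg_{\infopE_0}^{\aug}(\infSp_{\Tel(h)}) \simeq (\infSp_{\Tel(h)})_{/\munit}$, whereas by definition it is $\infAlg_{\infopE_0}(\infSp_{\Tel(h)})_{/\munit}$ (objects with both a unit and an augmentation); in the stable setting these agree up to equivalence, so nothing breaks.
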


\begin{passage}[The functor~$\UE_n$]\label{para:Un-Tn}
The functor~$\UE_{n} \colon \infAlg_{\infopLie}(\infSp_{\Tel(h)}) \to \infAlg_{\infopE_n}^{\nut}(\infSp_{\Tel(h)})$ from diagram~\eqref{diag:Lie-En-Th} is defined as the composition
\[
\infAlg_{\infopLie}\left(\infSp_{\Tel(h)}\right) \xrightarrow{\widetilde{\Loop}_{\infopLie}^n} \infAlg_{\infopE_n}\left(\infAlg_{\infopLie}\left(\infSp_{\Tel(h)}\right)\right) \xrightarrow{\indc_{\infopLie}^{+,n}} \infAlg_{\infopE_n}^{\aug}\left(\infSp_{\Tel(h)}\right) \xrightarrow{\augideal_{\infopE_n}} \infAlg_{\infopE_n}^{\nut}\left(\infSp_{\Tel(h)}\right).
\]
By construction~$\UE_n$ admits a right adjoint~$\rUE_n$, constructed as follows. 
The right adjoint~$\triv_{\infopLie}^{+}$ to~$\indc_{\infopLie}^{+}$ is lax monoidal and thus induces a functor
\[
\triv_{\infopLie}^{+, n} \colon \infAlg_{\infopE_n}^{\aug}\left(\infSp_{\Tel(h)}\right) \to \infAlg_{\infopE_n}\left(\infAlg_{\infopLie}\left(\infSp_{\Tel(h)}\right)\right),
\]
for every~$n \in \NN$.
The functor~$\triv_{\infopLie}^{+, n}$ is right adjoint to~$\indc_{\infopLie}^{+, n}$.
The functor~$\rUE_n$ is given by the composition~$\Bac_{n} \circ \triv_{\infopLie}^{+,n} \circ \trivaug_{\infopE_n}$.
\end{passage}

\begin{proposition}\label{prop:Un-Free}
There exists the following commutative diagram of~$\infty$\nobreakdash-categories:
\begin{equation}\label{diag:Lie-En-free-Th}
\begin{tikzcd}
  \infAlg_{\infopLie}(\Cca) \arrow[r, "\UE_n"] & \infAlg_{\infopE_n}^{\nut}(\Cca)\\
  \Cca \arrow[u, "\free_{\infopLie}"] \arrow[r, "\Loop^{n}_{\Cca}"', "\sim"] & \Cca. \arrow[u, "\free_{\infopE_n^{\nut}}"']
\end{tikzcd}
\end{equation}
\end{proposition}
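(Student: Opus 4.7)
The plan is to verify commutativity of the square by passing to right adjoints. Since~$\Cca$ is stable, $\Loop^n_{\Cca}$ is an equivalence with inverse $\susp^n_{\Cca}$, so in particular $\Loop^n_{\Cca}$ is left adjoint to $\susp^n_{\Cca}$; consequently the right adjoint of $\free_{\infopE_n^{\nut}} \circ \Loop^n_{\Cca}$ is $\susp^n_{\Cca} \circ \frgt_{\infopE_n^{\nut}}$, while by~\cref{para:Un-Tn} the right adjoint of $\UE_n \circ \free_{\infopLie}$ is $\frgt_{\infopLie} \circ \rUE_n$. By uniqueness of adjoints, it therefore suffices to exhibit a natural equivalence
\[
\frgt_{\infopLie} \circ \rUE_n \simeq \susp^n_{\Cca} \circ \frgt_{\infopE_n^{\nut}}.
\]

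Starting from $\rUE_n \simeq \Bac_n \circ \triv^{+,n}_{\infopLie} \circ \trivaug_{\infopE_n}$ in~\cref{para:Un-Tn}, the essential input is the equivalence $\frgt_{\infopLie} \circ \Bac_n \simeq \susp^n_{\Cca} \circ \frgt_{\infopLie} \circ \frgt_{\infopE_n}$ established inside the proof of~\cref{prop:Loop-fac-En} (see~\eqref{eq:bar-frgt-commutes}); applying it yields
\[
\frgt_{\infopLie} \circ \rUE_n \simeq \susp^n_{\Cca} \circ \frgt_{\infopLie} \circ \frgt_{\infopE_n} \circ \triv^{+,n}_{\infopLie} \circ \trivaug_{\infopE_n}.
\]
Since $\triv^{+,n}_{\infopLie}$ is the functor on $\infopE_n$-algebras induced by the lax symmetric monoidal functor $\triv^+_{\infopLie}$, we have $\frgt_{\infopE_n} \circ \triv^{+,n}_{\infopLie} \simeq \triv^+_{\infopLie} \circ \frgt^{\aug}_{\infopE_n}$; moreover, the definition $\triv^+_{\infopLie} = \triv_{\infopLie} \circ \widetilde{\augideal}_{\infopE_0}$ combined with $\frgt_{\infopLie} \circ \triv_{\infopLie} \simeq \id$ gives $\frgt_{\infopLie} \circ \triv^+_{\infopLie} \simeq \widetilde{\augideal}_{\infopE_0}$.

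Chaining these simplifications reduces the right-hand side to
\[
\susp^n_{\Cca} \circ \widetilde{\augideal}_{\infopE_0} \circ \frgt^{\aug}_{\infopE_n} \circ \trivaug_{\infopE_n}.
\]
The commutative square~\eqref{eq:equiv-non-unital-aug} of~\cref{cor:equiv-non-unital-aug} identifies $\widetilde{\augideal}_{\infopE_0} \circ \frgt^{\aug}_{\infopE_n}$ with $\frgt_{\infopE_n^{\nut}} \circ \augideal_{\infopE_n}$, and since $\augideal_{\infopE_n}$ and $\trivaug_{\infopE_n}$ are mutually inverse equivalences this collapses the expression to $\susp^n_{\Cca} \circ \frgt_{\infopE_n^{\nut}}$, completing the argument. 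I expect the only real friction to lie in verifying the compatibility $\frgt_{\infopE_n} \circ \triv^{+,n}_{\infopLie} \simeq \triv^+_{\infopLie} \circ \frgt^{\aug}_{\infopE_n}$---purely formal but easy to mishandle when unwinding how lax symmetric monoidal functors induce functors on $\infopE_n$-algebras; once this is in place, the rest is bookkeeping.
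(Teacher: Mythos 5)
Your proposal is correct and follows essentially the same route as the paper's proof: reduce to the right-adjoint identity $\frgt_{\infopLie} \circ \rUE_n \simeq \susp^n_{\Cca} \circ \frgt_{\infopE_n}^{\nut}$, apply~\eqref{eq:bar-frgt-commutes}, commute the forgetful functor past~$\triv^{+,n}_{\infopLie}$ via its lax monoidality, and finish with the augmented/non-unital comparison of~\cref{cor:equiv-non-unital-aug}. The only difference is cosmetic: you invoke the square~\eqref{eq:equiv-non-unital-aug} directly where the paper instead uses the equivalent pair $\frgt_{\infopE_n}^{\aug} \circ \trivaug_{\infopE_n} \simeq \trivaug_0 \circ \frgt_{\infopE_n}^{\nut}$ and $\augideal_0 \circ \trivaug_0 \simeq \id$.
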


\begin{proof}
  It suffices to show that the following equivalence
  \begin{equation}\label{eq:Lie-En-free-Th-equiv}
    \frgt_{\infopLie} \circ \rUE_n \simeq \susp_{\Cca}^{n} \circ \frgt_{\infopE_n}^{\nut}
  \end{equation}
  of functors holds;
  Then the commutative diagram~\eqref{diag:Lie-En-free-Th} follows by taking the left adjoints to the left-and right-hand-side of~\eqref{eq:Lie-En-free-Th-equiv}.
  
  Now by~\eqref{eq:bar-frgt-commutes} we see that
  \begin{equation}
    \begin{split}
    \frgt_{\infopLie} \circ \left(\Bac_n \circ \triv_{\infopLie}^{+, n} \circ \trivaug_{\infopE_n}\right)
      &\simeq \susp^n_{\Cca} \circ \frgt_{\infopLie} \circ \frgt_{\infopE_n} \circ  \triv_{\infopLie}^{+, n} \circ \trivaug_{\infopE_n}\\
      &\simeq \susp^n_{\Cca} \circ \frgt_{\infopLie} \circ (\triv_{\infopLie} \circ \augideal_0) \circ \frgt_{\infopE_n}^{\aug} \circ \trivaug_{\infopE_n} \\
      &\simeq \susp_{\Cca}^{n} \circ \frgt_{\infopE_n},
    \end{split}
  \end{equation}
  where
  \begin{enumerate}
    \item the second equivalence holds by the monoidality of the functor~$\triv_{\infopLie}^{+}$,~and 
    \item the third equivalence holds by the following~equivalences
      \[
      \frgt_{\infopLie} \circ \triv_{\infopLie} \simeq \id \text{ and } \frgt_{\infopE_n}^{\aug} \circ \trivaug_{\infopE_n} \simeq \trivaug_{0} \circ \frgt_{\infopE_n}^{\nut} \text{ and } \augideal_0 \circ \trivaug_0 \simeq \id. \qedhere
      \]
  \end{enumerate} 
\end{proof}

\begin{proposition}
  For every~$n \in \NN$ the following diagram
  \[
  \begin{tikzcd}
    \infAlg_{\infopLie}\left(\infSp_{\Tel(h)}\right) \arrow[d, "\UE_{n+1}"'] \arrow[rd, "\UE_n"] & \\
    \infAlg_{\infopE_{n+1}}^{\nut}\left(\infSp_{\Tel(h)}\right) \arrow[r, "\rB_{n+1}"'] & \infAlg_{\infopE_n}^{\nut}\left(\infSp_{\Tel(h)}\right)
  \end{tikzcd}
  \]
  of~$\infty$\nobreakdash-categories commutes.
\end{proposition}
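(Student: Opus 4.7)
The plan is to unpack definitions and reduce to two compatibilities: symmetric-monoidal naturality of the Bar construction, and the iterated-cobar decomposition of the loop functor. Starting from the definition of~$\UE_{n+1}$ in~\cref{para:Un-Tn} and of~$\rB_{n+1}$ in~\cref{para:non-unital-bar}, the composite~$\rB_{n+1} \circ \UE_{n+1}$ equals
\[
\augideal_{\infopE_n} \circ \Bac_1 \circ \trivaug_{\infopE_{n+1}} \circ \augideal_{\infopE_{n+1}} \circ \indc_{\infopLie}^{+,n+1} \circ \widetilde{\Loop}_{\infopLie}^{n+1}.
\]
Since~$\trivaug_{\infopE_{n+1}}$ and~$\augideal_{\infopE_{n+1}}$ are mutually inverse equivalences by~\cref{cor:equiv-non-unital-aug} and~$\augideal_{\infopE_n}$ is likewise an equivalence, comparison with~$\UE_n = \augideal_{\infopE_n} \circ \indc_{\infopLie}^{+,n} \circ \widetilde{\Loop}_{\infopLie}^{n}$ reduces the claim to the equivalence
\[
\Bac_1 \circ \indc_{\infopLie}^{+,n+1} \circ \widetilde{\Loop}_{\infopLie}^{n+1} \simeq \indc_{\infopLie}^{+,n} \circ \widetilde{\Loop}_{\infopLie}^{n}
\]
of functors~$\infAlg_{\infopLie}(\infSp_{\Tel(h)}) \to \infAlg_{\infopE_n}^{\aug}(\infSp_{\Tel(h)})$.

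Next I would address the symmetric-monoidal naturality. Setting~$\Dca \coloneqq \infAlg_{\infopLie}(\infSp_{\Tel(h)})$, the pointed cartesian structure on~$\Dca$ makes every~$\infopE_k$-algebra in~$\Dca$ canonically augmented, so that the internal Bar construction~$\Bac_1^{\Dca} \colon \infAlg_{\infopE_{n+1}}(\Dca) \to \infAlg_{\infopE_n}(\Dca)$ is defined. Since~$\indc_{\infopLie}^{+}$ is symmetric monoidal and colimit-preserving by~\cref{prop:indc-Lie-Th-+-sym-mon}, its induced functor~$\indc_{\infopLie}^{+,n+1}$ preserves the geometric realization of the two-sided bar simplicial object defining~$\Bac_1$ (\cf~\cref{para:Bar-constr-En}), yielding the naturality equivalence~$\Bac_1 \circ \indc_{\infopLie}^{+,n+1} \simeq \indc_{\infopLie}^{+,n} \circ \Bac_1^{\Dca}$.

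The second compatibility,~$\Bac_1^{\Dca} \circ \widetilde{\Loop}_{\infopLie}^{n+1} \simeq \widetilde{\Loop}_{\infopLie}^{n}$, I would obtain from the proof of~\cref{prop:Loop-fac-En} together with the decomposition~$\Bac_{n+1} \simeq \Bac_n \circ \Bac_1$ of~\cref{para:Bar-constr-En}\rom{2}. Taking right adjoints yields~$\Cobar_{n+1} \simeq \Cobar_1 \circ \Cobar_n$, and under the identifications used in that proof one has~$\widetilde{\Loop}_{\infopLie}^{k} \simeq \Cobar_k$ for all~$k$. Because~$\widetilde{\Loop}_{\infopLie}^{n}$ and~$\widetilde{\Loop}_{\infopLie}^{n+1}$ are equivalences by~\cref{prop:Loop-fac-En}, so are their left adjoints~$\Bac_n$ and~$\Bac_{n+1}$; hence~$\Bac_1 \simeq \Bac_n^{-1} \circ \Bac_{n+1}$ is an equivalence with inverse~$\Cobar_1$, so in particular~$\Bac_1 \circ \Cobar_1 \simeq \id$. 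Whiskering with~$\widetilde{\Loop}_{\infopLie}^n$ yields the claim, and combining the two compatibilities finishes the proof.

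The main obstacle I anticipate is carefully formulating and verifying the symmetric-monoidal naturality of the Bar construction: while morally clear, turning the colimit-preservation and symmetric-monoidality of~$\indc_{\infopLie}^+$ into a clean comparison between the Bar construction on~$\infAlg_{\infopE_{n+1}}^{\aug}(\infSp_{\Tel(h)})$ and the internal Bar construction on~$\infAlg_{\infopE_{n+1}}(\Dca)$ requires tracking the Dunn additivity identifications and the augmentation structure through the induced functors.
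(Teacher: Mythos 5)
Your argument is correct and is exactly the definition-unpacking that the paper's one-line proof (``this follows by our constructions of~$\UE_n$ and~$\rB_n$'') leaves implicit: cancelling~$\trivaug_{\infopE_{n+1}} \circ \augideal_{\infopE_{n+1}}$, commuting the symmetric monoidal colimit-preserving functor~$\indc_{\infopLie}^{+}$ past the Bar construction, and using that~$\Bac_1 \circ \Cobar_1 \simeq \id$ because~$\Bac_n$ and~$\Bac_{n+1}$ are equivalences by~\cref{prop:Loop-fac-En}. No gaps; the Dunn-additivity bookkeeping you flag is the only routine point left to spell out.
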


\begin{proof}
  This follows by our constructions of the functors~$\UE_n$ and~$\rB_{n}$.
\end{proof}

\subsection{Fully faithfulness of~\texorpdfstring{$\UE_{\infty}$}{U_∞}}\label{sec:fully-faithful}

In this section we give the proof of the following theorem.

\begin{theorem}\label{thm:U-infty-ff}
 Let~$h \geq 1$ be a natural number.
 The functor
 \[
 \UE_{\infty} \colon \infAlg_{\infopLie}\left(\infSp_{\Tel(h)}\right) \to \varprojlim \infAlg_{\infopE_n}^{\nut}\left(\infSp_{\Tel(h)} \right)
 \] 
 induced by the commutative diagram~\eqref{diag:Lie-En-Th} is fully~faithful.  
\end{theorem}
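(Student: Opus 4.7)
The plan is to prove fully faithfulness of $\UE_\infty$ by verifying that the unit $\eta \colon \id \to \rUE_\infty \circ \UE_\infty$ of the adjunction $\UE_\infty \dashv \rUE_\infty$ is an equivalence. The first step is to identify the right adjoint: combining the adjunctions $\UE_n \dashv \rUE_n$ with the universal property of limits of mapping spaces shows that, for any compatible family $(A_n)_n$, there is a natural equivalence $\rUE_\infty((A_n)_n) \simeq \varprojlim_n \rUE_n(A_n)$ in $\infAlg_{\infopLie}(\infSp_{\Tel(h)})$. Since $\frgt_{\infopLie}$ is conservative and preserves limits, it suffices to check that the induced map $\frgt_{\infopLie}(L) \to \varprojlim_n \frgt_{\infopLie}(\rUE_n \UE_n L)$ is an equivalence in $\infSp_{\Tel(h)}$ for every $L$.

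Next, I would combine the equivalence $\frgt_{\infopLie} \circ \rUE_n \simeq \susp^n \circ \frgt_{\infopE_n^{\nut}}$ from the proof of Proposition~\ref{prop:Un-Free} with a direct unpacking of the composite defining $\UE_n$. Using that $\augideal_{\infopE_n} \circ \trivaug_{\infopE_n} \simeq \id$, the symmetric monoidality of $\indc_{\infopLie}^{+}$ from Proposition~\ref{prop:indc-Lie-Th-+-sym-mon}, and the identification $\frgt_{\infopE_n} \circ \widetilde{\Loop}^n_{\infopLie} \simeq \Loop^n_{\infopLie}$ of Proposition~\ref{prop:Loop-fac-En}, one obtains the clean formula $\frgt_{\infopE_n^{\nut}}(\UE_n L) \simeq \indc_{\infopLie}(\Loop^n_{\infopLie} L)$. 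The problem then reduces to showing that the canonical tower map
\[
\frgt_{\infopLie}(L) \longrightarrow \varprojlim_n \susp^n\, \indc_{\infopLie}\!\left(\Loop^n_{\infopLie} L\right)
\]
is an equivalence in $\infSp_{\Tel(h)}$, naturally in $L$. Conceptually this is a stabilisation / Taylor-tower statement for the identity of $\infAlg_{\infopLie}(\infSp_{\Tel(h)})$, asking whether the stabilisation adjunction $\indc_{\infopLie} \dashv \triv_{\infopLie}$ of~\cref{para:vh-stabilisation} detects the underlying object of a Lie algebra through its loop filtration.

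I would first verify the reduced claim on free Lie algebras $L = \free_{\infopLie}(Y)$, where Proposition~\ref{prop:Un-Free} gives $\UE_n(\free_{\infopLie}(Y)) \simeq \free_{\infopE_n^{\nut}}(\Loop^n Y)$. Using the stable identities $(\Loop^n Y)^{\otimes r} \simeq \Loop^{nr}(Y^{\otimes r})$ and $\susp^n \Loop^{nr} \simeq \Loop^{n(r-1)}$, the target becomes $\varprojlim_n \bigoplus_{r \geq 1}(\infopE_n(r) \otimes \Loop^{n(r-1)} Y^{\otimes r})_{\Perm_r}$, whose arity-$r$ summand ought to converge to $(\infopLie(r) \otimes Y^{\otimes r})_{\Perm_r}$ via the operadic identification $\infopLie(r) \simeq \varprojlim_n \Loop^{n(r-1)} \infopE_n(r)$ in $\infSp_{\Tel(h)}$, a manifestation of the self-Koszul-duality of the $\infopE_n$-operads and of Ching's description of the spectral Lie operad. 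To pass from free to arbitrary $L$, I would resolve $L$ by its monadic bar resolution of free Lie algebras; as a left adjoint $\UE_\infty$ preserves this resolution. The main obstacle, in my view, is then to justify that the inverse limit $\varprojlim_n$ on the target commutes with the sifted colimits presenting $L$ from free Lie algebras. I expect this to follow from a Mittag-Leffler / pro-constancy argument showing that the tower $(\susp^n \indc_{\infopLie}(\Loop^n_{\infopLie}(\blank)))_n$ is arity-wise eventually constant in a controlled range in $\infSp_{\Tel(h)}$, but making this precise -- together with the operadic convergence used for free algebras -- is the technical heart of the proof.
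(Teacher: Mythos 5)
Your setup agrees with the paper's: prove the unit $\eta\colon \id \to \rUE_{\infty}\circ\UE_{\infty}$ is an equivalence, apply $\frgt_{\infopLie}$, and identify $\frgt_{\infopLie}\circ\rUE_{n}\circ\UE_{n} \simeq \susp^{n}\circ\indc_{\infopLie}\circ\Loop^{n}_{\infopLie}$, so that the question becomes whether $\frgt_{\infopLie}(L) \to \varprojlim_{n}\susp^{n}\indc_{\infopLie}(\Loop^{n}_{\infopLie}L)$ is an equivalence. From that point on, however, your strategy diverges from the paper's and has a genuine gap. You propose to verify the claim on free Lie algebras and then descend to arbitrary $L$ via the monadic bar resolution. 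The descent step fails as stated: while $\UE_{\infty}$ preserves the realisation of the bar resolution, the composite $\rUE_{\infty}\circ\UE_{\infty}$ involves a right adjoint and the infinite inverse limit $\varprojlim_{n}$, neither of which commutes with geometric realisations. Checking the unit of an adjunction on free algebras therefore does not propagate to all objects, and the ``Mittag--Leffler / pro-constancy'' of the tower $\bigl(\susp^{n}\indc_{\infopLie}\Loop^{n}_{\infopLie}(\blank)\bigr)_{n}$, uniformly in $L$, is exactly what one cannot take for granted. Even the free case is not closed: commuting $\varprojlim_{n}$ with the infinite coproduct over arities and with the $\Perm_{r}$-orbits requires the $\Tel(h)$-local Tate-vanishing input of \cref{lem:Th-dual-calculus-sum}, and that lemma does not apply off the shelf here because your tower is not the dual Goodwillie tower of a single fixed functor (the functor $\T_{\infopE_{n}^{\nut}}$ changes with $n$).

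The paper's proof runs in the opposite direction: instead of resolving $L$ by free algebras, it tests the unit against the compact generator $\free_{\infopLie}(\Loc_{h}^{\f}\susp^{\infty}V)$, where $V$ is a finite type~$h$ complex with a $v_{h}$-self-map. Mapping out of a colimit is a limit, so compact generation causes no commutation problem, and the copower--power adjunction together with the finite-limit preservation of $\rUE_{\infty}\circ\UE_{\infty}$ (\cref{prop:unit-finite-limit}) reduces everything to showing $L^{V}\to(\rUE_{\infty}\UE_{\infty})(L^{V})$ is an equivalence. The decisive observation --- which your argument misses --- is that the $v_{h}$-self-map of $V$ becomes invertible in $\infSp_{\Tel(h)}$, so $L^{V}$ is an infinite loop object and hence a \emph{trivial} (square-zero) spectral Lie algebra. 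On trivial Lie algebras the tower collapses to the colinear approximation $\Poly^{1}(\indc_{\infopLie}\circ\triv_{\infopLie})\simeq\Poly^{1}(\T_{\infopE_{\infty}^{\vee}})$, which is the identity by Koszul duality and \cref{lem:Th-dual-calculus-sum}. In short: the correct test objects are the trivial Lie algebras $L^{V}$, not the free ones, and it is precisely the $v_{h}$-periodicity that makes these test objects available.
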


\begin{passage}[Proof strategy]\label{para:U-infty-ff-strategy}
  We explain our proof strategy first before going into detailed arguments. 
  To prove the fully faithfulness of~$\UE_{\infty}$, it is equivalent to show that the unit natural~transformation 
  \[
  \id \to \rUE_{\infty} \circ \UE_{\infty}
  \]
  of the adjunction~$\UE_{\infty} \dashv \rUE_{\infty}$~\eqref{eq:U-T-infty-adjunction} is an equivalence, \ie we need to show that the evaluation~${L \to (\rUE_{\infty} \circ \UE_{\infty})(L)}$ is an equivalence for every spectral Lie algebra~${L \in \infAlg_{\infopLie}(\infSp_{\Tel(h)})}$.
  By~\cite[Proposition~1.2.4.1]{Lur09} it suffices to show that the induced map
  \begin{equation}\label{eq:Yoneda-equivalence-UT}
    \infMap_{\infAlg_{\infopLie}(\infSp_{\Tel(h)})}(M, L) \to \infMap_{\infAlg_{\infopLie}(\infSp_{\Tel(h)})}(M, (\rUE_{\infty} \circ \UE_{\infty})(L))
  \end{equation}
  on mapping spaces is an equivalence in~$\infHType$, for every~${M \in \infAlg_{\infopLie}(\infSp_{\Tel(h)})}$.
  
  Let~$V_{h}$ be any pointed~$p$-local finite complex of type~$h$.
  Recall that~$\infSp_{\Tel(h)}$ is generated under small colimits by the spectrum~$\Loc_{h}^{\f}\susp^{\infty}V_{h}$, see~\cref{rmk:Th-compact-generated}.
  Furthermore, every spectral Lie algebra is equivalent to a sifted colimit of free spectral Lie algebras.
  Thus, to verify that~\eqref{eq:Yoneda-equivalence-UT} is an equivalence for every spectral Lie algebra~$M$, it suffices to prove it for~$M = \free_{\infopLie}(\Loc_{h}^{\f}\susp^{\infty}V_{h})$.
  Using series of adjunctions and the property that~$\rUE_{\infty} \circ \UE_{\infty}$ preserves finite limits~(see~\cref{prop:unit-finite-limit}), we reduce the problem to proving that there exists an~equivalence
  \begin{equation}\label{eq:counit-Fn}
    L^{V_{h}} \xrightarrow{\sim} (\rUE_{\infty} \circ \UE_{\infty})\left(L^{V_{h}}\right)
  \end{equation}
  for every~$\Tel(h)$-local spectral Lie algebras~$L$, where~$L^{V_{h}}$ denotes the limit of the constant diagram~${V_{h} \to \infAlg_{\infopLie}(\infSp_{\Tel(h)})}$ sending every vertex of~$V_{h}$ to~$L$~(see~\cref{prop:proof-reduction-fully-faithfyl}). 
  As the last step, we choose a finite complex~$V$ of type~$h$ which admits a~$v_{h}$ self-map and show that~\eqref{eq:counit-Fn} is an equivalence in the case~$V_{h} = V$~(see~\cref{prop:XV-tu-equi}), using the fact that~$L^{V}$ is a trivial spectral Lie algebra~(see~\cref{cor:XV-triv-lie}).
\end{passage}

In the proof we will use a special property of dual Goodwillie calculus on endofunctors on~$\infSp_{\Tel(h)}$.
First, let us recall the basics about Goodwillie calculus of functors.

\begin{passage}[Goodwillie calculus]\label{para:Goodwillie-calculus}
  Let~$F \colon \Cca \to \Dca$ be a functor where
  \begin{enumerate}
    \item $\Cca$ admits finite colimits and terminal objects,
    \item $\Dca$ admits finite limits and sequential colimits, and
    \item sequential colimits commute with finite limits in~$\Dca$.
  \end{enumerate}
  Using the theory of Goodwillie calculus~\cite{Goo03}, one can construct the following commutative~diagram 
  \[
  \begin{tikzcd}[row sep = large]
  &              &              &              &              & {F} \arrow[dd] \arrow[ldd] \arrow[lldd] \arrow[llldd] \arrow[lllldd] \arrow[llllldd] \\
  \\
  {\cdots} \arrow[r] & {\Poly_{k+1} F} \arrow[r] & {\Poly_{k} F} \arrow[r] & {\cdots} \arrow[r] & {\Poly_2 F} \arrow[r] & {\Poly_{1}F}                                                                                                                           
  \end{tikzcd}
  \]
  of functors from~$\Cca$ to~$\Dca$ where the functor~$\Poly_{k}F$ is the best approximation of~$F$ by a~``$k$\nobreakdash-polynomial'' functors for every~$k \geq 1$.
  Very informally speaking, one can consider the lower horizontal tower as an approximation of the functor~$F$, analogous to the Taylor approximation of a function.
  
  This diagram, or the lower horizontal tower, is known as the \emph{Goodwillie tower} of~$F$.
  In particular, we have 
  \[
  \Poly_{1}(F) \simeq \varinjlim_{n \geq 0} \left(\Loop^{n}_{\Dca} \circ F \circ \susp^{n}_{\Cca}\right),
  \]
  known as the ``linear approximation'' of~$F$, see~\cite[Example~6.1.1.28]{HA}.
  For~$\Cca = \Dca$ and~$F = \id$ we see that~$\Poly_1(F) \simeq \Loop^{\infty}_{\Cca} \susp^{\infty}_{\Cca}$, which relates closely to the stabilisation of~$\Cca$.
  We refer the reader to~\cite{Goo03, HA} for a detailed introduction of this theory and to~\cite{Kuh07} for nice applications.
\end{passage}

\begin{passage}[Dual Goodwillie calculus of Endofunctors of~$\infSp_{\Tel(h)}$]\label{para:dual-Goodwillie}
  Let~$F$ be the functor in~\cref{para:Goodwillie-calculus}.
  Because of the hypotheses \rom{1}-\rom{3} in~\cref{para:Goodwillie-calculus}, one can not simply construct a Goodwillie tower of the functor~${F^{\op} \colon \Cca^{\op} \to \Dca^{\op}}$: For example, the opposite category~$\Dca^{\op}$ does not satisfy the conditions~\rom{2} and~\rom{3} in general. 
  
  However, if~$\Dca$ is stable and~$\Cca$ admits finite limits and initial objects, then~$F^{\op}$ does fulfil the hypotheses \rom{1}-\rom{3} in~\cref{para:Goodwillie-calculus}.
  Assume that we work in this situation.
  Then the \emph{dual Goodwillie tower} of~$F$ is defined as the Goodwillie tower of~$F^{\op} \colon \Cca^{\op} \to \Dca^{\op}$, illustrated by the commutative diagram in the~$\infty$\nobreakdash-category~$\infFun(\Cca, \Dca)$ below:
  \begin{equation*}
  \begin{tikzcd}[row sep = large]
  {\Poly^{1}F} \arrow[r] \arrow[dd] & {\Poly^2 F} \arrow[r] \arrow[ldd] & {\cdots} \arrow[r] \arrow[lldd] & {\Poly^{k}F} \arrow[r] \arrow[llldd] & {\Poly^{k+1}F} \arrow[r] \arrow[lllldd] & {\cdots} \arrow[llllldd] \\
  \\
  {F,}  & &  & &  &                             
  \end{tikzcd}
  \end{equation*}
  This is called the \emph{dual Goodwillie tower} of~$F$.
  Similarly as the formula for~$\Poly_1(F)$, there exists the following \emph{colinear~approximation}
  \[
  \Poly^{1}(F) \simeq \varprojlim_{n \geq 0} \left(\susp^{n}_{\Cca} \circ F \circ \Loop^{n}_{\Cca}\right). 
  \]
  For~$k \geq 1$, we say 
  \begin{enumerate}
    \item $F$ is \emph{$k$\nobreakdash-polynomial} if~$\Poly^{k}F \simeq F$, and 
    \item $F$ is \emph{$k$\nobreakdash-homogeneous} if~$\Poly^{k}F \simeq F$ and~$\Poly^{i}F = \pt$ for~$i \leq k$.
  \end{enumerate}
\end{passage}

The dual Goodwillie tower of a functor~$F \colon \infSp_{\Tel(h)} \to \infSp_{\Tel(h)}$ has particularly nice properties, see~\cite[§4.1,~Appendix~B]{Heu21} for more details.
  We need the following lemma for later applications.

\begin{lemma}\label{lem:Th-dual-calculus-sum}
  Consider a functor~$F \colon \infSp_{\Tel(h)} \to \infSp_{\Tel(h)}$ satisfying the following property: There exists a sequence~$(F_{j})_{j \geq 1}$ of endofunctors of~$\infSp_{\Tel(h)}$ such~that
  \begin{enumerate}
    \item $F \simeq \coprod_{j = 1}^{\infty}F_{j}$ and
    \item $F_j$ is~$j$\nobreakdash-homogeneous for every~$j \geq 1$.
  \end{enumerate}
  Then the natural map 
  \[
  \coprod_{j = 1}^{k} F_{j} \to \Poly^{k}F
  \]
  is an equivalence of functors, for every~$k \geq 1$.
\end{lemma}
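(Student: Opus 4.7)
The plan is to reduce the statement to the key special property of dual Goodwillie calculus on~$\infSp_{\Tel(h)}$, namely that the coreflection~$\Poly^{k}$ commutes with small coproducts of endofunctors. Granted this ingredient, the lemma follows by computing~$\Poly^{k}F_{j}$ separately for each~$j$ and reassembling.

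First I would verify that the natural map exists. Every~$F_{j}$ with~$j \leq k$ is~$j$\nobreakdash-polynomial by the homogeneity hypothesis, hence also~$k$\nobreakdash-polynomial. Since being~$k$\nobreakdash-polynomial is preserved under finite limits, and since~$\infSp_{\Tel(h)}$ is stable so that finite coproducts coincide with finite products, the finite coproduct~$\coprod_{j=1}^{k}F_{j}$ is itself~$k$\nobreakdash-polynomial. The universal property of~$\Poly^{k}$ then produces the canonical factorization~$\coprod_{j=1}^{k}F_{j} \to \Poly^{k}F \to F$.

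Next I would compute~$\Poly^{k}F_{j}$ termwise. For~$j \leq k$ the functor~$F_{j}$ is already~$k$\nobreakdash-polynomial, so~$\Poly^{k}F_{j} \simeq F_{j}$. For~$j > k$, I would invoke the identity~$\Poly^{k} \simeq \Poly^{k} \circ \Poly^{m}$ for~$k \leq m$, which is immediate from the universal property of the polynomial approximation: applied with~$m = j-1$ together with the vanishing~$\Poly^{j-1}F_{j} \simeq \pt$ from the~$j$\nobreakdash-homogeneity of~$F_{j}$, this yields~$\Poly^{k}F_{j} \simeq \Poly^{k}(\pt) \simeq \pt$. Invoking the special property that~$\Poly^{k}$ commutes with small coproducts of endofunctors of~$\infSp_{\Tel(h)}$, see~\cite[\S 4.1,~Appendix~B]{Heu21}, one then computes
\[
\Poly^{k}F \simeq \Poly^{k}\coprod_{j=1}^{\infty}F_{j} \simeq \coprod_{j=1}^{\infty}\Poly^{k}F_{j} \simeq \coprod_{j=1}^{k}F_{j},
\]
and naturality of these equivalences shows that the composite agrees with the canonical map~$\coprod_{j=1}^{k}F_{j} \to \Poly^{k}F$ constructed in the preceding paragraph.

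The main obstacle is justifying the commutation of~$\Poly^{k}$ with infinite coproducts of endofunctors: for endofunctors of a general~$\infty$\nobreakdash-category this property fails, but it holds for endofunctors of~$\infSp_{\Tel(h)}$ thanks to the particularly favourable convergence and compactness features of the dual Goodwillie tower in the~$\Tel(h)$\nobreakdash-local setting. Once this input is granted, the remainder of the argument is a formal manipulation of universal properties.
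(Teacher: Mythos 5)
Your formal scaffolding is correct: the finite coproduct~$\coprod_{j=1}^{k}F_{j}$ is indeed~$k$\nobreakdash-polynomial (each~$F_j$ with~$j \leq k$ is, and the class of~$k$\nobreakdash-polynomial functors is closed under finite coproducts since the target is stable), the factorisation through~$\Poly^{k}F$ follows from the universal property of the coreflection, and the termwise computation~$\Poly^{k}F_{j} \simeq F_{j}$ for~$j \leq k$ and~$\Poly^{k}F_{j} \simeq \pt$ for~$j > k$ via~$\Poly^{k} \simeq \Poly^{k} \circ \Poly^{j-1}$ is fine. This is also the right way to organise the argument: everything reduces to showing that the tail~$\coprod_{j > k}F_{j}$ contributes nothing to~$\Poly^{k}$.

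The gap is that the ingredient you "grant" --- that~$\Poly^{k}$ commutes with small coproducts of endofunctors of~$\infSp_{\Tel(h)}$ --- is not a separately available fact you can invoke; in the instance you need it, it \emph{is} the lemma. The coreflection~$\Poly^{k}$ is built from limits (\eg~$\Poly^{1}F \simeq \varprojlim_{n}\susp^{n}F\Loop^{n}$), and limits do not commute with infinite coproducts even in a stable~$\infty$\nobreakdash-category, so the blanket statement for arbitrary endofunctors is stronger than anything established in~\cite{Heu21} and is likely false at that level of generality. What~\cite[Lemma~4.6]{Heu21} actually proves is precisely the statement of this lemma for coproducts of homogeneous functors, and the non-formal input is not a commutation property of~$\Poly^{k}$ but a uniform nilpotence result~\cite[Lemma~B.4]{Heu21}, which rests on the vanishing of Tate constructions in~$\infSp_{\Tel(h)}$ and gives the uniform control needed to interchange the limit defining~$\Poly^{k}$ with the infinite coproduct over the tail. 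So either cite~\cite[Lemma~4.6]{Heu21} for the whole statement (as the paper does) or supply the uniform nilpotence argument; as written, the entire mathematical content of the lemma sits inside the unproved interchange.
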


\begin{proof}
  See~\cite[Lemma~4.6]{Heu21}.
  The proof uses a ``uniform nilpotence'' result for~$\infSp_{\Tel(h)}$, which uses the fact that the Tate construction in~$\infSp_{\Tel(h)}$ vanishes; this does not hold for a general stable~$\infty$\nobreakdash-category, see~\cite[Lemma~B.4]{Heu21}.
\end{proof}

Now we will work towards the proof of~\cref{thm:U-infty-ff}.

\begin{proposition}\label{prop:ff-left-adjoint}
  The functor~$\UE_{\infty}$ is fully faithful if and only if there exists an equivalence~$\rUE_{\infty} \circ \UE_{\infty} \simeq \id$ of functors.
\end{proposition}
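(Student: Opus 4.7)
The plan is to invoke the standard categorical fact that in an adjunction $F \dashv G$ between $\infty$\nobreakdash-categories, the left adjoint $F$ is fully faithful if and only if the unit $\eta \colon \id \to G \circ F$ is a natural equivalence of functors; see for example~\cite[Proposition~5.2.7.4]{Lur09}. Applied to the adjunction $\UE_{\infty} \dashv \rUE_{\infty}$ from~\eqref{eq:U-T-infty-adjunction}, this immediately handles the forward direction: if $\UE_{\infty}$ is fully faithful, then the unit $\id \to \rUE_{\infty} \circ \UE_{\infty}$ is an equivalence, which in particular produces the desired equivalence of functors $\rUE_{\infty} \circ \UE_{\infty} \simeq \id$.

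For the converse, I would start from a given natural equivalence $\alpha \colon \rUE_{\infty} \circ \UE_{\infty} \simeq \id$ and show that the unit $\eta$ of the adjunction is itself an equivalence, so that the same characterization applies. The mechanism is to use that for every pair $M, L \in \infAlg_{\infopLie}(\infSp_{\Tel(h)})$, the adjunction furnishes an equivalence
\[
\infMap(\UE_{\infty}(M), \UE_{\infty}(L)) \simeq \infMap(M, (\rUE_{\infty} \circ \UE_{\infty})(L)),
\]
under which the map induced by $\UE_{\infty}$ on mapping spaces is identified with post-composition by $\eta_L$. Combining with $\alpha$ and running through naturality together with the triangle identities of the adjunction then forces $\eta_L$ to be an equivalence for every $L$, which is exactly the hypothesis required by the above characterization of fully faithfulness.

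The only nontrivial step is the passage from the bare existence of some equivalence $\rUE_{\infty} \circ \UE_{\infty} \simeq \id$ of functors to invertibility of the specific natural transformation given by the unit. This is a short formal manipulation with the adjunction data and does not pose a genuine obstacle. The real content of the section lies in~\cref{thm:U-infty-ff} which follows, where one actually has to produce the equivalence $\rUE_{\infty} \circ \UE_{\infty} \simeq \id$ by explicit analysis using the spectral Lie algebra model and dual Goodwillie calculus in~$\infSp_{\Tel(h)}$; the present proposition is essentially a repackaging serving to set up that reduction.
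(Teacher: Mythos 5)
Your proposal is correct and matches the paper, whose entire proof is a citation of Johnstone's Lemma~1.1.1, which states exactly this two-way equivalence — including the subtle direction you isolate, that the mere existence of \emph{some} equivalence $\rUE_{\infty} \circ \UE_{\infty} \simeq \id$ forces the unit itself to be an equivalence. Your identification of that passage as the only nontrivial step, and your sketch of how to close it via the adjunction's triangle identities and naturality, is precisely the content of the cited lemma.
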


\begin{proof}
  See~\cite[Lemma 1.1.1]{Joh02}.
\end{proof}

\begin{situation}\label{sit:generator-v}
  Fix a prime number~$p$.
  Till the end of this section we work with a~$p$-local finite complex~$V$ of type~$h$ together with a~$v_{h}$~self\nobreakdash-map~$v \colon \susp^{d}V \to V$.
\end{situation}

\begin{definition}\label{def:power-Lie-alg}
  For~$L \in \infAlg_{\infopLie}(\infSp_{\Tel(h)})$, define~$L^{V} \in \infAlg_{\infopLie}(\infSp_{\Tel(h)})$ as the limit of the constant diagram
  \[
  V \to \infAlg_{\infopLie}(\infSp_{\Tel(h)})
  \]
  sending every vertex of~$V$ to~$L$.
  
  By construction the endofunctor~$(\blank)^{V}$ of~$\infAlg_{\infopLie}(\infSp_{\Tel(h)})$ is right adjoint to the \emph{copower functor}~$V \otimes (\blank)$, which assigns to an object~$L \in \infAlg_{\infopLie}(\infSp_{\Tel(h)})$ the colimit~${V \otimes L}$ of the afmorementioned constant diagram.
\end{definition}

\begin{proposition}\label{prop:proof-reduction-fully-faithfyl}
  Consider the unit natural transformation~$\eta \colon \id \to \rUE_{\infty} \circ \UE_{\infty}$ of the adjunction~$\UE_{\infty} \dashv \rUE_{\infty}$.
  The following statements are~equivalent:
  \begin{enumerate}
    \item The natural transformation~$\eta$ is an equivalence of functors.
    \item The natural transformation~$\eta$ induces the following~equivalence
      \[
      (\rUE_{\infty} \circ \UE_{\infty})((\blank)^{V}) \simeq (\blank)^{V}.
      \]
      of functors.
  \end{enumerate}
\end{proposition}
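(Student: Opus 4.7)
The implication (i) $\Rightarrow$ (ii) is immediate: if $\eta$ is a natural equivalence of endofunctors, then in particular it is an equivalence when evaluated at objects of the form $L^V$. The content of the proposition is the reverse direction (ii) $\Rightarrow$ (i), which I plan to prove by combining naturality of $\eta$, finite-limit preservation of $\rUE_\infty \circ \UE_\infty$ from \cref{prop:unit-finite-limit}, and conservativity of the cotensor $(\blank)^V$ on $\infAlg_{\infopLie}(\infSp_{\Tel(h)})$.

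The first step is to reformulate hypothesis (ii). Since $V$ is a finite CW complex, it is built by finitely many iterated cell attachments, so the cotensor $(\blank)^V$ is a finite iterated limit. By \cref{prop:unit-finite-limit}, the functor $\rUE_\infty \circ \UE_\infty$ preserves finite limits, so the canonical comparison map
\[
  (\rUE_\infty \circ \UE_\infty)(L^V) \xrightarrow{\sim} ((\rUE_\infty \circ \UE_\infty)(L))^V
\]
is an equivalence. Under this identification, naturality of $\eta$ yields $\eta_{L^V} \simeq (\eta_L)^V$. Thus, hypothesis (ii) amounts to the assertion that $(\eta_L)^V$ is an equivalence for every spectral Lie algebra $L$.

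The second step is to show that $(\blank)^V \colon \infAlg_{\infopLie}(\infSp_{\Tel(h)}) \to \infAlg_{\infopLie}(\infSp_{\Tel(h)})$ is conservative. Since $\frgt_{\infopLie}$ is conservative and, being a right adjoint, commutes with limits, it suffices to verify conservativity of $(\blank)^V$ on $\infSp_{\Tel(h)}$. By adjunction, $E^V \simeq \MMap(\susp^\infty V_+, E)$, and since $\susp^\infty V_+ \simeq \susp^\infty V \oplus \SS$ admits $\susp^\infty V$ as a direct summand, $\Loc_h^\f \susp^\infty V_+$ is a compact generator of $\infSp_{\Tel(h)}$ by \cref{rmk:Th-compact-generated}. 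Consequently, mapping spectra out of $\susp^\infty V_+$ detect equivalences in $\infSp_{\Tel(h)}$. Combining this with the first step yields $\eta_L$ equivalence for every $L$, which is (i).

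The main subtlety I anticipate is the coherent identification $\eta_{L^V} \simeq (\eta_L)^V$ carried out in the first step. Once one sets up $(\blank)^V$ as $\lim_V$ of a constant diagram and invokes finite-limit preservation of $\rUE_\infty \circ \UE_\infty$ uniformly in $L$, this identification follows from naturality of $\eta$, but formalising it coherently in the $\infty$-categorical setting requires some care. The conservativity argument and the final deduction are then routine.
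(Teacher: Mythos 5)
Your argument is correct and uses the same key ingredients as the paper's proof: the finite-limit preservation of $\rUE_{\infty}\circ\UE_{\infty}$ (\cref{prop:unit-finite-limit}, packaged in the paper as \cref{cor:power-unit-commutes}) and compact generation of $\infSp_{\Tel(h)}$ by $\Loc_h^{\f}\susp^{\infty}V$. The only difference is cosmetic: the paper runs the final deduction as a Yoneda argument on mapping spaces out of $\free_{\infopLie}(\Loc_h^{\f}\susp^{\infty}V)$, unwound through the copower--power adjunctions, whereas you phrase the adjoint statement as conservativity of $(\blank)^{V}$ (which, incidentally, also follows for free from the fact that the identity is a natural retract of $(\blank)^{V}$ via a basepoint of $V$).
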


In order to prove~\cref{prop:proof-reduction-fully-faithfyl} we first need to introduce some notations and show some properties of the functor~$\rUE_{\infty} \circ \UE_{\infty}$.

\begin{passage}
Recall the notations from~\cref{para:Un-Tn}.
The~$(\UE_{\infty} \dashv \rUE_{\infty})$\nobreakdash-adjunction is induced by the adjunctions~$\UE_{n} \dashv \rUE_{n}$ where, for every~$n \geq 1$,
\begin{align*}
  \UE_{n} \colon \infAlg_{\infopLie}\left(\infSp_{\Tel(h)}\right) &\to \infAlg_{\infopE_n}^{\nut}\left(\infSp_{\Tel(h)}\right) \\
  L &\mapsto (\augideal_{\infopE_n} \circ \indc_{\infopLie}^{+, n} \circ \widetilde{\Loop}^{n}_{\infopLie})(L) \\
  \rUE_{n} \colon \infAlg_{\infopE_n}^{\nut}\left(\infSp_{\Tel(h)}\right) &\to \infAlg_{\infopLie}\left(\infSp_{\Tel(h)}\right) \\
  K_n &\mapsto (\Bac_n \circ \triv_{\infopLie}^{+, n}\circ \trivaug_{\infopE_n})(K_{n}).
\end{align*}
Denote the right adjoint to the ``non-unital'' Bar construction~$\rB_{n}$ by~$\rrB_{n}$, see~\cref{para:non-unital-bar}.
The family of natural transformations 
\[
\rUE_{n+1}\circ  \UE_{n+1} \to \rUE_{n+1} \circ (\rrB_{n+1} \circ \rB_{n+1}) \circ \UE_{n+1} \to \rUE_{n}\circ  \UE_{n}
\]
for~$n \geq 0$ induces an equivalence 
\begin{align*}
\rUE_{\infty} \circ \UE_{\infty} &\simeq \varprojlim\left(\rUE_n \circ \UE_n\right) \\
                     &\simeq \varprojlim_{n}\left(\Bac_n \circ \triv_{\infopLie}^{n, +} \circ \indc_{\infopLie}^{n, +} \circ \Loop^{n}_{\infopLie}\right)
\end{align*}
of functors, where the limits are taken in~$\infty$\nobreakdash-category of endofunctors of~$\infAlg_{\infopLie}(\infSp_{\Tel(h)})$.
The first equivalence holds by the universal property of the limit.
\end{passage} 

\begin{proposition}\label{prop:unit-finite-limit}
  The functor~$\rUE_{\infty} \circ \UE_{\infty}$ preserves finite limits.
\end{proposition}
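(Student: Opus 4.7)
The plan is to reduce the claim to a standard property of the dual Goodwillie linear approximation of the indecomposables functor. Since~$\frgt_{\infopLie}\colon \infAlg_{\infopLie}(\infSp_{\Tel(h)}) \to \infSp_{\Tel(h)}$ is conservative and, as a right adjoint, preserves arbitrary limits, it reflects finite limit diagrams; hence it suffices to show that~$\frgt_{\infopLie} \circ \rUE_\infty \circ \UE_\infty$ preserves finite limits. Using the formula~$\rUE_\infty \circ \UE_\infty \simeq \varprojlim_n \rUE_n \circ \UE_n$ together with the fact that~$\frgt_{\infopLie}$ commutes with this inverse limit, the task reduces to understanding and then assembling the composites~$\frgt_{\infopLie} \circ \rUE_n \circ \UE_n$.

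Next I would unwind these composites explicitly. Starting from~$\rUE_n \circ \UE_n \simeq \Bac_n \circ \triv^{+, n}_{\infopLie} \circ \indc^{+, n}_{\infopLie} \circ \widetilde{\Loop}^n_{\infopLie}$, after cancelling the equivalence~$\trivaug_{\infopE_n} \circ \augideal_{\infopE_n} \simeq \id$ from~\cref{cor:equiv-non-unital-aug}, I would apply~$\frgt_{\infopLie}$ and invoke the identity from the proof of~\cref{prop:Loop-fac-En}, namely~$\frgt_{\infopLie} \circ \Bac_n \simeq \susp^n \circ \frgt_{\infopLie} \circ \frgt_{\infopE_n}$, to trade the iterated Bar construction for an iterated suspension on the spectrum side. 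Combining this with the naturality of forgetful functors with respect to the lax, respectively strong, monoidal structures on~$\triv^+_{\infopLie}$ and~$\indc^+_{\infopLie}$, the factorisation~$\frgt_{\infopE_n} \circ \widetilde{\Loop}^n_{\infopLie} \simeq \Loop^n_{\infopLie}$ from~\cref{prop:Loop-fac-En}, and the reductions~$\frgt_{\infopLie} \circ \triv^+_{\infopLie} \simeq \augideal$ together with~$\augideal \circ \indc^+_{\infopLie} \simeq \indc_{\infopLie}$, should leave
\[
\frgt_{\infopLie} \circ \rUE_n \circ \UE_n \simeq \susp^n \circ \indc_{\infopLie} \circ \Loop^n_{\infopLie}.
\]
Passing to the inverse limit over~$n$ then identifies~$\frgt_{\infopLie} \circ \rUE_\infty \circ \UE_\infty$ with~$\Poly^1(\indc_{\infopLie})$, the dual linear approximation of the indecomposables functor in the sense of~\cref{para:dual-Goodwillie}.

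To finish, I would invoke the fact that, for any reduced functor~$F$ with stable target, the dual linear approximation~$\Poly^1(F)$ is $1$\nobreakdash-polynomial and hence $1$\nobreakdash-coexcisive: in a stable target, $1$\nobreakdash-coexcisivity is equivalent to sending pullback squares to pullback squares, which together with reducedness yields preservation of all finite limits. Applied to~$\indc_{\infopLie}$, which is reduced since it preserves the zero object as a left adjoint between pointed~$\infty$\nobreakdash-categories, this concludes the argument.

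The main obstacle will be this last step: giving or citing a precise justification that~$\Poly^1(F)$ preserves finite limits in this~$\infty$\nobreakdash-categorical setting. This is part of the basic dictionary of dual Goodwillie calculus, but its precise formulation in our context may require adapting arguments from~\cite[Appendix~B]{Heu21} or the corresponding portions of~\cite{HA}.
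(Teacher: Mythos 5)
Your proposal is correct, and it reaches the conclusion by a genuinely different (though closely related) route. The paper makes the same initial reduction to $\frgt_{\infopLie}\circ\rUE_{\infty}\circ\UE_{\infty}$, but then keeps each stage of the tower in the unsimplified form $\susp^{n}\circ\frgt_{\infopE_n}^{\nut}\circ\augideal_{\infopE_n}\circ\indc_{\infopLie}^{+,n}\circ\widetilde{\Loop}^{n}_{\infopLie}$ and proves commutation with $\Loop_{\infopLie}$ directly by an index shift in the inverse limit, using exactly the compatibilities you list (monoidality of $\indc_{\infopLie}^{+}$, the interaction of the augmentation-ideal functors with the forgetful functors $\frgt_{n}^{n+1}$); it then concludes from the criterion that a reduced functor into a stable $\infty$\nobreakdash-category commuting with loop functors preserves finite limits. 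You instead collapse each stage to $\susp^{n}\circ\indc_{\infopLie}\circ\Loop^{n}_{\infopLie}$ --- a simplification the paper only carries out later, in the proof of \cref{prop:XV-tu-equi}, and there only after precomposing with $\triv_{\infopLie}$ --- identify the limit with $\Poly^{1}(\indc_{\infopLie})$, and invoke coexcisiveness of the colinear approximation. The two endings are two sides of the same coin: a reduced functor commuting with $\Omega$ is canonically equivalent to its own colinearisation, and the coexcisiveness of $\Poly^{1}$ (valid here since $\infAlg_{\infopLie}(\infSp_{\Tel(h)})$ admits finite limits and an initial object and the target is stable, per the hypotheses of \cref{para:dual-Goodwillie}) is what powers both conclusions. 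Your version is the more conceptual packaging; its price is the point you already flag, plus one further check the paper's index-shift sidesteps, namely that the stagewise equivalences $\frgt_{\infopLie}\circ\rUE_n\circ\UE_n\simeq\susp^{n}\circ\indc_{\infopLie}\circ\Loop^{n}_{\infopLie}$ are compatible with the Bar-construction transition maps on one side and the canonical tower maps of $\Poly^{1}(\indc_{\infopLie})$ on the other.
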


\begin{notation}\label{not:abbrev-for-proof-ff}
  In the proof of~\cref{prop:unit-finite-limit}, we make the following simplification of notations.
  \begin{enumerate}
    \item Abbreviate the forgetful functor~$\frgt_{\infopE_n}$ by~$\frgt^{n}$.
    \item Abbreviate the trivial augmentation~$\trivaug_{\infopE_n}$ and augmentation ideal~$\augideal_{\infopE_n}$ functor by~$\trivaug_{n}$ and~$\augideal_{n}$, respectively. 
    \item Abbreviate the suspension and loop functor of~$\infSp_{\Tel(h)}$ by~$\susp$ and~$\Loop$, respectively.
    \item Let~$\Dca$ be a symmetric monoidal~$\infty$\nobreakdash-category.
      For~$m, n \in \NN \cup \{\infty\}$ with~$m > n$, let~$\frgt^{m}_{n}$ denote the forgetful functor 
      \[
      \infAlg_{\infopE_m}(\Dca) \to \infAlg_{\infopE_n}(\Dca),
      \]
      induced by the morphism~${i_{n}^{m} \colon \infopE_n \to \infopE_m}$ of~$\infty$\nobreakdash-operads.
    \item Recall the~$n$\nobreakdash-fold loop endofunctor
      \[
      \Loop_{\infopLie}^n \colon \infAlg_{\infopLie}\left(\infSp_{\Tel(h)}\right) \to \infAlg_{\infopLie}\left(\infSp_{\Tel(h)}\right)
      \] 
      and its factorisation
      \[
      \widetilde{\Loop}^{n}_{\infopLie} \colon \infAlg_{\infopLie}\left(\infSp_{\Tel(h)}\right) \to \infAlg_{\infopE_n}\left(\infAlg_{\infopLie}\left(\infSp_{\Tel(h)}\right)\right)
      \]
      from~\cref{prop:Loop-fac-En}.
      In particular, we have~$\Loop_{\infopLie}^n = \frgt^{n} \circ \widetilde{\Loop}^{n}_{\infopLie}$.
  \end{enumerate}
\end{notation}

\begin{proof}[Proof of~\cref{prop:unit-finite-limit}]
  Since the forgetful functor~$\frgt_{\infopLie}$ is conservative, it suffices to prove that the composition
  \[
  \frgt_{\infopLie} \circ \rUE_{\infty} \circ \UE_{\infty} \colon  \infAlg_{\infopLie}\left(\infSp_{\Tel(h)}\right) \to \infSp_{\Tel(h)}
  \]
  preserves finite limits. 
  Moreover, it is then equivalent to show that~$ \frgt_{\infopLie} \circ \rUE_{\infty} \circ \UE_{\infty}$ preserves the zero object and commutes with the loop functors, because the target~$\infty$\nobreakdash-category is stable, see~\cite[Corollary~4.4.2.5]{Lur09} and~\cite[Lemma~3.9]{Heu21}.
  
  Since every single functor in the composition preserves the zero object, the composition also~does.
  We write the functor~$\frgt_{\infopLie} \circ \rUE_{\infty} \circ \UE_{\infty}$ explicitly as
  \begin{equation}\label{eq:frgt-unit-tu}
    \begin{split}
     \frgt_{\infopLie}  \circ \rUE_{\infty} \circ \UE_{\infty} 
    &\simeq \frgt_{\infopLie} \circ \varprojlim_{n}\left(\rUE_n \circ \UE_n\right) \\
    &\simeq \varprojlim_{n}\left((\frgt_{\infopLie} \circ \rUE_n) \circ \UE_n\right) \\
    &\simeq \varprojlim_{n}\left(\susp^n \circ \frgt_{\infopE_n}^{\nut} \circ \UE_n\right) \\
    &\simeq \varprojlim_{n}\left(\susp^n \circ \frgt_{\infopE_n}^{\nut} \circ \augideal_{n} \circ \indc_{\infopLie}^{n, +} \circ \widetilde{\Loop}^{n}_{\infopLie}\right)
    \end{split}
  \end{equation}
  where the second equivalence holds because~$\frgt_{\infopLie}$ commutes with small limits, and the third equivalence holds by~\eqref{diag:Lie-En-Frgt}.
  For every~$L \in \infAlg_{\infopLie}(\infSp_{\Tel(h)})$, we obtain
  \begin{align*}
    &(\frgt_{\infopLie} \circ  \rUE_{\infty} \circ \UE_{\infty})(\Loop_{\infopLie}(L)) \\
    \simeq &\varprojlim_{n}\left(\susp^n \circ \frgt_{n}^{\nut} \circ \augideal_n \circ \indc_{\infopLie}^{+, n} \circ \widetilde{\Loop}^{n}_{\infopLie}\right)(\Loop_{\infopLie}(L)) \\
    \overset{(a)}{\simeq} &\varprojlim_{n}\left(\susp^n \circ \frgt_{n}^{\nut} \circ \augideal_{n} \circ \indc_{\infopLie}^{+, n} \circ \frgt_{n}^{n+1} \circ \widetilde{\Loop}^{n+1}_{\infopLie}\right)(L) \\
    \overset{(b)}{\simeq} &\varprojlim_{n}\left(\susp^n \circ \frgt_{n}^{\nut} \circ \augideal_{n} \circ \frgt_{n}^{n+1} \circ \indc_{\infopLie}^{+, n+1}\circ \widetilde{\Loop}^{n+1}_{\infopLie}\right)(L) \\
    \overset{(c)}{\simeq} &\Loop\left( \varprojlim_{n}\left(\susp^{n+1} \circ \frgt^{\nut}_{n+1} \circ \augideal_{n+1} \circ  \indc_{\infopLie}^{+, n+1} \circ \widetilde{\Loop}^{n+1}_{\infopLie}\right)(L)\right) \\
    \simeq &\Loop \left( (\frgt_{\infopLie} \rUE_{\infty}\UE_{\infty})(X)\right).
  \end{align*}
  The equivalence (a) holds because of the equivalence
  \[
  \frgt_n^{n+1} \circ \widetilde{\Loop}^{n+1}_{\infopLie} \simeq \widetilde{\Loop}^{n}_{\infopLie} \circ \frgt^1 \circ \widetilde{\Loop}_{\infopLie},
  \] 
  since every functor here is a right adjoint.
  The equivalence (b) holds because the functor~$\indc_{\infopLie}^{+}$ is symmetric monoidal, and thus induces the following commutative diagram
  \[
  \begin{tikzcd}[row sep = huge]
  \infAlg_{\infopE_{m}}\left(\infAlg_{\infopLie}(\infSp_{\Tel(h)})\right) \arrow[r, "\indc_{\infopLie}^{+, m}"] \arrow[d, "\frgt_{n}^{m}"'] & \infAlg_{\infopE_{m}}\left(\infAlg_{\infopE_0}^{\aug}(\infSp_{\Tel(h)})\right) \simeq \infAlg_{\infopE_m}^{\aug}\left(\infSp_{\Tel(h)}\right) \arrow[d, "\frgt_{n}^{m}"] \\
  \infAlg_{\infopE_{n}}\left(\infAlg_{\infopLie}(\infSp_{\Tel(h)})\right) \arrow[r, "\indc_{\infopLie}^{+, n}"']  & \infAlg_{\infopE_{n}}\left(\infAlg_{\infopE_0}^{\aug}(\infSp_{\Tel(h)})\right) \simeq \infAlg_{\infopE_n}^{\aug}\left(\infSp_{\Tel(h)}\right),          
  \end{tikzcd}
  \]
  for every~$m, n \in \NN \cup \{\infty\}$ with~$m > n$.
  The equivalence~$(c)$ holds by
  \[
  \frgt_{n}^{\nut} \circ \augideal_{n} \circ \frgt_{n}^{n+1} \simeq \frgt^{\nut}_{n+1} \circ \augideal_{n+1},
  \]
  following from~\cref{cor:equiv-non-unital-aug}. 
\end{proof}

\begin{corollary}\label{cor:power-unit-commutes}
  For every spectral Lie algebra~$L \in \infAlg_{\infopLie}(\infSp_{\Tel(h)})$, there exists an natural equivalence
  \[
  (\rUE_{\infty}\UE_{\infty})(L^{V}) \simeq \left((\rUE_{\infty}\UE_{\infty})(L)\right)^{V}
  \]
  of spectral Lie algebras.
\end{corollary}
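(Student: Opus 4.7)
The plan is to apply~\cref{prop:unit-finite-limit}, which asserts that the endofunctor~$\rUE_{\infty} \circ \UE_{\infty}$ preserves finite limits, together with the observation that cotensoring by the finite complex~$V$ is a finite\nobreakdash-limit construction.

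First I would realise~$V$ as a finite cellular object in~$\infHType$: since~$V$ is a finite complex, its underlying homotopy type may be presented as a finite colimit of points, obtained by writing~$V$ as an iterated pushout of finitely many cells. That is, there exists a finite indexing~$\infty$\nobreakdash-category~$I$ together with an equivalence~$V \simeq \varinjlim_{i \in I} \pt$ in~$\infHType$. By the defining universal property of the cotensor recorded in~\cref{def:power-Lie-alg}, it then follows that
\[
L^{V} \simeq \varprojlim_{i \in I^{\op}} L
\]
in~$\infAlg_{\infopLie}(\infSp_{\Tel(h)})$, and similarly with~$L$ replaced by~$(\rUE_{\infty} \circ \UE_{\infty})(L)$.

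Next I would apply the endofunctor~$\rUE_{\infty} \circ \UE_{\infty}$. Being a functor it sends constant diagrams to constant diagrams, and by~\cref{prop:unit-finite-limit} it preserves the finite limit indexed by~$I^{\op}$. Combining these two facts yields
\[
(\rUE_{\infty} \circ \UE_{\infty})(L^{V}) \simeq \varprojlim_{i \in I^{\op}} (\rUE_{\infty} \circ \UE_{\infty})(L) \simeq \bigl((\rUE_{\infty} \circ \UE_{\infty})(L)\bigr)^{V}.
\]
Naturality in~$L$ is automatic from the functoriality of the constructions involved.

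There is no genuine obstacle: the content of the corollary is essentially a direct application of~\cref{prop:unit-finite-limit} to the cotensor~$(-)^{V}$. The only point requiring mild care is to express~$V$ as a finite colimit of points, which is built into the notion of a finite complex~of type~$h$.
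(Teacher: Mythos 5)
Your proof is correct and is essentially the paper's own argument: the paper disposes of this corollary in one line by noting that~$L^{V}$ is a finite limit construction and invoking~\cref{prop:unit-finite-limit}. You have simply spelled out the details (writing~$V$ as a finite colimit of points so that~$(-)^{V}$ becomes a finite limit indexed by~$I^{\op}$), which is exactly the intended justification.
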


\begin{proof}
  The object~$L^{V}$ is obtained by a finite limit construction.
\end{proof}

\begin{proof}[Proof of~\cref{prop:proof-reduction-fully-faithfyl}]
  It is obvious that~$\rom{1}$ implies~$\rom{2}$.
  We show that \rom{2} implies \rom{1}.
  Abbreviate the mapping space~$\infMap_{\infAlg_{\infopLie}(\infSp_{\Tel(h)})}$ by~$\infMap_{\infopLie}$ in this proof.
  
  As we explained in~\cref{para:U-infty-ff-strategy}, it suffices to prove that the unit natural transformation of the adjunction~${\UE_{\infty} \dashv \rUE_{\infty}}$ induces an equivalence
  \begin{equation}\label{eq:test-counit-equi-on-free}
      \infMap_{\infopLie}\left(\free_{\infopLie}\left(\Loc_{h}^{\f}\susp^{\infty}V\right), \ L\right) \xrightarrow{\sim} \infMap_{\infopLie}\left(\free_{\infopLie}\left(\Loc_{h}^{\f}\susp^{\infty}V\right), \ (\rUE_{\infty} \circ \UE_{\infty})(L)\right)
  \end{equation}
  for every~$L \in \infAlg_{\infopLie}(\infSp_{\Tel(h)})$.
  Then we have
  \begin{equation}\label{eq:adjunctions-equivs}
    \begin{split}
    &\infMap_{\infopLie}\left(\free_{\infopLie}\left(\Loc_{h}^{\f}\susp^{\infty}V\right), \ L\right) \\
    \simeq &\infMap_{\infSp_{\Tel(h)}}\left(\Loc_{h}^{\f}\susp^{\infty}V, \ \frgt_{\infopLie}L\right) \\
    \simeq &\infMap_{\infSp}\left(\susp^{\infty}V, \ \frgt_{\infopLie} L\right) \\
    \simeq &\infMap_{\infSp}\left(V \otimes \SS, \ \frgt_{\infopLie} L\right) \\
    \simeq &\infMap_{\infSp}\left(\SS, \ \left(\frgt_{\infopLie} L\right)^{V}\right) \\
    \simeq &\infMap_{\infSp}\left(\SS, \ \frgt_{\infopLie} \left(L^{V}\right)\right),
    \end{split}
  \end{equation}
  where the first equivalence holds by adjunction~$\free \dashv \frgt$, the second holds by the universal property of the localisation~$\Loc_n^{\f}$, the third and fourth hold by the copower--power adjunction~$(\blank \otimes V) \dashv (\blank)^{V}$ and the last equivalence holds because the forgetful functor preserves small~limits.
  
  Similarly for the target mapping space, we have an equivalence
  \[
    \infMap_{\infopLie}\left(\free_{\infopLie}\left(\Loc_{h}^{\f}\susp^{\infty}V\right), \ (\rUE_{\infty} \circ \UE_{\infty})(L)\right) \xrightarrow{\sim} \infMap_{\infSp}\left(\SS, \ \frgt_{\infopLie}\left(\left((\rUE_{\infty} \circ \UE_{\infty})(L)\right)^{V} \right) \right).
  \]
  Thus, the morphism~\eqref{eq:test-counit-equi-on-free} is equivalent to following morphism
  \[
  \Map_{\infSp}\left(\SS, \ \frgt_{\infopLie} \left(L^{V}\right)\right) \to \Map_{\infSp}\left(\SS, \ \frgt_{\infopLie}\left(\left((\rUE_{\infty} \circ \UE_{\infty})(L)\right)^{V} \right) \right),
  \]
  which is indeed an equivalence for every~$L \in \infAlg_{\infopLie}(\infSp_{\Tel(h)})$, by assumption \rom{2} and~\cref{cor:power-unit-commutes}.
\end{proof}

\begin{proposition}\label{prop:LV-infinite-loop}
  For every~$L \in \infAlg_{\infopLie}(\infSp_{\Tel(h)})$, the spectral Lie algebra~$L^{V}$ is an infinite loop object of the~$\infty$\nobreakdash-category~$\infAlg_{\infopLie}(\infSp_{\Tel(h)})$.
\end{proposition}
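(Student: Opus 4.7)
My plan is to exploit the $v_{h}$ self-map $v \colon \susp^{d}V \to V$ from~\cref{sit:generator-v} to produce a canonical equivalence $\alpha \colon L^{V} \simeq \Loop^{d}_{\infopLie}(L^{V})$ in $\infAlg_{\infopLie}\left(\infSp_{\Tel(h)}\right)$, and then to promote this periodicity to an infinite loop structure by a cofinality argument.

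Applying the cotensor $L^{(\blank)}$ contravariantly to $v$ yields a morphism $v^{\ast} \colon L^{V} \to L^{\susp^{d}V}$ in $\infAlg_{\infopLie}\left(\infSp_{\Tel(h)}\right)$. Because the target $\infty$-category is pointed, cotensor converts the reduced suspension on $V$ into iterated loops on the target, providing a canonical equivalence $L^{\susp^{d}V} \simeq \Loop^{d}_{\infopLie}(L^{V})$. To see that $v^{\ast}$ itself is an equivalence, I use that $\frgt_{\infopLie}$ is conservative and limit-preserving; it therefore suffices to show that the underlying morphism $\left(\frgt_{\infopLie}L\right)^{V} \to \left(\frgt_{\infopLie}L\right)^{\susp^{d}V}$ is an equivalence in $\infSp_{\Tel(h)}$. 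This morphism is the function spectrum $F(\susp^{\infty}v, \frgt_{\infopLie}L)$, and by~\cref{rmk:Th-compact-generated} the spectrum-level $v_{h}$ self-map $\susp^{\infty}v$ is already an equivalence in $\infSp_{\Tel(h)}$, so mapping into a $\Tel(h)$-local target yields an equivalence.

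Composing the above equivalences produces $\alpha \colon L^{V} \xrightarrow{\sim} \Loop^{d}_{\infopLie}(L^{V})$. Since the inclusion of the subposet of multiples of $d$ into $\NN^{\op}$ is initial, the stabilisation of $\infAlg_{\infopLie}\left(\infSp_{\Tel(h)}\right)$ can equivalently be computed as the inverse limit of the $\Loop^{d}_{\infopLie}$-tower rather than of the $\Loop_{\infopLie}$-tower. The constant $\NN^{\op}$-diagram with value $L^{V}$ and transition maps given by $\alpha$ then defines an object of this inverse limit whose image under the canonical $\Loop^{\infty}$ functor from the stabilisation back to $\infAlg_{\infopLie}\left(\infSp_{\Tel(h)}\right)$ is $L^{V}$. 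Hence $L^{V}$ lies in the essential image of $\Loop^{\infty}$ and is an infinite loop object.

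The main technical obstacle lies in the identification $L^{\susp^{d}V} \simeq \Loop^{d}_{\infopLie}(L^{V})$: one must carefully interpret the cotensor by the pointed finite complex $V$ in the pointed $\infty$-category $\infAlg_{\infopLie}\left(\infSp_{\Tel(h)}\right)$, using the zero object to coherently intertwine the reduced suspension on $V$ with the loop functor on spectral Lie algebras. Once this naturality is in place, the remaining ingredients (conservativity of the forgetful functor, inversion of $\susp^{\infty}v$ in $\infSp_{\Tel(h)}$, and cofinality of $d\cdot\NN^{\op} \hookrightarrow \NN^{\op}$) assemble into the desired infinite loop structure.
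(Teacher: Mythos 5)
Your proposal is correct and follows essentially the same route as the paper: cotensor the $v_h$ self-map to get an equivalence $L^{V}\xrightarrow{\sim}L^{\susp^{d}V}$ (via conservativity of $\frgt_{\infopLie}$ and invertibility of $\Loc_{h}^{\f}\susp^{\infty}(v)$ in $\infSp_{\Tel(h)}$), identify $L^{\susp^{d}V}\simeq\Loop^{d}_{\infopLie}(L^{V})$, and iterate. Your explicit cofinality argument for $d\cdot\NN^{\op}\hookrightarrow\NN^{\op}$ just spells out the final step the paper leaves implicit.
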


\begin{proof}
  Recall the~$v_{h}$-self-map~$v \colon \susp^{d}V \to V$ of~$V$ from~\cref{sit:generator-v}. 
  The induced morphism~$(\Loc_{h}^{\f}\circ \susp^{\infty})(v)$ becomes an equivalence in~$\infSp_{\Tel(h)}$, see~\cref{rmk:Th-compact-generated}.
  Thus, we expect the following equivalence in~$\infAlg_{\infopLie}(\infSp_{\Tel(h)})$.
  
  \begin{claim}
    The map~$v$ induces an equivalence~$L^{v} \colon L^{V} \to L^{\susp^{d}V}$ of spectral Lie algebras.
  \end{claim}
    The equivalence~$v_{\ast} \colon \Loc_{h}^{\f} \susp^{\infty + d}V \to \Loc_{h}^{\f} \susp^{\infty}V$ in~$\infSp_{\Tel(h)}$ induces an equivalence
    \[
    \Map_{\infSp_{\Tel(h)}}\left(\Loc_{h}^{\f} \susp^{\infty}V, \ \frgt_{\infopLie}L\right) \xrightarrow{\sim} \Map_{\infSp_{\Tel(h)}}\left(\Loc_{h}^{\f} \susp^{\infty+d}V, \ \frgt_{\infopLie}L\right).
    \]
    With the same arguments as in~\eqref{eq:adjunctions-equivs} we obtain by adjunctions an equivalence
    \[
    \frgt_{\infopLie}\left(L^{V}\right) \to \frgt_{\infopLie}\left(L^{\susp^{d}V}\right)
    \]
    of spectra.
    The claim follows by the fact that the functor~$\frgt_{\infopLie}$ is conservative.
  
  Considering the~$(\susp \dashv \Loop)$\nobreakdash-adjunction of pointed \htypes, we prove the following~claims.

  \begin{claim}
    There exists a natural equivalence 
   ~$
    \Loop^{d}_{\infopLie} (L^V) \simeq L^{\susp^{d}V}
   ~$ 
    of spectral Lie algebras.
  \end{claim}
  
  Let~$\usphere^d$ denote the~$d$\nobreakdash-dimensional sphere.
  The following sequence of natural equivalences proves the claim:
  \[
  L^{\susp^{d}V} = \varprojlim_{\susp^{d}V} L \simeq \varprojlim_{\usphere^d} \varprojlim_{V} L \simeq \varprojlim_{\usphere^d}\left(L^{V}\right)\simeq \Loop^{d}_{\infopLie}\left(L^V\right). \qedhere
  \]
\end{proof}

\begin{corollary}\label{cor:XV-triv-lie}
  For every~$L \in \infAlg_{\infopLie}(\infSp_{\Tel(h)})$, the spectral Lie algebra~$L^{V}$ is trivial, \ie it lies in the image of the trivial spectral Lie algebra functor~$\triv_{\infopLie}$~(see~\eqref{eq:two-adj-Lie-alg}).
  In particular, there exists a natural equivalence
  \[
  L^{V} \simeq \left(\triv_{\infopLie} \circ \frgt_{\infopLie}\right)\left(L^{V}\right)
  \] 
  in the~$\infty$\nobreakdash-category~$\infAlg_{\infopLie}(\infSp_{\Tel(h)})$.
\end{corollary}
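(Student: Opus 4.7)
The plan is to combine Proposition~\ref{prop:LV-infinite-loop} with the stabilisation adjunction recalled in~\cref{para:vh-stabilisation}. By~\cite[Theorem~4.3]{Heu20}, the adjunction $\indc_{\infopLie} \dashv \triv_{\infopLie}$ exhibits $\infSp_{\Tel(h)}$ as the stabilisation of $\infAlg_{\infopLie}(\infSp_{\Tel(h)})$, so $\triv_{\infopLie}$ plays the role of the ``$\Loop^{\infty}$'' for this stabilisation. Under the inverse-limit presentation of the stabilisation, the essential image of $\triv_{\infopLie}$ consists precisely of the infinite loop objects of $\infAlg_{\infopLie}(\infSp_{\Tel(h)})$, namely those $Y$ that extend to a sequence $(Y_n)_{n \geq 0}$ with $Y_0 \simeq Y$ and $Y_n \simeq \Loop_{\infopLie} Y_{n+1}$ for all $n$. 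Since Proposition~\ref{prop:LV-infinite-loop} already asserts that $L^{V}$ is an infinite loop object -- concretely via the self-equivalence $L^{V} \simeq \Loop_{\infopLie}^{d} L^{V}$ of its proof, iterated to give $L^{V} \simeq \Loop_{\infopLie}^{kd} L^{V}$ for every $k$ and interpolated by $\Loop_{\infopLie}^{j}$ for $0 \leq j < d$ -- we obtain an $E \in \infSp_{\Tel(h)}$ with $L^{V} \simeq \triv_{\infopLie}(E)$.

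To pin down $E$ and obtain the formula in the corollary, I would apply the forgetful functor $\frgt_{\infopLie}$ and invoke the identity $\frgt_{\infopLie} \circ \triv_{\infopLie} \simeq \id_{\infSp_{\Tel(h)}}$ recorded in~\cref{para:vh-stabilisation}: this yields $\frgt_{\infopLie}(L^{V}) \simeq E$, and substituting back into $L^{V} \simeq \triv_{\infopLie}(E)$ produces the natural equivalence $L^{V} \simeq \triv_{\infopLie}(\frgt_{\infopLie}(L^{V}))$. The only non-formal step is the promotion of the periodic self-equivalence $L^{V} \simeq \Loop_{\infopLie}^{d} L^{V}$ to a coherent infinite tower $(X_n)$ of deloopings, i.e.\ to an honest object of the inverse limit $\varprojlim(\cdots \xrightarrow{\Loop_{\infopLie}} \infAlg_{\infopLie}(\infSp_{\Tel(h)}) \xrightarrow{\Loop_{\infopLie}} \infAlg_{\infopLie}(\infSp_{\Tel(h)}))$; this is where the $\infty$-categorical care enters, but it is essentially forced by the iteration and the fact that $\infSp_{\Tel(h)}$ is stable, and it is the only step with genuine content -- the identification of $E$ with $\frgt_{\infopLie}(L^{V})$ and the final substitution are purely formal consequences of the adjunction identities already in place.
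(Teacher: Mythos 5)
Your proposal is correct and follows essentially the same route as the paper: cite \cref{prop:LV-infinite-loop} to see that $L^{V}$ is an infinite loop object, identify the essential image of $\triv_{\infopLie}$ with the infinite loop objects via the stabilisation adjunction of \cref{para:vh-stabilisation}, and then apply $\frgt_{\infopLie}\circ\triv_{\infopLie}\simeq\id$ to pin down the formula. The coherence issue you flag (upgrading the periodic self-equivalence to a full tower of deloopings) belongs to the proof of \cref{prop:LV-infinite-loop} rather than to this corollary, which simply invokes that proposition as a black box.
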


\begin{proof}
   Recall that the adjunction~$\indc_{\infopLie} \dashv \triv_{\infopLie}$ exhibits~$\infSp_{\Tel(h)}$ as the stabilisation of the~$\infty$\nobreakdash-category~$\infAlg_{\infopLie}(\infSp_{\Tel(h)})$. 
  In other words, the image of~$\triv_{\infopLie}$ is exactly the set of infinite loop objects of~$\infAlg_{\infopLie}(\infSp_{\Tel(h)})$, see~\cref{para:vh-stabilisation}.
  So, by~\cref{prop:LV-infinite-loop}, the spectral Lie algebra~$L^{V}$ is contained in the image of~$\triv_{\infopLie}$.
  Therefore, we obtain the equivalence
  \[
  L^{V} \simeq \left(\triv_{\infopLie} \circ \frgt_{\infopLie}\right)\left(L^{V}\right),
  \] 
  because~$\frgt_{\infopLie} \circ \triv_{\infopLie} \simeq \id_{\infAlg_{\infopLie}(\infSp_{\Tel(h)})}$ by~\eqref{eq:two-adj-Lie-alg}.
\end{proof}

\begin{proposition}\label{prop:XV-tu-equi}
  There exists a natural equivalence~$\left(\rUE_{\infty} \circ \UE_{\infty}\right)(L^{V}) \simeq L^V$ of spectral Lie algebras, for every~$L \in \infAlg_{\infopLie}(\infSp_{\Tel(h)})$.
\end{proposition}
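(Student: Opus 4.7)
The plan is to exploit the fact that $L^V$ is a trivial spectral Lie algebra (\cref{cor:XV-triv-lie}) and to compute $\rUE_{\infty} \circ \UE_{\infty}$ directly on trivial Lie algebras, crucially using the $v_h$-periodicity inherited from the self-map on $V$.

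First, by~\cref{cor:XV-triv-lie} I would write $L^V \simeq \triv_{\infopLie}(M)$ with $M \coloneqq \frgt_{\infopLie}(L^V)$. Since $\frgt_{\infopLie}$ is conservative, it then suffices to show that the induced map $\frgt_{\infopLie}(\eta_{\triv_{\infopLie}(M)}) \colon M \to \frgt_{\infopLie}((\rUE_{\infty} \circ \UE_{\infty})(\triv_{\infopLie}(M)))$ is an equivalence in $\infSp_{\Tel(h)}$, where $\eta$ denotes the unit of the adjunction $\UE_{\infty} \dashv \rUE_{\infty}$.

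Second, I would unwind the limit formula
\[
  \frgt_{\infopLie} \circ \rUE_{\infty} \circ \UE_{\infty} \simeq \varprojlim_n \susp^n \circ \frgt^{\nut}_{\infopE_n} \circ \UE_n
\]
from the proof of~\cref{prop:unit-finite-limit}. A diagram chase---using that $\triv_{\infopLie}$ is a right adjoint (so preserves loops), the decomposition $\indc^{+}_{\infopLie} \simeq \trivaug_0 \circ \indc_{\infopLie}$, and the identity $\frgt^{\nut}_{\infopE_n} \circ \augideal_{\infopE_n} \simeq \augideal_0 \circ \frgt^{\aug}_{\infopE_n}$---produces the identification
\[
  \frgt^{\nut}_{\infopE_n} \circ \UE_n \circ \triv_{\infopLie} \simeq \indc_{\infopLie} \circ \triv_{\infopLie} \circ \Loop^n.
\]
Substituting yields
\[
  \frgt_{\infopLie}\bigl((\rUE_{\infty} \circ \UE_{\infty})(\triv_{\infopLie}(M))\bigr) \simeq \varprojlim_n \susp^n (\indc_{\infopLie} \triv_{\infopLie})(\Loop^n M),
\]
which is a dual Goodwillie colinear approximation of the composite $\indc_{\infopLie} \triv_{\infopLie}$ evaluated at $M$.

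Third, I would verify that this inverse limit is equivalent to $M$ and that the comparison is mediated by the unit. The essential input is that the spectrum $M = \frgt_{\infopLie}(L^V)$ carries a self-equivalence $M \simeq \Loop^d M$ inherited from the $v_h$-self-map $v \colon \susp^d V \to V$ through the equivalence $L^V \simeq L^{\susp^d V}$ established in the proof of~\cref{prop:LV-infinite-loop}. Combined with the vanishing of Tate constructions in $\infSp_{\Tel(h)}$ underlying~\cref{lem:Th-dual-calculus-sum}, this $v_h$-periodicity forces the tower $(\susp^n (\indc_{\infopLie} \triv_{\infopLie})(\Loop^n M))_n$ to collapse to $M$. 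Conservativity of $\frgt_{\infopLie}$ then lifts the resulting equivalence back to $\infAlg_{\infopLie}(\infSp_{\Tel(h)})$. The main obstacle lies precisely in this third step: controlling the dual Goodwillie colinear approximation of the stabilisation monad $\indc_{\infopLie} \triv_{\infopLie}$, which is not the identity in general and only becomes so, on the specific class of spectra arising as $\frgt_{\infopLie}(L^V)$, after invoking both the $v_h$-periodicity and the telescopic Tate vanishing specific to $\infSp_{\Tel(h)}$.
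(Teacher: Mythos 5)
Your first two steps reproduce the paper's argument: reduce via conservativity of~$\frgt_{\infopLie}$ and \cref{cor:XV-triv-lie} to computing $\varprojlim_{n}\left(\susp^{n} \circ (\indc_{\infopLie} \circ \triv_{\infopLie}) \circ \Loop^{n}\right)$ on $M = \frgt_{\infopLie}(L^{V})$, i.e.\ the colinear approximation $\Poly^{1}\left(\indc_{\infopLie} \circ \triv_{\infopLie}\right)(M)$. The gap is in your third step. You assert that this colinear approximation collapses to the identity only on the special spectra $\frgt_{\infopLie}(L^{V})$, and that the mechanism is the self-equivalence $M \simeq \Loop^{d}M$ inherited from the $v_{h}$-self-map of~$V$. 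Neither claim is right. The $v_{h}$-self-map has already done its work: it was used in \cref{prop:LV-infinite-loop} and \cref{cor:XV-triv-lie} to show that $L^{V}$ is an infinite loop object and hence trivial, and it plays no further role. Knowing only that $M$ is $d$-fold periodic gives you no handle on $\varprojlim_{n}\susp^{n}F(\Loop^{n}M)$ for an unidentified endofunctor $F = \indc_{\infopLie}\circ\triv_{\infopLie}$; periodicity of the input does not by itself make a tower of this shape constant, so "forces the tower to collapse" is not an argument.

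What is actually needed, and what the paper supplies, is a structural identification of the functor $\indc_{\infopLie} \circ \triv_{\infopLie}$ itself: by operadic Koszul duality (\cref{prop:bar-O-algebras} together with \cref{ex:KD-Lie-Com}) it agrees with $\frgt_{\infopE_{\infty}^{\vee}} \circ \free_{\infopE_{\infty}^{\vee}}$, i.e.\ with a coproduct $\coprod_{j \geq 1}F_{j}$ of $j$-homogeneous endofunctors of $\infSp_{\Tel(h)}$ whose degree-one piece is the identity. \cref{lem:Th-dual-calculus-sum} (the telescopic Tate-vanishing input you correctly sense is relevant) then gives $\Poly^{1}\left(\indc_{\infopLie} \circ \triv_{\infopLie}\right) \simeq \id$ as an equivalence of endofunctors of all of $\infSp_{\Tel(h)}$ — not merely on the class of spectra of the form $\frgt_{\infopLie}(L^{V})$. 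Without this Koszul-duality step your proof does not close, and the substitute you propose does not repair it.
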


\begin{proof}
  Since the forgetful functor is conservative, it is enough to show that
  \[
  \left(\frgt_{\infopLie} \circ \rUE_{\infty} \circ\UE_{\infty}\right) \left( L^{V}\right) \simeq \frgt_{\infopLie}\left( L^{V}\right).
  \]
  We will use the abbreviations of notations as in~\cref{not:abbrev-for-proof-ff}.
  Recall the expression of~$\frgt_{\infopLie} \circ \rUE_{\infty} \circ\UE_{\infty}$ from~\eqref{eq:frgt-unit-tu}.
  We have
  \begin{align*}
    &\left(\frgt_{\infopLie} \circ \rUE_{\infty} \circ\UE_{\infty}\right) \left( L^{V}\right) \\
    \simeq  &\varprojlim_{n}\left(\susp^n \circ \frgt_{n}^{\nut} \circ \augideal_n \circ  \indc_{\infopLie}^{+, n} \circ \widetilde{\Loop}^{n}_{\infopLie}\right) \left( L^{V}\right) \\
    \overset{\rom{1}}{\simeq} &\varprojlim_{n}\left(\susp^n \circ \frgt_{n}^{\nut} \circ \augideal_n \circ \indc_{\infopLie}^{+, n} \circ \widetilde{\Loop}^{n}_{\infopLie} \circ \triv_{\infopLie}\right) \left(\frgt_{\infopLie}\left(L^{V}\right)\right) \\
    \overset{\rom{2}}{\simeq} &\varprojlim_{n}\left(\susp^n \circ \frgt_{n}^{\nut} \circ \augideal_n \circ \indc_{\infopLie}^{+, n} \circ \triv_{\infopLie}^{+, n} \circ \Cobar_n \circ \trivaug_0 \right)\left(\frgt_{\infopLie}\left(L^{V}\right)\right) \\ 
    \overset{\rom{3}}{\simeq} &\varprojlim_{n}\left(\susp^n \circ \frgt_{n}^{\nut} \circ \augideal_n \circ \left(\trivaug_0 \circ \indc_{\infopLie} \circ \triv_{\infopLie} \circ \augideal_0\right)^{n} \circ \Cobar_n \circ \trivaug_0 \right) \left(\frgt_{\infopLie}\left(L^{V}\right)\right) \\ 
    \overset{\rom{4}}{\simeq} &\varprojlim_{n}\left(\susp^n \circ (\indc_{\infopLie} \circ \triv_{\infopLie}) \circ \Loop^{n} \right) \left(\frgt_{\infopLie}\left(L^{V}\right)\right) \\
    \overset{\rom{5}}{\simeq} &\left(\Poly^1\left(\indc_{\infopLie} \circ \triv_{\infopLie}\right)\right) \left(\frgt_{\infopLie} L^{V}\right) \\
    \overset{\rom{6}}{\simeq} &\left(\Poly^1\left(\frgt_{\infopE_{\infty}^{\vee}} \circ \free_{\infopE_{\infty}^{\vee}} \right)\right) \left(\frgt_{\infopLie} L^{V}\right) \\
    \overset{\rom{7}}{\simeq} &\frgt_{\infopLie}\left(L^{V}\right).
  \end{align*}
  The reasons for the equivalences are given below:
    \begin{enumerate}
      \item is by~\cref{cor:XV-triv-lie}.
      \item holds by the equivalence~$\widetilde{\Loop}^{n}_{\infopLie} \circ \triv_{\infopLie} \simeq \triv_{\infopLie}^{+, n} \circ \Cobar_n \circ \trivaug_0$, because we have the equivalence~$\triv_{\infopLie}^{+, n} \circ \Cobar_n \simeq \widetilde{\Loop}^{n}_{\infopLie} \circ \triv_{\infopLie}^{+}$ of right adjoint functors.
      \item holds by the equivalence 
        \[
        \indc_{\infopLie}^{+} \circ \triv_{\infopLie}^{+} \simeq \trivaug_0 \circ \indc_{\infopLie} \circ \triv_{\infopLie} \circ \augideal_0,
        \] 
        and~$(\trivaug_0 \circ \indc_{\infopLie} \circ \triv_{\infopLie} \circ \augideal_0)^n$ denotes the functor on the~$\infty$\nobreakdash-category of augmented~$\infopE_n$\nobreakdash-algebras, induced by the composition~${\trivaug_0 \circ \indc_{\infopLie} \circ \triv_{\infopLie} \circ \augideal_0}$, which is lax monoidal. 
      \item holds by using the lax-monoidality of~$\indc_{\infopLie}^{+, n} \circ \triv_{\infopLie}^{+, n}$ and the equivalences
        \begin{align*}
          \frgt_{n}^{\nut} \circ \augideal_n &\simeq \augideal_0 \circ \frgt^{\aug} \\
          \frgt_n^{\aug} \circ \Cobar_n &\simeq \Loop^n_{\infAlg_{\infopE_0}(\infSp_{\Tel(h)})} \\
          \augideal_0 \circ \Loop^n_{\infAlg_{\infopE_0}(\infSp_{\Tel(h)})} \circ \trivaug_0 &\simeq \Loop^n.
        \end{align*}
      \item holds by the construction of the first colinear approximation~$\Poly^1(F)$ of~$F$ in the dual Goodwillie calculus tower, see~\cref{para:dual-Goodwillie}.
      \item  holds by operadic Koszul duality~\cref{prop:bar-O-algebras}: The Koszul dual~$\infty$\nobreakdash-cooperad of~$\infopLie$ is the cocommutative~$\infty$\nobreakdash-cooperad~$\infopE_{\infty}^{\vee}$.
      \item holds by~\cref{lem:Th-dual-calculus-sum} and the formula for~$\T_{\infopE_{\infty}^{\vee}} = \frgt_{\infopE_{\infty}^{\vee}} \circ \free_{\infopE_{\infty}^{\vee}}$.  \qedhere 
    \end{enumerate} 
\end{proof}

\begin{proof}[Proof of~\cref{thm:U-infty-ff}]
  This is a consequence of~\cref{prop:XV-tu-equi}, \cref{prop:proof-reduction-fully-faithfyl} and~\cref{prop:ff-left-adjoint}, as we explained in the proof strategy~\cref{para:U-infty-ff-strategy}.
\end{proof}

\subsection{Different construction of higher enveloping algebras}\label{sec:relation-to-KD}

One can construct two other commutative diagrams of the form~\eqref{diag:Lie-En-Th}, by Knudsen's construction of higher enveloping algebras in~\cite{Knu18}, and by the self Koszul duality of the~$\infty$\nobreakdash-operad~$\infopE_n$~\cite{CS22}, respectively.
In this section we discuss the relationships among these three constructions.
Throughout this section let~$\Cca$ denote a presentable stable symmetric monoidal~$\infty$\nobreakdash-category.

\begin{passage}[Operadic Koszul duality]\label{para:operadic-KD}
  An~$\infty$\nobreakdash-operad~$\Oca$ with values in~$\Cca$ is \emph{augmented} if there exists a map~$\Oca \to \infopTriv$ of~$\infty$\nobreakdash-operads.
  In other words, it is an augmented associative algebra object in the monoidal~$\infty$\nobreakdash-category~$\infSSeq(\Cca)$ of symmetric sequences in~$\Cca$.
  We recall briefly here what we mean by operadic Koszul duality. 
  For a detailed explanation including proofs, see~\cite[§5.3]{ShiTh}.
  
  A coassociative coalgebra in the monoidal~$\infty$\nobreakdash-category~$\infSSeq(\Cca)$ is called an~\emph{$\infty$\nobreakdash-cooperad} with values in~$\Cca$.
  Operadic Koszul duality, due to by Ginzburg--Kapranov~\cite{GK94}, exhibits a relationship between augmented operads and coaugmented cooperads.
  Let~$\infOpd^{\aug}(\Cca)$~(respectively~$\infcoOpd^{\coaug}(\infSp)$) denote the~$\infty$\nobreakdash-categories of augmented~$\infty$\nobreakdash-operads~(respectively coaugmented~$\infty$\nobreakdash-cooperads) with values in~$\Cca$.
  In \cite[Theorem~5.2.2.7]{HA} Lurie constructs the~$(\Bac \dashv \Cobar)$\nobreakdash-adjunction between the~$\infty$\nobreakdash-category of augmented associative algebras and the~$\infty$\nobreakdash-category of coaugmented coassociative coalgebras in a monoidal~$\infty$\nobreakdash-category which admits geometric realisations and totalisations.
  Thus, we can apply the adjunction to~$\infOpd^{\aug}(\Cca)$, which leads to an adjunction
  \begin{equation}\label{eq:cobar-bar-opd}
    \Bac \colon \infOpd^{\aug}(\Cca) \rightleftarrows \infcoOpd^{\coaug}(\Cca) \cocolon \Cobar.
  \end{equation}
  For an augmented~$\infty$\nobreakdash-operad~$\Oca$ we say~$\Bac(\Oca)$ is the \emph{Koszul dual~$\infty$\nobreakdash-cooperad} of~$\Oca$.
  Using the proof of~\cite[Proposition~3.47]{BCN23} one can see that taking arity-wise linear dual induces a~functor
  \begin{align*}
    \infcoOpd^{\coaug}(\Cca) &\to \infOpd^{\aug}(\Cca) \\
    \Lca & \mapsto \Lca^{\vee} \text{ with } \Lca^{\vee}(r) \simeq \MMap(\Lca(r), \munit_{\Cca}).
  \end{align*}
  We call~$\Bac(\Oca)^{\vee}$ the \emph{Koszul dual~$\infty$\nobreakdash-operad} of~$\Oca$, denoted by
  \[
  \KD(\Oca) \coloneqq \Bac(\Oca)^{\vee}
  \]
  
  An~$\infty$\nobreakdash-operad~($\infty$-cooperad)~$\Oca \in \infOpd(\Cca)$ is \emph{reduced} if it is non-unital and~$\Oca(1) \simeq \munit_{\Cca}$. 
  It is shown in~\cite{Chi05}~(model categorically) and in~\cite{HeuKD}~($\infty$\nobreakdash-categorically) that the adjunction~$\Bac \dashv \Cobar$ restricts to an equivalence on the~$\infty$\nobreakdash-subcategory of reduced~$\infty$\nobreakdash-operads and reduced~$\infty$\nobreakdash-cooperads.
  In the following we will work only with reduced~$\infty$\nobreakdash-operads.
\end{passage}

\begin{example}\label{ex:KD-Lie-Com}
  Let~$\infopE_{\infty} \in \infOpd(\infSp)$ denote the \emph{commutative~$\infty$\nobreakdash-operad}, where~$\infopE_{\infty}(r) \simeq \SS$ for~$r \in \NN$.
  The spectral Lie~$\infty$\nobreakdash-operad is defined as the Koszul dual of the deunitalisation~$\infopE_{\infty}^{\nut}$ of~$\infopE_{\infty}$, see~\cref{para:spectral-Lie-operad}.
\end{example}

\begin{example}
  Recall the non-unital~$\infty$\nobreakdash-operad~$\infopE_n^{\nut}$.
  Applying the suspension spectrum functor arity-wise induces a functor
  \[
  \susp^{\infty}_{+} \colon \infSSeq(\infHType) \to \infSSeq(\infSp).
  \]
  We denote the image of~$\infopE_n$ under this functor again by~$\infopE_n$, called the spectral~$\infopE_n$\nobreakdash-$\infty$\nobreakdash-operad.
  
  The main theorem of~\cite{CS22} shows that the Koszul dual of~$\infopE_n^{\nut} \in \infOpd(\infSp)$ is equivalent to the~$n$\nobreakdash-fold \emph{operadic desuspension}~$\opdsusp^{-n}\left(\infopE_n^{\nut}\right) \in \infOpd(\infSp)$ of~$\infopE_n^{\nut}$.
  Instead of going to the detailed construction on the operadic desuspension, let us just remark that~$\sigma^{-n}\left(\infopE_n^{\nut}\right)$ enjoys the property that there exists a commutative diagram
  \begin{equation}\label{diag:En-desuspension}
  \begin{tikzcd}
    \infAlg_{\infopE_n^{\nut}}(\Cca) \arrow[r, "\sim", "\phi_{\ast}"'] \arrow[d, "\frgt_{\infopE_n^{\nut}}"'] & \infAlg_{\opdsusp^{-n}\infopE_n^{\nut}}(\Cca) \arrow[d, "\frgt_{\opdsusp^{-n}\infopE_n^{\nut}}"] \\
    \Cca \arrow[r, "\susp_{\Cca}^{n}"', "\sim"] & \Cca.
  \end{tikzcd}
  \end{equation}
  See~\cite{HeiTh} or~\cite[§5.3.4]{ShiTh} for more details about operadic suspensions.
\end{example}

\begin{passage}[Nilpotent divided power coalgebras over an~$\infty$\nobreakdash-cooperad]
  Recall the monoidal functor functor~$\Tel_{\blank}$ from~\cref{para:symseq-compo}.
  The functor~$\T_{\blank}$ sends an~$\infty$\nobreakdash-cooperad~$\Lca$ with values in~$\Cca$ to an~$\infty$\nobreakdash-comonad~$\T_{\Lca}$.
  We call left comodules over~$\T_{\Lca}$ \emph{nilpotent divided power coalgebras} over~$\Lca$, see~\cite[Remark~5.3.1.12]{ShiTh} for some explanation about the terminology.
  Denote the~$\infty$\nobreakdash-category of nilpotent divided power coalgebras over~$\Lca$ as~$\infcoAlg_{\Lca}^{\ndp}(\Cca)$.
\end{passage}

Let~$\Oca$ be a reduced~$\infty$\nobreakdash-operad with values in~$\Cca$.
The augmentation~$\Oca \to \infopTriv_{\Cca}$ induces an adjunction
\[
\indc_{\Oca} \colon \infAlg_{\Oca}(\Cca) \rightleftarrows \Cca \cocolon \triv_{\Oca}.
\]
The~$\infty$-categories~$\infAlg_{\Oca}(\Cca)$ and~$\infcoAlg_{\Bac(\Oca)}^{\ndp}(\Cca)$ are related by the functor~$\indc_{\Oca}$.

\begin{proposition}\label{prop:bar-O-algebras}
  The functor~$\indc_{\Oca}$ factors through the~$\infty$\nobreakdash-category~$\infcoAlg_{\Bac(\Oca)}^{\ndp}(\Cca)$, given by the following commutative diagram 
  \[
  \begin{tikzcd}
  \infAlg_{\Oca}(\Cca) \arrow[rr, "\indc_{\Oca}"] \arrow[rd, "\widetilde{\indc}_{\Oca}"'] &              & \Cca \\
                          & \infcoAlg_{\Bac(\Oca)}^{\ndp}(\Cca) \arrow[ru, "\frgt_{\Bac(\Oca)}"'] &
  \end{tikzcd}
  \]
  of~$\infty$\nobreakdash-categories.
\end{proposition}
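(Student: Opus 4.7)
The plan is to exhibit the factorization via the comonad generated by the adjunction $\indc_{\Oca} \dashv \triv_{\Oca}$, and then identify that comonad with the comonad $\T_{\Bac(\Oca)}$ associated with the Koszul dual $\infty$-cooperad. Concretely, I would first invoke the dual of the Eilenberg--Moore construction: any left adjoint $L \dashv R$ between $\infty$-categories factors canonically as $\mathcal{A} \to \infcoAlg_{LR}(\Cca) \to \Cca$, where the second arrow is the forgetful functor. Applied to $\indc_{\Oca} \dashv \triv_{\Oca}$, this gives a factorization
\[
\infAlg_{\Oca}(\Cca) \xrightarrow{\widetilde{\indc}_{\Oca}} \infcoAlg_{\indc_{\Oca} \circ \triv_{\Oca}}(\Cca) \xrightarrow{\frgt} \Cca
\]
through comodules over the comonad $\indc_{\Oca} \circ \triv_{\Oca}$ on $\Cca$.

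The content of the proposition therefore reduces to identifying this comonad with $\T_{\Bac(\Oca)}$ and verifying that the comodules one obtains are nilpotent divided power coalgebras in the sense of the paper. For the comonad identification, I would compute $\indc_{\Oca} \circ \triv_{\Oca}$ via the bar resolution of a trivial algebra: any $\Oca$-algebra $A$ admits a canonical simplicial resolution by free $\Oca$-algebras $\free_{\Oca}\T_{\Oca}^{\bullet}(\frgt_{\Oca} A)$, and $\indc_{\Oca}$ applied to this resolution yields, for $A = \triv_{\Oca}(X)$, the two-sided bar construction $\Bac(\munit_{\infSSeq}, \Oca, \munit_{\infSSeq})$ evaluated on $X$. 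Since the monoidal functor $\T_{(\blank)} \colon \infSSeq(\Cca) \to \infFun(\Cca, \Cca)$ preserves the relevant colimits, this pointwise bar construction is $\T_{\Bac(\Oca)}$. The compatibility between the comonad structure on $\indc_{\Oca} \circ \triv_{\Oca}$ induced by the adjunction and the comonad structure on $\T_{\Bac(\Oca)}$ induced by the cooperad structure is then a formal consequence of the monoidality of $\T_{(\blank)}$ and the construction of the Koszul dual in~\eqref{eq:cobar-bar-opd}.

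For the nilpotent divided power aspect, I would observe that the simplicial bar resolution above exhibits $\indc_{\Oca}(A)$ as a sifted colimit of values of the form $\T_{\Oca}^{n}(\frgt_{\Oca} A)$, so the resulting coaction automatically factors through the nilpotent divided power comonad $\T_{\Bac(\Oca)}$ rather than through a cofree coalgebra over $\Bac(\Oca)$; this matches exactly the definition of $\infcoAlg_{\Bac(\Oca)}^{\ndp}(\Cca)$ as left comodules over $\T_{\Bac(\Oca)}$ recalled just before the statement.

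The main obstacle I expect is the comonad identification step, specifically verifying coherently (not just on underlying objects) that the coalgebra structure on $\Bac(\Oca)$ produced by the operadic bar construction agrees with the comonad structure on $\indc_{\Oca} \circ \triv_{\Oca}$ coming from the unit of $\indc_{\Oca} \dashv \triv_{\Oca}$. Since both structures come from the same simplicial bar construction interpreted in two monoidal contexts (symmetric sequences under composition product on one side, endofunctors of $\Cca$ on the other), this reduces to the monoidality of $\T_{(\blank)}$ recalled in~\cref{para:symseq-compo}, so I would expect the bulk of the work to be bookkeeping with the relevant simplicial diagrams rather than a genuinely new input.
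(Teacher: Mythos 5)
Your proposal is correct and follows essentially the same route as the paper: the paper's own (sketched) proof consists precisely of identifying the comonad of the adjunction~$\indc_{\Oca} \dashv \triv_{\Oca}$ with~$\T_{\Bac(\Oca)}$ and then invoking the canonical comodule factorisation, deferring the coherence details to~\cite[Proposition~5.3.2.4]{ShiTh}. Your bar-resolution computation of~$\indc_{\Oca} \circ \triv_{\Oca}$ via the monoidality of~$\T_{(\blank)}$ is exactly the standard way that identification is carried out, and you correctly flag the comonad-coherence step as the only real work.
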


\begin{proof}[Sketch]
  One shows that the~$\infty$\nobreakdash-comonad~$\triv_{\Oca} \circ \indc_{\Oca}$ is equivalent to~$\T_{\Bac(\Oca)}$.
  For details of the proof, see for example~\cite[Proposition~5.3.2.4]{ShiTh}.
\end{proof}

Now we discuss the relationship between Knudsen's construction of higher enveloping algebras and our construction of the functor~$\UE_n \colon \infAlg_{\infopLie}(\infSp_{\Tel(h)}) \to \infAlg_{\infopE_n}(\infSp_{\Tel(h)})$.
For this purpose let us first consider a \emph{conjectural} generalisation of our construction~$\UE_n$ to an arbitrary presentable stable symmetric monoidal~$\infty$\nobreakdash-category~$\Cca$.

\begin{passage}[Chevalley--Eilenberg functor]\label{para:CE}
Using Lurie's theory of~$\infty$\nobreakdash-operads, define the~$\infty$\nobreakdash-category~$\infcoAlg_{\infopE_{\infty}}^{\aug}(\Cca)$ of augmented cocommutative coalgebras as
\[
\infcoAlg_{\infopE_{\infty}}^{\aug}(\Cca) \coloneqq \infAlg_{\infopE_{\infty} / \infopCom}^{\aug}(\Cca^{\op})^{\op}.
\]
One should think of an object in~$\infcoAlg_{/ \infopE_{\infty}}^{\aug}(\Cca)$ is an object of~$\Cca$ together with a ``comultiplication''.
\[
X \to \prod_{r = 0}^{\infty} \left(X^{\otimes r}\right)^{\Perm_{r}}.
\]
The forgetful functor~$\frgt_{\infopE_{\infty}}^{\aug, \op} \colon \infcoAlg_{/ \infopE_{\infty}}^{\aug}(\Cca) \to \infAlg_{\infopE_0}^{\aug}(\Cca)$ is symmetric monoidal with respect to the cartesian symmetric monoidal structure with respect to the source and the canonical symmetric monoidal structure on the target~$\infty$\nobreakdash-category, by~\cite[Example~3.2.4.4 and~Proposition~3.2.4.7]{HA}.

In~\cite{HeuKD} Heuts defines a natural transformation
\[
\frgt^{\mathrm{nil, dp}} \colon \infcoAlg_{\infopE_{\infty}}^{\aug, \ndp}(\Cca) \to \infcoAlg_{/ \infopE_{\infty}}^{\aug}(\Cca).
\]
Intuitively speaking the functor~$\frgt^{\mathrm{nil, dp}}$ is induced by the natural transformation 
\[
\coprod_{r = 0}^{\infty}\left((\blank)^{\otimes r}\right)_{\Perm_{r}} \to \prod_{r = 0}^{\infty}\left((\blank)^{\otimes r}\right)^{\Perm_{r}}.
\]
The \emph{Chevalley--Eilenberg functor} is defined as the composition
\[
\CE \colon \infAlg_{\infopLie}(\Cca) \xrightarrow{\widetilde{\indc}_{\infopLie}} \infcoAlg_{\infopCom}^{\nut, \ndp} \xrightarrow{\sim} \infcoAlg_{\infopCom}^{\aug, \ndp} \xrightarrow{\frgt^{\mathrm{nil, dp}}} \infcoAlg_{/ \infopE_{\infty}}^{\aug}(\Cca).
\]
Then we obtain the following equivalence of~functors.
\[
\frgt_{\infopE_{\infty}}^{\aug, \op} \circ  \CE \simeq \indc_{\infopLie}^{+} \coloneqq \trivaug_{\Cca} \circ \indc_{\infopLie}.
\]

We believe that~$\CE$ preserves product, as is already shown 1-categorically in~\cite[Chapter~6, §4.2.6]{GR17}.
Assuming this, the functor~$\indc_{\infopLie}^{+} \simeq \frgt_{\infopE_{\infty}}^{\aug, \op} \circ  \CE$ is symmetric monoidal.
Then we can proceed with the construction of~$\UE_n$ as in~\cref{para:Un-Tn}, for any presentable stable symmetric monoidal~$\infty$\nobreakdash-category~$\Cca$.
\end{passage}

Assuming that~$\CE$ preserves products we can give the~$\MK(h)$\nobreakdash-version of~\cref{thm:U-infty-ff}.

\begin{proposition}\label{thm:U-infty-ff-K(h)}
  Let~$\Cca$ be the~$\infty$\nobreakdash-category~$\infSp_{\MK(h)}$.
  In the situation of~\cref{para:CE}, \ie assuming that~$\CE$ preserves products, the induced~functor
  \[
  \UE_{\infty} \colon \infAlg_{\infopLie}\left(\infSp_{\MK(h)}\right) \to \varprojlim \infAlg_{\infopE_n}^{\nut}\left(\infSp_{\MK(h)} \right)
  \] 
  is fully faithful.
\end{proposition}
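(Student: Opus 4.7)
The plan is to transcribe the proof of Theorem~\ref{thm:U-infty-ff} with $\infSp_{\Tel(h)}$ replaced by $\infSp_{\MK(h)}$ throughout. Under the standing hypothesis that $\CE$ preserves products, the machinery of~\cref{sec:higher-enveloping-Th}—in particular the construction of $\UE_n$ and the adjunction $\UE_\infty \dashv \rUE_\infty$—is available over $\infSp_{\MK(h)}$, so by~\cref{prop:ff-left-adjoint} it suffices to show that the unit $\eta \colon \id \to \rUE_\infty \circ \UE_\infty$ on $\infAlg_{\infopLie}(\infSp_{\MK(h)})$ is an equivalence.

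First I would repeat the reduction of~\cref{prop:proof-reduction-fully-faithfyl}: the $\infty$-category $\infSp_{\MK(h)}$ is generated under colimits by $\Loc_{\MK(h)}(\susp^\infty V)$ for some $p$-local finite complex $V$ of type~$h$ equipped with a $v_h$-self-map $v \colon \susp^d V \to V$, and every spectral Lie algebra is a sifted colimit of free spectral Lie algebras on such objects. Running the mapping-space manipulations of~\eqref{eq:adjunctions-equivs} with $\Loc_h^{\f}$ replaced by $\Loc_{\MK(h)}$, the task reduces to producing a natural equivalence $L^V \xrightarrow{\sim} (\rUE_\infty \circ \UE_\infty)(L^V)$ for every $L \in \infAlg_{\infopLie}(\infSp_{\MK(h)})$, where $L^V$ is the constant-diagram limit of~\cref{def:power-Lie-alg}.

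Next I would verify the analogues of~\cref{prop:unit-finite-limit,prop:LV-infinite-loop} and~\cref{cor:XV-triv-lie}. Their proofs use only the formal structure of the constituent adjunctions, the equivalence~\eqref{eq:bar-frgt-commutes}, and the single chromatic input that the $v_h$-self-map $v$ becomes invertible after localisation. By the Periodicity~\cref{thm:periodicity} the map $v$ is an $\MK(h)_\bullet$-isomorphism, so $\Loc_{\MK(h)}(v)$ is an equivalence in $\infSp_{\MK(h)}$; the same two-step argument then shows that $L^v \colon L^V \to L^{\susp^d V}$ is an equivalence, that $L^V \simeq \Loop_{\infopLie}^d(L^V)$, hence $L^V$ is an infinite loop object and therefore a trivial spectral Lie algebra in the $\MK(h)$-local setting.

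The main obstacle—and the heart of the argument—is carrying over~\cref{prop:XV-tu-equi}. Equivalences (i)--(vi) in its proof are formal consequences of the Koszul-duality/adjunction framework and apply verbatim in any presentable stable symmetric monoidal $\infty$-category, so one is left only with step (vii), namely the analogue of~\cref{lem:Th-dual-calculus-sum} for endofunctors of $\infSp_{\MK(h)}$. That lemma is established via a uniform nilpotence statement that reduces, as explained in its proof, to the vanishing of Tate constructions for finite group actions on the localised category; for $\infSp_{\MK(h)}$ this Tate vanishing is the classical Greenlees--Hovey--Sadofsky theorem. Granting the $\MK(h)$-local version of~\cref{lem:Th-dual-calculus-sum}, the proof of~\cref{prop:XV-tu-equi} applies word-for-word, and combining it with the reduction above and~\cref{prop:ff-left-adjoint} yields the fully faithfulness of $\UE_\infty$.
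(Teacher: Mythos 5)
Your proposal is correct and follows essentially the same route as the paper, which likewise proves the statement by rerunning the strategy of \cref{para:U-infty-ff-strategy} verbatim with $\Tel(h)$ replaced by $\MK(h)$, citing the compact generation of $\infSp_{\MK(h)}$ by $\Loc_{\MK(h)}\susp^{\infty}V_h$ and the product-preservation of $\Loc_{\MK(h)}$ to keep $\indc_{\infopLie}^{+}$ symmetric monoidal. If anything, you are more explicit than the paper about the one genuinely chromatic input, namely that the $\MK(h)$-local analogue of \cref{lem:Th-dual-calculus-sum} rests on Tate vanishing (Greenlees--Sadofsky), which is a worthwhile point to record.
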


\begin{proof}
  One can prove this proposition using the same proof strategy~\cref{para:U-infty-ff-strategy} as for~\cref{thm:U-infty-ff}: Every statement holds after replacing~$\Tel(h)$ by~$\MK(h)$.
  In particular, 
  \begin{enumerate}
    \item By~\cite[Corollary~5.5.7.3]{Lur09} the~$\infty$\nobreakdash-category~$\infSp_{\MK(h)}$ is compactly generated by one compact object~$\Loc_{\MK(h)}\susp^{\infty}V_{h}$.
    \item \cref{prop:unit-finite-limit} holds with~$\MK(h)$ in place of~$\Tel(h)$: In the original proof we use the fact that~$\indc_{\infopLie}^{+}$ is symmetric monoidal.
    This is still true in the~$\MK(h)$\nobreakdash-local case, since the localisation functor~$\infSp_{\Tel(h)} \to \infSp_{\MK(h)}$ is product preserving, see~\cite[Lemma~1.4.4.7]{HA}. \qedhere
  \end{enumerate} 
\end{proof}

\begin{proposition}
  In the situation of~\cref{para:CE} our construction of the functor~$\UE_n$ agrees with Knuden's construction of the higher enveloping algebra functor~$\infAlg_{\infopLie}(\Cca) \to \infAlg_{\infopE_n}^{\nut}(\Cca)$ for every~$n \in \NN$.
  In particular, this is true for~$\Cca = \infSp_{\Tel(h)}$.
\end{proposition}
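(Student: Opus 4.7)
The plan is to identify both functors as the unique sifted-colimit-preserving extension of a common functor defined on free spectral Lie algebras. First, I would verify that both constructions preserve sifted colimits. Our $\UE_n$ is a composition of left adjoints by~\cref{para:Un-Tn}, hence itself a left adjoint. Knudsen's higher enveloping algebra, as constructed in~\cite{Knu18}, is defined as the left adjoint to a shifted ``Lie underlying'' functor $\infAlg_{\infopE_n}^{\nut}(\Cca) \to \infAlg_{\infopLie}(\Cca)$, so it also preserves small colimits.

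Next, since the adjunction $\free_{\infopLie} \dashv \frgt_{\infopLie}$ is monadic and $\frgt_{\infopLie}$ preserves sifted colimits, every spectral Lie algebra is canonically a sifted colimit of free Lie algebras $\free_{\infopLie}(X)$ with $X \in \Cca$. It therefore suffices to produce a natural equivalence between the restrictions of both functors along $\free_{\infopLie}$. Our side evaluates to $\free_{\infopE_n^{\nut}}(\Loop_{\Cca}^n X)$ by~\cref{prop:Un-Free}. The parallel identification for Knudsen's construction---that $\UE_n(\free_{\infopLie}(X))$ is the free non-unital $\infopE_n$-algebra on an $n$-fold shift of $X$---is essentially the defining property of the higher enveloping algebra; combined with the operadic-desuspension identification of~\eqref{diag:En-desuspension}, both formulas describe the same object of $\infAlg_{\infopE_n}^{\nut}(\Cca)$, naturally in $X$.

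The main obstacle will be promoting this agreement on free Lie algebras to a \emph{natural} equivalence of functors on all of $\infAlg_{\infopLie}(\Cca)$. Since the two constructions traverse different intermediate $\infty$-categories---ours via $\widetilde{\Loop}^n_{\infopLie}$, $\indc_{\infopLie}^{+,n}$, and the augmentation-ideal equivalence, Knudsen's via configuration spaces and factorisation homology---one cannot simply compare composites of natural transformations. I would address this by exhibiting both functors as left Kan extensions along $\free_{\infopLie}$: both restrict to functors $\Cca \to \infAlg_{\infopE_n}^{\nut}(\Cca)$ sending $X$ to $\free_{\infopE_n^{\nut}}(\Loop_{\Cca}^n X)$ with the canonical $\infopE_n$-algebra structure, and both preserve sifted colimits. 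By the universal property of $\infAlg_{\infopLie}(\Cca)$ as the sifted cocompletion of its full subcategory of free Lie algebras, this pins down the functor up to contractible choices, yielding the desired natural equivalence $\UE_n \simeq \UE_n^{\mathrm{Knu}}$.
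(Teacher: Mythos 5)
Your reduction to free Lie algebras is the right instinct, and your computation of $\UE_n \circ \free_{\infopLie}$ via \cref{prop:Un-Free} matches the paper's. The gap is in the last step: a sifted-colimit-preserving functor out of $\infAlg_{\infopLie}(\Cca)$ is \emph{not} determined by its composite with $\free_{\infopLie} \colon \Cca \to \infAlg_{\infopLie}(\Cca)$. What is true is that it is determined by its restriction to the full subcategory spanned by free Lie algebras (the Kleisli category of the monad $\T_{\infopLie}$), whose morphisms are all Lie algebra maps $\free_{\infopLie}(X) \to \free_{\infopLie}(Y)$, i.e.\ maps $X \to \T_{\infopLie}(Y)$ in $\Cca$ --- not merely the images of maps in $\Cca$. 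Two sifted-colimit-preserving functors that agree, naturally in $X \in \Cca$, on objects of the form $\free_{\infopLie}(X)$ need not be equivalent: for modules over a ring $R$, the identity and the restriction of scalars along a ring automorphism both preserve colimits and become naturally equivalent after composition with the free module functor, yet are not equivalent in general. So the actual content of the proposition is the compatibility of the objectwise identification $\free_{\infopE_n^{\nut}}(\Loop^n_{\Cca}X)$ with the Lie structure maps (equivalently, with the monad $\T_{\infopLie}$), and your proposal does not supply this.

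The paper handles this by outsourcing the uniqueness: its proof is the one-line citation of \cite[Theorem~B]{Knu18} together with the symmetric monoidality of $\indc_{\infopLie}^{+}$ and \cref{prop:Loop-fac-En}. In other words, it appeals to Knudsen's characterisation of the higher enveloping algebra functor and checks that the construction of \cref{para:Un-Tn} satisfies the characterising properties, rather than re-deriving uniqueness from sifted cocompletion. If you want to keep your route, the missing ingredient is precisely such a characterisation, or else a direct comparison of the two functors on the full Kleisli subcategory of free spectral Lie algebras.
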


\begin{proof}
  This is due to~\cite[Theorem~B]{Knu18}, the symmetric monoidality of~$\indc_{\infopLie}^{+}$ and~\cref{prop:Loop-fac-En}.
\end{proof}

Now we discuss the construction of a similar-looking diagram as~\eqref{diag:Lie-En-Th} via self-Koszul duality of the spectral~$\infty$\nobreakdash-operads~$\infopE_{n}$, for~$n \in \NN$. 

\begin{construction}
  The topological little~$n$\nobreakdash-disks operads~$\opE_n$ fits into a tower of~inclusions
  \[
  \opE_n \hookrightarrow \opE_1 \hookrightarrow \cdots \hookrightarrow \opE_n \hookrightarrow \opE_{n+1} \hookrightarrow \cdots, 
  \]
  obtained by increasing the dimension of the disks.
  The homotopy colimit of this tower is the topological operad~$\opE_{\infty}$ where~$\opE_{\infty}(r)$ is contractible and admits a free~$\Perm_r$\nobreakdash-action.
  Thus, we obtain a tower of inclusions
  \begin{equation}\label{eq:En-inclusion}
    \infopE_0 \hookrightarrow \infopE_1 \hookrightarrow \cdots \hookrightarrow \infopE_n \hookrightarrow \infopE_{n+1} \cdots
  \end{equation}
  of~$\infty$\nobreakdash-operads whose colimit in~$\infOpd(\infHType)$ is equivalent to the commutative~$\infty$\nobreakdash-operad~$\infopE_{\infty}$.
  Applying Koszul duality to this tower, we obtain a tower
  \[
  \cdots \opdsusp^{-(n+1)}\infopE_{n+1} \xrightarrow{c_{n}^{n+1}} \opdsusp^{-n}\infopE_n \to \cdots \to \opdsusp^{-1}\infopE_1 \to \infopE_0
  \]
  of~$\infty$\nobreakdash-operads with values in~$\infSp$, whose inverse limit is equivalent to the~$\infty$\nobreakdash-operad~$\infopLie$, see~\cref{ex:KD-Lie-Com}.
  Denote the induced morphism~$\infopLie \to \susp^{-n}\infopE_{n}^{\nut}$ by~$c_n$.
  
  The morphism~$c_{n}$ induces the following commutative diagram
  \begin{equation}\label{diag:Lie-En-Frgt}
    \begin{tikzcd}
    \infAlg_{\infopE_n}^{\nut}(\Cca) \arrow[r, "\sim"] \arrow[d, "\frgt_{\infopE_n^{\nut}}"'] & \infAlg_{\susp^{-n}\infopE_{n}}^{\nut}(\Cca) \arrow[r, "(c_n)^{\ast}"] \arrow[d, "\frgt_{\susp^{-n}\infopE_{n}^{\nut}}"] &\infAlg_{\infopLie}(\Cca) \arrow[d, "\frgt_{\infopLie}"] \\
    \Cca \arrow[r, "\susp_{\Cca}^{n}"', "\sim"] & \Cca \arrow[r, "\id"', "\sim"] & \Cca
  \end{tikzcd}
  \end{equation}
  in~$\infPrr$, by~\cref{prop:operad-map-induced-functor} and \eqref{diag:En-desuspension}, for every~$n \in \NN$.
  By adjunction we obtain the following commutative~diagram
  \begin{equation}\label{diag:Lie-En-Free}
  \begin{tikzcd}
    \infAlg_{\infopLie}(\Cca) \arrow[r, "\widetilde{\UE}_n"] & \infAlg_{\infopE_n}^{\nut}(\Cca)\\
    \Cca \arrow[u, "\free_{\infopLie}"] \arrow[r, "\Loop^{n}_{\Cca}"', "\sim"] & \Cca \arrow[u, "\free_{\infopE_n^{\nut}}"']
  \end{tikzcd}
  \end{equation}
  in the~$\infty$\nobreakdash-category~$\infPrl$.
  Similarly, for each~$n \in \NN$, the morphism~$c_{n}^{n+1}$ induces the following commutative diagram in~$\infPrr$
  \[
    \begin{tikzcd}
    \infAlg_{\infopE_n}^{\nut}(\Cca) \arrow[r, "\sim"] \arrow[d, "\frgt_{\infopE_n^{\nut}}"'] & \infAlg_{\susp^{-n}\infopE_{n}}^{\nut}(\Cca) \arrow[r, "(c_n^{n+1})^{\ast}"] \arrow[d, "\frgt"] & \infAlg_{\susp^{-(n+1)}\infopE_{n+1}}^{\nut}(\Cca) \arrow[r, "\sim"] \arrow[d, "\frgt"] & \infAlg_{\infopE_{n+1}}^{\nut}(\Cca) \arrow[d, "\frgt_{\infopE_{n+1}^{\nut}}"] \\
    \Cca \arrow[r, "\susp_{\Cca}^{n}"', "\sim"] & \Cca \arrow[r, "\id"', "\sim"] & \Cca \arrow[r, "\Loop_{\Cca}^{n+1}"', "\sim"] & \Cca.
  \end{tikzcd}
  \]
  Taking left adjoints to the functors in the above diagrams gives the commutative diagram in~$\infPrl$ below
  \begin{equation}\label{diag:En-En-1-Free}
  \begin{tikzcd}
    \infAlg_{\infopE_{n+1}}^{\nut}(\Cca) \arrow[r, "\widetilde{\rB}_n"] & \infAlg_{\infopE_n}^{\nut}(\Cca)\\
    \Cca \arrow[u, "\free_{\infopE_n}^{\nut}"] \arrow[r, "\susp_{\Cca}"', "\sim"] & \Cca. \arrow[u, "\free_{\infopE_n}^{\nut}"']
  \end{tikzcd}
  \end{equation}
  Assembling the functors~$\widetilde{\UE}_n$ and~$\widetilde{\rB}_n$ together gives the commutative diagram~in~$\infPrl$ below:
  \begin{equation}\label{diag:Lie-En}
  \begin{tikzcd}[row sep = huge]
          & \infAlg_{\infopLie}\left(\Cca\right) \arrow[rrrrd] \arrow[rrrd, "\widetilde{\UE}_1"']  \arrow[rd] \arrow[d, "\widetilde{\UE}_n"'] \arrow[ld, ] &             &             &   \\
  \cdots \arrow[r] & \infAlg_{\infopE_{n}}^{\nut}\left(\Cca\right) \arrow[r, "\widetilde{\rB}_n"']                                                & \infAlg_{\infopE_{n-1}}^{\nut}\left(\Cca\right) \arrow[r, "\widetilde{\rB}_{n-1}"'] & \cdots \arrow[r] & \infAlg_{\infopE_{1}}^{\nut}\left(\Cca\right) \arrow[r, "\widetilde{\rB}_1"'] & \Cca.
  \end{tikzcd}
  \end{equation}
  Furthermore, we obtain an induced adjunction
  \[
  \widetilde{\UE}_{\infty} \colon \infAlg_{\infopLie}(\Cca) \rightleftarrows \varprojlim_n \infAlg_{\infopE_n}^{\nut}(\Cca) \cocolon \widetilde{\rUE}_{\infty}.
  \] 
\end{construction}

\begin{conjecture}
  In the situation of~\cref{para:CE}, \ie assuming that we can construct~$\UE_n$ for any presentable stable symmetric monoidal~$\infty$\nobreakdash-category~$\Cca$, the commutative diagram~\eqref{diag:Lie-En} is equivalent to our construction~\eqref{diag:Lie-En-Th}.
\end{conjecture}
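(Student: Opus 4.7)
The plan is to compare both constructions via their right adjoints. Since $\UE_n$ and $\widetilde{\UE}_n$ both lie in $\infPrl$, the two commutative diagrams \eqref{diag:Lie-En-Th} and \eqref{diag:Lie-En} are determined by the ladder of right adjoints obtained by reversing every arrow. Combining diagram~\eqref{diag:Lie-En-Frgt} with the equivalence $\frgt_{\infopLie} \circ \rUE_n \simeq \Loop^n_{\Cca} \circ \frgt_{\infopE_n}^{\nut}$ extracted from the proof of~\cref{prop:Un-Free} shows that both $\rUE_n$ and $\widetilde{\rUE}_n \coloneqq (c_n)^{\ast}$ lift the same underlying functor $\Loop^n_{\Cca} \circ \frgt_{\infopE_n}^{\nut}$ along $\frgt_{\infopLie}$. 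The conjecture therefore reduces to identifying these lifts, compatibly in $n$.

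First I would exhibit $\rUE_n$ as restriction along an operad morphism $\tau_n \colon \infopLie \to \opdsusp^{-n}\infopE_n^{\nut}$: under the standing hypothesis of~\cref{para:CE} that $\CE$ preserves products, the functor $\indc_{\infopLie}^{+}$ is symmetric monoidal, so its right adjoint $\triv_{\infopLie}^{+}$ is lax symmetric monoidal, and chasing this through the formula $\rUE_n \simeq \Bac_n \circ \triv_{\infopLie}^{+, n} \circ \trivaug_{\infopE_n}$ equips $\rUE_n$ with a canonical operadic structure valued in $\opdsusp^{-n}\infopE_n^{\nut}$-algebras via the equivalence~\eqref{diag:En-desuspension}. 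Second, I would check that the resulting family $\{\tau_n\}_n$ is compatible with the tower, so that the natural transformation $\rUE_{n+1} \to \rUE_n$ arising from iterating the Bar construction translates into the composition $\infopLie \xrightarrow{\tau_{n+1}} \opdsusp^{-(n+1)}\infopE_{n+1}^{\nut} \xrightarrow{c_n^{n+1}} \opdsusp^{-n}\infopE_n^{\nut}$. Third, since by the self-Koszul-duality of Ching--Salvatore the operad $\infopLie$ is the inverse limit of the dualised tower in $\infOpd(\infSp)$ and $c_n$ is the canonical structure map, the compatibility forces $\tau_n \simeq c_n$ for every $n$. Passing back to left adjoints gives the desired equivalence of the two diagrams.

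The main obstacle lies in the first step: extracting the full $\opdsusp^{-n}\infopE_n^{\nut}$-operadic structure on $\rUE_n$, rather than merely its lax symmetric monoidal enhancement. The natural way to organise this is to promote the Chevalley--Eilenberg functor of \cref{para:CE} to a strong symmetric monoidal equivalence between spectral Lie algebras and a full subcategory of nilpotent divided-power cocommutative coalgebras (along the lines of~\cite[Chapter~6]{GR17} but in the stable $\infty$-categorical setting), and then trace the bar--cobar equivalence of~\cref{prop:bar-O-algebras} one layer of $\infopE_n$ at a time, matching $\tau_n$ against the Koszul dual of the inclusion $\infopE_0^{\nut} \hookrightarrow \infopE_n^{\nut}$. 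This compatibility of $\CE$ with the iterated Bar construction is really the heart of the conjecture, and distinguishes it from~\cref{thm:U-infty-ff}, where one only needed to test the unit against free objects and could exploit the nilpotence properties special to $\infSp_{\Tel(h)}$.
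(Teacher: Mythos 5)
The statement you are addressing is stated in the paper only as a conjecture; the paper gives no proof, and its own remark immediately afterwards points out that the question is essentially that of comparing different constructions of Koszul duality for $\infopE_n$\nobreakdash-algebras. Judged on its own terms, your proposal is a reasonable \emph{reduction} but not a proof: the step that carries all the weight — realising $\rUE_n$ as restriction along an operad morphism $\tau_n \colon \infopLie \to \opdsusp^{-n}\infopE_n^{\nut}$ — is precisely that open comparison, as you acknowledge in your last paragraph. Knowing that $\rUE_n$ and $(c_n)^{\ast}$ have the same underlying functor is essentially vacuous here, since \emph{every} functor of the form $\tau^{\ast}$ lifts that underlying functor; the entire content is the operadic enhancement, and extracting it from the lax symmetric monoidality of $\triv_{\infopLie}^{+}$ together with the iterated Bar construction is not carried out. (A small but real slip: by \eqref{eq:Lie-En-free-Th-equiv} and \eqref{diag:Lie-En-Frgt} the common underlying functor is $\susp^{n}_{\Cca} \circ \frgt_{\infopE_n}^{\nut}$, not $\Loop^{n}_{\Cca} \circ \frgt_{\infopE_n}^{\nut}$; since the eventual goal is to pin down an operad map, a discrepancy of $2n$ suspensions is not harmless bookkeeping.)

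There is also a gap in your final step that would remain even if step one were granted. A cone $\{\tau_n\}$ over the tower with apex $\infopLie$ gives, by the universal property of the inverse limit $\infopLie \simeq \varprojlim_n \opdsusp^{-n}\infopE_n^{\nut}$, only a factorisation $\tau_n \simeq c_n \circ \phi$ for some endomorphism $\phi$ of $\infopLie$; compatibility with the tower does not \emph{force} $\tau_n \simeq c_n$. You would still need to show that $\phi$ is the identity (or at least an equivalence, if one is willing to identify the two diagrams only up to an automorphism of $\infAlg_{\infopLie}(\Cca)$), and checking the bottom of the tower or arity~$1$ does not determine $\phi$ in higher arities. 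So the proposal is a plausible outline of where a proof would have to go, but both of its load-bearing steps are open.
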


\begin{remark}
  The above conjecture also relates to the question about comparing different notions of Koszul duality of~$\infopE_n$\nobreakdash-algebras, \cf~\cite{CS22},~\cite[§5.2]{HA} and~\cite[§3.2]{AF21}.
\end{remark}

\begin{theorem}\label{thm:U-infty-rational-equi}
For~$\Cca = \infSp_{\QQ}$, the functor~$\widetilde{\UE}_{\infty}$ is an equivalence of~$\infty$\nobreakdash-categories.
\end{theorem}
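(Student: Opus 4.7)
The plan is to combine the self Koszul duality of the~$\infopE_n$-operads proved in~\cite{CS22} with the formality of the rational~$\infopE_n$-operads. Self Koszul duality provides an equivalence~$\KD(\infopE_n^{\nut}) \simeq \opdsusp^{-n}\infopE_n^{\nut}$ in~$\infOpd(\infSp)$, and by applying it to the sequential colimit~$\infopE_\infty^{\nut} \simeq \varinjlim_n \infopE_n^{\nut}$, whose Koszul dual is~$\infopLie$ by~\cref{ex:KD-Lie-Com}, one identifies~$\infopLie$ with the sequential limit~$\varprojlim_n \opdsusp^{-n}\infopE_n^{\nut}$ in~$\infOpd(\infSp)$. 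Under this identification, the morphisms~$c_n \colon \infopLie \to \opdsusp^{-n}\infopE_n^{\nut}$ used to construct~$\widetilde{\UE}_n$ are precisely the legs of the limit cone.

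Having identified~$\infopLie$ as a cofiltered limit of~$\infty$-operads, I would transport this to algebras in~$\infSp_{\QQ}$. The functor~$\widetilde{\UE}_{\infty}$ is then identified with the canonical comparison
\[
\infAlg_{\varprojlim_n \opdsusp^{-n}\infopE_n^{\nut}}\left(\infSp_{\QQ}\right) \longrightarrow \varprojlim_n \infAlg_{\opdsusp^{-n}\infopE_n^{\nut}}\left(\infSp_{\QQ}\right),
\]
so it suffices to show that this comparison is an equivalence rationally. Both sides are monadic over~$\infSp_{\QQ}$ via their respective forgetful functors: the left side by construction, and the right side because the tower of forgetful functors is jointly conservative and compatible with sifted colimits. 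To conclude, I would identify the associated monads arity-by-arity, using that~$\infopLie(r) \simeq \varprojlim_n \opdsusp^{-n}\infopE_n^{\nut}(r)$ holds rationally by the Koszul-duality computation above.

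The main obstacle is controlling the interaction between the limit of the operads and the symmetric-sequence composition product that defines their monads: the relevant monad on~$X$ involves a coproduct of the form~$\coprod_r (\Oca(r) \otimes X^{\otimes r})_{\Perm_r}$, which does not commute with limits in~$\Oca$ in general. This is precisely where rationality enters through formality: the tower~$\{\opdsusp^{-n}\infopE_n^{\nut}\}_n$ is modelled by a strict tower of differential graded operads in~$\CatCh_{\QQ}$, the relevant symmetric group orbits split off because~$\lvert \Perm_r \rvert$ is invertible in~$\QQ$, and the arity-wise limit can be computed on the nose. The same argument does not apply to~$\infSp_{\Tel(h)}$, which is why the analogous statement there remains only conjectural.
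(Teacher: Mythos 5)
Your first step is consistent with the paper's setup: self Koszul duality identifies~$\infopLie$ with the inverse limit of the tower~$\opdsusp^{-n}\infopE_n^{\nut}$, and~$\widetilde{\UE}_{\infty}$ is assembled from the induction functors~$(c_n)_!$. The gap is in the second step. The inverse limit~$\varprojlim_n \infAlg_{\infopE_n}^{\nut}(\infSp_{\QQ})$ is taken along the functors~$\widetilde{\rB}_n$, i.e.\ along the \emph{left} adjoints of the restriction functors~$(c_n^{n+1})^{\ast}$ (the restrictions themselves form a direct system~$\infAlg_{\infopE_0}^{\nut} \to \infAlg_{\opdsusp^{-1}\infopE_1}^{\nut} \to \cdots$ and cannot be used to form this limit), so~$\widetilde{\UE}_{\infty}$ is not ``the canonical comparison functor'' for a limit of operads in any formal sense. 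More seriously, there is no evident underlying-object functor from this limit to~$\infSp_{\QQ}$: the bar constructions~$\widetilde{\rB}_n$ do not commute with the forgetful functors (they do so only on free and on trivial algebras), so the projections~$\pr_n$ composed with~$\frgt_{\infopE_n}^{\nut}$ do not assemble into a single conservative, sifted-colimit-preserving right adjoint to~$\infSp_{\QQ}$. Your claimed monadicity of the right-hand side therefore has no candidate monadic functor, and the proposed arity-by-arity comparison of monads has nothing to compare against. A further sign that something is off: if the argument reduced to splitting~$\Perm_r$-orbits plus the arity-wise identification~$\infopLie(r) \simeq \varprojlim_n \opdsusp^{-n}\infopE_n^{\nut}(r)$, it would transport verbatim to~$\infSp_{\Tel(h)}$, where Tate vanishing gives the same control over orbits (\cref{lem:Th-dual-calculus-sum}) — yet there the statement is only~\cref{conj:U-finty-equi-Sp-Th}, of which the paper proves only full faithfulness.

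The ingredient you are missing is the stronger form of Fresse--Willwacher formality (\cref{thm:En-formality}): for~$k \geq 2$ the rationalised inclusion~$i_n^{n+k} \colon (\infopE_n)_{\QQ} \to (\infopE_{n+k})_{\QQ}$ factors through~$(\infopE_{\infty})_{\QQ}$. Koszul-dualising (\cref{cor:formality-En-map-Lie}), the composite~$c_n^{n+k}$ factors through~$\infopLie_{\QQ}$, so the tower~$\{\infAlg_{\infopE_n}^{\nut}(\infD(\QQ)), \widetilde{\rB}\}$ interleaves with the constant tower on~$\infAlg_{\infopLie}(\infD(\QQ))$; the two inverse limits then agree by cofinality, with no monadicity and no interchange of limits with the composition product required. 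That factorisation through~$\infopE_{\infty}$ — not the invertibility of the orders of the symmetric groups — is where rationality genuinely enters and why the argument does not carry over to~$\infSp_{\Tel(h)}$.
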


The proof of this theorem uses formality of~$\infopE_n$\nobreakdash-operads with values in~$\infSp_{\QQ}$.
Let~$\Oca$ be an~$\infty$\nobreakdash-operad with values in~$\infSp$.
\begin{enumerate}
  \item The \emph{rationalisation}~$\Oca_{\QQ}$ of~$\Oca$ is the~$\infty$\nobreakdash-operad with values in~$\infD(\QQ)$ induced by the rationalisation functor~$\blank \otimes \EM\QQ \colon \infSp \to \infMod_{\EM\QQ} \simeq \infD(\QQ)$.
    In particular, we have~$\Oca_{\QQ}(r) \simeq \Oca(r) \otimes \EM\QQ$, for every~$r \in \NN$.
  \item The \emph{rational homology}~$\infty$\nobreakdash-operad~$\Ho_{\bullet}\left(\Oca; \QQ \right)$ is induced by the rational homology functor~$\pi_{\bullet}^{\st} \circ (\blank \otimes \EM\QQ)$.
    In particular, we have~$\Ho_{\bullet}\left(\Oca; \QQ \right)(r) \simeq \Ho_{\bullet}\left(\Oca(r); \QQ \right)$, for every~$r \in \NN$.
    We consider~$\Ho_{\bullet}\left(\Oca; \QQ \right)$ as~$\infty$\nobreakdash-operads with values in~$\infD(\QQ)$~(with trivial differentials). 
\end{enumerate}
The content of the formality of~the rational~$\infopE_n$\nobreakdash-operads can be summarised as follows: 

\begin{theorem}[Fresse--Willwacher]\label{thm:En-formality}
  Let~$n \in \NN$.
  \begin{enumerate}
    \item There exists an equivalence~$(\infopE_n)_{\QQ} \simeq \Ho_{\bullet}\left(\infopE_{n}; \QQ \right)$ of~$\infty$\nobreakdash-operads with values in~$\infD(\QQ)$.
    \item Let~$i_n^{n+k} \colon \infopE_n \to \infopE_{n+k}$ denote the composition of the morphisms
    \[
    \infopE_n \to \infopE_{n+1} \to \cdots \infopE_{n+k}
    \] 
    of~$\infty$\nobreakdash-operads in~\eqref{eq:En-inclusion}.
    For every~$n \geq 0$ and every~$k \geq 2$, there exists the following commutative diagram
    \[
    \begin{tikzcd}
    (\infopE_n)_{\QQ} \arrow[d, "i_n^{n+k}"] \arrow[r, "\simeq"] & \Ho_{\bullet}(\infopE_n; \QQ) \arrow[d, "(i_n^{n+k})_{\ast}"] \arrow[r, two heads, "\pi"] & (\infopE_{\infty})_{\EM\QQ} \arrow[d, "\id"] \\
    (\infopE_{n+k})_{\QQ}           \arrow[r, "\simeq"]            & \Ho_{\bullet}(\infopE_{n+k}; \QQ) & (\infopE_{\infty})_{\EM\QQ} \arrow[l, hook', "\iota"]        
    \end{tikzcd}
    \]
    of~$\infty$\nobreakdash-operads with values in~$\infD(\QQ)$,~where
    \begin{enumerate}
      \item[b)] The map~$\pi$ is induced by the unique map~$\opE_n(r) \to \pt$ of topological spaces for all~$r \in \NN$ and for every~$n \in \NN$.
      \item[c)] The map~$\iota_{c}$ is induced by an inclusion~$\iota \colon \pt \hookrightarrow \opE_m(r)$ for every~$r \in \NN$.
        This is a well-defined map of operads because~$\opE_m(r)$ is connected for every~$m \geq 2$.
    \end{enumerate}
  \end{enumerate}
\end{theorem}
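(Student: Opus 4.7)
The plan is to treat the two parts separately, noting that both are deep results due primarily to Fresse--Willwacher~\cite{FW20}, building on earlier work of Kontsevich, Tamarkin, and others. For part~\rom{1}, formality, I would first dispose of the low-dimensional cases by hand: for~$n = 0$, the~$\infty$\nobreakdash-operad~$\infopE_0$ is essentially trivial (only nullary and unary operations) and the equivalence is immediate; for~$n = 1$, the~$\infty$\nobreakdash-operad~$\infopE_1$ is the associative~$\infty$\nobreakdash-operad, and its rational formality follows from the fact that~$\Ho_{\bullet}(\opE_1(r); \QQ)$ agrees with the associative operad and the comparison is induced by a connected CW structure on~$\opE_1(r)$. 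For~$n \geq 2$, the approach is to use Kontsevich's integral construction from compactified configuration spaces: one constructs an explicit chain-level zig-zag between~$(\infopE_n)_{\QQ}$ and its rational homology operad using Fulton--MacPherson compactifications and integrals of propagators attached to graph complexes. Tamarkin's alternative proof for~$n=2$ via Drinfeld associators establishes the base case, and Fresse--Willwacher extend this to all~$n \geq 2$ by analysing the obstruction theory controlling deformations of the operadic structure; the main input is the rational vanishing of the relevant graph cohomology in the degrees where obstructions live.

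For part~\rom{2}, I would exploit the fact that for~$k \geq 2$ there is a crucial dimensional gap. Concretely, the composition~$\iota \circ \pi$ equals the image of the inclusion~$i_n^{n+k}$ on rational homology provided that the connecting maps~$\opE_n(r) \hookrightarrow \opE_{n+k}(r)$ are rationally~$\infty$\nobreakdash-connected after the~$\Perm_r$\nobreakdash-equivariant passage to higher suspensions; but for~$k \geq 2$, a Serre spectral sequence argument using fibrations between configuration spaces, together with the computation of~$\Ho_{\bullet}(\opE_{n+k}; \QQ)$ via Poisson operad structures of appropriate degree, shows that the induced map on rational homology factors through the commutative suboperad generated in arity~$r$ by the fundamental class of~$\opE_{n+k}(r) \simeq \opE_{\infty}(r)$ modulo classes of lower weight. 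The diagram then commutes by naturality of the formality equivalences constructed in part~\rom{1}, since Fresse--Willwacher's construction is functorial with respect to the inclusions~$\infopE_n \hookrightarrow \infopE_{n+k}$.

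The hard part will be establishing formality itself: this requires genuine analytic or combinatorial input (either Kontsevich integrals or Drinfeld associators) and cannot be reduced to pure homotopical algebra. A secondary difficulty is the rigidification from operads in~$\infHType$ to operads in~$\infD(\QQ)$ compatible with the inclusion tower; this is where the stable-range requirement~$k \geq 2$ appears, since for~$k = 1$ the inclusion~$\infopE_n \hookrightarrow \infopE_{n+1}$ carries genuine non-commutative information that does not factor through the commutative operad rationally. In the paper, I would simply cite~\cite{FW20} for both parts, as the full argument is far beyond the scope of the present work and is used here only as a black box to deduce~\cref{thm:U-infty-rational-equi}.
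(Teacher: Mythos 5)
Your proposal lands in the same place as the paper: Theorem~\ref{thm:En-formality} is invoked as a black box, and the paper's entire proof is the citation of \cite[Theorems~B' and~D']{FW20} together with \cite[Proposition~0.3.5.a]{Fre11}, so your concluding recommendation to cite \cite{FW20} is exactly the intended argument (your survey of the Kontsevich/Tamarkin/graph-complex background is accurate but not used). One caution on your sketch of part~\rom{2}: the commutativity of the left square is \emph{not} a formal consequence of ``naturality of the formality equivalences'' from part~\rom{1} --- relative formality of the map $\infopE_n \to \infopE_{n+k}$ is precisely the separate content of Fresse--Willwacher's Theorem~D' and genuinely fails for $k=1$, as you correctly acknowledge elsewhere; only the identification of $(i_n^{n+k})_{\ast}$ with $\iota \circ \pi$ on homology is a degree-counting triviality.
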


\begin{proof}
  See \cite[Theorem~B' and Theorem~D']{FW20} and~\cite[Proposition~0.3.5.a]{Fre11}.
\end{proof}

\begin{corollary}\label{cor:formality-En-map-Lie}
  For every~$n \geq 0$ and every~$k \geq 2$, the Koszul dual morphism 
  \[
  c_{n}^{n+k} \colon \susp^{-(n+k)} (\infopE_{n+k}^{\nut})_{\QQ} \to \susp^{-n}  (\infopE_{n}^{\nut})_{\QQ}
  \] 
  of~$i_n^{n+k}$ factors through~$\infopLie_{\QQ}$.
  More precisely, there exists the following commutative diagram
  \[
  \begin{tikzcd}
    \susp^{-n} (\infopE_{n}^{\nut})_{\QQ} & \infopLie_{\QQ} \arrow[l]\\
    \susp^{-n-k}(\infopE_{n+k}^{\nut})_{\QQ} \arrow[r] \arrow[u, "(c^{n+k}_{n})_{\ast}"]           & \infopLie_{\QQ} \arrow[u, "\id"'] 
  \end{tikzcd}
  \]
  of~$\infty$\nobreakdash-operads with values in~$\infD(\QQ)$.
\end{corollary}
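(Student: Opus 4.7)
The plan is to obtain the desired factorization as the image under Koszul duality of the factorization provided by Fresse--Willwacher's formality Theorem~\ref{thm:En-formality}. Recall that Koszul duality, in the form of the $(\Bac \dashv \Cobar)$-adjunction of~\eqref{eq:cobar-bar-opd}, is a (contravariant) functor on reduced $\infty$-operads, and by the discussion in~\cref{para:operadic-KD} it restricts to an equivalence on reduced $\infty$-(co)operads. In particular, applying $\KD$ to a commutative diagram of reduced $\infty$-operads yields a commutative diagram of reduced $\infty$-(co)operads with arrows reversed.

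The first step is to take the non-unital parts of the factorization in~\cref{thm:En-formality}.\rom{2}, which, after passing to rationalisations, gives a commutative diagram
\[
\begin{tikzcd}
(\infopE_n^{\nut})_{\QQ}  \arrow[d, "i_n^{n+k}"'] \arrow[r, two heads, "\pi"] & (\infopE_{\infty}^{\nut})_{\QQ} \arrow[d, "\id"] \\
(\infopE_{n+k}^{\nut})_{\QQ} & (\infopE_{\infty}^{\nut})_{\QQ} \arrow[l, hook', "\iota"]
\end{tikzcd}
\]
of reduced $\infty$-operads with values in~$\infD(\QQ)$. Applying the Koszul dual functor~$\KD$ arity-wise, and using that $\KD$ commutes with rationalisation (since it is built from a monadic bar construction with geometric realisations and totalisations computed in~$\infD(\QQ)$), I obtain a commutative diagram of reduced $\infty$-operads in~$\infD(\QQ)$ with arrows reversed. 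By~\cref{ex:KD-Lie-Com} we have~$\KD((\infopE_{\infty}^{\nut})_{\QQ}) \simeq \infopLie_{\QQ}$, and by the self-Koszul duality of the spectral~$\infopE_n$-operads from~\cite{CS22} we have $\KD((\infopE_m^{\nut})_{\QQ}) \simeq \opdsusp^{-m}(\infopE_m^{\nut})_{\QQ}$ for every finite~$m$. Under these identifications, the dual of~$i_n^{n+k}$ is precisely~$c_n^{n+k}$, the dual of~$\id$ is~$\id$, and the duals of~$\pi$ and~$\iota$ give the maps completing the diagram
\[
\begin{tikzcd}
  \susp^{-n} (\infopE_{n}^{\nut})_{\QQ} & \infopLie_{\QQ} \arrow[l]\\
  \susp^{-n-k}(\infopE_{n+k}^{\nut})_{\QQ} \arrow[r] \arrow[u, "(c^{n+k}_{n})_{\ast}"] & \infopLie_{\QQ} \arrow[u, "\id"']
\end{tikzcd}
\]
as required.

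The main obstacle I foresee is purely bookkeeping: one has to check that the operadic suspensions produced by the Ching--Salvatore equivalence are compatible with the formality equivalences of~\cref{thm:En-formality}.\rom{1}, so that the two sides of the diagram really become $\opdsusp^{-n}(\infopE_n^{\nut})_{\QQ}$ and $\opdsusp^{-(n+k)}(\infopE_{n+k}^{\nut})_{\QQ}$ and not some twist thereof. Since Koszul duality is a functor on the $\infty$-category of reduced augmented $\infty$-operads, and the self-Koszul-duality equivalences are natural in the operad, the naturality square produced by applying $\KD$ to~$i_n^{n+k}$ identifies the Koszul dual of~$i_n^{n+k}$ with~$c_n^{n+k}$ on the nose, and no further adjustment is needed. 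The rest is a direct transcription of the formality diagram through the contravariant functor~$\KD$.
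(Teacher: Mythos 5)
Your proposal is correct and is essentially identical to the paper's (one-line) proof, which simply applies the Koszul duality functor to the outer commutative square of \cref{thm:En-formality}.\rom{2}; your additional remarks on deunitalisation, base change to~$\infD(\QQ)$, and the identification $\KD((\infopE_m^{\nut})_{\QQ}) \simeq \opdsusp^{-m}(\infopE_m^{\nut})_{\QQ}$ just make explicit the bookkeeping the paper leaves implicit (note that~$c_n^{n+k}$ is \emph{defined} as the Koszul dual of~$i_n^{n+k}$, so no separate identification is needed there).
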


\begin{proof}
  We apply the Koszul duality functor to the outer commutative diagram in~\cref{thm:En-formality}.\rom{2}.
\end{proof}

\begin{proof}[Proof of~\cref{thm:U-infty-rational-equi}]
  It is equivalent to show that 
  \[
  \UE_{\infty} \colon \infAlg_{\infopLie}\left(\infD(\QQ)\right) \to \varprojlim_n \infAlg_{\infopE_n}\left(\infD(\QQ)\right)
  \] 
  is an equivalence, by the equivalence~$\infSp_{\QQ} \simeq \infD(\QQ)$ of symmetric monoidal~$\infty$\nobreakdash-categories.
  Recall that we have the equivalence
  \[
  \infAlg_{\infopE_n}(\infD(\QQ)) \simeq \infAlg_{(\infopE_n)_{\QQ}}(\infD(\QQ))
  \]
  by the unique symmetric monoidal functor~$\infHType \xrightarrow{\susp^{\infty}_{+}} \infSp \xrightarrow{\Loc_{\QQ}} \infSp_{\QQ} \xrightarrow{\sim} \infD(\QQ)$ in~$\infPrl$.
  
  Abbreviate the left adjoint to the induced functor~$(c_{n}^{n+1})_{\ast}$ on algebras by~$c^{!}$.
  Denote the composition~${\widetilde{\rB}_{n+k} \circ \widetilde{\rB}_{n+k-1} \circ \cdots \circ \widetilde{\rB}_{n+1}}$ by~$\widetilde{\rB}_{n+k, n}$.
  By~\cref{cor:formality-En-map-Lie} we obtain the following commutative diagram
  \[
  \begin{tikzcd}[row sep = large]
  \infAlg_{\infopE_{n+k}}^{\nut}(\infD(\QQ)) \arrow[d, "\simeq"] \arrow[r, "\widetilde{\rB}_{n+k, n}"] & \infAlg_{\infopE_{n}}^{\nut}(\infD(\QQ))           \\
  \infAlg_{\susp^{-n-k}\infopE_{n+k}}^{\nut}(\infD(\QQ)) \arrow[d] \arrow[r, "c^{!}"] & \infAlg_{\susp^{-n}\infopE_{n}}^{\nut}(\infD(\QQ))  \arrow[u, "\simeq"] \\
  \infAlg_{\infopLie}(\infD(\QQ)) \arrow[r, "\id"]           &  \infAlg_{\infopLie}(\infD(\QQ)) \arrow[u]
  \end{tikzcd}
  \]
  of~$\infty$\nobreakdash-categories, for every~$n \geq 0$ and every~$k \geq 2$.
  This gives the commutative diagram of~$\infty$\nobreakdash-categories below:
  \[
  \begin{tikzcd}[row sep = large, column sep = small]
  \cdots \arrow[r] \arrow[d]  & \infAlg_{\infopE_{n+2}}^{\nut}\left(\infD(\QQ)\right) \arrow[r, "\rB_{n+2, n}"] \arrow[d]  & \infAlg_{\infopE_{n}}^{\nut}\left(\infD(\QQ)\right) \arrow[r, "\rB_{n, n-2}"] \arrow[d]  & \cdots \arrow[r, "\rB_{2, 0}"] & \infAlg_{\infopE_{0}}^{\nut}\left(\infD(\QQ)\right)\arrow[d, "\id"]\\
  \cdots \arrow[r, "\id"] \arrow[ru] & \infAlg_{\infopLie}\left(\infD(\QQ)\right) \arrow[ru] \arrow[r, "\id"] & \infAlg_{\infopLie}\left(\infD(\QQ)\right) \arrow[ru] \arrow[r, "\id"] & \cdots \arrow[r, "\id"] \arrow[ru] & \infAlg_{\infopE_{0}}^{\nut}\left(\infD(\QQ)\right).  
  \end{tikzcd}
  \]
  Therefore, the inverse limit of the upper row is equivalent to the inverse limit of lower horizontal rows, and the latter is equivalent to~$\infAlg_{\infopLie}\left(\infD(\QQ)\right)$ because the diagram becomes constant after the first arrow.
\end{proof}

Since we consider~$\infSp_{\Tel(h)}$, for every~$1 \leq h \in \NN$, as a higher height analogue of~$\infSp_{\QQ}$, we make the following conjecture.

\begin{conjecture}\label{conj:U-finty-equi-Sp-Th}
  The functor~$\UE_{\infty}$, or the functor~$\widetilde{\UE}_{\infty}$ in the case where~$\Cca \simeq \infSp_{\Tel(h)}$, is an equivalence of~$\infty$\nobreakdash-categories.
  In other words, we have
  \[
  \infAlg_{\infopLie}\left(\infSp_{\Tel(h)}\right) \simeq \varprojlim_n \infAlg_{\infopE_n}\left(\infSp_{\Tel(h)}\right).
  \]
\end{conjecture}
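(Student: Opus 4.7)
The plan is to leverage the full faithfulness of $\UE_{\infty}$ established in \cref{thm:U-infty-ff}, which reduces the conjecture to showing that $\UE_{\infty}$ is essentially surjective; equivalently, that the counit $\UE_{\infty} \circ \rUE_{\infty} \to \id$ of the adjunction $\UE_{\infty} \dashv \rUE_{\infty}$ is an equivalence, or still equivalently that the right adjoint $\rUE_{\infty}$ is conservative on the target $\varprojlim_n \infAlg_{\infopE_n}^{\nut}(\infSp_{\Tel(h)})$.

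First I would reduce to a statement about generators. The source $\infAlg_{\infopLie}(\infSp_{\Tel(h)})$ is generated under sifted colimits by free Lie algebras $\free_{\infopLie}(\Loc_h^{\f} \susp^{\infty} V_h)$, and by \cref{prop:Un-Free} these are sent by $\UE_n$ to the free non-unital $\infopE_n$-algebras on $\Loop^n(\Loc_h^{\f} \susp^{\infty} V_h)$, compatibly along the iterated bar transitions $\rB_n$ (by \cref{cor:Bn-free-commutes}). A sufficient condition for essential surjectivity is then that every compatible family $(X_n)_n$ in the limit is a sifted colimit of such free families; combined with the full faithfulness, this is in turn equivalent to asking that the essential image of $\UE_{\infty}$ contains a set of compact generators of the limit.

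The hard part will be to overcome the absence of formality. The rational analogue \cref{thm:U-infty-rational-equi} uses in an essential way the Fresse--Willwacher formality theorem \cref{thm:En-formality}, which via \cref{cor:formality-En-map-Lie} produces a factorisation of every Koszul dual transition map through $\infopLie_{\QQ}$ for jumps of size at least two, making the inverse system essentially constant. No such formality holds for $\infopE_n$ in $\infSp_{\Tel(h)}$. A plausible substitute would be an \emph{asymptotic} factorisation: one would try to show that the inverse limit of the Koszul dual tower $(\opdsusp^{-n} \infopE_n^{\nut})_n$, computed in $\infOpd(\infSp_{\Tel(h)})$, is equivalent to $\infopLie$, and then to argue that, for this particular tower of non-unital operads, the formation of algebra $\infty$-categories in $\infSp_{\Tel(h)}$ commutes with the inverse limit, using the compact generation of $\infSp_{\Tel(h)}$ and the arity-wise connectivity control coming from the operadic suspensions $\opdsusp^{-n}$. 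As a complementary route, one could try to identify $\UE_{\infty}$ and $\rUE_{\infty}$ with explicit bar--cobar constructions and derive the counit equivalence from convergence of the associated dual Goodwillie towers, exploiting the Tate-vanishing techniques underlying \cref{lem:Th-dual-calculus-sum}; here the principal technical hurdle is controlling this convergence uniformly across the whole tower of non-unital $\infopE_n$-algebras.
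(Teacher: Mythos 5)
This statement is a \emph{conjecture} in the paper: the author proves full faithfulness of~$\UE_{\infty}$ (\cref{thm:U-infty-ff}) and the rational analogue (\cref{thm:U-infty-rational-equi}), but explicitly leaves the equivalence at positive chromatic height open. There is therefore no proof in the paper to compare against, and your proposal does not close the gap either. Your first reduction is correct and matches what is already available: by \cref{thm:U-infty-ff} and \cref{prop:ff-left-adjoint} the only remaining issue is essential surjectivity, equivalently that the counit~$\UE_{\infty} \circ \rUE_{\infty} \to \id$ is an equivalence. But everything after that point is a list of unproven desiderata rather than an argument. The claim that the inverse limit of the Koszul dual tower~$(\opdsusp^{-n}\infopE_n^{\nut})_n$ is~$\infopLie$ is fine at the level of operads, but the crucial assertion — that formation of algebra~$\infty$-categories over this tower in~$\infSp_{\Tel(h)}$ commutes with the inverse limit — is precisely the content of the conjecture in disguise; ``compact generation plus arity-wise connectivity control'' is not a proof of it, and in fact the transition maps~$\rB_n$ are bar constructions, not forgetful functors along operad maps, so there is no obvious connectivity estimate to run. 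Likewise, the rational proof does not proceed by identifying generators and checking that free objects generate the limit: it uses formality to make the tower of~$\infty$-categories essentially constant, and nothing you propose produces an analogous structural collapse at height~$h \geq 1$.

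Concretely, the missing idea is a replacement for \cref{cor:formality-En-map-Lie}: some mechanism forcing every compatible family~$(X_n)_n$ in~$\varprojlim_n \infAlg_{\infopE_n}^{\nut}(\infSp_{\Tel(h)})$ to arise from a spectral Lie algebra. The Tate-vanishing input behind \cref{lem:Th-dual-calculus-sum} controls the colinear approximation of the single composite~$\frgt_{\Oca} \circ \free_{\Oca}$, which is what makes the unit (hence full faithfulness) work; controlling the counit requires understanding the comonad on the limit category, for which no analogous uniform convergence statement is available. Your proposal correctly diagnoses this as the hard part, but diagnosing it is not the same as overcoming it, so what you have is a research plan for the open conjecture, not a proof.
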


\begin{passage}
  Let~$\Cca$ be the derived~$\infty$\nobreakdash-category~$\infD(k)$ of a field~$k$ of characteristic~0.
  Then we can also interpret the diagram~\eqref{diag:Lie-En} in terms of deformation theory and formal moduli problems. 
  See~\cite{CG21, DAGX, BM} for some introductions to the relationship between Koszul duality and deformation theory.
  It is shown in \cite{DAGX, Pri10} that the~$\infty$\nobreakdash-category~$\infModuli_{k}$ of (commutative) moduli problems over~$k$ is equivalent to the~$\infty$\nobreakdash-category~$\infAlg_{\infopLie}(\infD(k))$ of spectral Lie algebras in~$\infD(k)$.
  Furthermore, the~$\infty$\nobreakdash-category~$\infModuli_{\infopE_n, k}$ of~$\infopE_n$\nobreakdash-moduli problems is equivalent to the~$\infty$\nobreakdash-category~$\infAlg_{\infopE_n}(\infD(k))$.
  The inclusion~$\infopE_n \hookrightarrow \infopE_{\infty}$ of~$\infty$\nobreakdash-operads induces a small-colimit-preserving functor~${\infModuli_{k} \to \infModuli_{\infopE_n, k}}$ by left Kan extension, which should be equivalent to the functor~$\widetilde{\UE}_n$.
  Therefore, another interpretation of~\cref{thm:U-infty-rational-equi} is that the~$\infty$\nobreakdash-category~$\infModuli_{k}$ of commutative formal Moduli problems over~$k$ is equivalent to the inverse limit of the~$\infty$\nobreakdash-category~$\infModuli_{\infopE_n, k}$ of~$\infopE_{n}$\nobreakdash-formal Moduli problems over~$k$.
 
  Furthermore, the association between formal moduli problems and Lie algebras is generalised to positive and mixed characteristic situations, see~\cite{BM}.
  This might help with understanding the functor~$\widetilde{\UE}_{\infty}$ in the case where~$\Cca$ is the~$\infty$\nobreakdash-category of module spectra over a field of positive characteristic.
\end{passage}

\section{The costabilisation of the unstable monochromatic layers}\label{sec:costab}

The goal of this section is to prove the universal property of the Bousfield--Kuhn functor, see~\cref{thm:universal-property-BK}.
We achieve this in~\cref{sec:costab-vh} by proving that the Bousfield--Kuhn adjunction~$\lBK_h \dashv \BK_h$ exhibits the~$\infty$\nobreakdash-category~$\infSp_{\Tel(h)}$ as the costabilisation of~$\infHType_{v_h}$~(see~\cref{thm:costab-Lie-Tn}).
In the first two sections~\crefrange{sec:costab-basic}{sec:costab-example} we introduce the theory of costabilisation and give some examples.
Another work discussing the theory of costabilisation of~$\infty$\nobreakdash-categories is~\cite{FraTh}.

\subsection{The costabilisation of an \texorpdfstring{$\infty$}{∞}-category}\label{sec:costab-basic}\label{sec:expl-costab}

In this section we introduce the theory of costabilisation, the dual notion of stabilisation.

\begin{passage}[Stabilisation of an~$\infty$\nobreakdash-category]\label{para:stabilisation}
  Let us first recall some basic notions of stabilisation, following~\cite[§1.4]{HA}.
  A functor between~$\infty$\nobreakdash-categories is \emph{excisive} if it sends pushout diagrams to pullback diagrams~(given that both exist in the source and target~$\infty$\nobreakdash-categories, respectively).
  
  Let~$\Dca$ be an~$\infty$\nobreakdash-category that admits finite limits.
  The \emph{stabilisation}~$\infSp(\Dca)$ of~$\Dca$ is defined as the stable~$\infty$\nobreakdash-category 
  \[
  \infSp(\Dca) \coloneqq \infExc_{\ast}(\infHType_{\ast}^{\fin}, \Dca)
  \] 
  of reduced (preserves terminal objects) excisive functors from the~$\infty$\nobreakdash-category~$\infHType_{\ast}^{\fin}$ of pointed finite \htypes to~$\Dca$, see~\cite[§1.4.2]{HA}.
  
  Let~$\Dca_{\ast}$ denote the~$\infty$\nobreakdash-category of pointed objects of~$\Dca$. 
  The forgetful functor~$\Dca_{\ast} \to \Dca$ induces an equivalence~$\infSp(\Dca) \simeq \infSp(\Dca_{\ast})$ of their stabilisations, using the fact that a stable~$\infty$\nobreakdash-category is pointed, see~\cite[Remark~1.4.2.18]{HA}.
  Furthermore, there exists an~equivalence
  \[
  \infSp(\Dca_{\ast}) \simeq \varprojlim\left( \cdots \xrightarrow{\Loop_{\Dca}} \Dca_{\ast} \xrightarrow{\Loop_{\Dca}} \Dca_{\ast}\right)
  \]
  of stable~$\infty$\nobreakdash-categories, see~\cite[Proposition~1.4.2.24]{HA}.
  Thus, we think of an object in~$\infSp(\Dca_{\ast})$ as a sequence~$(X_{n})_{n \geq 0}$ of objects of~$\Cca$ such that~$\Loop_{\Dca}(X_{n+1}) \simeq X_{n}$, for every~$n \geq 0$.
  From this we obtain~$\infSp(\infHType_{\ast}) \simeq \infSp$.
  
  The stabilisation~$\infSp(\Dca)$ is equipped with a canonical functor 
  \[
  \Loop^{\infty}_{\Dca} \colon \infSp(\Dca) \to \Dca.
  \]
  Depending on which definitions one uses for the stabilisation, one can consider this functor either as 
  \begin{enumerate}
    \item the evaluation of an excisive functor at~$\usphere^0 \in \infHType^{\fin}_{\ast}$, or
    \item the functor assigning to~$(X_{n})_{n \geq 0}$ the object~$X_0$, if~$\Dca$ is pointed.
  \end{enumerate}
  The functor~$\Loop^{\infty}_{\Dca}$ satisfies the universal property that it is the terminal finite-limit-preserving functor from a stable~$\infty$\nobreakdash-category to~$\Dca$, see~\cite[Corollary~1.4.2.23]{HA}.
  
  If~$\Dca$ is in addition presentable, the functor~$\Loop^{\infty}_{\Dca}$ admits a left adjoint, denoted by~$\susp^{\infty}_{\Dca}$. 
  Sometimes we abbreviate~$\susp^{\infty}_{\Dca}$ and~$\Loop^{\infty}_{\Dca}$ by~$\susp^{\infty}$ and~$\Loop^{\infty}$, respectively. 
\end{passage}

\begin{definition}\label{def:costab}
  Let~$\Cca$ be an~$\infty$\nobreakdash-category which admits finite colimits.
  The \emph{costabilisation}~$\infcoSp(\Cca)$ of~$\Cca$ is defined as
  \[
  \infcoSp(\Cca) \coloneqq \infSp(\Cca^{\op})^{\op}.
  \] 
  An object of~$\infcoSp(\Cca)$ is called a \emph{cospectrum object} of~$\Cca$.
\end{definition}

\begin{example}\label{ex:costab-triv-ex}
  Here are two rather elementary examples.
  \begin{enumerate}
    \item The costabilisation of the~$\infty$\nobreakdash-category~$\infHType$ of \htypes is trivial, \ie there exists an equivalence~$\infcoSp(\infHType) \simeq \pt$ of~$\infty$\nobreakdash-categories. 
      Let~${F \colon \infHType_{\ast}^{\fin} \to \infHType^{\op}}$ be a reduced excisive functor.
      Then we know~${F(\pt) = \emptyset}$~(the terminal object in~$\infHType^{\op}$). 
      Let~$F(\usphere^0) = X$.
      Then the pointed map~$\pt \to \usphere^0$ is sent to~$X \to \emptyset$ in~$\infHType$ under~$F$, which implies that~$X$ is also the empty set. 
      More generally, let~$\Cca$ be an~$\infty$\nobreakdash-category with strict initial objects\footnote{An initial object~$I$ is strict if any morphism mapping into~$I$ is an equivalence}, then~$\infcoSp(\Cca)$ is trivial by the same arguments.
    \item Let~$\Cca$ be a stable~$\infty$\nobreakdash-category.
      Then we have
      \[
      \infcoSp(\Cca) = \left(\infExc_{\ast}(\infHType_{\ast}^{\fin}, \Cca^{\op})\right)^{\op} \overset{(a)}{\simeq} \left(\infFun^{\rex}(\infHType_{\ast}, \Cca^{\op})\right)^{\op} \overset{(b)}{\simeq} \left(\Cca^{\op}\right)^{\op} \simeq \Cca,
      \]
      where~$\infFun^{\rex}$ denotes the~$\infty$\nobreakdash-category of right exact functors. 
      The equivalence~$(a)$ holds because pullbacks and pushouts in a stable~$\infty$\nobreakdash-category coincide, see~\cite[Proposition~1.1.3.4]{HA}, and a functor preserves finite colimits if it preserves the initial object and pushouts, see~\cite[Corollary~4.4.2.5]{Lur09}.
      The equivalence~$(b)$ holds because~$\infHType^{\fin}_{\ast}$ the~$\infty$\nobreakdash-category freely generated by the~$\pt$ under finite colimits, see~\cite[Remark~1.4.2.6]{HA}.
  \end{enumerate}
\end{example}

\begin{proposition}
  Let~$\Cca$ be an~$\infty$\nobreakdash-category which admits finite colimits. 
  The costabilisation~$\infcoSp(\Cca)$ of~$\Cca$ is a stable~$\infty$\nobreakdash-category.
\end{proposition}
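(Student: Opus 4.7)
The plan is to reduce the statement to two well-known facts about Lurie's theory of stabilisation and about opposite categories of stable $\infty$-categories. By construction, $\infcoSp(\Cca) = \infSp(\Cca^{\op})^{\op}$, so the claim will follow immediately once one checks that (a) $\infSp(\Cca^{\op})$ is itself a stable $\infty$-category, and (b) the opposite of a stable $\infty$-category is stable.

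For (a), observe first that since $\Cca$ admits finite colimits, the opposite $\Cca^{\op}$ admits finite limits, so the definition $\infSp(\Cca^{\op}) = \infExc_{\ast}(\infHType_{\ast}^{\fin}, \Cca^{\op})$ is available as in~\cref{para:stabilisation}. Then I would invoke~\cite[Proposition~1.4.2.17]{HA}, which asserts that for any $\infty$-category $\Dca$ admitting finite limits, the $\infty$-category $\infSp(\Dca)$ of spectrum objects of $\Dca$ is stable.

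For (b), the observation is that the defining properties of a stable $\infty$-category~\cite[Definition~1.1.1.9]{HA} — being pointed, admitting finite limits and colimits, and having pullback squares coincide with pushout squares — are all self-dual. Hence if $\Eca$ is stable then so is $\Eca^{\op}$. Applying this to $\Eca = \infSp(\Cca^{\op})$ yields that $\infcoSp(\Cca) = \infSp(\Cca^{\op})^{\op}$ is stable.

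There is no real obstacle here; the proof is essentially a formal unpacking of the definition together with citations. The only point to be careful about is not to assume that $\Cca$ itself has any additional structure (such as finite limits or a zero object) beyond finite colimits, so each invocation must be made on $\Cca^{\op}$ rather than on $\Cca$ directly. I would keep the write-up to a couple of sentences.
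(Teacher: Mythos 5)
Your proposal is correct and follows exactly the paper's own argument: the stabilisation of an $\infty$-category admitting finite limits is stable by~\cite[Proposition~1.4.2.17]{HA}, and the opposite of a stable $\infty$-category is stable (the paper cites~\cite[Remark~1.1.1.3]{HA} for the latter, which is the self-duality observation you make). No issues.
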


\begin{proof}
  The stabilisation of an~$\infty$\nobreakdash-category admitting finite limits is stable, see~\cite[Proposition~1.4.2.17]{HA}.
  The opposite~$\infty$\nobreakdash-category of a stable~$\infty$\nobreakdash-category is stable, see~\cite[Remark~1.1.1.3]{HA}.
\end{proof}

\begin{notation}
  Let~$\Cca$ be an~$\infty$\nobreakdash-category which admits finite colimits. 
  Denote the opposite of the functor~$\Loop^{\infty}_{\Cca^{\op}} \colon \infSp(\Cca^{\op}) \to \Cca^{\op}$~(see~\cref{para:stabilisation}) by~$\susp_{\infty}^{\Cca} \colon \infcoSp(\Cca) \to \Cca$.
  We abbreviate~$\susp_{\infty}^{\Cca}$ by~$\susp_{\infty}$ if it is clear from the context which~$\infty$\nobreakdash-category~$\Cca$ we are working~with. 
\end{notation}

\begin{proposition}
  The functor~$\susp_{\infty} \colon \infcoSp(\Cca) \to \Cca$ preserves finite colimits.
\end{proposition}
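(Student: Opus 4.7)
The plan is to deduce this directly by dualising the corresponding statement for the stabilisation functor $\Loop^{\infty}$, which is already recalled in~\cref{para:stabilisation}. By construction we have $\infcoSp(\Cca) = \infSp(\Cca^{\op})^{\op}$ and the functor $\susp_{\infty}^{\Cca}$ is defined as the opposite of $\Loop^{\infty}_{\Cca^{\op}} \colon \infSp(\Cca^{\op}) \to \Cca^{\op}$. Therefore, showing that $\susp_{\infty}^{\Cca}$ preserves finite colimits is equivalent to showing that $\Loop^{\infty}_{\Cca^{\op}}$ preserves finite limits.

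First I would observe that since $\Cca$ admits finite colimits by assumption, the opposite $\infty$-category $\Cca^{\op}$ admits finite limits, so the stabilisation $\infSp(\Cca^{\op})$ is defined and is a stable $\infty$-category (by~\cite[Proposition~1.4.2.17]{HA}, as already invoked). Next I would appeal to the universal property of the stabilisation functor recalled in~\cref{para:stabilisation}: the functor $\Loop^{\infty}_{\Cca^{\op}} \colon \infSp(\Cca^{\op}) \to \Cca^{\op}$ is terminal among finite-limit-preserving functors from a stable $\infty$-category to $\Cca^{\op}$, and in particular it itself preserves finite limits (this is the content of~\cite[Corollary~1.4.2.23]{HA}).

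Finally, passing to opposites converts the finite-limit-preservation of $\Loop^{\infty}_{\Cca^{\op}}$ into finite-colimit-preservation of $\susp_{\infty}^{\Cca}$, which is what we want. There is no real obstacle here: the entire argument is formal once one has matched up the definitions, and no further input is needed beyond the universal property of $\Loop^{\infty}$ in the presentable/unconditional setting of~\cite[§1.4.2]{HA}. The only thing to be careful about is that the cited universal property applies whenever the target admits finite limits, which is exactly the hypothesis we have on $\Cca^{\op}$.
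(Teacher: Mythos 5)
Your argument is correct and is essentially the paper's proof: both reduce the claim to the fact that $\Loop^{\infty}_{\Cca^{\op}}$ preserves finite limits and then pass to opposite $\infty$\nobreakdash-categories. The only difference is cosmetic — the paper justifies left exactness of $\Loop^{\infty}$ via~\cite[Remark~1.4.2.3]{HA} (finite limits in the functor category $\infExc_{\ast}(\infHType_{\ast}^{\fin}, \Cca^{\op})$ are computed pointwise, so the evaluation functor is left exact), whereas you read it off from the universal property~\cite[Corollary~1.4.2.23]{HA}, which indeed presupposes exactly this fact.
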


\begin{proof}
  By \cite[Remark~1.4.2.3]{HA} the functor~$\Loop^{\infty}_{\Cca^{\op}}$ preserves finite limits, since finite limits of functors are computed point-wise. 
  Thus the functor~$\susp_{\infty}^{\Cca} \simeq \left(\Loop^{\infty}_{\Cca}\right)^{\op}$ preserves finite colimits. 
\end{proof}

\begin{proposition}\label{prop:costab-uni-prop}
  Let~$\Cca$ be an~$\infty$\nobreakdash-category admitting finite colimits.
  The costabilisation~$\infcoSp(\Cca)$ together with the functor~$\susp_{\infty} \colon \infcoSp(\Cca) \to \Cca$ satisfies the following universal property:
  Any finite-colimit-preserving functor~$F$ from a stable~$\infty$\nobreakdash-category~$\Dca$ to~$\Cca$ factors through~$\susp_{\infty} \colon \infcoSp(\Cca) \to \Cca$, uniquely up to contractible choice.
  We illustrate the universal property by the following commutative diagrams.
  \[
  \begin{tikzcd}
    \Dca \arrow[rr, "F"] \arrow[rd, dashed, "\exists !"'] && \Cca \\ 
    &\infcoSp\left(\Cca\right). \arrow[ru, "\susp_{\infty}"']
  \end{tikzcd}
  \]
\end{proposition}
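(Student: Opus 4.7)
The plan is to deduce this universal property formally from the corresponding universal property of stabilisation \cite[Corollary~1.4.2.23]{HA} by passing to opposite $\infty$\nobreakdash-categories throughout.

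First, I would unpack the definitions. By \cref{def:costab} we have $\infcoSp(\Cca) = \infSp(\Cca^{\op})^{\op}$, and by construction the functor $\susp_{\infty}^{\Cca}$ is the opposite of $\Loop^{\infty}_{\Cca^{\op}} \colon \infSp(\Cca^{\op}) \to \Cca^{\op}$. Since $\Cca$ admits finite colimits, $\Cca^{\op}$ admits finite limits, so the stabilisation $\infSp(\Cca^{\op})$ is defined and is a stable $\infty$\nobreakdash-category, and $\Loop^{\infty}_{\Cca^{\op}}$ preserves finite limits.

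Next, given a stable $\infty$\nobreakdash-category $\Dca$ and a finite-colimit-preserving functor $F \colon \Dca \to \Cca$, taking opposites produces a finite-limit-preserving functor $F^{\op} \colon \Dca^{\op} \to \Cca^{\op}$. Its source $\Dca^{\op}$ is again stable by \cite[Remark~1.1.1.3]{HA}. Applying the universal property of stabilisation \cite[Corollary~1.4.2.23]{HA} to $F^{\op}$, I obtain a finite-limit-preserving functor $G \colon \Dca^{\op} \to \infSp(\Cca^{\op})$ together with an equivalence $\Loop^{\infty}_{\Cca^{\op}} \circ G \simeq F^{\op}$, and moreover the space of such factorisations is contractible. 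More precisely, \loccit asserts that precomposition with $\Loop^{\infty}_{\Cca^{\op}}$ is an equivalence from the $\infty$\nobreakdash-category of finite-limit-preserving functors $\Dca^{\op} \to \infSp(\Cca^{\op})$ to the $\infty$\nobreakdash-category of finite-limit-preserving functors $\Dca^{\op} \to \Cca^{\op}$.

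Finally, I would take opposites once more: the functor $G^{\op} \colon \Dca \to \infcoSp(\Cca)$ preserves finite colimits, and the equivalence $\Loop^{\infty}_{\Cca^{\op}} \circ G \simeq F^{\op}$ dualises to an equivalence $\susp_{\infty}^{\Cca} \circ G^{\op} \simeq F$, which is unique up to contractible choice because the corresponding space of factorisations on the opposite side is contractible. The only substantive bookkeeping is the dualisation dictionary exchanging finite limits with finite colimits and left exact with right exact functors; since the proposition asks only for the pointwise factorisation statement rather than an enhanced $\infty$\nobreakdash-categorical equivalence of functor categories, there is no real obstacle to overcome beyond this purely formal passage to opposites.
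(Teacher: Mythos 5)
Your proposal is correct and follows exactly the paper's argument: the paper also deduces the statement by applying the universal property of~$\Loop^{\infty}_{\Cca^{\op}}$ as the terminal finite-limit-preserving functor from a stable~$\infty$\nobreakdash-category to~$\Cca^{\op}$~\cite[Corollary~1.4.2.23]{HA} and then passing to opposite~$\infty$\nobreakdash-categories. Your version simply spells out the dualisation bookkeeping in more detail.
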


\begin{proof}
  The functor~$\Loop^{\infty}_{\Cca^{\op}}$ satisfies the universal property that it is the terminal finite-limit-preserving functor from a stable~$\infty$\nobreakdash-category to~$\Cca^{\op}$, see~\cite[Corollary~1.4.2.23]{HA}.
  Taking the opposite~$\infty$\nobreakdash-categories, we obtain the universal property of~$\susp_{\infty}(\Cca)$.
\end{proof}

\begin{example}
  Let~$\Cca$ be an~$\infty$\nobreakdash-category which admits finite colimits.
  The~$\infty$\nobreakdash-cate\-gory~$\infcoExc_{\ast}\left(\infHType_{\ast}, \Cca\right)$ of reduced coexcisive functors from~$\infHType_{\ast}$ to~$\Cca$ is a stable~$\infty$\nobreakdash-category, since it is equivalent to the~$\infty$\nobreakdash-category~$\infExc_{\ast}\left((\infHType_{\ast})^{\op}, \Cca^{\op}\right)$, which is stable by~\cite[Proposition~1.4.2.16]{HA}.
  Evaluation at a fixed object~$X \in \Cca$ gives a colimit-preserving functor~${\ev_{X} \colon \infcoExc_{\ast}\left(\infHType_{\ast}, \Cca\right) \to \Cca}$, since the colimits of functors are computed point-wise.
  By the universal property of costabilisation, we obtain the following commutative~diagram
  \[
  \begin{tikzcd}[row sep = large]
  \infcoExc_{\ast}\left(\infHType_{\ast}, \Cca\right)^{\op} \arrow[rr, "\ev_{X}"] \arrow[rd, "{\left(\infMap_{\ast}(\blank, X)\right)^{\ast}}"'] && \Cca \\
  & \infcoSp(\Cca) \simeq \infExc_{\ast}\left(\infHType_{\ast}^{\fin}, \Cca^{\op}\right)^{\op} \arrow[ru, "\ev_{\usphere^0}"']  
  \end{tikzcd}
  \]
  where the functor~$\left(\infMap(\blank, X)\right)^{\ast}$ is given by composing with the functor 
  \[
  \infMap(\blank, X) \colon \infHType^{\fin}_{\ast} \to \infHType_{\ast}^{\op}, \ V \mapsto \infMap(V, X).
  \]
\end{example}

\begin{proposition}\label{prop:costab-infinite-suspension}
  Let~$\Cca$ be a pointed~$\infty$\nobreakdash-category admitting finite colimits. 
  Then there exists an equivalence
  \begin{equation}\label{eq:costab-infinite-suspension}
    \infcoSp(\Cca) \simeq \varprojlim\left(\cdots \xrightarrow{\susp_{\Cca}} \Cca \xrightarrow{\susp_{\Cca}} \Cca\right)
  \end{equation}
  of stable~$\infty$\nobreakdash-categories, where the inverse limit is take in the~$\infty$\nobreakdash-category~$\infCAT_{\infty}$ of (not necessarily small)~$\infty$\nobreakdash-categories.
\end{proposition}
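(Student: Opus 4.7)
The plan is to obtain this equivalence as a formal consequence of the dual statement for stabilisations already recorded in~\cref{para:stabilisation}, namely Lurie's equivalence
\[
\infSp(\Dca) \simeq \varprojlim\left(\cdots \xrightarrow{\Loop_{\Dca}} \Dca \xrightarrow{\Loop_{\Dca}} \Dca\right)
\]
valid for any pointed~$\infty$\nobreakdash-category~$\Dca$ admitting finite limits~\cite[Proposition~1.4.2.24]{HA}. First I would unwind the definition, \ie use that by~\cref{def:costab} we have~$\infcoSp(\Cca) \coloneqq \infSp(\Cca^{\op})^{\op}$, and observe that~$\Cca^{\op}$ is pointed and admits finite limits because~$\Cca$ is pointed and admits finite colimits. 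Hence Lurie's result applies to~$\Dca \coloneqq \Cca^{\op}$ and yields
\[
\infSp(\Cca^{\op}) \simeq \varprojlim\left(\cdots \xrightarrow{\Loop_{\Cca^{\op}}} \Cca^{\op} \xrightarrow{\Loop_{\Cca^{\op}}} \Cca^{\op}\right)
\]
in~$\infCAT_{\infty}$.

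Next I would identify the loop endofunctor of~$\Cca^{\op}$ with the opposite of the suspension endofunctor of~$\Cca$. More precisely, since the loop object of a pointed object of~$\Cca^{\op}$ is, by the pullback/pushout definition, precisely the opposite datum of the suspension in~$\Cca$, we have an equivalence~$\Loop_{\Cca^{\op}} \simeq (\susp_{\Cca})^{\op}$ of endofunctors. Substituting this into the above inverse limit gives the description of~$\infSp(\Cca^{\op})$ as the limit of the tower of~$(\susp_{\Cca})^{\op}$'s acting on~$\Cca^{\op}$.

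Finally I would take the opposite of this inverse limit. The involution~$(\blank)^{\op} \colon \infCAT_{\infty} \to \infCAT_{\infty}$ is an auto-equivalence; in particular it preserves small limits. Applying it to the tower diagram commutes taking opposites past the inverse limit and replaces~$(\susp_{\Cca})^{\op}$ on~$\Cca^{\op}$ by~$\susp_{\Cca}$ on~$\Cca$, producing the desired equivalence
\[
\infcoSp(\Cca) \simeq \varprojlim\left(\cdots \xrightarrow{\susp_{\Cca}} \Cca \xrightarrow{\susp_{\Cca}} \Cca\right)
\]
in~$\infCAT_{\infty}$. Stability of the target is automatic since~$\infcoSp(\Cca)$ is already known to be stable.

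There is essentially no hard step here; the argument is a purely formal duality. The only point deserving care is ensuring that the opposite-category involution commutes with the inverse limit of the tower, which is immediate once one recalls that~$(\blank)^{\op}$ is an auto-equivalence of~$\infCAT_{\infty}$ and therefore preserves all small limits (and colimits). I would make this commutation explicit so that the identification of the transition functors~$\Loop_{\Cca^{\op}}$ with~$(\susp_{\Cca})^{\op}$ matches up with the transition functors~$\susp_{\Cca}$ after passing to opposites.
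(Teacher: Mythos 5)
Your argument is correct and is essentially identical to the paper's own proof: both apply Lurie's inverse-limit description of the stabilisation to~$\Cca^{\op}$, identify~$\Loop_{\Cca^{\op}}$ with the opposite of~$\susp_{\Cca}$, and pass the opposite-category involution through the inverse limit. The extra care you take in justifying that~$(\blank)^{\op}$ commutes with the limit is a welcome explicit version of a step the paper leaves implicit.
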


\begin{proof}
  The opposite~$\infty$\nobreakdash-category~$\Cca^{\op}$ is pointed and admits finite limits. 
  By \cite[Proposition 1.4.2.24]{HA}, we have
  \[
    \begin{split}
      \infSp\left({\Cca^{\op}}\right) 
      &\simeq \varprojlim\left(\cdots \xrightarrow{\Loop_{\Cca^{\op}}} \Cca^{\op} \xrightarrow{\Loop_{\Cca^{\op}}} \Cca^{\op} \right) \\
      &\simeq \left(\varprojlim\left(\cdots \xrightarrow{\susp_{\Cca}} \Cca \xrightarrow{\susp_{\Cca}} \Cca \right)\right)^{\op}.
    \end{split}
  \]
  Then the proposition follows from the definition of costabilisation. 
\end{proof}

\begin{remark}
  Let~$\Cca$ be a pointed~$\infty$\nobreakdash-category admitting finite colimits.
  An object of the~$\infty$\nobreakdash-category
  \[
  \varprojlim\left(\cdots \xrightarrow{\susp_{\Cca}} \Cca \xrightarrow{\susp_{\Cca}} \Cca\right)
  \] 
  is a sequence~$(X_i)_{i \geq 0}$ of objects in~$\Cca$ such that~$X_{i} \simeq \susp_{\Cca}(X_{i+1})$ for every~$i \geq 0$.
  In particular, we see that  
  \begin{enumerate}
    \item for every~$i \geq 0$ the object~$X_i \in \Cca$ admits infinite desuspensions in~$\Cca$, and 
    \item the functor~$\susp_{\infty}^{\Cca}$ is equivalent to the canonical functor 
      \[
      \varprojlim\left(\cdots \xrightarrow{\susp_{\Cca}} \Cca \xrightarrow{\susp_{\Cca}} \Cca\right) \to \Cca, \ (X_i)_{i \geq 0} \mapsto X_0,
      \]
      under the equivalence~\eqref{eq:costab-infinite-suspension}.
  \end{enumerate}
\end{remark}

\begin{corollary}
  The costabilisation of the~$\infty$\nobreakdash-category~$\infHType_{\ast}$ of pointed \htypes is trivial.
\end{corollary}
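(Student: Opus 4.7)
The plan is to invoke the previous proposition to identify
\[
\infcoSp(\infHType_{\ast}) \simeq \varprojlim\left(\cdots \xrightarrow{\susp} \infHType_{\ast} \xrightarrow{\susp} \infHType_{\ast}\right)
\]
and then to show that every object in this inverse limit is weakly contractible for connectivity reasons. Concretely, I would take an object of the right-hand side to be a sequence~$(X_i)_{i \geq 0}$ of pointed \htypes together with equivalences~$X_i \simeq \susp X_{i+1}$ for every~$i \geq 0$, and argue that each~$X_i$ must be contractible.

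The key ingredient is the Freudenthal-type estimate that suspension raises connectivity: for any pointed \htype~$Y$ one has~$\conn(\susp Y) \geq \conn(Y) + 1$, with the convention that a non-empty space has connectivity at least~$-1$. Iterating the equivalences gives~$X_i \simeq \susp^{n} X_{i+n}$, so~$\conn(X_i) \geq (n - 1) + \conn(X_{i+n}) \geq n - 1$, for every~$n \geq 0$. Hence each~$X_i$ is~$n$-connected for every natural number~$n$, which forces~$X_i \simeq \pt$. From this I would conclude that the canonical functor $\infcoSp(\infHType_{\ast}) \to \pt$ is an equivalence, since both the objects and, by the same connectivity argument applied level-wise, the mapping spaces of the inverse limit are contractible.

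The only subtle step is the connectivity estimate, but this is classical and requires no new input; alternatively, one can deduce the result directly from~\cref{ex:costab-triv-ex}.\rom{1} by passing to a pointed variant, since the forgetful functor~$\infHType_{\ast} \to \infHType$ is conservative and the empty space does not occur among pointed \htypes, so the same strict-initial-object style argument applies after noting that any map in~$\infHType_{\ast}$ landing in a non-empty object places no constraint by itself; the connectivity argument above is the cleanest route and has the advantage of making the universal cospectrum explicit (namely the zero object).
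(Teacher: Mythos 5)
Your proposal is correct and is essentially the paper's own argument: the paper identifies an object of the costabilisation as a pointed homotopy type admitting infinite desuspensions and concludes contractibility from the fact that suspension raises connectivity. Your extra care about mapping spaces is harmless but unnecessary (once every object is a zero object the category is automatically trivial), and you are right that the aside via the strict-initial-object argument is not needed.
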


\begin{proof}
  The suspension functor increases the connectivity of pointed \htypes. 
  In particular, any pointed \htype that admits infinite desuspensions is contractible. 
\end{proof}

\begin{remark}
  Let~$\Cca$ be an~$\infty$\nobreakdash-category admitting finite limits and finite colimits.
  Denote the final object of~$\Cca$ by~$t_{\Cca}$.
  Let~$\Cca_{\ast} \coloneqq \Cca_{t_{\Cca} /}$ denote the~$\infty$\nobreakdash-category of pointed objects of~$\Cca$, see~\cite[Definition~7.2.2.1]{Lur09}.
  Note that the canonical forgetful functor~$\Cca_{\ast} \to \Cca$ induces an equivalence~$\infSp\left(\Cca_{\ast}\right) \simeq \infSp\left(\Cca\right)$ of stable~$\infty$\nobreakdash-categories.
  However, the costabilisations~$\infcoSp(\Cca_{\ast})$ and~$\infcoSp(\Cca)$ are in general not equivalent~$\infty$\nobreakdash-categories.
  For example, let~$\Cca$ be~$\infHType^{\op}$.
  We have~$\left(\infHType^{\op}\right)_{\ast} \simeq \{\emptyset\}$.
  Thus, the costabilisation~$\infcoSp(\left(\infHType^{\op}\right)_{\ast})$ is trivial, whereas~$\infcoSp(\infHType^{\op}) \simeq \infSp(\infHType)^{\op} \simeq \infSp^{\op}$ is not trivial. 
\end{remark}

\begin{proposition}
  Let~$\Cca$ be a pointed presentable~$\infty$\nobreakdash-category.
  \begin{enumerate}
    \item The costabilisation~$\infcoSp(\Cca)$ is presentable.
    \item The functor~$\susp_{\infty}^{\Cca} \colon \infcoSp(\Cca) \to \Cca$ admits a right adjoint~$\Loop_{\infty}^{\Cca}$. 
    \item Let~$\Dca$ be a stable presentable~$\infty$\nobreakdash-category. 
      An exact functor~$F \colon \Dca \to \infcoSp(\Cca)$ admits a right adjoint if and only if~$\susp_{\infty} \circ F \colon \Dca \to \Cca$ admits a right adjoint.  
  \end{enumerate}
\end{proposition}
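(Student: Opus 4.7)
The plan is to deduce all three statements from the description
\[
\infcoSp(\Cca) \simeq \varprojlim\left(\cdots \xrightarrow{\susp_{\Cca}} \Cca \xrightarrow{\susp_{\Cca}} \Cca\right)
\]
of \cref{prop:costab-infinite-suspension}. Because $\Cca$ is pointed presentable, the suspension $\susp_{\Cca}$ is left adjoint to $\Loop_{\Cca}$ and hence a morphism in $\infPrl$. Since limits of small diagrams exist in $\infPrl$ and are preserved by the forgetful functor to the $\infty$-category of (not necessarily small) $\infty$-categories, this identification exhibits $\infcoSp(\Cca)$ as presentable, yielding \rom{1}, and exhibits $\susp_{\infty}^{\Cca}$, which is the canonical projection to the zeroth stage, as a morphism in $\infPrl$. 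The adjoint functor theorem for presentable $\infty$-categories then produces a right adjoint $\Loop_{\infty}^{\Cca}$, giving~\rom{2}.

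For~\rom{3} the forward implication is immediate: if $F$ has a right adjoint $F^{\mathrm{R}}$, then $\susp_{\infty}^{\Cca} \circ F$ has right adjoint $F^{\mathrm{R}} \circ \Loop_{\infty}^{\Cca}$. The converse reduces, by a second application of the adjoint functor theorem, to showing that $F$ preserves small colimits whenever $\susp_{\infty}^{\Cca} \circ F$ does. Invoking the universal property of the limit in $\infPrl$, the functor $F$ corresponds to a coherent family of functors $F_n \colon \Dca \to \Cca$ satisfying $\susp_{\Cca} \circ F_{n+1} \simeq F_n$ with $F_0 \simeq \susp_{\infty}^{\Cca} \circ F$, and $F$ preserves small colimits if and only if each $F_n$ does.

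The main step, and the only slightly subtle one, is to relate the~$F_n$ to~$F_0$. Because $F$ is exact and the suspension on $\infcoSp(\Cca)$ is induced by applying $\susp_{\Cca}$ componentwise (compatibly with the structural equivalences $\susp_{\Cca} F_{n+1} \simeq F_n$), projecting the equivalence $F(\susp_{\Dca} D) \simeq \susp_{\infcoSp(\Cca)} F(D)$ to the $n$-th stage yields $F_n \circ \susp_{\Dca} \simeq F_{n-1}$ for every $n \geq 1$. Iterating and using that $\Loop_{\Dca}$ is an autoequivalence of the stable $\infty$-category $\Dca$ gives $F_n \simeq F_0 \circ \Loop_{\Dca}^n$. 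Hence each $F_n$ preserves small colimits if and only if $F_0$ does, which finishes the reduction and thus the proof of~\rom{3}.
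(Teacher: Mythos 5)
Your proof is correct and follows essentially the same route as the paper: parts \rom{1} and \rom{2} come from viewing the inverse limit $\varprojlim(\cdots \xrightarrow{\susp_{\Cca}} \Cca)$ as a limit in $\infPrl$ preserved by the inclusion into $\infCAT_{\infty}$, plus the adjoint functor theorem. For \rom{3} the paper simply cites the universal property of $\susp_{\infty}$ in $\infPrl$ (the dual of~\cite[Proposition~1.4.4.4]{HA}); your componentwise argument identifying $F_n \simeq F_0 \circ \Loop_{\Dca}^{n}$ is a correct and welcome unwinding of that same idea.
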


\begin{proof}
  This is the analogue of \cite[Proposition~1.4.4.4]{HA}.
  Since~$\Cca$ is presentable, the functor~$\susp_{\Cca}$ admits a right adjoint~$\Loop_{\Cca}$, see~\cite[Remark~1.1.2.8]{HA}. 
  The inverse limit 
  \[
  \varprojlim\left(\cdots \xrightarrow{\susp_{\Cca}} \Cca \xrightarrow{\susp_{\Cca}} \Cca\right)
  \] 
  in~$\infPrl$ is preserved under the inclusion~$\infPrl \hookrightarrow \infCAT_{\infty}$, see~\cite[Proposition~5.5.3.13]{Lur09}.
  Thus, $\infcoSp(\Cca)$ is an object of~$\infPrl$ and~$\susp_{\infty}^{\Cca}$ is a morphism in~$\infPrl$, by~\cref{prop:costab-infinite-suspension}.
  This proves \rom{1} and \rom{2}.
  Property \rom{3} follows from the universal property of the costabilisation~$\susp_{\infty} \colon \infcoSp(\Cca) \to \Cca$ in the~$\infty$\nobreakdash-category~$\infPrl$.
\end{proof}
 
\begin{corollary}\label{cor:univ-right-adj-costab}
  Let~$\Cca$ and~$\Dca$ be pointed presentable~$\infty$\nobreakdash-categories and assume that~$\Dca$ is stable. 
  Let~$G \colon \Cca \to \Dca$ be a functor that admits a left adjoint.  
  Then~$G$ factors through~$\Loop_{\infty} \colon \Cca \to \infcoSp(\Cca)$, uniquely up to contractible choice. 
  We illustrate this universal property of~$\Loop_{\infty}^{\Cca}$ by the following commutative diagram: 
  \[
  \begin{tikzcd}[row sep = large]
  \Cca\arrow[rr, "G"] \arrow[rd, "\Loop_{\infty}"'] & & \Dca \\
   & \infcoSp(\Cca). \arrow[ru, dashed,  "\exists !"']
  \end{tikzcd}
  \]
\end{corollary}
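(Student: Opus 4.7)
The plan is to dualise the universal property of the costabilisation via adjunction, using that we are given a right adjoint $G$ whose left adjoint $F \colon \Dca \to \Cca$ we can feed into the universal property of $\susp_{\infty}$ established in Proposition~\ref{prop:costab-uni-prop}.

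More precisely, since $G$ admits a left adjoint $F$, the functor $F$ preserves all small colimits, in particular finite colimits. As $\Dca$ is stable, Proposition~\ref{prop:costab-uni-prop} produces an essentially unique functor $\widetilde{F} \colon \Dca \to \infcoSp(\Cca)$ together with an equivalence $F \simeq \susp_{\infty} \circ \widetilde{F}$. The functor $\widetilde{F}$ then preserves finite colimits; since its source $\Dca$ is stable, this forces $\widetilde{F}$ to be exact.

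Now I invoke part~\rom{3} of the proposition preceding the statement: an exact functor from a stable presentable $\infty$-category to $\infcoSp(\Cca)$ admits a right adjoint if and only if its composition with $\susp_{\infty}$ does. Since $\susp_{\infty} \circ \widetilde{F} \simeq F$ admits the right adjoint $G$, the functor $\widetilde{F}$ admits a right adjoint $\widetilde{G} \colon \infcoSp(\Cca) \to \Dca$. Passing to right adjoints in the equivalence $F \simeq \susp_{\infty} \circ \widetilde{F}$ (and using the adjunction $\susp_{\infty} \dashv \Loop_{\infty}$ from the preceding proposition) yields the desired factorisation $G \simeq \widetilde{G} \circ \Loop_{\infty}$.

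For uniqueness up to contractible choice, suppose $G \simeq G' \circ \Loop_{\infty}$ for some functor $G' \colon \infcoSp(\Cca) \to \Dca$. Since $\Loop_{\infty}$ admits the left adjoint $\susp_{\infty}$ and $G$ admits the left adjoint $F$, the functor $G'$ is forced to admit a left adjoint $F' \colon \Dca \to \infcoSp(\Cca)$ fitting into an equivalence $F \simeq \susp_{\infty} \circ F'$; the uniqueness of the factorisation of $F$ through $\susp_{\infty}$ provided by Proposition~\ref{prop:costab-uni-prop} identifies $F' \simeq \widetilde{F}$, and passing to right adjoints identifies $G' \simeq \widetilde{G}$. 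I do not expect a genuine obstacle: the argument is a formal adjoint-functor manipulation, and the only non-trivial input, namely that a right adjoint exists on the level of costabilisations, is already packaged in part~\rom{3} of the preceding proposition.
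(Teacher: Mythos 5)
Your proposal is correct and is exactly the argument the paper intends: its entire proof reads ``This follows from Proposition~\ref{prop:costab-uni-prop} by adjunctions,'' and your write-up simply spells out that passage to adjoints, using part~(iii) of the preceding proposition to produce the right adjoint $\widetilde{G}$ on costabilisations.
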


\begin{proof}
  This follows from~\cref{prop:costab-uni-prop} by adjunctions. 
\end{proof}

\begin{proposition}\label{lem:ff-costab}
  Let~$\Cca$ and~$\Dca$ be~$\infty$\nobreakdash-categories admitting finite colimits and~$F \colon \Cca \to \Dca$ be a functor preserving initial objects. 
  \begin{enumerate}
    \item If~$F$ is right exact, then~$F$ induces an exact functor~$\FF \colon \infcoSp(\Cca) \to \infcoSp(\Dca)$ of stable~$\infty$\nobreakdash-categories.
    \item Assuming that~$\Cca$ and~$\Dca$ are pointed~$\infty$\nobreakdash-categories and that there exists an equivalence~$F \circ \susp_{\Cca} \simeq \susp_{\Dca} \circ F$, then~$F$ induces an exact functor~$\FF$ of their costabilisations.
    \item In the situation of $\rom{1}$ and \rom{2}, if~$F$ is fully faithful, the induced functor~$\FF$ is also fully~faithful.
  \end{enumerate}
\end{proposition}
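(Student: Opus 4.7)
The plan is to address the three parts separately, using the universal property for (i), the tower description for (ii), and a levelwise argument on mapping spaces for (iii).

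For part (i), I will invoke the universal property of costabilisation from \cref{prop:costab-uni-prop}. The composition $F \circ \susp_{\infty}^{\Cca} \colon \infcoSp(\Cca) \to \Dca$ is right exact from the stable $\infty$-category $\infcoSp(\Cca)$ to $\Dca$, since both $\susp_{\infty}^{\Cca}$ and $F$ preserve finite colimits. The universal property then yields, uniquely up to contractible choice, a right exact functor $\FF \colon \infcoSp(\Cca) \to \infcoSp(\Dca)$ satisfying $\susp_{\infty}^{\Dca} \circ \FF \simeq F \circ \susp_{\infty}^{\Cca}$. Since the source is stable, right exactness coincides with exactness.

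For part (ii), I will use the description $\infcoSp(\Cca) \simeq \varprojlim(\cdots \xrightarrow{\susp_{\Cca}} \Cca \xrightarrow{\susp_{\Cca}} \Cca)$ from \cref{prop:costab-infinite-suspension}, and analogously for $\Dca$. The equivalence $F \circ \susp_{\Cca} \simeq \susp_{\Dca} \circ F$, combined with $F$ preserving the zero object, upgrades $F$ to a morphism of these towers in $\infCAT_{\infty}$, yielding an induced $\FF$ on the inverse limits that acts levelwise by $F$. To verify exactness, I would show that $\FF$ preserves the zero object (immediate from $F$ preserving the initial object in the pointed setting) and preserves cofibre sequences; the latter would exploit that every object in $\infcoSp(\Cca)$ admits coherent desuspensions, so that cofibre sequences there are reconstructible via iterated suspension data, and $F$'s commutation with $\susp$ suffices to preserve such reconstructions.

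For part (iii), I will compute mapping spaces in $\infcoSp(\Cca)$ via the tower description: between cospectrum objects $X = (X_n)$ and $Y = (Y_n)$, the space $\infMap_{\infcoSp(\Cca)}(X, Y)$ is a limit of the $\infMap_{\Cca}(X_n, Y_n)$ with transition maps induced by $\susp_{\Cca}$. Fully faithfulness of $F$ turns each comparison $\infMap_{\Cca}(X_n, Y_n) \to \infMap_{\Dca}(F(X_n), F(Y_n))$ into an equivalence, and these assemble into an equivalence on the limits, giving the fully faithfulness of $\FF$.

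The main obstacle is the exactness claim in part (ii). Commuting with $\susp$ is strictly weaker than preserving all finite colimits, and the argument must show that this is nonetheless enough to make $\FF$ preserve cofibre sequences between cospectrum objects, even though $F$ itself need not preserve pushouts in $\Cca$. The crux is to exploit stability of $\infcoSp(\Cca)$, in which every object admits arbitrary desuspensions, so that cofibre-sequence preservation reduces to $F$'s compatibility with suspension. Making this precise, likely via the excisive-functor description $\infcoSp(\Cca) \simeq \infExc_{\ast}(\infHType_{\ast}^{\fin}, \Cca^{\op})^{\op}$ and a pointwise check, is where the real work lies.
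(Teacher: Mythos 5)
Your parts \rom{1} and \rom{3} are essentially fine. For \rom{1} you factor $F \circ \susp_{\infty}^{\Cca}$ through $\susp_{\infty}^{\Dca}$ via the universal property of \cref{prop:costab-uni-prop}; the paper instead defines $\FF$ directly as post-composition with $F^{\op}$ on $\infExc_{\ast}(\infHType_{\ast}^{\fin}, \Cca^{\op})$ and checks preservation of finite (co)limits pointwise. The two constructions agree and the difference is cosmetic. Your \rom{3} is the paper's argument (fully faithfulness passes to limits of mapping spaces), with the minor caveat that the tower description of \cref{prop:costab-infinite-suspension} is only available in the pointed case; in case \rom{1} one checks fully faithfulness pointwise on the excisive functor categories instead.

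The genuine gap is in \rom{2}, and you have flagged it yourself: ``making this precise is where the real work lies'' is an admission, not a proof. Moreover, the reduction you propose cannot work as stated. The induced functor $\FF$ on the towers does preserve zero objects and does commute with the suspension of the costabilisations, but for functors between stable $\infty$\nobreakdash-categories this is strictly weaker than exactness: the endofunctor $X \mapsto \bigoplus_{n \in \ZZ} \susp^{n}\bigl((\Loop^{n}X)^{\otimes 2}\bigr)$ of $\infSp$ preserves the zero object and admits a natural equivalence with its conjugate by $\susp$, yet is not exact. Since cofibre sequences in $\varprojlim(\cdots \xrightarrow{\susp_{\Cca}} \Cca)$ are computed levelwise, ``reconstructing them from iterated suspension data'' just reproduces the pushout-preservation that $F$ is not assumed to have; no amount of stability of $\infcoSp(\Cca)$ substitutes for it. What actually closes the argument in the paper is \cite[Lemma~3.9]{Heu21}: for a functor preserving terminal objects and commuting with $\Loop$ (applied here in the opposite categories), the image of a pullback square becomes a pullback square after applying $\Loop$ once more. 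Because a cospectrum object carries coherent (de)loopings in $\Cca^{\op}$, that one extra loop is available for free, and one concludes that $F^{\op}$ sends reduced excisive functors to reduced excisive functors and that $\FF$ is exact. Your proof of \rom{2} is incomplete without this lemma or an equivalent precise statement about how the comparison maps of $F$ interact with pullback squares.
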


\begin{proof}
    \itemnum Since~$F$ is right exact, it induces a right exact functor
      \[
      \FF \colon \infExc_{\ast}\left(\infHType^{\fin}_{\ast}, \Cca^{\op}\right)^{\op} \to  \infExc_{\ast}\left(\infHType^{\fin}_{\ast}, \Dca^{\op}\right)^{\op}
      \]
      between stable~$\infty$\nobreakdash-categories, since limits of (excisive) functors are computed pointwise, see~\cite[Remark~1.4.2.3]{HA}.
      By~\cite[Proposition~1.1.4.1]{HA}, the functor~$\FF$ is~exact.
      
    \itemnum Let~$G \colon \infHType^{\fin}_{\ast} \to \Cca^{\op}$ be a reduced and excisive functor.
      We show that the composition~$F^{\op} \circ G$ is also reduced and excisive, where~$F^{\op}$ denotes the canonical functor~$\Cca^{\op} \to \Dca^{\op}$ associated to~$F$.
      Let~$P$ be a pushout diagram in~$\infHType^{\fin}_{\ast}$, so we have~$G(P)$ is a pullback diagram in~$\Cca^{\op}$.
      By~\cite[Lemma~3.9]{Heu21}~$(\Loop_{\Dca^{\op}} \circ F^{\op})(G(P))$ is a pullback diagram in~$\Dca^{\op}$.
      Since~$\infExc_{\ast}\left(\infHType^{\fin}_{\ast}, \Dca^{\op}\right)$ is stable and its limits are computed pointwise~$\Dca^{\op}$, we have that~$F(G(P))$ is a pullback diagram in~$\Dca^{\op}$.
      Similarly, we can show that~$\FF$ is right exact and thus~exact.
      
    \itemnum Fully faithfulness of~$\FF$ is checked pointwise on the induced map on mapping spaces in~$\Cca^{\op}$ and~$\Dca^{\op}$, which can be verified by the fully faithfulness of~$F$. \qedhere
\end{proof}
\subsection{Examples of costabilisation}\label{sec:costab-example}

Let~$\Oca$ be a reduced~$\infty$\nobreakdash-operad with values in a presentable stable symmetric monoidal~$\infty$\nobreakdash-category~$\Cca$.
The~$\infty$\nobreakdash-category~$\infAlg_{\Oca}(\Cca)$ of~$\Oca$\nobreakdash-algebras is pointed: The zero object of~$\infAlg_{\Oca}(\Cca)$ is the zero object of~$\Cca$ endowed with a trivial~$\Oca$\nobreakdash-algebra structure.
Recall that the functor~$\free_{\Oca} \colon \Cca \to \infAlg_{\Oca}(\Cca)$ preserves small colimits.
Since~$\Cca$ is stable, the suspension functor~$\susp_{\Cca}$ of~$\Cca$ is an auto-equivalence of~$\Cca$, with the inverse given by the loop functor~$\Cca$.
Denote the suspension endofunctor of~$\infAlg_{\Oca}(\Cca)$ by~$\susp_{\Oca}$, which admits a right adjoint~$\Loop_{\Oca}$.
For every object~$X \in \Cca$, we have
\[
  \begin{split}
    \free_{\Oca}(X) 
    &\simeq \free_{\Oca}\left(\susp_{\Cca} \Loop_{\Cca} X\right) \\
    &\simeq \susp_{\Oca}\left(\free_{\Oca}(\Loop_{\Cca}X)\right) \\
    &\simeq \susp_{\Oca}^{2}\left(\free_{\Oca}(\Loop_{\Cca}^2X)\right) \\
    &\simeq \cdots \\
    &\simeq \susp_{\Oca}^{\infty}\left(\free_{\Oca}(\Loop_{\Cca}^{\infty} X)\right).
  \end{split}
\]
This implies that every free~$\Oca$\nobreakdash-algebra admits infinite desuspensions in~$\infAlg_{\Oca}(\Cca)$, which corresponds to a cospectrum object of~$\infAlg_{\Oca}(\Cca)$ by~\cref{prop:costab-infinite-suspension}. 
Therefore, we expect that the costabilisation of~$\infAlg_{\Oca}(\Cca)$ is non-trivial. 
In the following we are going to present some examples of this~kind.

\begin{proposition}[Gaitsgory--Rozenblyum]\label{prop:chain-loop-triv}
  Let~$\Oca$ be a reduced~$\infty$\nobreakdash-operad with values in the~$\infty$\nobreakdash-category~$\infSp_{\QQ}$ of rational spectra.
  The loop functor~${\Loop_{\Oca} \colon \infAlg_{\Oca}(\infSp_{\QQ}) \to \infAlg_{\Oca}(\infSp_{\QQ})}$ admits the following factorisation
  \begin{equation}\label{eq:chain-loop-triv}
    \begin{tikzcd}
      \infAlg_{\Oca}\left(\infSp_{\QQ}\right) \arrow[r, "\Loop_{\Oca}"] \arrow[d, "\frgt_{\Oca}"'] &\infAlg_{\Oca}\left(\infSp_{\QQ}\right) \\
      \infSp_{\QQ} \arrow[r, "\Loop_{\infSp_{\QQ}}"'] &\infSp_{\QQ} \arrow[u, "\triv_{\Oca}"']
    \end{tikzcd}
  \end{equation}
\end{proposition}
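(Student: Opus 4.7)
The claim is that for every rational $\Oca$-algebra~$A$, the $\Oca$-algebra~$\Loop_\Oca A$ is equivalent to the trivial $\Oca$-algebra on the underlying spectrum~$\Loop(\frgt_\Oca A)$. Since $\frgt_\Oca$ preserves small limits (being a right adjoint), the two sides automatically have the same underlying rational spectrum, so the real content of the proposition is the triviality of the $\Oca$-algebra structure on~$\Loop_\Oca A$.

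I would first reformulate the statement using the stabilisation adjunction. By~\cite[Theorem~4.3]{Heu20}, recalled in~\cref{para:vh-stabilisation}, the pair $\indc_\Oca \dashv \triv_\Oca$ exhibits $\infSp_\QQ$ as the stabilisation of $\infAlg_\Oca(\infSp_\QQ)$. In particular $\frgt_\Oca \circ \triv_\Oca \simeq \id$ and $\indc_\Oca \circ \triv_\Oca \simeq \id$, so $\triv_\Oca$ is fully faithful with essential image the infinite loop objects of $\infAlg_\Oca(\infSp_\QQ)$. The desired factorisation then becomes equivalent to the assertion that $\Loop_\Oca A$ lies in this essential image for every~$A$, together with the identification $\indc_\Oca \Loop_\Oca A \simeq \Loop \frgt_\Oca A$.

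The heart of the argument is to show that all $\Oca$-operations of arity $\geq 2$ act trivially on $\Loop_\Oca A$. I would use operadic Koszul duality (\cref{prop:bar-O-algebras}) to encode the $\Oca$-algebra structure on the spectrum~$\frgt_\Oca \Loop_\Oca A$ as a nilpotent divided-power coaction of the cooperad $\Bac(\Oca)$, and then argue that this coaction is trivial rationally. Rationality enters at two points: first, the norm maps $(X^{\otimes r})_{\Perm_r} \xrightarrow{\sim} (X^{\otimes r})^{\Perm_r}$ are equivalences in $\infSp_\QQ$, so each arity-$r$ cooperation factors through a transfer; second, such transfers vanish on loop objects by a standard Eckmann--Hilton / multiplication-by-$r!$ argument that is only invertible in characteristic zero. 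An equivalent and more hands-on route, available thanks to~\cref{para:infinity-rational-chain}, is to strictify $A$ to a dg $\Oca$-algebra, realise $\Loop_\Oca A$ as the strict fibre of $0 \to A$, and verify by an explicit parity argument that the dg operad acts by zero on the shifted complex in arities~$\geq 2$.

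The main obstacle is making this vanishing of higher (co)operations fully rigorous and functorial at the $\infty$-categorical level. Rationality is indispensable here: the analogous statement fails integrally or in positive characteristic, since loops of $\infopE_n$-algebras retain non-trivial Dyer--Lashof and Browder operations. Any proof must therefore pinpoint exactly where the characteristic-zero hypothesis is used --- whether via Tate vanishing, formality of rational operads in the sense of~\cref{thm:En-formality}, or a direct dg calculation --- and this is the delicate step that Gaitsgory--Rozenblyum's argument must address.
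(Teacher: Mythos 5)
The paper offers no argument of its own for this proposition: its proof is the single line ``See~\cite[Chapter~6, Proposition~1.7.2]{GR17}'', so there is nothing in the paper to compare your route against. Your opening reduction is correct and is indeed the content of the cited result: since~$\frgt_{\Oca}$ is a right adjoint, the underlying spectrum of~$\Loop_{\Oca}A$ is automatically~$\Loop_{\infSp_{\QQ}}(\frgt_{\Oca}A)$, and since~$\indc_{\Oca}\dashv\triv_{\Oca}$ exhibits~$\infSp_{\QQ}$ as the stabilisation of~$\infAlg_{\Oca}(\infSp_{\QQ})$ by~\cite[Theorem~4.3]{Heu20}, the proposition amounts precisely to the assertion that every loop object is a trivial algebra.

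The gap is in the central step, the triviality of the algebra structure on~$\Loop_{\Oca}A$, which both of your suggested routes leave open. The assertion that ``transfers vanish on loop objects by a standard Eckmann--Hilton / multiplication-by-$r!$ argument'' is not a standard argument. What is actually needed is that the colinear approximation of each $r$-homogeneous piece of~$\frgt_{\Oca}\circ\free_{\Oca}$ (equivalently, of the Koszul-dual cooperations) vanishes for~$r\geq 2$, i.e.\ that~$\varprojlim_{n}\left(\susp^{n}\circ F\circ\Loop^{n}\right)\simeq 0$ for~$F$ $r$-homogeneous. This is exactly the delicate point in the analogous $\Tel(h)$-local arguments of this paper (\cref{lem:Th-dual-calculus-sum}, which rests on Tate vanishing and uniform nilpotence and fails in a general stable~$\infty$\nobreakdash-category); rationally the norm maps are equivalences, but the vanishing of the inverse limit is still not formal for unbounded objects --- one must control the transition maps in the tower, which is what Gaitsgory--Rozenblyum's filtration and Bar--Cobar analysis accomplishes. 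Likewise, the dg ``parity argument'' that the operad ``acts by zero on the shifted complex in arities~$\geq 2$'' is a restatement of the proposition, not a proof of it: the operations on the fibre of~$0\to A$ are induced from those on the path object, and one must produce coherent nullhomotopies for them. So your proposal correctly locates the difficulty and correctly identifies where characteristic zero enters, but the proposition itself remains unproven; as it stands, the honest conclusion is the same as the paper's, namely to defer to~\cite{GR17}.
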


\begin{proof}
  See~\cite[Chapter~6, Proposition~1.7.2]{GR17}.
\end{proof}

\begin{corollary}\label{prop:chain-susp-free}
  Let~$\Oca$ be a reduced~$\infty$\nobreakdash-operad with values in the~$\infty$\nobreakdash-category~$\infSp_{\QQ}$. 
  The suspension functor~$\susp_{\Oca} \colon \infAlg_{\Oca}(\infSp_{\QQ}) \to \infAlg_{\Oca}(\infSp_{\QQ})$ admits a factorisation 
  \[
    \begin{tikzcd}
      \infAlg_{\Oca}\left(\infSp_{\QQ}\right) \arrow[r, "\susp_{\Oca}"] \arrow[d, "\indc_{\Oca}"'] &\infAlg_{\Oca}\left(\infSp_{\QQ}\right) \\
      \infSp_{\QQ} \arrow[r, "\susp_{\infSp_{\QQ}}"'] &\infSp_{\QQ}. \arrow[u, "\free_{\Oca}"']
    \end{tikzcd}
  \]
\end{corollary}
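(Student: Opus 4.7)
The plan is to deduce this corollary directly from \cref{prop:chain-loop-triv} by passing to left adjoints. The factorisation in \cref{prop:chain-loop-triv} says that
\[
\Loop_{\Oca} \simeq \triv_{\Oca} \circ \Loop_{\infSp_{\QQ}} \circ \frgt_{\Oca}
\]
as endofunctors of $\infAlg_{\Oca}(\infSp_{\QQ})$. Each functor appearing on the right-hand side admits a known left adjoint: the loop functor $\Loop_{\Oca}$ is left adjoint to $\susp_{\Oca}$ (since $\infAlg_{\Oca}(\infSp_{\QQ})$ is presentable, so the suspension functor has a right adjoint, and here we only need the opposite direction of adjunction -- more precisely, $\susp_{\Oca}$ is left adjoint to $\Loop_{\Oca}$); the trivial algebra functor $\triv_{\Oca}$ is right adjoint to the indecomposables functor $\indc_{\Oca}$ (see~\eqref{eq:two-adj-Lie-alg} and its generalisation to arbitrary reduced operads); $\Loop_{\infSp_{\QQ}}$ is right adjoint to $\susp_{\infSp_{\QQ}}$; and $\frgt_{\Oca}$ is right adjoint to $\free_{\Oca}$ by \cref{ex:free-forg-adjunction}.

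Taking left adjoints of both sides of the equivalence $\Loop_{\Oca} \simeq \triv_{\Oca} \circ \Loop_{\infSp_{\QQ}} \circ \frgt_{\Oca}$, and using that the left adjoint of a composition is the composition of the left adjoints in the reverse order, yields
\[
\susp_{\Oca} \simeq \free_{\Oca} \circ \susp_{\infSp_{\QQ}} \circ \indc_{\Oca},
\]
which is exactly the commutative diagram asserted in the statement. Since left adjoints are unique up to contractible choice, the factorisation is canonical.

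There is no real obstacle here: the content of the corollary is entirely contained in \cref{prop:chain-loop-triv}, and the argument is the routine manipulation of adjoints. The only minor point to check is that all four adjunctions used really exist in this setting, but this is immediate because $\infSp_{\QQ}$ is presentable stable, $\infAlg_{\Oca}(\infSp_{\QQ})$ is presentable by \cite[Corollary~4.2.3.7]{HA}, the free--forgetful adjunction exists by \cref{ex:free-forg-adjunction}, and the indecomposables--trivial adjunction exists by the discussion in \cref{para:vh-stabilisation} (adapted from $\infopLie$ to an arbitrary reduced $\Oca$, using the augmentation $\Oca \to \infopTriv_{\infSp_{\QQ}}$ and \cref{prop:adj-induced-alg}).
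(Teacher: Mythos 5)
Your proposal is correct and is precisely the paper's own argument: the proof there reads ``Taking the left adjoints to the functors in~\eqref{eq:chain-loop-triv} gives the desired result.'' Your additional verification that each of the four adjunctions exists is a reasonable (if routine) supplement, but the route is identical.
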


\begin{proof}
  Taking the left adjoints to the functors in~\eqref{eq:chain-loop-triv} gives the desired result.
\end{proof}

\begin{corollary}\label{cor:costab-rational-chain-O-algebra}
  Let~$\Oca$ be a reduced~$\infty$\nobreakdash-operad with values in~$\infSp_{\QQ}$.
  The~$\infty$\nobreakdash-cate\-gory~$\infSp_{\QQ}$ together with the functor~$\free_{\Oca} \colon \infSp_{\QQ} \to \infAlg_{\Oca}(\Cca)$ is equivalent to the costabilisation of the~$\infty$\nobreakdash-category~$\infAlg_{\Oca}(\Cca)$.
\end{corollary}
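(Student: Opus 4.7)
The plan is to use the universal property of costabilisation~(\cref{prop:costab-uni-prop}) to produce a candidate functor $\widetilde{\free}_{\Oca} \colon \infcoSp(\infSp_{\QQ}) \to \infcoSp(\infAlg_{\Oca}(\infSp_{\QQ}))$ factoring $\free_{\Oca}$, and then to construct an explicit inverse using the indecomposables functor $\indc_{\Oca}$. The identification $\infcoSp(\infSp_{\QQ}) \simeq \infSp_{\QQ}$ from~\cref{ex:costab-triv-ex}.\rom{2} reduces the claim to showing this equivalence.

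First I would observe that, since $\indc_{\Oca} \circ \free_{\Oca} \simeq \id_{\infSp_{\QQ}}$ (both being induced by $\infopTriv \to \Oca \to \infopTriv$ which composes to the identity), \cref{prop:chain-susp-free} immediately yields an equivalence $\susp_{\Oca} \circ \free_{\Oca} \simeq \free_{\Oca} \circ \susp_{\infSp_{\QQ}}$. Since $\free_{\Oca}$ is a left adjoint it preserves finite colimits, so \cref{lem:ff-costab}.\rom{2} applies, producing an exact functor $\widetilde{\free}_{\Oca}$ on costabilisations. Dually, \cref{prop:chain-loop-triv} gives $\Loop_{\Oca} \circ \triv_{\Oca} \simeq \triv_{\Oca} \circ \Loop_{\infSp_{\QQ}}$; taking left adjoints and using $\frgt_{\Oca} \circ \triv_{\Oca} \simeq \id$ yields $\susp_{\infSp_{\QQ}} \circ \indc_{\Oca} \simeq \indc_{\Oca} \circ \susp_{\Oca}$, which together with the fact that $\indc_{\Oca}$ is a left adjoint lets me apply \cref{lem:ff-costab}.\rom{2} again to obtain an exact functor $\widetilde{\indc}_{\Oca} \colon \infcoSp(\infAlg_{\Oca}(\infSp_{\QQ})) \to \infcoSp(\infSp_{\QQ})$.

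Next I would verify that $\widetilde{\free}_{\Oca}$ and $\widetilde{\indc}_{\Oca}$ are mutually inverse. One composition is automatic: applying $\infcoSp(\blank)$ to $\indc_{\Oca} \circ \free_{\Oca} \simeq \id$ gives $\widetilde{\indc}_{\Oca} \circ \widetilde{\free}_{\Oca} \simeq \id$. For the other composition, I use the key structural consequence of \cref{prop:chain-susp-free}: for any cospectrum object $(X_i)_{i \geq 0}$ in $\infAlg_{\Oca}(\infSp_{\QQ})$ (viewed via the model of~\cref{prop:costab-infinite-suspension}), we have
\[
X_i \simeq \susp_{\Oca}(X_{i+1}) \simeq \free_{\Oca}\bigl(\susp_{\infSp_{\QQ}}(\indc_{\Oca}(X_{i+1}))\bigr).
\]
So each $X_i$ lies in the essential image of $\free_{\Oca}$; setting $Y_i \coloneqq \susp_{\infSp_{\QQ}}(\indc_{\Oca}(X_{i+1}))$, the relations $X_i \simeq \free_{\Oca}(Y_i)$ and $\indc_{\Oca}\free_{\Oca} \simeq \id$ give $\indc_{\Oca}(X_i) \simeq Y_i$, whence $\free_{\Oca}(\indc_{\Oca}(X_i)) \simeq X_i$. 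This shows $\widetilde{\free}_{\Oca} \circ \widetilde{\indc}_{\Oca} \simeq \id$ termwise, and the transition maps agree because they are built from the same suspension equivalences.

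I do not anticipate a major obstacle: the cospectrum condition forces every object to be free, at which point the two functors are visibly mutually inverse. The only mildly delicate point is to ensure that the pointwise identifications assemble into a genuine equivalence of $\infty$-categories rather than just a bijection on equivalence classes, but this follows formally from the functoriality of $\infcoSp(\blank)$ established in~\cref{lem:ff-costab} and the universal property~\cref{prop:costab-uni-prop}. The final identification $\infcoSp(\infSp_{\QQ}) \simeq \infSp_{\QQ}$, together with the compatibility $\susp_{\infty} \circ \widetilde{\free}_{\Oca} \simeq \free_{\Oca}$ coming from the universal property, then gives the statement of the corollary.
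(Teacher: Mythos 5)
Your proposal is correct and rests on exactly the same inputs as the paper's proof: \cref{prop:chain-susp-free} (equivalently \cref{prop:chain-loop-triv}), the identity $\indc_{\Oca}\circ\free_{\Oca}\simeq\id$, and the identification $\infcoSp(\infSp_{\QQ})\simeq\infSp_{\QQ}$. The paper packages your "mutually inverse functors" verification as an interleaving of the two suspension towers (a ladder with vertical maps $\free_{\Oca}$ and diagonals $\susp\circ\indc_{\Oca}$, all triangles commuting by \cref{prop:chain-susp-free}), which disposes of the objectwise step you flag as delicate — the compatibility of your termwise identifications $X_i\simeq\free_{\Oca}(\indc_{\Oca}(X_i))$ with the transition maps is precisely the commutativity of that ladder — but this is a presentational difference, not a different argument.
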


\begin{proof} 
  Abbreviate the functor~$\susp_{\infSp_{\QQ}}$ by~$\susp$.
  By~\cref{prop:chain-susp-free} we obtain the following commutative~diagram
  \[
  \begin{tikzcd}[column sep = huge, row sep = large]
  \cdots \arrow[r, "\susp_{\Oca}"] \arrow[rd, "\susp \circ \indc_{\Oca}"]  & \infAlg_{\Oca}\left(\infSp_{\QQ}\right) \arrow[r, "\susp_{\Oca}"] \arrow[rd, "\susp \circ \indc_{\Oca}"] & \infAlg_{\Oca}\left(\infSp_{\QQ}\right) \arrow[r, "\susp_{\Oca}"] \arrow[rd, "\susp \circ \indc_{\Oca}"] & \infAlg_{\Oca}\left(\infSp_{\QQ}\right)          \\
  \cdots \arrow[u, "\free_{\Oca}"'] \arrow[r, "\susp"'] & \infSp_{\QQ} \arrow[u, "\free_{\Oca}"'] \arrow[r, "\susp"']  & \infSp_{\QQ} \arrow[u, "\free_{\Oca}"'] \arrow[r, "\susp"']  & \infSp_{\QQ} \arrow[u, "\free_{\Oca}"']
  \end{tikzcd}
  \]
  in~$\infCAT_{\infty}$.
  Thus the functor 
  \[
  \infcoSp(\free_{\infopLie}) \colon \infSp_{\QQ} \simeq \infcoSp\left(\infSp_{\QQ}\right) \xrightarrow{\sim} \infcoSp\left(\infAlg_{\Oca}\left(\infSp_{\QQ}\right)\right),
  \]
  induced by~$\free_{\Oca}$, from the inverse limit of the lower row to that of the upper row is an equivalence of~$\infty$\nobreakdash-categories.
  In other words, we have a commutative~diagram
  \[
  \begin{tikzcd}[row sep = large]
    \infcoSp\left(\infAlg_{\Oca}\left(\infSp_{\QQ}\right)\right) \arrow[r, "\susp_{\infty}"] & \infAlg_{\Oca}\left(\infSp_{\QQ}\right)  \\
    \infcoSp\left(\infSp_{\QQ}\right) \arrow[r, "\susp_{\infty}"', "\sim"] \arrow[u, "\infcoSp(\free_{\infopLie})", "\simeq"'] & \infSp_{\QQ}  \arrow[u, "\free_{\Oca}"']
  \end{tikzcd}
  \]
  of~$\infty$\nobreakdash-categories.
  From the above diagram we see that~$\free_{\Oca}$ is equivalent to the lower horizontal arrow~$\susp_{\infty}$. 
\end{proof}

In the next section we show that the costabilisation of the~$\infty$\nobreakdash-category~$\infAlg_{\infopLie}(\infSp_{\Tel(h)})$ is equivalent to~$\infSp_{\Tel(h)}$, for every~$h \geq 1$.
In particular, this implies that the costabilisation of the~$\infty$\nobreakdash-category~$\infHType_{v_{h}}$ of~$v_{h}$\nobreakdash-periodic \htypes is non-trivial, in contrast to the costabilisation of~$\infHType_{\ast}$, \cf~\cref{ex:costab-triv-ex}.

\begin{theorem}[Heuts--Land]\label{thm:En-susp-n-free}
  Let~$n \in \NN$ and let~$\Cca$ be a presentable stable symmetric monoidal~$\infty$\nobreakdash-category.
  Then the~$n$\nobreakdash-fold suspension functor~$\susp^n_{\infopE_n} \colon \infAlg_{\infopE_n}^{\nut}(\Cca) \to \infAlg_{\infopE_n}^{\nut}(\Cca)$ admits the following factorisation
  \[
  \begin{tikzcd}[row sep = large]
  \infAlg_{\infopE_n}^{\nut}\left(\Cca\right) \arrow[r, "\susp_{\infopE_n}^n"] \arrow[d, "\indc_{\infopE_n^{\nut}}"'] &\infAlg_{\infopE_n}^{\nut}\left(\Cca\right) \\
  \Cca \arrow[r, "\susp_{\Cca}^{n}"'] &\Cca. \arrow[u, "\free_{\infopE_n^{\nut}}"'] 
  \end{tikzcd}
  \]
\end{theorem}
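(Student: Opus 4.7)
Plan. The plan is to take right adjoints of the desired factorisation and then prove the equivalent trivialisation statement using Dunn's additivity and the iterated Bar--Cobar adjunction. Right-adjoining the stated factorisation, the theorem becomes equivalent to the assertion that
\[
\Loop^n_{\infopE_n^{\nut}} \simeq \triv_{\infopE_n^{\nut}} \circ \Loop^n_\Cca \circ \frgt_{\infopE_n^{\nut}},
\]
i.e.\ the $n$-fold loop of any non-unital $\infopE_n$-algebra $B$ is the \emph{trivial} $\infopE_n$-algebra on its underlying $n$-fold loop $\Loop^n_\Cca(\frgt B)$ in $\Cca$. I find this adjoint formulation more tractable, since the right-hand side exhibits the trivialisation explicitly.

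I would then pass to the augmented setting via the equivalence $\trivaug_{\infopE_n} \colon \infAlg_{\infopE_n}^{\nut}(\Cca) \simeq \infAlg_{\infopE_n}^{\aug}(\Cca)$ of \cref{cor:equiv-non-unital-aug}, reducing the claim to one about $\Loop^n_{\infopE_n,\aug}$. Using the equivalence $\infAlg_{\infopE_n}^{\aug}(\Cca) \simeq \infAlg_{\infopE_n}(\Cca_{/\munit_\Cca})$ combined with Additivity, one has $\infAlg_{\infopE_n}(\Cca_{/\munit_\Cca}) \simeq \infAlg_{\infopE_1}(\infAlg_{\infopE_{n-1}}(\Cca_{/\munit_\Cca}))$, so that $\Loop^n_{\infopE_n,\aug}$ decomposes into $n$ successive single loops, each taken inside an iterated augmented $\infopE_1$-algebra category. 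Single loops in augmented $\infopE_1$-algebras are controlled by the Bar--Cobar adjunction $\Bac_1 \dashv \Cobar_1$; iterating $n$ times identifies $\Loop^n_{\infopE_n,\aug}$ with the iterated cobar construction $\Cobar_n$ applied appropriately. Combining with the key identity $\Bac_n \circ \free_{\infopE_n}^{\aug} \simeq \susp^n_{\Cca_{/\munit_\Cca}}$ of \cref{para:Bar-constr-En}(iv) (and its right-adjoint dual), one establishes the claimed trivialisation on free augmented $\infopE_n$-algebras, which by the colimit-preservation of both sides propagates to all augmented $\infopE_n$-algebras.

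The hard part will be to verify that the induced $\infopE_n$-algebra structure on $\Loop^n_{\infopE_n,\aug}(B)$ is genuinely trivial, rather than merely a shifted $\infopE_n$-structure. An analogous statement for a general reduced augmented $\infty$-operad $\Oca$ in place of $\infopE_n^{\nut}$ would fail (as can already be seen by considering $\Oca = \infopLie$ in conjunction with \cref{thm:Heuts-vh-Lie} and the non-triviality of $\infHType_{v_h}$), so the specific Koszul self-duality of $\infopE_n$ must be exploited in an essential way. Concretely, the Ching--Salvatore theorem $\KD(\infopE_n^{\nut}) \simeq \opdsusp^{-n} \infopE_n^{\nut}$ of \cref{sec:relation-to-KD} allows one to identify the higher $\infopE_n$-cooperations arising in the $n$-fold Cobar construction with operations that have undergone an $n$-fold operadic desuspension; this desuspension is cancelled in arity one by the $n$-fold algebraic suspension of $\Cca$ but drives the arity-$\geq 2$ contributions to zero after looping, leaving only the trivial $\infopE_n$-structure. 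Stability of $\Cca$ is used throughout to make $\susp^n_\Cca$ an equivalence and to ensure that no convergence issues arise in identifying the cosimplicial models for $\Cobar_n$ with the desired trivial algebra.
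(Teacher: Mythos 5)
The paper does not prove this statement: it is quoted from Heuts--Land and the proof is a citation to \cite[Corollary~2.13]{HLEn}, so your sketch has to stand on its own. It does not, for two reasons. First, the load-bearing identification of~$\Loop^n_{\infopE_n,\aug}$ with the iterated cobar construction~$\Cobar_n$ is false. The functor~$\Cobar_n$ is right adjoint to~$\Bac_n$, and~$\Bac_n$ is the iterated relative tensor product~$\munit \otimes_A \munit$, \emph{not} the categorical suspension~$\munit \sqcup_A \munit$ in~$\infAlg^{\aug}_{\infopE_n}(\Cca)$; these agree only when the monoidal structure is (co)cartesian, which is exactly the setting of \cref{prop:Loop-fac-En} and \cite[Example~5.2.2.4]{HA}, but not the setting here. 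Concretely, $\susp_{\infopE_1^{\aug}}(\free^{\aug}_{\infopE_1}(X)) \simeq \free^{\aug}_{\infopE_1}(\susp_{\Cca}X)$ because~$\free^{\aug}$ preserves pushouts, whereas~$\Bac_1(\free^{\aug}_{\infopE_1}(X)) \simeq \munit \oplus \susp_{\Cca}X$ by \cref{para:Bar-constr-En}; the underlying objects already differ. So the route through \cref{para:Bar-constr-En}.(iv) and its adjoint computes the wrong functor, and moreover~$\Cobar_n$ is not even an endofunctor of~$\infAlg^{\aug}_{\infopE_n}(\Cca)$.

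Second, the proposed mechanism for the genuinely hard point --- that the Ching--Salvatore self-duality makes the arity-$\geq 2$ operations ``go to zero after looping'' --- is not an argument. The operadic desuspension~$\opdsusp^{-n}\infopE_n^{\nut}$ has non-trivial operations in every arity (its algebras are equivalent to~$\infopE_n^{\nut}$-algebras), so nothing is killed by desuspension; and historically the self-duality of~$\infopE_n$ is \emph{deduced from} suspension/bar computations of this kind rather than the other way around. What is actually needed is a direct analysis of the pushout~$\munit \sqcup_A \munit$ for augmented~$\infopE_1$-algebras in a stable monoidal~$\infty$-category (showing it is free on~$\susp_{\Cca}\indc_{\infopE_1}(A)$), followed by induction over~$n$ via Dunn additivity; this is the content of the Heuts--Land argument. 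Two smaller points: your reduction to free algebras is attached to the right-adjointed (loop) form of the statement, where the verification on free algebras is itself the hard content (a stable James-type splitting), whereas the easy check on free algebras is for the suspension form via colimit-preservation of~$\free$ --- and in either form you first need to exhibit a natural comparison map, which the sketch never constructs. Finally, your claimed counterexample for~$\Oca = \infopLie$ does not work: non-triviality of~$\infcoSp(\infHType_{v_h})$ is perfectly consistent with~$\susp_{\infopLie} \simeq \free_{\infopLie}\circ\susp_{\Cca}\circ\indc_{\infopLie}$ (compare \cref{cor:costab-rational-chain-O-algebra}, where exactly this identity for arbitrary reduced~$\Oca$ over~$\infSp_{\QQ}$ produces a non-trivial costabilisation).
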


\begin{proof}
See~\cite[Corollary~2.13]{HLEn}. 
\end{proof}

\begin{corollary}[Heuts--Land]\label{cor:costab-En-general}
  In the situation of~\cref{thm:En-susp-n-free} the~$\infty$\nobreakdash-category~$\Cca$ together with the functor~$\free_{\infopE_n^{\nut}} \colon \Cca \to \infAlg_{\infopE_n}(\Cca)$ is equivalent to the costabilisation of the~$\infty$\nobreakdash-category~$\infAlg_{\infopE_n}^{\nut}(\Cca)$.
\end{corollary}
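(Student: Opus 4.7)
The strategy mirrors that of \cref{cor:costab-rational-chain-O-algebra}, now invoking \cref{thm:En-susp-n-free} in place of \cref{prop:chain-susp-free}. By \cref{prop:costab-infinite-suspension},
$$\infcoSp\bigl(\infAlg_{\infopE_n}^{\nut}(\Cca)\bigr) \simeq \varprojlim_{k}\bigl(\cdots \xrightarrow{\susp_{\infopE_n}} \infAlg_{\infopE_n}^{\nut}(\Cca) \xrightarrow{\susp_{\infopE_n}} \infAlg_{\infopE_n}^{\nut}(\Cca)\bigr),$$
and cofinality of $n\NN \subseteq \NN$ allows me to replace the transition maps by their $n$-fold iterates $\susp_{\infopE_n}^n$ without changing the limit.

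Using \cref{thm:En-susp-n-free} together with the identity $\indc_{\infopE_n^{\nut}} \circ \free_{\infopE_n^{\nut}} \simeq \id_\Cca$ (the left adjoint of $\frgt_{\infopE_n^{\nut}} \circ \triv_{\infopE_n^{\nut}} \simeq \id_\Cca$), I obtain
$$\susp_{\infopE_n}^n \circ \free_{\infopE_n^{\nut}} \simeq \free_{\infopE_n^{\nut}} \circ \susp_\Cca^n \circ \indc_{\infopE_n^{\nut}} \circ \free_{\infopE_n^{\nut}} \simeq \free_{\infopE_n^{\nut}} \circ \susp_\Cca^n.$$
This produces a commutative ladder whose bottom row is the $\susp_\Cca^n$-tower (with inverse limit $\Cca$ itself, by stability of $\Cca$), whose top row is the $\susp_{\infopE_n}^n$-tower, and whose vertical arrows are all $\free_{\infopE_n^{\nut}}$. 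Passing to inverse limits thus yields a functor $\Cca \to \infcoSp(\infAlg_{\infopE_n}^{\nut}(\Cca))$ whose composition with $\susp_\infty$ is $\free_{\infopE_n^{\nut}}$ by construction.

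To establish that this functor is an equivalence, I unwind the description of the inverse limit: an object is a sequence $(Y_i)_{i \geq 0}$ in $\infAlg_{\infopE_n}^{\nut}(\Cca)$ together with equivalences $Y_i \simeq \susp_{\infopE_n}^n(Y_{i+1}) \simeq \free_{\infopE_n^{\nut}}\bigl(\susp_\Cca^n(\indc_{\infopE_n^{\nut}}(Y_{i+1}))\bigr)$, so each $Y_i$ is automatically free. Setting $X_i := \indc_{\infopE_n^{\nut}}(Y_i)$ and applying $\indc_{\infopE_n^{\nut}} \circ \free_{\infopE_n^{\nut}} \simeq \id_\Cca$ produces the relation $X_i \simeq \susp_\Cca^n(X_{i+1})$, so the assignment $(Y_i) \mapsto (X_i)$ is inverse to the functor $(X_i) \mapsto (\free_{\infopE_n^{\nut}}(X_i))$ induced by the ladder.

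The main obstacle is to render this last step rigorous at the $\infty$-categorical level, upgrading the pointwise bijection between objects of the two limits to a genuine equivalence of $\infty$-categories. This requires tracking the coherence data of the natural equivalences $\indc_{\infopE_n^{\nut}} \circ \free_{\infopE_n^{\nut}} \simeq \id_\Cca$ and $\susp_{\infopE_n}^n \simeq \free_{\infopE_n^{\nut}} \circ \susp_\Cca^n \circ \indc_{\infopE_n^{\nut}}$ across the tower. The cleanest route is to argue, as in \cref{cor:costab-rational-chain-O-algebra}, directly at the level of inverse systems in $\infCAT_\infty$: the factorisation of \cref{thm:En-susp-n-free} exhibits the $\susp_{\infopE_n}^n$-tower as a pro-equivalent reindexing of the $\susp_\Cca^n$-tower, whence the induced map on inverse limits is the desired equivalence.
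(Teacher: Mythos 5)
Your proposal is correct and is essentially the paper's own argument, which simply reruns the proof of \cref{cor:costab-rational-chain-O-algebra} with the $n$-fold suspension in place of the $1$-fold one. The coherence obstacle you flag at the end is resolved exactly by the device you hint at: the factorisation $\susp_{\infopE_n}^{n} \simeq \free_{\infopE_n^{\nut}} \circ \susp_{\Cca}^{n} \circ \indc_{\infopE_n^{\nut}}$ of \cref{thm:En-susp-n-free} together with the unit identity $\indc_{\infopE_n^{\nut}} \circ \free_{\infopE_n^{\nut}} \simeq \id_{\Cca}$ supply precisely the two commuting triangles that interleave the $\susp_{\infopE_n}^{n}$-tower with the $\susp_{\Cca}^{n}$-tower via the diagonal functors $\susp_{\Cca}^{n} \circ \indc_{\infopE_n^{\nut}}$, so the induced functor on inverse limits is an equivalence in $\infCAT_{\infty}$ and your objectwise inverse construction can be dispensed with entirely.
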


\begin{proof}
  See~\cite[Corollary~2.16]{HLEn}.
  We can use the same proof strategy as the proof of~\cref{cor:costab-rational-chain-O-algebra}, where we replace the 1-fold suspension functor by the~$n$\nobreakdash-fold suspension functor. 
  In particular, we obtain the following commutative diagram
  \[
    \begin{tikzcd}[row sep = large]
        \infcoSp\left(\infAlg_{\infopE_n}^{\nut}(\Cca)\right) \arrow[r, "\susp_{\infty}"] &\infAlg_{\infopE_n}^{\nut}(\Cca) \\
        \infcoSp(\Cca) \arrow[r, "\susp_{\infty}"', "\sim"] \arrow[u, "\simeq"', "\infcoSp(\free_{\infopE_n^{\nut}})"] &\Cca \arrow[u, "\free_{\infopE_n^{\nut}}"']     
    \end{tikzcd}
  \]
  of~$\infty$\nobreakdash-categories, which gives an equivalence~$\free_{\infopE_n^{\nut}} \simeq \susp_{\infty}^{\infAlg_{\infopE_n}^{\nut}(\Cca)}$ of functors. \qedhere
\end{proof}

\begin{passage}
Let~$\Oca$ be a reduced~$\infty$\nobreakdash-operad with values in a presentable stable symmetric monoidal~$\infty$\nobreakdash-category~$\Cca$.
In~\cite[Theorem~4.3]{Heu20} it is shown that the stabilisation of the~$\infty$\nobreakdash-category~$\infAlg_{\Oca}(\Cca)$ is equivalent to~$\Cca$.
We cannot simply dualise the argument in \loccit to give a general statement about the costabilisation of~$\infAlg_{\Oca}(\Cca)$.
For simplicity, let~$\Oca$ be an~$\infty$\nobreakdash-operad with values in~$\infHType$.
Almost equivalently, we can consider the problem of determining the costabilisation of~$\infAlg_{\Oca}(\Cca)$ as the problem of determining the stabilisation of the~$\infty$\nobreakdash-category~${\infcoAlg_{\Oca}(\Cca) \coloneqq \infAlg_{\Oca / \infopCom}(\Cca^{\op})^{\op}}$ of~$\Oca$-coalgebras.
In the situation of~\cite[Theorem~4.3]{Heu20} the proof uses two~ingredients:
\begin{enumerate}
  \item The adjunction induced by the stabilisation of a monadic adjunction is monadic, see~\cite[Example~4.7.3.10]{HA}. 
  \item The linear approximation (in the context of Goodwillie calculus) of the functor~$\frgt_{\Oca} \circ \free_{\Oca}$ satisfies 
    \[
    \Poly_{1}\left(\coprod_{r \geq 0} \Oca(r) \otimes_{\Perm_r} X^{\otimes r}\right) \simeq \Oca(1) \otimes X.
    \]
\end{enumerate}
Neither of the two facts generalises to the dual/opposite setting in general. 
In particular, we would have to consider the Goodwillie calculus tower for the divided power functor~$\left(\Oca(r) \otimes (\blank)^{\otimes r}\right)^{\Perm_r}$, which is non-trivial, in contrary to the Goodwillie calculus tower of~$\left(\Oca(r) \otimes (\blank)^{\otimes r}\right)_{\Perm_r}$ (this functor is~$r$\nobreakdash-homogeneous).

However, if we work with the~$\infty$\nobreakdash-category~$\infSp_{\Tel(h)}$ of~$\Tel(h)$\nobreakdash-local spectra, recall that we do have similar formula of the co-linear approximation~$\Poly^{1}(F)$ of certain endofunctors of~$\infSp_{\Tel(h)}$, see~\cref{lem:Th-dual-calculus-sum}.
This is exactly one of the important ingredients we use to investigate the costabilisation of~$\infAlg_{\infopLie}(\infSp_{\Tel(h)})$ in the next~section.
\end{passage}

\subsection{Costabilisation of~\texorpdfstring{$v_h$}{vₕ}-periodic homotopy types}\label{sec:costab-vh}

Let~$h \geq 1$ be a natural number.
In this section we prove that the costabilisation of~$\infAlg_{\infopLie}(\infSp_{\Tel(h)})$ is equivalent to~$\infSp_{\Tel(h)}$, see~\cref{thm:costab-Lie-Tn}.
Under the equivalence~$\infHType_{v_{h}} \simeq \infAlg_{\infopLie}(\infSp_{\Tel(h)})$~(see~\cref{thm:Heuts-vh-Lie}), the~$\infty$\nobreakdash-category~$\infSp_{\Tel(h)}$ is also equivalent to the costabilisation of~$\infHType_{v_{h}}$.
As a corollary we provide a universal property of the Bousfield--Kuhn functor, see~\cref{cor:universal-property-BK}.

\begin{situation}\label{sit:free-frgt-Th}
  Let~$\Oca$ be a reduced~$\infty$\nobreakdash-operad with values in the~$\infty$\nobreakdash-category~$\infSp_{\Tel(h)}$. 
  Recall the~adjunction
  \[
     \free_{\Oca} \colon \infSp_{\Tel(h)} \rightleftarrows \infAlg_{\Oca}\left(\infSp_{\Tel(h)}\right) \cocolon \frgt_{\Oca}.
  \]
  Since the functor~$\free_{\Oca}$ preserves small colimits, the adjunction induces an adjunction
  \[
    \sfree_{\Oca} \colon \infSp_{\Tel(h)} \rightleftarrows \infcoSp\left(\infAlg_{\Oca}\left(\infSp_{\Tel(h)}\right)\right) \cocolon \GG_{\Oca},
  \]
  on costabilisations, by~\cref{lem:ff-costab}.
\end{situation}

\begin{proposition}
  In~\cref{sit:free-frgt-Th} the functor~$\sfree_{\Oca}$ is fully faithful. 
\end{proposition}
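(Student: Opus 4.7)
The plan is to exhibit an explicit right adjoint $\GG_{\Oca}$ of $\sfree_{\Oca}$, and then to show that the unit natural transformation $\id \to \GG_{\Oca} \circ \sfree_{\Oca}$ is an equivalence, which is equivalent to fully faithfulness of $\sfree_{\Oca}$ by~\cite[Lemma~1.1.1]{Joh02} (cited in the proof of~\cref{prop:ff-left-adjoint}).

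First I would use~\cref{prop:costab-infinite-suspension} to identify $\infcoSp(\infAlg_{\Oca}(\infSp_{\Tel(h)}))$ with the inverse limit $\varprojlim_n \infAlg_{\Oca}(\infSp_{\Tel(h)})$ under the suspension functors~$\susp_{\Oca}$, so that an object is a sequence~$(Y_n)_{n\geq 0}$ with $\susp_{\Oca} Y_{n+1} \simeq Y_n$. Under this identification, the functor~$\sfree_{\Oca}$ sends $E \in \infSp_{\Tel(h)}$ to the sequence~$(\free_{\Oca}(\Loop^n_{\infSp_{\Tel(h)}} E))_{n \geq 0}$, since $\free_{\Oca}$ preserves colimits and $\susp_{\Oca} \circ \free_{\Oca} \simeq \free_{\Oca} \circ \susp_{\infSp_{\Tel(h)}}$. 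Using that mapping spaces in a limit of $\infty$-categories are computed as the limit of mapping spaces, together with the adjunctions $\free_{\Oca} \dashv \frgt_{\Oca}$ and $\susp_{\infSp_{\Tel(h)}} \dashv \Loop_{\infSp_{\Tel(h)}}$, I would compute
\[
  \infMap(\sfree_{\Oca}(E),(Y_n)) \simeq \varprojlim_n \infMap_{\infSp_{\Tel(h)}}(E, \susp^n_{\infSp_{\Tel(h)}} \frgt_{\Oca}(Y_n)),
\]
which identifies the right adjoint as $\GG_{\Oca}((Y_n)) \simeq \varprojlim_n \susp^n_{\infSp_{\Tel(h)}} \frgt_{\Oca}(Y_n)$.

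Specialising to $(Y_n) = \sfree_{\Oca}(F)$, this gives
\[
  \GG_{\Oca}(\sfree_{\Oca}(F)) \simeq \varprojlim_n \left( \susp^n_{\infSp_{\Tel(h)}} \circ (\frgt_{\Oca} \circ \free_{\Oca}) \circ \Loop^n_{\infSp_{\Tel(h)}}\right)(F).
\]
By the formula for the colinear approximation recalled in~\cref{para:dual-Goodwillie}, the right-hand side is precisely $\Poly^1(\T_{\Oca})(F)$, where $\T_{\Oca} = \frgt_{\Oca} \circ \free_{\Oca}$ is the monad associated to $\Oca$. From~\cref{para:symseq-compo} we have the decomposition $\T_{\Oca}(X) \simeq \coprod_{r \geq 0}(\Oca(r) \otimes X^{\otimes r})_{\Perm_r}$, in which the $r$-th summand is an $r$-homogeneous endofunctor of $\infSp_{\Tel(h)}$; since $\Oca$ is reduced, the $r = 0$ summand vanishes and the $r = 1$ summand is the identity.

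Applying~\cref{lem:Th-dual-calculus-sum} (Heuts' lemma, which crucially uses the vanishing of the Tate construction in $\infSp_{\Tel(h)}$) to $\T_{\Oca}$ yields $\Poly^1(\T_{\Oca}) \simeq \id_{\infSp_{\Tel(h)}}$, and thus $\GG_{\Oca} \circ \sfree_{\Oca} \simeq \id$. One still needs to check that the equivalence produced this way agrees with the unit of the adjunction $\sfree_{\Oca} \dashv \GG_{\Oca}$; this can be verified by tracing through the adjunction identifications above, since at each stage the map in question is the one induced by the unit of~$\free_{\Oca} \dashv \frgt_{\Oca}$ composed with the structure equivalences $\susp^n \Loop^n \simeq \id$ of the stable $\infty$-category $\infSp_{\Tel(h)}$. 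The main delicate point is precisely this last compatibility, together with the verification that~\cref{lem:Th-dual-calculus-sum} applies to the full infinite coproduct $\T_{\Oca}$ rather than merely to its finite truncations; both points rest on the special behaviour of the dual Goodwillie calculus in $\infSp_{\Tel(h)}$ already exploited in~\cite{Heu21}.
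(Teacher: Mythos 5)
Your proposal is correct and follows essentially the same route as the paper: reduce to showing the unit $\id \to \GG_{\Oca}\circ\sfree_{\Oca}$ is an equivalence, identify $\GG_{\Oca}\circ\sfree_{\Oca}$ with the colinear approximation $\Poly^1(\frgt_{\Oca}\circ\free_{\Oca})$ via the inverse-limit description of the costabilisation, and conclude by \cref{lem:Th-dual-calculus-sum} using that $\Oca$ is reduced. The extra care you take over the compatibility with the unit and the applicability of the lemma to the full coproduct is welcome but matches what the paper implicitly relies on.
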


\begin{proof}
  By~\cref{prop:ff-left-adjoint} it suffices to show that there is an equivalence~${\GG_{\Oca} \circ \sfree_{\Oca} \simeq \id}$.
  Denote the suspension and the loop functor on~$\infSp_{\Tel(h)}$ by~$\susp$ and~$\Loop$, respectively.
  
  For every object~$E \in \infSp_{\Tel(h)}$, we have 
  \[
    \begin{split}
       (\GG_{\Oca} \circ \sfree_{\Oca})(E) &\simeq \varprojlim_{n}\left(\susp^{n} \circ \frgt_{\Oca} \circ \free_{\Oca} \circ \Loop^n \right) (E) \\
       &\simeq \Poly^{1}\left(\coprod_{i \geq 1}\left(\Oca(n)\otimes E^{\otimes i}\right)_{\Perm_i}\right) \\
       &\simeq E.
    \end{split}
  \]
  The first equivalence holds by the construction of~$\sfree_{\Oca}$ and~$\GG_{\Oca}$. 
  The second equivalence holds by the construction of the colinear approximation, in the sense of dual Goodwillie calculus, of the endofunctors~$\frgt_{\Oca} \circ \free_{\Oca}$ of~$\infSp_{\Tel(h)}$.
  The last equivalence holds by~\cref{lem:Th-dual-calculus-sum}.
\end{proof}

\begin{proposition}\label{prop:sfree-Lie-equi}
  The functor~$\sfree_{\infopLie}$ induced by~$\free_{\infopLie} \colon \infSp_{\Tel(h)} \to \infAlg_{\infopLie}(\infSp_{\Tel(h)})$ is an equivalence of stable~$\infty$\nobreakdash-categories.
\end{proposition}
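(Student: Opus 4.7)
The preceding proposition shows that~$\sfree_{\infopLie}$ is fully faithful, so my plan reduces to proving essential surjectivity by exhibiting~$\infcoSp(\UE_0)$ as a two-sided inverse. Since~$\UE_0 = \indc_{\infopLie}$ is a left adjoint (to~$\triv_{\infopLie}$, see~\eqref{eq:two-adj-Lie-alg}), it preserves all colimits, so by~\cref{lem:ff-costab} it induces an exact functor~$\infcoSp(\UE_0) \colon \infcoSp(\infAlg_{\infopLie}(\infSp_{\Tel(h)})) \to \infcoSp(\infSp_{\Tel(h)}) \simeq \infSp_{\Tel(h)}$. Specialising~\cref{prop:Un-Free} to~$n = 0$ gives an equivalence~$\UE_0 \circ \free_{\infopLie} \simeq \id_{\infSp_{\Tel(h)}}$, which on passing to costabilisations yields~$\infcoSp(\UE_0) \circ \sfree_{\infopLie} \simeq \id$.

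For the reverse composite, my plan is to test the equivalence~$\sfree_{\infopLie} \circ \infcoSp(\UE_0) \simeq \id$ on~$\infcoSp(\infAlg_{\infopLie}(\infSp_{\Tel(h)}))$ after post-composition with the functor~$\infcoSp(\UE_{\infty})$ associated to the fully faithful~$\UE_{\infty}$ of~\cref{thm:U-infty-ff}. Because~$\UE_{\infty}$ is a left adjoint,~\cref{lem:ff-costab} shows that~$\infcoSp(\UE_{\infty})$ is fully faithful, so it suffices to verify~$\infcoSp(\UE_{\infty}) \circ \sfree_{\infopLie} \circ \infcoSp(\UE_0) \simeq \infcoSp(\UE_{\infty})$. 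Since the functor~$\infcoSp$ commutes with inverse limits of~$\infty$\nobreakdash-categories (by duality together with the analogous fact for stabilisation), the target of~$\infcoSp(\UE_{\infty})$ identifies with~$\varprojlim_n \infcoSp(\infAlg_{\infopE_n}^{\nut}(\infSp_{\Tel(h)}))$; Heuts--Land's~\cref{cor:costab-En-general} trivialises each factor as~$\infSp_{\Tel(h)}$ via~$\sfree_{\infopE_n^{\nut}}$, and~\cref{cor:Bn-free-commutes} shows that the transition maps correspond to the suspension autoequivalence of~$\infSp_{\Tel(h)}$, so the inverse limit collapses back to~$\infSp_{\Tel(h)}$.

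The main obstacle will be the diagram chase that identifies both of the composites~$\infcoSp(\UE_{\infty}) \circ \sfree_{\infopLie} \circ \infcoSp(\UE_0)$ and~$\infcoSp(\UE_{\infty})$ with the same functor into this inverse limit. The crucial input is the formula~$\UE_n \circ \free_{\infopLie} \simeq \free_{\infopE_n^{\nut}} \circ \Loop^n$ from~\cref{prop:Un-Free}, which upon passing to costabilisations lets me identify~$\infcoSp(\UE_n) \circ \sfree_{\infopLie}$ with~$\sfree_{\infopE_n^{\nut}} \circ \Loop^n$, and hence under the Heuts--Land identification with the autoequivalence~$\Loop^n$ of~$\infSp_{\Tel(h)}$. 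The delicate point is that~$\UE_n$ is a composition of both left and right adjoints, so it does not formally respect the cospectrum structure; transferring the compatibility above through the Heuts--Land identifications at each level~$n$ and assembling them coherently in the inverse limit requires some care, but since all maps in sight become autoequivalences of~$\infSp_{\Tel(h)}$, the comparison ultimately reduces to a formal manipulation of the diagram~\eqref{diag:Lie-En-Th}.
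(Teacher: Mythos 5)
Your proposal is correct and follows essentially the same route as the paper: both arguments rest on the fully faithfulness of~$\infcoSp(\UE_{\infty})$, the identity~$\indc_{\infopLie} \circ \free_{\infopLie} \simeq \id$, and the collapse of~$\varprojlim_n \infcoSp(\infAlg_{\infopE_n}^{\nut}(\infSp_{\Tel(h)}))$ to~$\infSp_{\Tel(h)}$ via the Heuts--Land identification together with~\cref{cor:Bn-free-commutes}; the paper merely packages the final deduction as ``$\infcoSp(\pr_0) \circ \infcoSp(\UE_{\infty}) \circ \sfree_{\infopLie} \simeq \id$ with~$\infcoSp(\pr_0)$ an equivalence'' instead of exhibiting~$\infcoSp(\UE_0)$ as an explicit two-sided inverse. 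Your worry that~$\UE_n$ might not respect the cospectrum structure is unfounded, since diagram~\eqref{diag:Lie-En-Th} lives in~$\infPrl$, so each~$\UE_n$ preserves small colimits and~\cref{lem:ff-costab} applies directly.
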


\begin{proof}
  Recall the fully faithful functor~(see~\cref{thm:U-infty-ff})
  \[
  \UE_{\infty} \colon \infAlg_{\infopLie}\left(\infSp_{\Tel(h)}\right) \to \varprojlim \infAlg_{\infopE_n}^{\nut}\left(\infSp_{\Tel(h)}\right)
  \]
  induced by the commutative diagram~\eqref{diag:Lie-En-Th}.
  Let 
  \[
  \pr_0 \colon \varprojlim_{n} \infAlg_{\infopE_n}^{\nut}\left(\infSp_{\Tel(h)}\right) \to  \infAlg_{\infopE_0}^{\nut}\left(\infSp_{\Tel(h)}\right) \simeq \infSp_{\Tel(h)}
  \]
  denote the canonical projection map in the inverse limit~diagram.
  Note that we~have
  \[
  \indc_{\infopLie} \simeq \UE_{0} = \pr_0 \circ \UE_{\infty}.
  \]
  Since~$\indc_{\infopLie} \circ \free_{\infopLie} \simeq \id$~(see~\cref{para:vh-stabilisation}), the following~composition
  \begin{equation}\label{eq:Lie-id-UE}
    \infSp_{\Tel(h)} \xrightarrow{\free_{\infopLie}} \infAlg_{\infopLie}\left(\infSp_{\Tel(h)}\right) \xrightarrow{\UE_{\infty}} \varprojlim_{n} \infAlg_{\infopE_n}\left(\infSp_{\Tel(h)}\right) \xrightarrow{\pr_0} \infSp_{\Tel(h)}
  \end{equation}
   of small-colimit-preserving functors is equivalent to identity functor on~$\infSp_{\Tel(h)}$.
   Thus the following composition of functors
   \[
   \infSp_{\Tel(h)} \xrightarrow{\sfree_{\infopLie}} \infcoSp\left(\infAlg_{\infopLie}\left(\infSp_{\Tel(h)}\right)\right) \xrightarrow{\infcoSp(\UE_{\infty})}  \infcoSp\left(\varprojlim_{n} \infAlg_{\infopE_n}^{\nut}\left(\infSp_{\Tel(h)}\right)\right) \xrightarrow{\infcoSp(\pr_{0})} \infSp_{\Tel(h)}
   \]
   on costabilisations, induced by~\eqref{eq:Lie-id-UE}, is equivalent to the identity functor on~$\infSp_{\Tel(h)}$ as well. 
   Therefore, to show that~$\sfree_{\infopLie}$ is an equivalence of~$\infty$\nobreakdash-categories, it suffices to show that~$\infcoSp(\pr_{0})$ is an equivalence, since the functor~$\infcoSp(\UE_{\infty})$ is fully faithful by~\cref{lem:ff-costab}.
   
   \begin{claim}
     The functor~$\infcoSp(\pr_{0})$ is an equivalence of~$\infty$\nobreakdash-categories.
   \end{claim}
   Consider the following commutative diagram
   \[
   \begin{tikzcd}
     \cdots \arrow[r] & \infAlg_{\infopE_{n+1}}^{\nut}\left(\infSp_{\Tel(h)}\right) \arrow[r, "\rB_{n+1}"] & \infAlg_{\infopE_n}^{\nut}\left(\infSp_{\Tel(h)}\right) \arrow[r, "\rB_{n}"] & \cdots \arrow[r, "\rB_1"] & \infAlg_{\infopE_0}^{\nut}\left(\infSp_{\Tel(h)}\right) \\
     \cdots \arrow[r, "\susp"] \arrow[u] & \infSp_{\Tel(h)} \arrow[r, "\susp"] \arrow[u, "\free_{\infopE_{n+1}}^{\nut}"] & \infSp_{\Tel(h)} \arrow[r, "\susp"] \arrow[u, "\free_{\infopE_n}^{\nut}"] & \cdots \arrow[r, "\susp"] \arrow[u] & \infSp_{\Tel(h)} \arrow[u, "\id", "\simeq"']
   \end{tikzcd}
   \]
   in~$\infPrl$ obtained from~\cref{cor:Bn-free-commutes}.
   It induces the following commutative diagram
   \[
    \begin{tikzcd}
    \cdots \arrow[r]           & \infcoSp\left(\infAlg_{\infopE_n}^{\nut}\left(\infSp_{\Tel(h)}\right)\right) \arrow[r]           & \cdots \arrow[r]           &  \infcoSp\left(\infAlg_{\infopE_0}^{\nut}\left(\infSp_{\Tel(h)}\right)\right)          \\
    \cdots \arrow[r, "\sim"] \arrow[u, "\simeq"] & \infSp_{\Tel(h)} \arrow[r, "\sim", "\susp"'] \arrow[u, "\sfree_{\infopE_n}", "\simeq"'] & \cdots \arrow[u, "\simeq"'] \arrow[r, "\sim", "\susp"'] & \infSp_{\Tel(h)} \arrow[u, "\sfree_{\infopE_0}", "\simeq"']
    \end{tikzcd}
   \]
   on costabilisations, where the vertical arrows are equivalences by~\cref{cor:costab-En-general}.
   Then the claim follows by the induced map between the colimits of the rows and from the commutativity of the second~diagram.
\end{proof}

\begin{theorem}\label{thm:costab-Lie-Tn}
  The stable~$\infty$\nobreakdash-category~$\infSp_{\Tel(h)}$ together with the functor~$\free_{\infopLie}$ is equivalent to the costabilisation of the~$\infty$\nobreakdash-category~$\infAlg_{\infopLie}(\infSp_{\Tel(h)})$.
\end{theorem}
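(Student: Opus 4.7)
The theorem is essentially a repackaging of \cref{prop:sfree-Lie-equi}, which has already done the hard work of establishing that the induced exact functor $\sfree_{\infopLie}\colon \infSp_{\Tel(h)} \to \infcoSp(\infAlg_{\infopLie}(\infSp_{\Tel(h)}))$ is an equivalence. All that remains is to check that, under this equivalence, the functor $\free_{\infopLie}$ is identified with the universal functor $\susp_{\infty}$ out of the costabilisation.

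For this I would invoke the universal property of the costabilisation, \cref{prop:costab-uni-prop}, applied to the finite-colimit-preserving functor $\free_{\infopLie}\colon \infSp_{\Tel(h)} \to \infAlg_{\infopLie}(\infSp_{\Tel(h)})$ whose source is stable. It produces, uniquely up to contractible choice, a factorisation
\[
\free_{\infopLie} \simeq \susp_{\infty}^{\infAlg_{\infopLie}(\infSp_{\Tel(h)})} \circ \widetilde{\free}_{\infopLie}
\]
through some exact functor $\widetilde{\free}_{\infopLie}\colon \infSp_{\Tel(h)} \to \infcoSp(\infAlg_{\infopLie}(\infSp_{\Tel(h)}))$. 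Unwinding the definition of $\sfree_{\infopLie}$ from \cref{sit:free-frgt-Th} (together with the identification $\infSp_{\Tel(h)} \simeq \infcoSp(\infSp_{\Tel(h)})$ from \cref{ex:costab-triv-ex}), one sees that $\widetilde{\free}_{\infopLie}$ is nothing but $\sfree_{\infopLie}$, so we obtain the commutative triangle
\[
\begin{tikzcd}[row sep = large]
\infSp_{\Tel(h)} \arrow[rr, "\free_{\infopLie}"] \arrow[rd, "\sfree_{\infopLie}"'] && \infAlg_{\infopLie}(\infSp_{\Tel(h)}). \\
& \infcoSp\left(\infAlg_{\infopLie}\left(\infSp_{\Tel(h)}\right)\right) \arrow[ru, "\susp_{\infty}"'] &
\end{tikzcd}
\]
Combined with \cref{prop:sfree-Lie-equi}, this exhibits the pair $(\infSp_{\Tel(h)}, \free_{\infopLie})$ as equivalent, as a stable $\infty$-category together with a finite-colimit-preserving functor to $\infAlg_{\infopLie}(\infSp_{\Tel(h)})$, to the universal pair $(\infcoSp(\infAlg_{\infopLie}(\infSp_{\Tel(h)})), \susp_{\infty})$, which is precisely the content of the theorem.

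All the genuine content therefore sits upstream in \cref{prop:sfree-Lie-equi}, whose proof leans on the fully-faithful embedding $\infcoSp(\UE_{\infty})$ obtained from \cref{thm:U-infty-ff}, on the identification of $\infcoSp(\infAlg_{\infopE_n}^{\nut}(\infSp_{\Tel(h)}))$ with $\infSp_{\Tel(h)}$ from \cref{cor:costab-En-general}, and on the compatibility of the bar constructions $\rB_n$ with the free $\infopE_n$-algebra functors from \cref{cor:Bn-free-commutes}. Given those inputs, no further obstacle arises; the present theorem is a clean formal consequence.
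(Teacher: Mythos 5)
Your proposal is correct and follows essentially the same route as the paper: the paper likewise treats the theorem as an immediate consequence of \cref{prop:sfree-Lie-equi}, identifying $\free_{\infopLie}$ with $\susp_{\infty}$ via the commutative square induced by $\free_{\infopLie}$ on costabilisations (using that $\susp_{\infty}\colon\infcoSp(\infSp_{\Tel(h)})\to\infSp_{\Tel(h)}$ is an equivalence because the source of $\free_{\infopLie}$ is stable). Your appeal to the universal property of \cref{prop:costab-uni-prop} rather than to the naturality square is only a cosmetic variation.
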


\begin{proof}
  The functor~$\free_{\infopLie}$ induces a commutative diagram
  \[
    \begin{tikzcd}
      \infSp_{\Tel(h)} \arrow[r, "\susp_{\infty}", "\sim"'] \arrow[d, "\sfree_{\infopLie}"', "\simeq"] & \infSp_{\Tel(h)} \arrow[d, "\free_{\infopLie}"]\\
      \infcoSp\left(\infAlg_{\infopLie}\left(\infSp_{\Tel(h)}\right)\right) \arrow[r, "\susp_{\infty}"'] & \infAlg_{\infopLie}\left(\infSp_{\Tel(h)}\right) 
    \end{tikzcd}
  \]
  of~$\infty$\nobreakdash-categories; here the left vertical arrow is an equivalence by~\cref{prop:sfree-Lie-equi} and the upper horizontal arrow is an equivalence because the~$\infty$\nobreakdash-category~$\infSp_{\Tel(h)}$ is~stable. 
  This commutative diagram exhibits an equivalence between the functor~$\free_{\infopLie}$ and the canonical functor
  $
  \susp_{\infty} \colon \infcoSp\left(\infAlg_{\infopLie}(\infSp_{\Tel(h)})\right) \to \infAlg_{\infopLie}(\infSp_{\Tel(h)}).
  $
\end{proof}

Recall from~\cref{para:vh-stabilisation} that the free--forgetful adjunction of~$\Tel(h)$\nobreakdash-local spectral Lie algebras corresponds to the Bousfield--Kuhn adjunction
$
\rBK_{h} \colon \infSp_{\Tel(h)} \rightleftarrows \infHType_{v_{h}} \cocolon \BK_{h},
$
under the equivalence~$\infAlg_{\infopLie}(\infSp_{\Tel(h)}) \simeq \infHType_{v_{h}}$.
Therefore, \cref{thm:costab-Lie-Tn} implies that the functor~$\lBK_{h}$ is equivalent to the costabilisation of~$\infHType_{v_h}$.
This gives a universal property of~$\rBK_{h}$ and~$\BK_{h}$ and thus proves~\cref{thm:universal-property-BK}. 
We record the content of~\cref{thm:universal-property-BK} here for convenience of the reader.

\begin{corollary}[{\cref{thm:universal-property-BK}}]\label{cor:universal-property-BK}
    \begin{enumerate}
      \item Let~$\Cca$ be a stable~$\infty$\nobreakdash-category.
      Composing with the functor~$\lBK_{h}$ induces an~equivalence
      \[
      \infFun^{\rex}(\Cca, \infSp_{\Tel(h)}) \xrightarrow{\sim} \infFun^{\rex}\left(\Cca, \infHType_{v_{h}}\right),
      \]
      of~$\infty$\nobreakdash-categories, where~$\infFun^{\rex}$ denotes the~$\infty$\nobreakdash-category of right exact functors, \ie functors that preserve finite colimits.
      \item Let~$\Dca$ be a presentable stable~$\infty$\nobreakdash-category.
      Composing with the Bousfield--Kuhn functor~$\BK_{h}$ induces an equivalence
       \[
       \infFun^{R}(\infSp_{\Tel(h)}, \Dca) \xrightarrow{\sim} \infFun^{R}\left(\infHType_{v_{h}}, \Dca\right)
       \]
       of~$\infty$\nobreakdash-categories, where~$\infFun^{R}$ denotes the~$\infty$\nobreakdash-category of functors that are accessible and  preserves small limits, \ie functors admitting left~adjoints. 
    \end{enumerate}  
\end{corollary}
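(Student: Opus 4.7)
The plan is to deduce the Corollary from Theorem~\ref{thm:costab-Lie-Tn} together with the equivalence~$\infHType_{v_{h}} \simeq \infAlg_{\infopLie}(\infSp_{\Tel(h)})$ of Theorem~\ref{thm:Heuts-vh-Lie}. Under this equivalence the adjunction~$\lBK_{h} \dashv \BK_{h}$ corresponds to~$\free_{\infopLie} \dashv \frgt_{\infopLie}$ (see~\cref{para:vh-stabilisation}), so Theorem~\ref{thm:costab-Lie-Tn} translates to the statement that~$\lBK_{h} \colon \infSp_{\Tel(h)} \to \infHType_{v_{h}}$ is equivalent, as a finite-colimit-preserving functor out of a stable~$\infty$\nobreakdash-category, to the canonical functor~$\susp_{\infty} \colon \infcoSp(\infHType_{v_{h}}) \to \infHType_{v_{h}}$. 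Hence by passing to right adjoints, the Bousfield--Kuhn functor~$\BK_{h}$ is equivalent to~$\Loop_{\infty} \colon \infHType_{v_{h}} \to \infcoSp(\infHType_{v_{h}}) \simeq \infSp_{\Tel(h)}$.

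For part~\rom{1}, the universal property of costabilisation (\cref{prop:costab-uni-prop}) gives that any right exact functor from a stable~$\infty$\nobreakdash-category~$\Cca$ into~$\infHType_{v_{h}}$ factors through~$\susp_{\infty}$, uniquely up to contractible choice. To upgrade this pointwise statement to an equivalence of functor~$\infty$\nobreakdash-categories, I would use the identification~$\infcoSp(\infHType_{v_{h}}) \simeq \infExc_{\ast}(\infHType_{\ast}^{\fin}, \infHType_{v_{h}}^{\op})^{\op}$ and apply the functor-category form of the universal property of stabilisation from~\cite[Corollary~1.4.2.23]{HA} applied to~$(\infHType_{v_{h}})^{\op}$ and~$\Cca^{\op}$, then dualise.

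For part~\rom{2}, I would deduce it from~\rom{1} via adjunction. By~\cref{cor:univ-right-adj-costab}, for any presentable stable~$\infty$\nobreakdash-category~$\Dca$ and any functor~$G \colon \infHType_{v_{h}} \to \Dca$ admitting a left adjoint, $G$ factors uniquely (up to contractible choice) through~$\Loop_{\infty} \simeq \BK_{h}$; the upgrade to an equivalence of functor~$\infty$\nobreakdash-categories follows from~\rom{1} by the equivalence~$\infFun^{\mathrm{R}}(\Aca, \Bca) \simeq \infFun^{\mathrm{L}}(\Bca, \Aca)^{\op}$ between presentable~$\infty$\nobreakdash-categories and the duality~${(\infPrl)^{\op} \simeq \infPrr}$ of~\cite[Corollary~5.5.3.4]{Lur09}, which identifies composition with~$\lBK_{h}$ on left-adjoint functor categories with composition with~$\BK_{h}$ on right-adjoint functor categories.

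The main obstacle is the bookkeeping needed to promote the pointwise universal property of~\cref{prop:costab-uni-prop} to an equivalence of the full~$\infty$\nobreakdash-categories of right exact functors; this is essentially the standard upgrade passing from a terminal object in a category of factorisations to an equivalence of mapping~$\infty$\nobreakdash-categories, but one must verify that the constructions~$\infcoSp(\blank)$ and~$\susp_{\infty}$ are compatible with the cotensoring by a fixed stable~$\infty$\nobreakdash-category~$\Cca$, which amounts to verifying that~$\susp_{\infty}$ is a morphism of stable~$\infty$\nobreakdash-categories exhibiting the universal property inside the~$\infty$\nobreakdash-category of stable~$\infty$\nobreakdash-categories and right exact functors. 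Once this formal upgrade is in place, both parts of the Corollary are immediate consequences of~\cref{thm:costab-Lie-Tn}.
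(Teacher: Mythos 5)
Your proposal is correct and follows essentially the same route as the paper: identify $\lBK_h \dashv \BK_h$ with $\free_{\infopLie} \dashv \frgt_{\infopLie}$, invoke \cref{thm:costab-Lie-Tn} to see that $\lBK_h$ is the costabilisation functor $\susp_{\infty}$ of $\infHType_{v_h}$, and then apply \cref{prop:costab-uni-prop} for part (i) and \cref{cor:univ-right-adj-costab} for part (ii). Your added remarks on upgrading the pointwise factorisation to an equivalence of functor $\infty$\nobreakdash-categories via \cite[Corollary~1.4.2.23]{HA} and the $\infPrl$/$\infPrr$ duality merely spell out what the paper leaves implicit.
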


\begin{proof}
  This is a consequence of~\cref{thm:costab-Lie-Tn},~\cref{prop:costab-uni-prop} and~\cref{cor:univ-right-adj-costab}.
\end{proof}

We would like to conclude this article by the following potential direction of future research:~\emph{Costabilisation of truncated chromatic layers}.
Recall the notations from~\cref{sit:monochrom-construction}.
Let~$(V_{h})_{h \geq 1}$ be a sequence of pointed~$p$\nobreakdash-local finite complexes such that
\begin{enumerate}
  \item $V_h$ admits a desuspension for every~$h \geq 1$,
  \item $V_h$ is of type~$h$, and
  \item $\conn(V_{h+1}) > \conn(V_h)$ for every~$h \geq 1$.
\end{enumerate}
Then by the Unstable Class Invariance Theorem~\cite[Theorem~9.15]{Bou94} we obtain a tower of natural transformations
\begin{equation}
  \cdots \to \Null_{V_{h+1}} \to \Null_{V_{h}} \to \cdots \to \Null_{V_1}
\end{equation}
We consider this tower as the unstable analogue of the finite stable chromatic localisation tower~\eqref{eq:finite-localisation-tower}, because of~\cref{thm:Vh-truncatio-vh-periodic-equivalence}.
Denote by~$\infHType_{(p), V_{h}}^{> 1}$ the full~$\infty$\nobreakdash-subcategory of~$\infHType_{(p)}^{> 1}$ whose objects are~$V_{h}$\nobreakdash-\less \htypes, \ie the essential image of~$\Null_{V_{h}}$.

We consider \cref{thm:costab-Lie-Tn} as our first step towards understanding chromatic assembly of \htypes.
For a natural number~$h \geq 2$, the finite truncation~$\infHType_{(p), V_{h}}^{> 1}$ of this tower also admits a non-trivial costabilisation; for~$h = 1$ the costabilisation of~$\infHType_{(p), V_{1}}^{> 1}$~(for example~${V_{1} = \usphere^2/p}$) of pointed simply-connected rational \htypes is trivial.
We would like to pose the following~questions.

\begin{question}
  \begin{enumerate}
    \item What is the costabilisation of the~$\infty$\nobreakdash-category~$\infHType_{(p), \widetilde{V}_{h}}^{> 1}$?
    \item What is the relationship between the Bousfield--Kuhn functor~$\BK_{h}$ and the right adjoint~$\BK_{\leq h}$ to the natural functor 
      $
      \susp_{\infty} \colon \infcoSp(\infHType_{(p), \widetilde{V}_{h}}^{> 1}) \to \infHType_{(p), \widetilde{V}_{h}}^{> 1}?
      $
  This is related to Heuts's question~\cite[Question~C.4]{Heu20}:
    \item Let~$V$ be a pointed finite~$p$\nobreakdash-local \htype.
        If the~$v_{h}$\nobreakdash-periodic homotopy groups of~$V$ vanish, are the~$v_{n}$\nobreakdash-periodic homotopy groups of~$V$ also trivial for every~$0 \leq n \leq h$?
  \end{enumerate}
\end{question}


\printbibliography

\end{document}